\newcommand\ZZ{\mathbb{Z}}
\newcommand{\Z}{\mathbb{Z}}
\newcommand\NN{\mathbb{N}}
\newcommand\CC{\mathbb{C}}
\newcommand\RR{\mathbb{R}}
\newcommand\QQ{\mathbb{Q}}
\newcommand\Qbar{{\overline{\QQ}}}
\newcommand\Fbar{{\overline{\FF}}}
\newcommand\FF{\mathbb{F}}
\newcommand\AAA{\mathbf{A}}
\newcommand\PP{\mathbb{P}} 
\newcommand{\A}{\mathbb{A}}
\newcommand\GG{\mathbb{G}}
\newcommand{\tS}{{\widetilde S}}
\newcommand{\SZ}{{\mathcal S}}
\newcommand{\tSZ}{{\widetilde{\mathcal S}}}
\newcommand{\tT}{Y}
\newcommand{\TZ}{{\mathcal Y}}
\newcommand{\cT}{{}_{\classtuple}\TZ}
\newcommand{\cpi}{{}_{\classtuple}\pi}
\newcommand{\lines}{S\smallsetminus U}
\DeclareMathOperator{\Pic}{Pic}
\newcommand{\Ss}{{S_{\text{reg}}}}
\newcommand{\Aone}{{\mathbf A}_1}
\newcommand{\Atwo}{{\mathbf A}_2}
\newcommand{\Athree}{{\mathbf A}_3}
\newcommand{\Dfive}{{\mathbf D}_5}
\newcommand{\St}{Z}
\newcommand{\Gt}{G}
\newcommand{\Xt}{X}
\newcommand{\Yt}{Y}
\newcommand{\GtX}{\Gt_{\Xt}}
\newcommand{\G}{\mathbb{G}}
\newcommand{\Zt}{W}
\newcommand{\XtOk}{X}
\newcommand{\YtOk}{Y}
\newcommand{\Ytk}{\YtOk_K}
\newcommand{\Xtk}{\XtOk_K}
\newcommand{\conorm}{\mathcal{C}}
\newcommand{\aAN}{{}_{\ua}\A^N}
\newcommand{\aYtOk}{{}_{\ua}\YtOk}
\newcommand{\cYtOk}{{}_{\classtuple}\YtOk}
\newcommand{\api}{{}_{\ua}\pi}
\newcommand{\classtuplepi}{{}_{\classtuple}\pi}
\newcommand{\zt}{z}
\newcommand{\ftOk}{f}
\newcommand{\gtOk}{g}
\newcommand{\mA}{A}
\newcommand{\mt}{m}
\newcommand{\st}{s}
\newcommand{\integersk}{\mathcal{O}_K}
\newcommand{\Ok}{\mathcal{O}_{K}}
\newcommand{\units}{{\integersk^\times}}
\newcommand{\idealsk}{\mathcal{I}_K}
\newcommand{\principalidealsk}{\mathcal{P}_K}
\newcommand{\id}[1]{\mathfrak{#1}}
\newcommand{\ppp}{\id p}
\newcommand{\p}{\mathfrak{p}}
\newcommand{\aaa}{\id a}
\newcommand{\aid}{\mathfrak{a}}
\newcommand{\ua}{\underline{\mathfrak{a}}}
\newcommand{\bbb}{\id b}
\newcommand{\places}{{\Omega_K}}
\newcommand{\archplaces}{{\Omega_\infty}}
\newcommand{\nonarchplaces}{{\Omega_0}}
\newcommand{\abs}[1]{\left|#1\right|}
\newcommand{\absv}[1]{\left|#1\right|_v}
\newcommand{\normv}[1]{\left\lVert #1 \right\rVert_v}
\DeclareMathOperator{\N}{\mathfrak{N}}
\newcommand{\OO}{\mathcal{O}}
\newcommand{\pigenerator}{\rho}
\newcommand{\cl}{Cl_K}
\newcommand{\classrepsyst}{\mathcal{C}}
\newcommand{\classrep}{\id{c}}
\newcommand{\classtuple}{{\underline{\id{c}}}}
\newcommand{\us}{\underline s}
\newcommand{\sg}{s}
\newcommand{\usg}{\underline{\sg}}
\newcommand{\coord}{a}
\newcommand{\coordtuple}{\underline{a}}
\newcommand{\coordfracideal}{\mathcal{O}}
\newcommand{\coordfracidealproduct}{{\mathcal{O}_*'}}
\newcommand{\coordideal}{\id{a}}
\newcommand{\coordidealtuple}{\underline{\id{a}}}
\newcommand{\FDtot}{\mathcal{F}}
\newcommand{\FDxi}{\mathcal{F}_0}
\newcommand{\FDei}{\mathcal{F}_1}
\newcommand{\divisorideal}{\id{d}}
\newcommand{\divisoridealtuple}{\underline{\id{d}}}
\newcommand{\latticefracideal}{\id{b}}
\newcommand{\countinggroup}{\mathcal{G}}
\newcommand{\countingbox}{\mathcal{B}}
\newcommand{\cutoff}{\mathcal{R}}
\newcommand\dd{\,\mathrm{d}}
\newcommand{\ex}[1]{*+<5pt>[o][F]{E_{#1}}}
\newcommand{\li}[1]{*+<3pt>[F]{E_{#1}}}
\newcommand{\tN}{\tilde{N}}
\newcommand{\tM}{\tilde{M}}
\newcommand\xx{\mathbf{x}}
\newcommand\WHERE{\,\Bigg|\,}
\newcommand{\bomega}{\boldsymbol{\omega}}
\newcommand{\dual}{\vee}
\DeclareMathOperator{\vol}{vol}
\DeclareMathOperator{\Gal}{Gal}
\DeclareMathOperator{\cox}{Cox}
\DeclareMathOperator{\Hom}{Hom}
\DeclareMathOperator{\pic}{Pic}
\DeclareMathOperator{\spec}{Spec}
\DeclareMathOperator{\proj}{Proj}
\DeclareMathOperator{\sgn}{sgn}
\newtheorem{theorem}{Theorem}
\newtheorem{lemma}[theorem]{Lemma}
\newtheorem{prop}[theorem]{Proposition}
\theoremstyle{definition}
\newtheorem{defin}[theorem]{Definition}
\newtheorem{rem}[theorem]{Remark}
\newtheorem*{ack}{Acknowledgements}
\newtheorem*{remark}{Remark}
\newtheorem{Remark}[theorem]{Remark}
\newtheorem{construction}[theorem]{Construction}
\numberwithin{theorem}{section}
\numberwithin{equation}{section}
\begin{document}

\setcounter{tocdepth}{1}

\title[O-minimality on twisted universal torsors and Manin's conjecture]{o-minimality on twisted universal torsors and Manin's  conjecture over number fields}

\author{Christopher Frei} \email{frei@math.tugraz.at}

\address{Technische Universit\"at Graz, Institut f\"ur Analysis und Computational Number Theory (Math A), Steyrergasse 30/II, A-8010 Graz, Austria}

\author{Marta Pieropan} \email{pieropan@math.uni-hannover.de}

\address{Institut f\"ur Algebra, Zahlentheorie und Diskrete
  Mathematik, Leibniz Universit\"at Hannover,
  Welfengarten 1, 30167 Hannover, Germany}

\date{July 17, 2015}

\begin{abstract}
  Manin's conjecture predicts the distribution of rational points on
  Fano varieties. Using explicit parameterizations of rational points
  by integral points on universal torsors and lattice-point-counting
  techniques, it was proved for several specific varieties over $\QQ$,
  in particular del Pezzo surfaces.  We show how this method can be
  implemented over arbitrary number fields, by proving Manin's
  conjecture for a singular quartic del Pezzo surface of type
  $\Athree+\Aone$. The parameterization step is treated in high
  generality with the help of twisted integral
  models of universal torsors. To make the counting step feasible over
  arbitrary number fields, we deviate from the usual approach over
  $\QQ$ by placing higher emphasis on the geometry of numbers in the
  framework of o-minimal structures.
%
%
\end{abstract}

\subjclass[2010] {11D45 (14G05)}
\keywords{Rational points, Manin's conjecture, o-minimality, universal torsor}

\maketitle
\vspace{-1cm}
\tableofcontents

\section{Introduction}
Let $K$ be a number field and $S$ the anticanonically embedded del
Pezzo surface of degree $4$ and type $\Athree+\Aone$ given in
$\PP^4_K$ by the equations
\begin{equation}\label{eq:def_S}
  x_0x_3-x_2x_4=x_0x_1+x_1x_3+x_2^2 = 0.
\end{equation}
Let $U$ be the complement of the lines in $S$, and let $H$ be the
anticanonical height on $S(K)$ induced by the Weil height on
$\PP^4(K)$,
\begin{equation*}
  H(x_0 : \cdots : x_4) := \prod_{v \in \places}\max\{|x_0|_v,
  \ldots, |x_4|_v\},
\end{equation*}
where $\places$ is the set of places of $K$ and the normalized
absolute values $\absv{\ \cdot\ }$ are given as follows: let $w$ be
the place of $\QQ$ below $v$ and $K_v$ (resp.~$\QQ_w$) the completion
of $K$ at $v$ (resp.~of $\QQ$ at $w$). Then $\absv{\ \cdot\ } :=
\abs{N_{K_v|\QQ_w}(\ \cdot\ )}_w$, where $\abs{\ \cdot\ }_w$ is the usual
real or $p$-adic absolute value on $\QQ_w$. We investigate the
counting function
\begin{equation*}
  N_{U, H}(B) := |\{\xx \in U(K)\mid H(\xx)\leq B\}|.
\end{equation*}
Generalized versions \cite{MR1679843, MR2019019} of Manin's conjecture
\cite{MR89m:11060} predict an asymptotic formula
\begin{equation*}
  N_{U, H}(B) = c_{S, H} B(\log B)^{5}(1 + o(1)),\quad\text{ as }B\to\infty,
\end{equation*}
with a positive constant $c_{S, H}$, which has been conjecturally
interpreted in \cite{MR1340296, MR1679843, MR2019019}. Our first main
result is a proof of Manin's conjecture for $S$:
\begin{theorem}\label{maintheorem}
  Let $K$ be a number field of degree $d$, let $S$ be given in
  $\PP^4_K$ by \eqref{eq:def_S}, let $U$ be the complement of the
  lines in $S$, and let $\epsilon > 0$. As $B \to \infty$,
  \begin{equation*}
    N_{U,H}(B) = c_{S,H} B  (\log B)^5 + O(B(\log B)^{5-1/d+\epsilon}),
  \end{equation*}
  with an explicit $c_{S,H} > 0$. This formula agrees with Peyre's
  refined version of Manin's conjecture \cite[Formule empirique
  5.1]{MR2019019}. The implicit constant in the error term depends on
  $K$ and $\epsilon$.
\end{theorem}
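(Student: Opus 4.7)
The plan is to apply the universal torsor method and reduce Manin's conjecture to a uniform lattice point counting problem handled by the o-minimal framework promised in the abstract. First, the parameterization step. The minimal desingularization $\tS\to S$ is a weak del Pezzo surface of degree $4$ with $\rk\Pic(\tS)=6$, and its Cox ring can be written down explicitly from the classes of the negative curves; this yields an affine embedding of a universal torsor, cut out by a single torsor equation in $\A^N$ for an appropriate $N$. Because the class group $\cl$ is generally nontrivial, no single integral model of the torsor suffices: instead I would introduce a family of twisted integral models $\cYtOk$ indexed by class tuples $\classtuple\in\cl^{6}$, and establish a bijection between $U(K)$ and the disjoint union, over $\classtuple$, of $\units$-orbits of $\Ok$-points on $\cYtOk$ satisfying the torsor equation together with support and coprimality conditions.

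Fix a class tuple $\classtuple$, and choose a fundamental domain $\FDtot$ for the scaling action of $\units$ on the torsor variables. The height bound $H(\xx)\le B$ then translates, via the real embeddings $\Ok\hookrightarrow\RR^d$, into a semi-algebraic condition in $\RR^{dN}$. M\"obius inversion on the coprimality conditions reduces the problem to counting lattice points in a parameterized family of such regions. Picking a distinguished torsor coordinate in which the torsor equation is linear, I would sum over it first: the equation expresses an ideal as a product of the other variables' ideals, so this sum yields a divisor function on ideals weighted by arithmetic conditions.

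The remaining task is to count lattice points in a family of semi-algebraic regions in $\RR^{d(N-1)}$, uniformly in the parameters. Here I would invoke an o-minimal lattice point estimate of the form $\#(\Lambda\cap R)=\vol(R)/\det\Lambda + O(\text{boundary contribution})$, with the error controlled by the successive minima of $\Lambda$ and the complexity of the defining formulas, and crucially not by the scale $B$ itself. Summing the main terms over $\classtuple$ and over the divisor function via standard Dedekind zeta and Hecke $L$-function asymptotics produces $c_{S,H}B(\log B)^5$; the error contributions combine to $O(B(\log B)^{5-1/d+\epsilon})$, with the $1/d$ reflecting the quality of the lattice-point error in $\RR^{dN}$ and the $\epsilon$ absorbing divisor-sum losses. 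A separate computation, matching the Cox-ring-derived leading constant with the product of local densities times the $\alpha$- and $\beta$-factors, verifies that $c_{S,H}$ agrees with Peyre's prediction.

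The main obstacle is the uniform lattice point count: over $\QQ$ one proceeds by elementary geometry of numbers region by region, but over a general number field the geometry of $\RR^{dN}$ is dictated by the $d$ real and complex embeddings and by the choice of unit fundamental domain $\FDtot$, and the number and complexity of pieces grows with $K$. The o-minimality framework is precisely what is designed to deliver the requisite uniformity, replacing ad hoc estimates on individual regions by a single statement for an entire definable family; delivering a clean error term of size $B(\log B)^{5-1/d+\epsilon}$ in this framework is the technical heart of the proof.
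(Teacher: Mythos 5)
Your overall architecture is the right one and matches the paper's: twisted integral models of the universal torsor indexed by $\cl^6$, a fundamental domain for the unit action, M\"obius inversion, an o-minimal lattice-point count, and a separate verification of Peyre's constant. However, the execution plan for the counting step --- which you correctly identify as the technical heart --- has two concrete problems. First, the regions are \emph{not} semi-algebraic: the restriction to a fundamental domain for $U_K$ is expressed through the logarithmic embedding of the units, i.e.\ through conditions of the form $\frac13(\log \tN_v)_v \in F(\infty)$, and this is precisely why the paper must work in Wilkie's structure $\langle\RR;<,+,\cdot,-,\exp\rangle$ rather than with Davenport's classical theorem for semi-algebraic sets. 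Describing the height condition as semi-algebraic conceals the exact obstruction that motivates the o-minimal machinery. Second, your proposed order of summation does not work. Summing first over the coordinate in which the torsor equation is linear does not produce a divisor function here: solving $a_1a_9+a_2a_8+a_3a_4^2a_5^3a_7=0$ for $a_9$ turns the integrality of $a_9$ into a \emph{congruence} condition on $a_8$ modulo an ideal depending on $a_1,\dots,a_7$, i.e.\ into a shifted sublattice, not a factorization. And counting lattice points in all remaining $N-1$ coordinates at once in $\RR^{d(N-1)}$ is not feasible: the region there is unbounded in many directions, and the sublattice produced by M\"obius inversion depends multiplicatively on all the variables, so no uniform volume-plus-boundary estimate applies.

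The paper's key deviation, which your proposal misses, is to count exactly \emph{three} coordinates $(a_6,a_7,a_8)$ simultaneously as points of a rank-$3d$ lattice in $\RR^{3d}$ lying in a definable region (with the torsor equation absorbed into the lattice as a congruence on $a_8$), for each fixed $(a_1,\dots,a_5)$, and only afterwards to sum over $a_1,\dots,a_5$ by elementary partial summation over ideals. This choice of three coordinates is dictated by the structure of the unit action and makes the fundamental domain compatible with the height condition. Even then, two further steps are indispensable before the Barroero--Widmer theorem gives a summable error: the regions have cusps where some archimedean conjugate of $a_6$ or $a_7$ is small, which must be cut off (at the cost of an admissible error of size $B(\log B)^{4+\epsilon}$), and the volumes of all proper coordinate projections must be bounded by quantities that remain summable over $(a_1,\dots,a_5)$ and the M\"obius parameters; the exponent $5-1/d$ then arises from a H\"older argument across the archimedean places, with one place saving a full logarithm. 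Without these ingredients the error term cannot be controlled, so as written the proposal has a genuine gap at the counting stage.
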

We describe the constant $c_{S,H}$ explicitly later in this
section. The special cases of Theorem \ref{maintheorem} where $K=\QQ$
or $K$ is imaginary quadratic were proved in \cite{MR2520770,
  arXiv:1304.3352}. A version of Manin's conjecture over arbitrary global function fields was proved for our surface $S$ in \cite{MR3124933}.

Manin's conjecture is known in some general
cases. For complete intersections of large dimension compared to their
degree, it follows from an application of the Hardy-Littlewood circle
method (cf. \cite{MR1340296, arXiv:1210.1792}). Moreover, it has been
proved for certain classes of Fano varieties with additional structure
coming from actions of algebraic groups, using Langlands' work on
Eisenstein series \cite{MR89m:11060} or harmonic analysis on adelic
points (for example, for toric varieties \cite{MR1620682} and
equivariant compactifications of additive groups \cite{MR1906155}).

Other known cases of Manin's conjecture concern specific varieties of
low dimension. Del Pezzo surfaces over $\QQ$ have received the most
attention: some milestones here are the first special cases of Manin's
conjecture for (singular or nonsingular) del Pezzo surfaces of degrees
$5$ \cite{MR1909606}, $4$ \cite{MR2373960}, $3$ \cite{MR2332351}, and
$2$ \cite{MR3100953} that are not covered by \cite{MR1620682} or
\cite{MR1906155}. The method behind these results and many further
proofs of Manin's conjecture for specific varieties over $\QQ$ is by
now classical. It is usually referred to as the \emph{universal torsor
  method}.

A major drawback of this method is that almost all of its successful
applications are restricted to varieties over $\QQ$. Recently,
Derenthal and the first-named author started a project with the aim to
generalize the universal torsor method to number fields beyond
$\QQ$. So far, they were able to adapt the basic framework to
imaginary quadratic fields \cite{arXiv:1302.6151}, and to apply it to
some singular del Pezzo surfaces of degrees $4$ \cite{arXiv:1304.3352}
and $3$ \cite{E6} over imaginary quadratic fields. To our best
knowledge, the only published proofs of Manin's conjecture for
varieties over arbitrary number fields that can be interpreted as
applications of the universal torsor method concern projective spaces
$\PP^n_K$ \cite{MR557080} and a specific toric variety
\cite{MR3107569}, which are also covered by \cite{MR1620682}.

Theorem \ref{maintheorem} is a first step to overcome this
restriction. It is based on the universal torsor approach and is the
first proof of Manin's conjecture over arbitrary number fields for a
variety that is not included in the general results mentioned above
(see \cite{MR2753646}). One should note that the surface $S$ is an equivariant compactification of a semidirect product $\GG_a \rtimes \GG_m$, so recent techniques of Tanimoto and Tschinkel \cite{MR2858922} using harmonic analysis could also apply. So far, this was worked out only over $\QQ$.

\subsection{The universal torsor method}
Universal torsors were introduced and studied by Colliot-Th\'el\`ene
and Sansuc \cite{MR0414556,MR89f:11082} to investigate arithmetic
properties such as the Hasse principle and weak
approximation. Salberger \cite{MR1679841} was the first to apply them
to Manin's conjecture (see also \cite{MR1679842}). After Salberger's
pioneering work, the universal torsor method became a prevalent tool
to prove special cases of Manin's conjecture over $\QQ$.

A typical application of the universal torsor method to a specific del
Pezzo surface $S$ consists essentially of two parts:
\begin{enumerate}
\item[(a)] Parameterizing the rational points on an open subset $U$ by
  integral points on a universal torsor over a minimal
  desingularization $\tS \to S$, subject to certain coprimality
  conditions, and lifting the height function to these points.
\item[(b)] Counting these integral points of bounded height,
  essentially replacing sums by integrals and estimating the
  difference.
\end{enumerate}
A framework covering these parts in some generality was developed over
$\QQ$ in \cite{MR2520770} and generalized in \cite{arXiv:1302.6151} to
imaginary quadratic fields.

\subsection{Parameterization}
The minimal desingularization $\tS$ of $S$ is a smooth projective
variety over $K$. For such a variety $\tS$ and a torsor $\tT$ over
$\tS$ under an algebraic $K$-group $\Gt$, a classical result of
Colliot-Th\'el\`ene and Sansuc \cite{MR89f:11082} shows that there is
a partition
\begin{equation*}
  \tS(K) = \bigsqcup_{[\sigma] \in H^1(K, \Gt)}{}_\sigma\pi({}_\sigma \tT(K)),
\end{equation*}
for twists ${}_\sigma\pi : {}_\sigma \tT \to \tS$ of $\tT$. The finest
partitions of this kind are achieved if $\tT$ is a universal
torsor. For quantitative problems such as Manin's conjecture, it is
desirable to obtain a parameterization of $\tS(K)$ by points with
integral coordinates, which allows us to apply lattice-point-counting
techniques. Such a parameterization was obtained by Salberger
\cite{MR1679841} for proper, smooth, split toric varieties $X$ over
$\QQ$ with globally generated anticanonical sheaf. In this case, the
partition induced by a model $\pi : \TZ \to \mathcal{X}$ of a
universal torsor $\tT\to X$ is trivial:
\begin{equation}\label{eq:parameterization_integers}
  X(\QQ) = \pi(\TZ(\ZZ)).
\end{equation}
Here, the fibers of $\pi$ are just the orbits under the action of
$\G_m^r(\ZZ) \cong (\ZZ^\times)^r$, where $r$ is the rank of the
Picard group of $X$. Hence, we obtain a $(2^r:1)$-parameterization
of $X(\QQ)$ by integral points, which reduces Manin's conjecture to
a lattice-point-counting problem. In almost all applications of the
universal torsor method to special cases of Manin's conjecture over
$\QQ$, a parameterization of the form
\eqref{eq:parameterization_integers} is constructed by elementary
methods that essentially consist of removing greatest common divisors
between existing coordinates by introducing new ones. With some
exceptions (e.g.~\cite{MR1909606,arXiv:1205.0190}), the connection to
universal torsors is usually not made precise.

The first case where it was necessary to consider a partition by
integral points on more than one torsor was encountered by de la
Bret\`eche, Browning and Peyre in \cite{MR2874644}. In
\cite{arXiv:1302.6151}, Derenthal and the first-named author observed
that similar disjoint unions (over all $r$-tuples of ideal classes)
always appear when considering split del Pezzo surfaces over number
fields $K$ of class number $h_K > 1$, even if a trivial partition with
just one universal torsor exists over $\QQ$. They interpreted this
phenomenon as points on one universal torsor, but with coordinates in
different ideal classes.

We provide a more conceptual explanation in terms of $\Ok$-points on twisted torsors over the ring of integers $\Ok$. This explanation also gives entirely explicit descriptions, which can be used to apply lattice-point-counting techniques. For split toric varieties, a similar description was found by Robbiani \cite{MR1650339}, based on ideas of Salberger. In the function field case, the parameterization was treated in high generality by Bourqui \cite{MR2573192}.

The basic idea is as follows. Let $\TZ$ be an $\Ok$-model of a
universal torsor $\tT$ over $\tS$, such that $\TZ$ is a torsor over a
proper model $\tSZ$ of $\tS$ under a split torus $\G_{m,\tSZ}^r$. We
apply the general theory of torsors and a properness argument to
obtain a partition
\begin{equation*}
  \tS(K) =  \bigsqcup_{[\sigma] \in H_{\text{\it{\'et}}}^1(\spec(\Ok), \G_{m,\Ok}^r)}{}_\sigma\pi({}_\sigma \TZ(\Ok)).
\end{equation*}
The identification $H_{\text{\it{\'et}}}^1(\spec(\Ok), \G_{m,\Ok}^r)
\cong \cl^r$ explains the disjoint union over $r$-tuples of ideal
classes appearing in the parameterization in \cite{arXiv:1302.6151}.
Under some additional technical conditions, we give an explicit
construction of the twists ${}_\sigma\pi:{}_\sigma \TZ \to \tSZ$ in
terms of fractional ideals of $\Ok$ representing the classes
corresponding to $\sigma$. This is worked out in a general context in
Section \ref{sec:param_twisted_torsors} and summarized in Theorem
\ref{thm:param}.

Explicit descriptions of universal torsors over minimal
desingularizations $\tS$ of singular del Pezzo surfaces $S$ over
$\Qbar$ can be obtained from the descriptions of their Cox rings in
\cite{math.AG/0604194}. In Section \ref{sec:universal_torsors}, we
show how to construct from this data a model $\tSZ$ of $\tS$ and a
model $\TZ$ of the universal torsor over $\Qbar$. In Theorem
\ref{universaltorsor}, we give conditions under which $\TZ$ is a
(universal) torsor over $\tSZ$ as above.

In Section \ref{sec:passage}, we present in detail an application of
the results from Sections \ref{sec:param_twisted_torsors} and \ref{sec:universal_torsors} to the quartic del Pezzo surface given by
(\ref{eq:def_S}) and obtain a parameterization of $U(K)$, where $U$ is
the open subset in Theorem \ref{maintheorem}. As summarized in Remark \ref{rem:other_cases}, analogous arguments apply to all other singular del Pezzo surfaces whose universal torsors are hypersurfaces, classified in \cite{math.AG/0604194}, allowing us to obtain in each case a good model of the universal torsor and a parameterization.

\subsection{Counting integral points}
Using the partition described above, we reduce the problem of counting
rational points on the open subset $U$ to counting $\Ok$-points in the
preimages of $U(K)$ under ${}_\sigma\pi$, modulo the action of
$\G_{m,\tSZ}^r(\Ok)$. This action is equivalent to an action of
$(\units)^r$, which is harmless when the unit group $\units$ is finite
(i.e., if $K=\QQ$ or $K$ is imaginary quadratic). If $\units$ is
infinite, however, one needs to count integral points in a fundamental
domain for this action. The difficulties arising in the treatment of
such counting problems are the main reason why the universal torsor
method was so far restricted to $\QQ$ and imaginary quadratic fields.

To deal with these problems in the case of our specific $S$ given by
\eqref{eq:def_S}, we introduce a major deviation from the usual
strategy in part (b). Instead of summing over the coordinates on the
twisted torsors one-by-one, we start by considering three coordinates
at the same time. The motivation for this departure comes from the
specific structure of the action of $\G_{m,\tSZ}^r(\Ok)$. This
structure is reflected in the shape of our fundamental domain, which
we construct in Section \ref{sec:fundamental_domain}.

The usual embedding $K\to K\otimes_\QQ \RR \cong \RR^d$ transforms
this first summation to the problem of counting lattice points in
certain  subsets of $\RR^{3d}$, depending on the remaining
coordinates, with an error term that can be estimated uniformly with
respect to the remaining coordinates, see Section
\ref{sec:lattice_point_problem}. These subsets are a priori unbounded,
and we need to remove cusps coming from small conjugates of certain
coordinates in Section \ref{sec:small_conjugates}.

Even after the removal of the cusps and the exploitation of certain
symmetries in Section \ref{sec:linear_transformations}, our sets are
of the ``long and thin'' kind, which makes them resistant to counting
arguments that depend on Lipschitz-parameterizations of the boundary,
such as \cite[Theorem VI.2.2]{MR1282723} or \cite[Lemma
2]{MR2247898}. In principle, Davenport's classical counting theorem
\cite{MR0043821} would apply, but its error term depends on certain
regularity properties which are hard to control uniformly in
general. In typical applications of Davenport's theorem, the sets
under consideration are semialgebraic, a condition that is not
satisfied in our case due to the restriction to a fundamental
domain.

A natural generalization of semialgebraic sets is given by the
model-theoretic framework of o-minimal structures. The celebrated
upper bound by Pila and Wilkie \cite{MR2228464} for the number of
rational points of bounded height in the transcendental part of sets
definable in an o-minimal structure has led to many applications in
Diophantine geometry. We apply o-minimality in a new way.

In Section \ref{sec:definability}, we show that the sets whose lattice
points are to be counted form a definable family in Wilkie's
\cite{MR1398816} o-minimal structure $\langle \RR;
<,+,\cdot,-,\exp\rangle$. This allows us to apply a recent adaptation
of Davenport's counting principle to definable sets by Barroero and
Widmer \cite{arXiv:1210.5943}.

The error term in Barroero and Widmer's theorem is given, as in
Davenport's result, in terms of the volumes of the orthogonal
projections of our set to all proper coordinate subspaces of
$\RR^{3d}$. In Section \ref{sec:volumes_of_projections}, we establish
summable upper bounds for these volumes, which allow us to perform
the first summation over three coordinates in Section
\ref{sec:first_summation}. The proof of Theorem \ref{maintheorem}
is completed in Section \ref{sec:completion}.

\subsection{Description of the leading constant}\label{subsec:constant}
Let us briefly describe the leading constant $c_{S,H}$ in Theorem
\ref{maintheorem}. Let $r_1$ (resp. $r_2$) be the number of real
(resp. complex) places of $K$, and let $\Delta_K$, $R_K$, $h_K$,
$\mu_K$ denote the discriminant, regulator, class number, and group of
roots of unity of $K$.  For any non-archimedean place $v$ of $K$,
corresponding to a prime ideal $\ppp$ of $\Ok$ of absolute norm
$\N\ppp$, we define
\begin{equation*}
  \omega_v(\tS) := \left(1-\frac{1}{\N\ppp}\right)^6\left(1+\frac{6}{\N\ppp}+\frac{1}{\N\ppp^2}\right). 
\end{equation*}
For any archimedean place $v$ of $K$ and $(x_0, x_1, x_2) \in K_v^3$,
we write
\begin{equation}\label{eq:def_N_v}
  N_v(x_0,x_1,x_2) := \max\left\{
    \begin{aligned}
      &\absv{x_0x_1x_2}, \absv{x_1^3}, \absv{x_1^2x_2},\\
      &\absv{x_1x_2(x_0+x_2)}, \absv{x_0x_2(x_0+x_2)}
    \end{aligned}\right\},
\end{equation}
and
\begin{equation*}
  \omega_v(\tS) :=
  \begin{cases}
    \frac{3}{2}\cdot \int_{N_v(x_0,x_1,x_2)\leq 1}\dd x_0\dd x_1 \dd x_2 &\text{ if $v$ is real,}\\
    \frac{12}{\pi}\cdot \int_{N_v(x_0,x_1,x_2) \leq 1}\dd x_0 \dd x_1
    \dd x_2&\text{ if $v$ is complex,}
  \end{cases}
\end{equation*}
where the integrals are taken with respect to the usual Lebesgue
measure on $K_v \in \{\RR, \CC\cong\RR^2\}$. Then the leading constant in
Theorem \ref{maintheorem} has the form
\begin{equation*}
  c_{S,H} =\frac{1}{8640}\cdot\left(\frac{2^{r_1}(2\pi)^{r_2}R_K h_K}{|\mu_K|}\right)^6\cdot\frac{1}{|\Delta_K|^4}\cdot\prod_{v \in \places}\omega_v(\tS).
\end{equation*}
In Section \ref{sec:constant}, we show that this constant is the one
from Peyre's empirical formula \cite[Formule empirique
5.1]{MR2019019}.

\subsection{More notation}
Unless explicitly stated otherwise, the symbol $K$ denotes a number
field.  Let $U_K$ be the subgroup of $\units$ generated by a fixed
system of fundamental units. Then $U_K$ is free abelian of rank
$q:=r_1+r_2-1$, and $\units \cong \mu_K \times U_K$. Let $\idealsk$
be the monoid of nonzero ideals of $\integersk$, let
$\principalidealsk$ be the group of nonzero principal fractional
ideals, and $\cl$ the class group of $\Ok$. The ideal class of a
nonzero fractional ideal $\aaa$ is denoted by $[\aaa]$.

For any $v \in \places$, we denote by $\sigma_v$ the embedding $K \to
K_v$, as well as its component-wise extensions $\sigma_v : K^n \to
K_v^n$ for $n \in \NN$. Moreover, $\sigma : K \to \prod_{v \in
  \archplaces}K_v$ denotes the embedding $\coord \mapsto
(\sigma_v(\coord))_{v \in \archplaces}$ or its coordinate-wise
continuation $K^n \to \prod_{v\in\archplaces}K_{v}^n$, $n \in
\NN$. When it is convenient, we will also write $\coord^{(v)}$ instead
of $\sigma_v(\coord)$ and $\absv{\coord}$ instead of
$\absv{\sigma_v(\coord)}$, for $\coord\in K$.

For each place $v \in \places$, lying over a place $w$ of $\QQ$, we
write $d_v:=[K_v : \QQ_w]$ for the local degree at $v$. The set of
archimedean (resp. non-archimedean) places of $K$ will be denoted by
$\archplaces$ (resp. $\nonarchplaces$). The completion $K_v$ at
$v\in\archplaces$ is identified with $\RR$ (resp. $\CC$) if $v$ is
real (resp. complex).

For a prime ideal $\p$ of a ring $A$, we write $k(\p)$ for the residue
field. Given an inclusion of rings $A\subseteq A'$ and an $A$-scheme $X$, we denote by $X_{A'}$ the base change
$X\times_{\spec(A)}\spec(A')$. Moreover, $\G_{m,X}$ (resp. $\G_{m,A}$)
denotes the multiplicative group scheme over $X$ (resp. over
$\spec(A)$).  We denote by $\pic(X)$ the Picard group of a scheme $X$
and by $N^1(X)$ its group of numerically equivalent divisor
classes. Given an ideal $I$ of a ring $R$, we denote by $V(I)$ both
the closed subset of $\spec(R)$ defined by $I$ and the closed
subscheme $\spec(R/I)$.

All implied constants in Landau's $O$-notation and Vinogradov's
$\ll$-notation may depend on $K$. Additional dependencies are
indicated by a subscript.

\section{Parameterization by integral points on twisted torsors}\label{sec:param_twisted_torsors}

Torsors over varieties under algebraic groups are known to give
partitions of the set of rational points of the variety in terms of
images of rational points on twisted torsors (see
\cite[(2.7.2)]{MR89f:11082} and \cite[p.~ 22]{MR1845760}). This
phenomenon holds also for torsors over more general schemes as the
following Proposition shows. 
For universal torsors of smooth projective split toric schemes over the ring of integers of a number field it was observed in \cite[p.~12]{MR1650339}. 
For the definition and basic properties
of torsors we refer to \cite[\S III.4]{MR559531} and
\cite[\S2.2]{MR1845760}. For the notion of twisted torsors see, for
example, \cite[p.~20]{MR1845760}.

\begin{prop}\label{generaldisjunion}
  Let $\St$ be a scheme, $\Gt$ an abelian group scheme over $\St$,
  $\Xt$ a $\St$-scheme, and $\pi:\Yt\to\Xt$ a torsor under
  $\GtX:=\Gt\times_{\St}\Xt$. Assume that the twisted torsors
  ${}_{\Zt}\Yt$ exist for all $\St$-torsors $\Zt$ under $\Gt$ (this is
  the case, for example, if $\Gt$ is affine over $\St$). Then
\begin{equation*}
\Xt(\St)=\bigsqcup_{[\Zt]\in H_{fppf}^1(\St,\Gt)}{}_\Zt\pi(({}_{\Zt}\Yt)(\St)),
\end{equation*}
where $\bigsqcup_{[\Zt]\in H_{fppf}^1(\St,\Gt)}$ is a disjoint union running through a system of representatives for the classes in $H_{fppf}^1(\St,\Gt)$ and ${}_\Zt\pi:{}_{\Zt}\Yt\to\Xt$ is a twist of $\Yt$ by $-[\Zt]\in H_{fppf}^1(\St,\Gt)$.
\end{prop}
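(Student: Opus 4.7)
The plan is to exhibit the decomposition as the fiber partition of a well-defined classifying map $\alpha\colon \Xt(\St) \to H_{fppf}^1(\St,\Gt)$ that sends an $\St$-point to the cohomology class of the corresponding pulled-back torsor; each fiber of $\alpha$ will then coincide with one of the sets ${}_{\Zt}\pi(({}_{\Zt}\Yt)(\St))$, so that summing over a system of representatives of $H_{fppf}^1(\St,\Gt)$ yields the assertion.

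For the construction of $\alpha$, I would take $x\colon \St\to\Xt$ and pull $\pi$ back along $x$ to obtain $x^{\ast}\Yt := \Yt\times_{\Xt, x}\St$. Since $\pi$ is a torsor under $\GtX = \Gt\times_\St\Xt$, the pullback $x^{\ast}\Yt$ is a torsor over $\St$ under $x^{\ast}\GtX = \Gt$, and I set $\alpha(x) := [x^{\ast}\Yt] \in H_{fppf}^1(\St,\Gt)$. The key compatibility to verify is that twisting commutes with pullback along $x$, i.e.\ there is a canonical isomorphism $x^{\ast}({}_{\Zt}\Yt) \cong {}_{\Zt}(x^{\ast}\Yt)$ of $\St$-torsors under $\Gt$. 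This follows from the realization of the twist as a contracted product $(\Yt\times_\Xt\Zt_{\Xt})/\GtX$, whose formation commutes with base change on $\Xt$, together with $x^{\ast}(\Zt_\Xt) = \Zt$. Under the sign convention fixed in the statement (twist by $-[\Zt]$), this gives $[x^{\ast}({}_{\Zt}\Yt)] = \alpha(x) - [\Zt]$ in $H_{fppf}^1(\St,\Gt)$.

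Next I would invoke the standard fact that an $\St$-torsor under $\Gt$ has an $\St$-point if and only if its class in $H_{fppf}^1(\St,\Gt)$ is trivial. Combined with the previous step, this yields $x\in {}_{\Zt}\pi(({}_{\Zt}\Yt)(\St))$ if and only if $x^{\ast}({}_{\Zt}\Yt)$ admits an $\St$-point, if and only if $\alpha(x) = [\Zt]$. Hence ${}_{\Zt}\pi(({}_{\Zt}\Yt)(\St)) = \alpha^{-1}([\Zt])$. Since $\alpha$ is a function, its fibers partition $\Xt(\St)$, and letting $\Zt$ run through a system of representatives for the classes of $H_{fppf}^1(\St,\Gt)$ assembles these fibers into the claimed disjoint union.

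The main obstacle is the bookkeeping in the second paragraph: checking that the twisted torsor ${}_{\Zt}\Yt$ pulls back to ${}_{\Zt}(x^{\ast}\Yt)$, and that the cohomological sign convention is compatible with the description of ${}_{\Zt}\pi$ as a twist by $-[\Zt]$. Both facts are standard in the references cited immediately before the statement, and the running hypothesis that all twists ${}_{\Zt}\Yt$ exist (automatic, for example, when $\Gt$ is affine over $\St$) is precisely what allows the contracted product to be formed as a scheme rather than only as an fppf sheaf.
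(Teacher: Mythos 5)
Your proposal is correct and is essentially the argument the paper relies on: the paper's proof simply cites the classical partition argument from Skorobogatov's book (loc.\ cit.\ p.~22) and notes it works with $\spec k$ replaced by $\St$, and that argument is precisely your classifying map $x\mapsto[x^{*}\Yt]$, the compatibility of twisting with pullback, and the fact that a torsor over $\St$ is trivial iff it has an $\St$-point. No discrepancies to report.
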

\begin{proof}
The proof given in  \cite[p.~ 22]{MR1845760} works also with $\spec k$ replaced by our base scheme $\St$.
\end{proof}

Our twisted torsors will be given as open subschemes of closed
subschemes of twisted affine spaces. Hence, we start with a definition
of those.

\begin{defin}\label{def:twist_affine}
  Let $A$ be a Dedekind domain with fraction field $K$, and assume
  that we are given a $\ZZ^r$-grading on $K[x_1, \ldots, x_N]$ defined
  by $\deg x_i = m^{(i)} \in \ZZ^r$. For any $r$-tuple $\ua = (\aid_1,
  \ldots, \aid_r)$ of nonzero fractional ideals of $A$, we define the
  \emph{$\ua$-twisted affine space over $A$} as the spectrum $\aAN :=
  \spec({}_{\ua}R)$ of the $\ZZ^r$-graded ring
  \begin{equation*}
    {}_{\ua}R
 := \bigoplus_{m \in \ZZ^r}\ua^{-m}R_m \subseteq K[x_1, \ldots, x_N], 
  \end{equation*}
  where $\ua^{-m} := \aid_1^{-m_1}\cdots\aid_r^{-m_r}$ if $m=(m_1,
  \ldots, m_r)$, and $R_m$ is the degree-$m$-part of $A[x_1,
  \ldots, x_N]$.
\end{defin}

The twisted affine spaces defined above depend, of course, not only on
$N$ and $\ua$, but also on the chosen $\ZZ^r$-grading. Here are some
simple properties.

\begin{prop}\label{prop:twist_aff_properties}
  The $\ua$-twisted affine space over $A$ defined above has the
  following properties.
  \begin{enumerate}[label=(\roman{*}), ref=\emph{(\roman{*})}]
  \item There is a canonical isomorphism
    $\aAN\times_{\spec(A)}\spec(K)\cong\A^N_K$.\label{twist_aff_extension}
  \item \label{twist_aff_covering} Let $U = \spec(A_U)$ be an affine open subset of $\spec(A)$
    such that the fractional ideals $\aid_1A_U, \ldots, \aid_rA_U$ of $A_U$ are
    principal. Then
    \begin{equation*}
      \aAN\times_{\spec(A)}U\cong\A^N_{A}\times_{\spec(A)}U.
  \end{equation*}
  \item \label{twist_aff_ratpoint} Via base extension and the canonical isomorphism from \emph{\ref{twist_aff_extension}},
    we have
    \begin{equation*}
      \aAN(A)=\{(a_1, \ldots, a_N) \in K^N \mid a_i \in
      \ua^{m^{(i)}} \text{ for all } 1\leq i\leq N\}.
    \end{equation*}
  \item $\aAN$ depends, up to isomorphism, only on the ideal classes
    of $\aid_1,\dots,\aid_r$.\label{twist_aff_unique}
  \end{enumerate}
\end{prop}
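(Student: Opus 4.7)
The plan is to verify the four parts in turn, each reducing to a direct manipulation of the graded ring ${}_\ua R = \bigoplus_{m \in \ZZ^r} \ua^{-m} R_m$ through which $\aAN$ is defined. None of the steps seems to hide a genuine obstacle; the only piece of bookkeeping to keep straight, and the reason the construction is well-defined at all, is the multiplicativity $\ua^{-m} \cdot \ua^{-m'} = \ua^{-m-m'}$ of the fractional-ideal weights, which is precisely what makes the $\ZZ^r$-grading compatible with the twist.

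For \ref{twist_aff_extension} I would tensor ${}_\ua R$ with $K$ over $A$: since each nonzero fractional ideal $\ua^{-m}$ localizes to $K$, one obtains $K \otimes_A {}_\ua R \cong \bigoplus_m K \cdot R_m = K[x_1, \ldots, x_N]$ as graded $K$-algebras. For \ref{twist_aff_covering}, I would pick generators $\pi_j$ of each principalized $\aid_j A_U$ and set $\pi^m := \pi_1^{m_1} \cdots \pi_r^{m_r}$; then the graded $A_U$-algebra map $A_U[x_1, \ldots, x_N] \to {}_\ua R \otimes_A A_U$ sending $x_i \mapsto \pi^{m^{(i)}} x_i$ is an isomorphism degree-by-degree, since in multi-degree $m$ the target is the free $A_U$-module $\pi^{-m} R_m \otimes A_U$ on the standard monomial basis.

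For \ref{twist_aff_ratpoint}, I would use \ref{twist_aff_extension} to embed $\aAN(A) \subseteq \aAN(K) = K^N$ and then check when a tuple $(a_1, \ldots, a_N) \in K^N$ gives rise to an $A$-algebra map ${}_\ua R \to A$. Testing on $x_i \in R_{m^{(i)}}$ yields the necessary condition $a_i \cdot \ua^{-m^{(i)}} \subseteq A$, i.e.~$a_i \in \ua^{m^{(i)}}$. Sufficiency, which is the subtlest step, follows from the observation that $R_m$ is spanned over $A$ by monomials $x_1^{e_1}\cdots x_N^{e_N}$ with $\sum_i e_i m^{(i)} = m$, whose evaluations on $(a_i)$ lie in $\prod_i \ua^{e_i m^{(i)}} = \ua^m$, so the prefactor $\ua^{-m}$ brings the whole element of $\ua^{-m} R_m$ into $A$. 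Finally, for \ref{twist_aff_unique}, writing $\aid'_j = \alpha_j \aid_j$ with $\alpha_j \in K^\times$ and $\alpha^m := \prod_j \alpha_j^{m_j}$, the grading-preserving automorphism of $K[x_1, \ldots, x_N]$ sending $x_i \mapsto \alpha^{-m^{(i)}} x_i$ acts on $R_m$ by multiplication by $\alpha^{-m}$ and therefore restricts to an isomorphism ${}_\ua R \xrightarrow{\sim} {}_{\ua'} R$, which descends to the desired isomorphism of schemes over $\spec(A)$.
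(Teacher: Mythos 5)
Your proposal follows the same route as the paper's proof in all four parts: tensoring with $K$ for \ref{twist_aff_extension}, trivializing over $U$ via chosen generators for \ref{twist_aff_covering}, extending $A$-homomorphisms to $K$-homomorphisms and testing integrality on the $x_i$ for \ref{twist_aff_ratpoint}, and rescaling variables for \ref{twist_aff_unique}; your extra care in \ref{twist_aff_ratpoint} about why $a_i \in \ua^{m^{(i)}}$ suffices (monomial spanning of $R_m$, so that $\ua^{-m}R_m$ evaluates into $\prod_i\ua^{e_im^{(i)}}\cdot\ua^{-m}=A$) is a welcome detail the paper leaves implicit. The only slip is a sign in \ref{twist_aff_covering}: the trivializing map must send $x_i \mapsto \pi^{-m^{(i)}} x_i$ rather than $\pi^{m^{(i)}} x_i$, so that a degree-$m$ monomial lands in $\pi^{-m} R_m \otimes_A A_U$, exactly as your own description of the target module requires.
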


\begin{proof}
  The canonical homomorphism ${}_{\ua}R \otimes_{A} K \to K[x_1,  \ldots,  x_N]$ provided by the universal property of the tensor product is an isomorphism, which implies \ref{twist_aff_extension}. For $j\in\{1,
  \ldots, r\}$, let $\pigenerator_j$
   be a generator of $\aid_jA_U$ and, with $m \in \ZZ^r$,
  write $\underline{\pigenerator}^m := \pigenerator_1^{m_1}\cdots \pigenerator_r^{m_r}$. Then
  ${}_{\ua}R\otimes_{A} A_U \cong A_U[\underline\pigenerator^{-m^{(1)}}x_1,
  \ldots, \underline\pigenerator^{-m^{(N)}}x_N] \cong A_U[x_1, \ldots, x_N]$,
  which implies \ref{twist_aff_covering}. For
  \ref{twist_aff_ratpoint}, we observe that every $A$-homomorphism
  $\varphi : {}_{\ua}R \to A$ extends uniquely to a $K$-homomorphism
  $\varphi : K[x_1, \ldots, x_N] \to K$. The $K$-homomorphisms coming
  from such $A$-homomorphisms are exactly those with
  $\varphi({}_{\ua}R)\subseteq A$, that is, $\varphi(x_i) \in
  \ua^{m^{(i)}}$ for all $i\in\{1,\dots,N\}$. To prove \ref{twist_aff_unique}, let $\underline b=(b_1, \ldots,
  b_r) \in (K^\times)^r$ and $\aid_j' := b_j\aid_j$ for $j\in\{1,\dots,r\}$. Then the
  $K$-automorphism of $K[x_1, \ldots, x_N]$ mapping $x_i \mapsto
  \underline b^{-m^{(i)}}x_i$ induces an $A$-isomorphism between ${}_{\ua}R$ and
  ${}_{\ua'}R$.
\end{proof}

Next, we define twists of open subschemes of closed subschemes of
$\A^N_A$ as certain subschemes of twisted affine spaces.

\begin{defin}\label{def:twist_subscheme}
  With the hypotheses of Definition \ref{def:twist_affine}, let $I_1,
  I_2$ be $\ZZ^r$-homogeneous ideals of $A[x_1, \ldots, x_N]$, and let
  $\YtOk$ be the subscheme  of $\A^N_A$
  defined by $\YtOk := V(I_1)\smallsetminus V(I_2)$. With $I_{j,m}$
  denoting the degree-$m$-part of $I_j$, we define the ideals
  \begin{equation*}
    {}_{\ua}I_j := \bigoplus_{m \in \ZZ^r}\ua^{-m}I_{j,m} \subseteq {}_{\ua}R.
  \end{equation*}
  The \emph{twist of $\YtOk$ by $\ua$} is the subscheme of $\aAN$ defined
  by
  \begin{equation*}
    \aYtOk := V({}_{\ua}I_1)\smallsetminus V({}_{\ua}I_2).
  \end{equation*}
\end{defin}

\begin{prop}\label{prop:twist_subscheme_properties}
  The twist of $\YtOk$ by $\ua$ defined above has the following
  properties.
  \begin{enumerate}[label=(\roman{*}), ref=\emph{(\roman{*})}]
  \item The canonical isomorphism from Proposition
    \ref{prop:twist_aff_properties}, \emph{\ref{twist_aff_extension}},
    induces an isomorphism
    $\aYtOk\times_{\spec(A)}\spec(K)\cong\Ytk$.\label{twist_subscheme_extension}
  \item \label{twist_subscheme_covering} Let $U = \spec(A_U)$ be an affine open subset of $\spec(A)$
    such that the fractional ideals $\aid_1A_U, \ldots, \aid_rA_U$ of
    $A_U$ are principal. Then
    \begin{equation*}
      \aYtOk\times_{\spec(A)}U\cong\YtOk\times_{\spec(A)}U.
    \end{equation*}
  \item \label{twist_subscheme_ratpoint} Via base extension and the canonical isomorphism from
    \emph{\ref{twist_subscheme_extension}}, the set of $A$-points
    $\aYtOk(A)$ is the subset of all $\coordtuple = (\coord_1, \ldots,
    \coord_N) \in K^N$ with $a_i \in \ua^{m^{(i)}}$ for all $i \in
    \{1, \ldots, N\}$, such that
    \begin{equation}\label{eq:twist_subscheme_cop}
      \sum_{m\in\ZZ^r}\sum_{f\in I_{2,m}}f(\coordtuple)\ua^{-m} = A 
    \end{equation}
    and
    \begin{equation}\label{eq:twist_subscheme_torsor}
      g(\coordtuple)=0 \text{ for all }g \in I_1.
    \end{equation}
  \item $\aYtOk$ depends, up to isomorphism, only on the ideal classes
    of $\aid_1,\dots,\aid_r$.\label{twist_subscheme_unique}
  \end{enumerate}
\end{prop}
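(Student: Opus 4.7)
My plan is to prove each of the four parts (i)--(iv) by mirroring the corresponding parts of Proposition \ref{prop:twist_aff_properties}, using the fact that the ideals $_\ua I_j$ are built compatibly from the graded pieces $I_{j,m}$ of $I_j$.

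For \ref{twist_subscheme_extension}, I would observe that $\ua^{-m}\otimes_A K = K$ for every $m \in \ZZ^r$, so tensoring with $K$ gives
\begin{equation*}
{}_{\ua}I_j \otimes_A K = \bigoplus_{m\in\ZZ^r} I_{j,m}\otimes_A K,
\end{equation*}
which is identified with the extension of $I_j$ to $K[x_1,\dots,x_N]$ under the canonical isomorphism of Proposition \ref{prop:twist_aff_properties}\ref{twist_aff_extension}. Since $V(-)\smallsetminus V(-)$ is compatible with flat base change, this delivers the required isomorphism. For \ref{twist_subscheme_covering}, I would revisit the isomorphism ${}_{\ua}R\otimes_A A_U \cong A_U[x_1,\dots,x_N]$ from the proof of Proposition \ref{prop:twist_aff_properties}\ref{twist_aff_covering}: it is realized by the $K$-linear $\ZZ^r$-graded automorphism $x_i\mapsto \underline{\pigenerator}^{-m^{(i)}}x_i$ of $K[x_1,\dots,x_N]$, and because it is graded, it carries the degree-$m$ piece $\underline{\pigenerator}^{-m}I_{j,m}\cdot A_U$ of ${}_{\ua}I_j\otimes_A A_U$ onto $I_{j,m}\cdot A_U$. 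Passing to spectra and restricting to open subschemes then yields the claim. Part \ref{twist_subscheme_unique} uses the same graded automorphism as in \ref{twist_aff_unique} of Proposition \ref{prop:twist_aff_properties}, which sends ${}_{\ua}R\to {}_{\ua'}R$ and ${}_{\ua}I_j\to{}_{\ua'}I_j$ simultaneously.

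The step that requires the most care is \ref{twist_subscheme_ratpoint}, since here we must translate a scheme-theoretic condition into the explicit ideal-theoretic conditions \eqref{eq:twist_subscheme_cop} and \eqref{eq:twist_subscheme_torsor}. An $A$-point of $\aYtOk$ corresponds to an $A$-algebra homomorphism $\varphi:{}_{\ua}R\to A$ which (a) vanishes on ${}_{\ua}I_1$ and (b) satisfies $\varphi({}_{\ua}I_2)\cdot A = A$. Using Proposition \ref{prop:twist_aff_properties}\ref{twist_aff_ratpoint}, $\varphi$ extends uniquely to evaluation at some $\coordtuple=(a_1,\dots,a_N)\in K^N$ with $a_i\in\ua^{m^{(i)}}$. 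For (a), since $\ua^{-m}$ is a nonzero fractional ideal for each $m$, the condition $\varphi(\ua^{-m}I_{j,m})=0$ for all $m$ is equivalent to $g(\coordtuple)=0$ for every $g\in I_1$, giving \eqref{eq:twist_subscheme_torsor}. For (b), the image $\varphi({}_{\ua}I_2)$ is the $A$-module $\sum_{m}\sum_{f\in I_{2,m}}f(\coordtuple)\ua^{-m}\subseteq A$, whose equality with $A$ is precisely \eqref{eq:twist_subscheme_cop}.

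The main potential obstacle is the set-theoretic unpacking in (b), namely showing that ``not lying in $V({}_{\ua}I_2)$'' for an $A$-point is equivalent to the generated ideal being the unit ideal; this follows because $\spec(A)$ is connected and the complement of $V({}_{\ua}I_2)$ is covered by the basic opens $D(f)$ for $f$ running through ${}_{\ua}I_2$, but no maximal ideal of $A$ may contain all the values $\varphi(f)$. Once this is pinned down, each of the four parts follows cleanly from the analogous statement for twisted affine spaces.
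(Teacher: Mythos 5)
Your proposal is correct and follows essentially the same route as the paper's proof: part (i) via the observation that the twisted ideals extend to the untwisted ones over $K$, part (ii) via the graded isomorphism $x_i\mapsto\underline{\pigenerator}^{-m^{(i)}}x_i$ carrying ${}_{\ua}I_j\otimes_A A_U$ onto $I_j\otimes_A A_U$, part (iii) by splitting the point condition into vanishing on ${}_{\ua}I_1$ and the unit-ideal condition $\varphi({}_{\ua}I_2)A=A$ (the paper phrases the latter as ${}_{\ua}I_2\not\subseteq\varphi^{-1}(\p)$ for all primes $\p$, which is what your $D(f)$ remark amounts to), and part (iv) by the same rescaling automorphism as in Proposition \ref{prop:twist_aff_properties}\ref{twist_aff_unique}. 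The only difference is stylistic: where you invoke compatibility of $V(-)\smallsetminus V(-)$ with base change abstractly, the paper verifies (i) and (ii) on an explicit affine cover by the localizations at homogeneous generators of ${}_{\ua}I_2$.
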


\begin{proof}
  Since the inclusion $A\to K$ is flat and ${}_{\ua}I_1\otimes_AK\cong
  I_1\otimes_AK$ under the canonical isomorphism ${}_{\ua}
  R\otimes_AK\cong K[x_1,\dots,x_N]$, we see that
  $V({}_{\ua}I_1)\times_{\spec(A)}\spec(K)\cong
  V(I_1)\times_{\spec(A)}\spec(K)$.  Let $I_2$ be generated by
  homogeneous polynomials $f_1, \ldots, f_\mt \in A[x_1, \ldots, x_N]$, and for
  every $i \in \{1, \ldots, \mt\}$, let $b_{i,1}, \ldots, b_{i,t_i}$
  be generators of the fractional ideal $\ua^{-\deg f_i}$. Then
  ${}_{\ua}I_2$ is generated in ${}_{\ua}R$ by the elements
  $b_{i,j}f_i$, and $\aYtOk$ is covered by affine open subsets
  $\spec(({}_{\ua}R/{}_{\ua}I_1)_{b_{i,j}f_i})$. Moreover,
  \begin{equation*}
  ({}_{\ua}R/{}_{\ua}I_1)_{b_{i,j}f_i}\otimes_AK\cong (A[x_1,\dots,x_N]/I_1)_{f_i}\otimes_AK
  \end{equation*}
  for every $i\in\{1,\dots,m\}$ and $j\in\{1,\dots,t_i\}$, which shows \ref{twist_subscheme_extension}.
 
  For $j\in\{1,
  \ldots, r\}$, let $\pigenerator_j$
   be a generator of $\aid_jA_U$ and, with $m \in \ZZ^r$,
  write $\underline{\pigenerator}^m := \pigenerator_1^{m_1}\cdots \pigenerator_r^{m_r}$.
  Let $\varphi_{\underline{\pigenerator}}
  : A_U[x_1, \ldots, x_N] \to A_U[\underline\pigenerator^{-m^{(1)}}x_1,
  \ldots,\underline\pigenerator^{-m^{(N)}}x_N]$ be the isomorphism that sends $x_i \mapsto
  \underline\pigenerator^{-m^{(i)}}x_i$. For every homogeneous $f\in I_2$ we obtain
  \begin{equation*}\begin{aligned}
  (A[x_1,\dots,x_N]/I_1)_{f}\otimes_AA_U &\cong \left(A_U[x_1, \ldots, x_N]/(I_1\otimes_AA_U)\right)_{f}\\
  &\cong\left(\varphi_{\underline \pigenerator}(A_U[x_1, \ldots, x_N])/\varphi_{\underline\pigenerator}(I_1\otimes_AA_U)\right)_{\varphi_{\underline\pigenerator}(f)} \\&\cong\left(({}_{\ua}R\otimes_AA_U)/({}_{\ua}I_1\otimes_AA_U)\right)_{\underline\pigenerator^{-\deg f}f}.
  \end{aligned}\end{equation*}
This proves \ref{twist_subscheme_covering}, since $f \in
I_2\otimes_A A_U$ is equivalent to
$\underline\pigenerator^{-\deg f}f \in {}_{\ua}I_2\otimes_A A_U$.  For
\ref{twist_subscheme_ratpoint}, we first consider
$V({}_{\ua}I_1)(A)$. Via the identification in Proposition
\ref{prop:twist_aff_properties}, \ref{twist_aff_ratpoint}, these
points correspond to $K$-homomorphisms $\varphi : K[x_1, \ldots,
x_N]\to K$ with $\varphi(x_i) \in \ua^{m^{(i)}}$ whose kernel contains
the homogeneous ideal ${}_{\ua}I_1 \otimes_{A} K = I_1 \otimes_{A} K $, that is, to
points $\coordtuple \in K^N$ with $\coord_i \in \ua^{m^{(i)}}$ and
satisfying \eqref{eq:twist_subscheme_torsor}.

  Next, let us consider $(\aAN\smallsetminus
  V({}_{\ua}I_2))(A)$. These points correspond to $A$-homo\-morphisms
  $\varphi : {}_{\ua}R \to A$ such that ${}_{\ua}I_2\not\subseteq\varphi^{-1}(\p)
  $ for all prime ideals $\p$ of $A$. That is,
  $\varphi({}_{\ua}I_2)A = A$. Keeping in mind that ${}_{\ua}I_2$ is
  generated by its homogeneous elements and using the description
  of $\aAN(A)$ from Proposition
  \ref{prop:twist_aff_properties}, \ref{twist_aff_ratpoint}, we see
  that $(\aAN\smallsetminus
  V({}_{\ua}I_2))(A)$ corresponds to the set of all $\coordtuple \in
  K^N$ with $\coord_i \in \ua^{m^{(i)}}$ and satisfying
  \eqref{eq:twist_subscheme_cop}.
  
  To prove \ref{twist_subscheme_unique}, let $\underline b=(b_1, \ldots,
  b_r) \in (K^\times)^r$ and $\aid_j' := b_j\aid_j$ for  $j\in\{1,\dots,r\}$. Then the
  $K$-automorphism of $K[x_1, \ldots, x_N]$ mapping $x_i \mapsto
  \underline b^{-m^{(i)}}x_i$ induces an $A$-isomorphism between ${}_{\ua}R$ and
  ${}_{\ua'}R$ which maps ${}_{\ua}I_j$ isomorphically onto
  ${}_{\ua'}I_j$, for $j\in\{1, 2\}$.
\end{proof}

Now we can focus on the case where $\YtOk$ is a torsor over an
$A$-scheme $\XtOk$ under a split torus $\G_{m,\XtOk}^r$. Throughout the rest of this section, we assume
the following setup.

Let $A$ be a Dedekind domain with fraction field $K$, and let there be
a $\ZZ^r$-grading on $K[x_1, \ldots, x_N]$ defined by $\deg x_i =
m^{(i)} \in\ZZ^r$.

Let $\XtOk$ be a separated scheme of finite type over $A$ that admits
an $\XtOk$-torsor $\pi : \YtOk \to \XtOk$ under a split torus
$\G_{m,\XtOk}^r$. We assume that there are
$\ZZ^r$-homogeneous polynomials $\ftOk_1,\dots,\ftOk_\mt$,
$\gtOk_1,\dots,\gtOk_\st\in A[x_1,\dots,x_N]$ such that
$\YtOk=V(\gtOk_1,\dots,\gtOk_\st)\smallsetminus V(\ftOk_1,\dots,\ftOk_\mt)$
as subscheme of $\A^N_{A}$. Moreover, we assume that the action of
$\G_{m,\XtOk}^r$ on $\YtOk$ is induced by the following action on
points:
\begin{align*}
  (\sg_1,\dots,\sg_r)*(a_1,\dots,a_N)=(\usg^{m^{(1)}} a_1,\dots,
  \usg^{m^{(N)}}a_N)
\end{align*}
for all $\usg=(\sg_1,\dots,\sg_r)\in\G_{m,\XtOk}^r(A)$ and
$(\coord_1,\dots,\coord_N)\in\YtOk(A)$, where we write $\usg^m :=
\sg_1^{m_1}\cdots \sg_r^{m_r}$ for $m=(m_1, \ldots,
m_r)\in\ZZ^r$. Under these assumptions, we now define the twists of
$\pi : \YtOk \to \XtOk$.

\begin{defin}\label{defin:twisted_torsors}
  Under the above hypotheses, let $\ua=(\aid_1,\dots,\aid_r)$ be an $r$-tuple of nonzero fractional ideals of $A$, and let $\aYtOk$
  be the twist of $\YtOk$ from Definition
  \ref{def:twist_subscheme}. Then the \emph{$\ua$-twist of $\pi :
    \YtOk \to \XtOk$} is the morphism $\api : \aYtOk \to \XtOk $
  obtained by glueing the following morphisms:
  \begin{equation*}
  \api_U:\aYtOk\times_{\spec(A)}U \to \XtOk\times_{\spec(A)}U,
  \end{equation*}
  where $U$ runs through an open covering of $\spec(A)$ by affine
  subschemes $U=\spec(A_U)$ such that $\aid_1A_U, \ldots, \aid_rA_U$
  are principal ideals of $A_U$, and $\api_U$ is defined as
  composition of $\pi$ after the isomorphism
  $\phi_{\underline\pigenerator}:\aYtOk\times_{\spec(A)}U\to\YtOk\times_{\spec(A)}U$
  from Proposition \ref{prop:twist_subscheme_properties},
  \ref{twist_subscheme_covering}, induced by the isomorphism
 \begin{equation*}
   \varphi_{\underline{\pigenerator}}
   : A_U[x_1, \ldots, x_N] \to A_U[\underline\pigenerator^{-m^{(1)}}x_1,
   \ldots,\underline\pigenerator^{-m^{(N)}}x_N], \quad x_i \mapsto
   \underline\pigenerator^{-m^{(i)}}x_i, \end{equation*}
 where $\pigenerator_j$ is a generator of $\aid_jA_U$ for $j\in\{1,
 \ldots, r\}$, and $\underline{\pigenerator}^m := \pigenerator_1^{m_1}\cdots \pigenerator_r^{m_r}$ for all $m \in \ZZ^r$. As will be shown in the proof below, the definition of $\api$ does not depend on the choice of the open subsets $U$ nor on the choice of the generators $\pigenerator$.
   \end{defin}

Now we are ready to state the second main theorem of this article.

\begin{theorem}\label{thm:param}
  The $\ua$ twists $\api : \aYtOk \to \XtOk$ defined above have the
  following properties.
  \begin{enumerate}[label=(\roman{*}), ref=\emph{(\roman{*})}]
  \item The morphism $\api : \aYtOk \to \XtOk$ is a torsor over $\XtOk$
    under $\G_{m,\XtOk}^r$ of class $[\YtOk]-[\ua]\in
    H^1_{\text{\it{\'et}}}(X,\G_{m,\XtOk}^r)$.
  \item \label{thm:param_union} Let $\classrepsyst$ be a system of representatives for the
    class group $\Pic(A)$ of $A$. If $\XtOk$ is proper over $A$
    then, under base extension, the set of rational points on $\Xtk$
    decomposes as a disjoint union
    \begin{equation*}
      \Xtk(K)=\bigsqcup_{\classtuple\in\classrepsyst^r}\classtuplepi(\cYtOk(A)).
    \end{equation*}
  \item \label{thm:param_coord} As a subset of $K^N$, the set $\cYtOk(A)$ is equal to the
    set of all $\coordtuple \in K^N$ whose coordinates $\coord_i$ lie
    in the fractional ideals $\classtuple^{m^{(i)}}$, satisfying the
    coprimality conditions expressed by
    \begin{equation*}
      \sum_{i=1}^\mt\ftOk_i(\coordtuple)\classtuple^{-\deg\ftOk_i}=A
    \end{equation*}
and the torsor equations
\begin{equation*}
   \gtOk_j(\coordtuple)=0  \text{ for all } j \in \{1,\ldots,\st\}.
\end{equation*}
    \end{enumerate}
\end{theorem}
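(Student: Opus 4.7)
The three parts split naturally: once (i) is established, part (ii) will follow from Proposition \ref{generaldisjunion}, and part (iii) is the specialization of Proposition \ref{prop:twist_subscheme_properties}, \ref{twist_subscheme_ratpoint} to the setup at hand. The bulk of the work therefore concentrates in (i), which I will handle by proving well-definedness of $\api$, verifying the torsor property \'etale-locally, and identifying the class via a \v Cech cocycle.

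For (i), my plan is first to verify that the local morphisms $\api_U$ are well defined and glue. On a trivializing open $U = \spec(A_U)$, replacing the generator $\pigenerator_j$ of $\aid_j A_U$ by another generator $\pigenerator_j' = u_j \pigenerator_j$ with $u_j \in A_U^\times$ changes $\phi_{\underline\pigenerator}$ into $\phi_{\underline\pigenerator} \circ \tau_{\underline u}$, where $\tau_{\underline u}$ is the fiberwise action of $\underline u \in \G_{m,\YtOk}^r(A_U)$; since $\pi$ is torus-invariant, $\api_U = \pi \circ \phi_{\underline\pigenerator}$ is unchanged, and the analogous computation on double overlaps shows that the local morphisms patch into a global $\api$. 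Transport along the $\phi_{\underline\pigenerator}$ endows $\aYtOk$ with a $\G_{m,\XtOk}^r$-action making $\api$ equivariant, and \'etale-locally on $\spec(A)$ the pair $(\aYtOk, \api)$ becomes isomorphic to $(\YtOk, \pi)$; since being a torsor is local in the \'etale topology, $\api$ is a $\G_{m,\XtOk}^r$-torsor. To identify its class I use the standard twist formula $[{}_{\Zt}\YtOk] = [\YtOk] - [\Zt_{\XtOk}]$: it suffices to exhibit a $\G_{m,A}^r$-torsor $\Zt$ over $\spec(A)$ of class $[\ua] \in \Pic(A)^r \cong H^1_{\text{\it{\'et}}}(\spec(A), \G_{m,A}^r)$ such that $\aYtOk \cong {}_{\Zt_{\XtOk}}\YtOk$ canonically. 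The \v Cech cocycle for the local trivializations $\phi_{\underline\pigenerator}$, with respect to an open covering of $\spec(A)$ that simultaneously trivializes all $\aid_j$, is given by the transition ratios $(\pigenerator_j'/\pigenerator_j)_j$, which represent $([\aid_1], \ldots, [\aid_r]) = [\ua]$, producing the required $\Zt$.

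Given (i), I prove (ii) by applying Proposition \ref{generaldisjunion} with $\St = \spec(A)$, $\Gt = \G_{m,A}^r$ (affine over $\St$, so all twists exist), $\Xt = \XtOk$ and $\Yt = \YtOk$. The classes in $H^1_{\text{\it{\'et}}}(\spec(A), \G_{m,A}^r) \cong \Pic(A)^r$ are represented by the tuples $\classtuple \in \classrepsyst^r$, and by (i) their associated twists of $\YtOk$ are precisely $\cYtOk$. Properness of $\XtOk$ over the Dedekind domain $A$, combined with the valuative criterion, yields $\XtOk(A) = \Xtk(K)$, completing the decomposition. For (iii), I apply Proposition \ref{prop:twist_subscheme_properties}, \ref{twist_subscheme_ratpoint} with $I_1 = (\gtOk_1, \ldots, \gtOk_\st)$ and $I_2 = (\ftOk_1, \ldots, \ftOk_\mt)$: the condition \eqref{eq:twist_subscheme_cop} collapses to $\sum_i \ftOk_i(\coordtuple)\classtuple^{-\deg \ftOk_i} = A$ because the $\ftOk_i$ are homogeneous generators of $I_2$. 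The principal obstacle I anticipate is the cocycle computation in (i): keeping sign conventions straight in passing between the graded ring ${}_{\ua}R = \bigoplus_m \ua^{-m} R_m$ (where $\ua^{-m}$ appears with a negative exponent) and the class $[\ua] \in \Pic(A)^r$, and checking that the isomorphism $\aYtOk \cong {}_{\Zt_{\XtOk}}\YtOk$ holds on the nose rather than up to an inversion.
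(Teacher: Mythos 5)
Your proposal is correct and follows essentially the same route as the paper: well-definedness and the torsor property of $\api$ via the local trivializations $\phi_{\underline\pigenerator}$, whose transition maps are the torus action by the cocycle $(\pigenerator_j^{-1}\pigenerator_j')_j$ representing $-[\ua]$, then Proposition \ref{generaldisjunion} plus the valuative criterion of properness for (ii), and Proposition \ref{prop:twist_subscheme_properties}, \ref{twist_subscheme_ratpoint} for (iii). The only cosmetic difference is that the trivializations here are already Zariski-local on $\spec(A)$, so no appeal to étale-local descent of the torsor property is needed beyond the identification $H^1_{\text{\it{\'et}}}(\spec(A),\G_{m,A}^r)\cong\Pic(A)^r$.
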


\begin{proof}
  For every choice of affine open subsets $U$, $U'$ of $\spec(A)$ as
  in Definition \ref{defin:twisted_torsors}, and corresponding
  $r$-tuples $\underline\pigenerator$, $\underline\pigenerator'$ of
  generators for the principal fractional ideals over $U$, resp. $U'$,
  let $
  \varphi_{\underline\pigenerator,\underline\pigenerator'}:A_{U\cap
    U'}[x_1, \ldots,x_N]\to A_{U\cap U'}[x_1, \ldots,x_N]$ be the
  isomorphism induced by the automorphism of $K[x_1,\dots,x_N]$
  mapping $x_i\mapsto
  \underline\pigenerator^{-m^{(i)}}\underline\pigenerator'^{m^{(i)}}x_i$,
  and $\phi_{\underline\pigenerator,\underline\pigenerator'}$ the
  automorphism of $\YtOk\times_{\spec(A)}(U\cap U')$ induced by $
  \varphi_{\underline\pigenerator,\underline\pigenerator'}$. Then
  $\phi_{\underline\pigenerator}=\phi_{\underline\pigenerator,\underline\pigenerator'}\circ\phi_{\underline\pigenerator'}$
  on $\aYtOk\times_{\spec(A)}(U\cap U')$.  We observe that
  $\phi_{\underline\pigenerator,\underline\pigenerator'}$ are the
  automorphisms induced by the $\G_{m,\XtOk}^r$-action of the cocycle
  $(\pigenerator^{-1}_1\pigenerator'_1,\dots,\pigenerator^{-1}_r\pigenerator_r')_{U',U}$
  that represents the class $-[\ua]\in
   \Pic(A)^r$. Thus
  $\pi\circ\phi_{\underline\pigenerator,\underline\pigenerator'}=\pi$
  on $\YtOk\times_{\spec(A)}(U\cap U')$, as $\pi:\YtOk\to\XtOk$ is a
  torsor under $\G_{m,\XtOk}^r$,
and the morphism $\api$ is well defined.
Since the automorphisms
$\phi_{\underline\pigenerator,\underline\pigenerator'}$ are
$\G_{m,\XtOk}^r$-equivariant, the $\XtOk$-scheme $\aYtOk$ is endowed
with an action of $\G_{m,\XtOk}^r$, and the morphism $\api$ is an
$\XtOk$-torsor under $\G_{m,\XtOk}^r$ of class $[\YtOk]-[\ua]\in
H^1_{\text{\it{\'et}}}(X,\G_{m,\XtOk}^r)$, 
via the homomorphism of cohomology groups
\begin{equation*}
 \Pic(A)^r\cong H^1_{\text{\emph{\'et}}}
  (\spec(A),\G_{m,A}^r)\to H^1_{\text{\it{\'et}}}(X,\G_{m,\XtOk}^r),
\end{equation*}
where the first isomorphism comes from the fact that \'etale cohomology
commutes with direct sums (see \cite[Remark III.3.6 (d)]{MR559531}).
Here we used {\it{\'etale}} cohomology groups in place of {\it{fppf}} because for $\G_{m}^r$ they coincide.

We recall that two torsors with the same class in
$H^1_{\text{\it{\'et}}}(X,\G_{m,\XtOk}^r)$ are $\XtOk$-isomorphic, so the images of
their structure morphisms coincide as subsets of
$\XtOk$. 
By the valuative criterion of properness, $\XtOk_K(K)=\XtOk(A)$ under base extension.
Thus, property \ref{thm:param_union} follows from  Proposition \ref{generaldisjunion}. 

Finally, \emph{(iii)} was already proved in Proposition \ref{prop:twist_subscheme_properties}, \ref{twist_subscheme_ratpoint}.
\end{proof}

In the case of universal torsors of smooth projective split toric schemes over the ring of integers of a number field, our Theorem \ref{thm:param} recovers the same parameterization via integral points on twisted universal torsors embedded into twisted affine spaces of \cite[p.~15]{MR1650339}.

\section{Models of universal torsors}\label{sec:universal_torsors}

This section is devoted to descent properties of universal torsors of
certain projective varieties. Let $A$ be a noetherian integral domain
with fraction field $K$ of characteristic $0$, and let $\overline{K}$
be an algebraic closure of $K$.

Given an integral, smooth, projective variety $\overline \Xt$ over
$\overline K$, whose Cox ring $\cox(\overline \Xt)$ is finitely
generated and defined over $A$, we construct an $A$-model $\Xt$ of
$\overline \Xt$ and an $A$-model $\Yt$ of a universal torsor
$\overline \Yt$ of $\overline \Xt$ contained in the spectrum of
$\cox(\overline \Xt)$ that turns out to be a universal torsor over
$\Xt$ under some additional conditions.

\begin{construction}\label{hypot3}
  We assume that $\pic(\overline \Xt)\cong\Z^r$,
  and that the Cox ring of $\overline X$ is $\cox(\overline \Xt)=\overline K[\eta_1,\dots,\eta_N]/I$,
  where $\eta_1,\dots,\eta_N$ are $\pic(\overline\Xt)$-homogeneous and
  $I$ is generated by polynomials $g_1,\dots,g_s\in
  A[\eta_1,\dots,\eta_N]$.  We denote by $\overline
  \Yt\subseteq\spec(\cox(\overline \Xt))$ the characteristic space
  defined in \cite[Constructions 1.6.1.3, 1.6.3.1]{cox}. Then
  $\overline \Yt$ is a universal torsor of $\overline \Xt$ by \cite[Proposition 6.1.3.9]{cox}. By \cite[Corollary
  1.6.3.6]{cox}, we know that $\overline \Yt$ is an open subset of
  $\spec(\cox(\overline \Xt))$, whose complement is defined by monic
  monomial equations
  \begin{equation*}
  f_1,\dots,f_m\in \overline K[\eta_1,\dots,\eta_N]\smallsetminus I.\end{equation*}

Fix an isomorphism between $\pic(\overline \Xt)\cong\mathbb{Z}^r$
given by a basis $\ell_1,\dots,\ell_r$ of $\pic(\overline \Xt)$. For
$i\in\{1,\dots,N\}$, let $m^{(i)}\in\ZZ^r$ be the degree of
$\eta_i$. By \cite[Construction 1.6.1.3]{cox}, the action of
$\G_{m,\overline \Xt}^r$ on $\overline\Yt$ is induced by the action of
$\G_{m,\overline \Xt}^r$ on $\spec(\cox(\overline\Xt))(\overline K)$
defined by the homomorphism
  \begin{equation*}
  \cox(\overline\Xt)\ \to\ \overline K[\zt_1,\zt_1^{-1},\dots,\zt_r,\zt_r^{-1}]\otimes_{\overline K}\cox(\overline\Xt),\quad \eta_j\mapsto\underline\zt^{m^{(j)}}\otimes\eta_j.
  \end{equation*}
   
   Without
  loss of generality, we can assume 
  that $I\cap A[\eta_1,\dots,\eta_N]=(g_1,\dots,g_s)$.
  Let
  \begin{equation*}
  R:=A[\eta_1,\dots,\eta_N]/(g_1,\dots,g_s),\end{equation*}
   and let $\Yt$ be the
  complement of the closed subset of $\spec(R)$ defined by
  $f_1,\dots,f_m$. 
  For $i\in\{1,\dots,m\}$, let $U_i:=\spec(R[f_i^{-1}])$
  and \begin{equation*} \overline{U}_i:=U_i\times_{\spec(A)}\spec(\overline
    K)\cong\spec(\cox(\overline \Xt)[f_i^{-1}]).
\end{equation*}
Then $\{U_i\}_{1\leq i\leq m}$ is an affine open covering of $\Yt$, the
family $\{\overline U_i\}_{1\leq i\leq m}$ is an affine open covering of
$\overline \Yt$, and $\Yt_{\overline K}\cong \overline \Yt$.

The $\pic(\overline \Xt)$-grading of $\cox(\overline \Xt)$ induces a $\pic(\overline \Xt)$-grading  on $R$ by assigning the degrees of $\eta_1,\dots,\eta_N$. 
We assume that $(R;f_1,\dots,f_m)$ satisfies the following condition:
\begin{equation}
    \parbox{0.8\linewidth}{for every $i,j\in\{1,\dots,m\}$, there is a homogeneous
      invertible element of $R[f_i^{-1}]$ of degree a 
      multiple of $\deg f_j$.}\label{conditionstar}
\end{equation}
For $i\in\{1,\dots,m\}$, let $R_i$ be the degree-$0$-part of the ring
$R[f_i^{-1}]$ and $V_i:=\spec(R_i)$. Then $R_i\otimes_A\overline K$ is
the degree-$0$-part of $\cox(\overline \Xt)[f_i^{-1}]$ for all
$i\in\{1,\dots,m\}$, and by construction of the universal torsor
structure $\overline \Yt\to\overline \Xt$ (see \cite[Remark
1.25]{HAUSEN}), gluing the family of schemes
$\{\spec(R_i\otimes_A\overline K)\}_{1\leq i\leq m}$ yields a variety
isomorphic to $\overline \Xt$. Let $\Xt$ be the $A$-scheme obtained by
gluing $\{V_i\}_{1\leq i\leq m}$. Then $\Xt$ is a model of
$\overline\Xt$ over $A$ and comes endowed with a natural morphism
$\pi:\Yt\to \Xt$ induced by the inclusions $R_i\to R[f_i^{-1}]$ for
$i\in\{1,\dots,m\}$. Since the inclusions $R_i\to R[f_i^{-1}]$ induce
surjective morphisms $U_i\to V_i$ for all $i\in\{1,\dots,m\}$, the
morphism $\pi$ is surjective. Moreover, $\pi$ is of finite
presentation because $X$ is noetherian and $R[f_i^{-1}]$ is a finitely
generated $R_i$-algebra for every $i\in\{1,\dots,m\}$.  Since
$f_1,\dots,f_m$ are $\ZZ^r$-homogeneous, the homomorphism
\begin{equation*}
  R\ \to\ A[\zt_1,\zt_1^{-1},\dots,\zt_r,\zt_r^{-1}]\otimes_{A}R,\quad \eta_j\mapsto\underline\zt^{m^{(j)}}\otimes\eta_j
\end{equation*}
induces an action of $\G_{m,\Xt}^r$ on $\Yt$ which is given by
\begin{equation}\label{action_on_points}
  \us*(\coord_1,\dots,\coord_N)=(\us^{m^{(1)}}\coord_1,\dots,\us^{m^{(N)}}\coord_N)
  \end{equation}
  on $A$-points $\us=(s_1,\dots,s_r)\in\G_{m,A}^r(A)$ and
  $(\coord_1,\dots,\coord_N)\in\Yt(A)$, where $\us^m:=s_1^{m_1}\cdots
  s_r^{m_r}$ for all $m=(m_1,\dots,m_r)\in\ZZ^r$. 

  Moreover, $\pi$ is an $\Xt$-torsor under $\G_{m,\Xt}^r$ (compatible
  with the universal torsor structure of $\overline \Yt$ over
  $\overline \Xt$) if and only if $\pi$ is flat and the morphism of
  schemes $\phi:\G^r_{m,A}\times_{\spec(A)}\Yt\to \Yt\times_{\Xt}\Yt$
  that sends $(\us,\coordtuple)\mapsto(\us*\coordtuple,\coordtuple)$,
  obtained by gluing the morphisms
  \begin{equation*}
    \varphi_i:R[f_i^{-1}]\otimes_{R_i}R[f_i^{-1}]\to A[\zt_1,\zt_1^{-1},\dots,\zt_r,\zt_r^{-1}]\otimes_AR[f_i^{-1}],\quad \eta_j\otimes \eta_l\mapsto\underline\zt^{m^{(j)}}\otimes \eta_j\eta_l
  \end{equation*}
  for $1\leq i\leq m$, is an isomorphism. 
\end{construction}

\begin{rem}\label{udesck}
  If $A=K$, then $\pi:\Yt\to\Xt$ is a universal torsor by \emph{fpqc} descent, see \cite[\S2]{MR89f:11082}.
\end{rem}

In his proof of Manin's conjecture for toric varieties, Salberger
introduced universal torsors for certain schemes defined over
noetherian base schemes \cite[Definition 5.14]{MR1679841}. Under
reasonable hypotheses, the following theorem shows that $\pi : \Yt \to
\Xt$ is indeed a universal torsor according to Salberger's definition.

\begin{theorem}\label{universaltorsor}Let $\pi$ be as in Construction \ref{hypot3}. If $(R; f_1,\dots,f_m)$ satisfies the condition that
    \begin{equation}
      \parbox{0.8\linewidth}{every element of $\pic(\overline \Xt)$ is the degree of a homogeneous invertible element of
        $R[f_i^{-1}]$, for all $i\in\{1, \ldots,m\}$,}\label{picardgeneration}
    \end{equation}
    then $\pi$ is an $\Xt$-torsor under $\G_{m,\Xt}^r$. If we
    additionally assume that $X(A)\neq\emptyset$, that $X$ is smooth,
    projective, of constant relative dimension, and with geometrically
    integral fibers over $A$, that $\pic(\Xt_K)=\pic(\overline{\Xt})$,
    and that for every prime ideal $\p$ of $A$ the cohomology groups
    $H^i(\Xt_{k(\p)},\mathcal{O}_{\Xt_{k(\p)}})$ vanish for
    $i\in\{1,2\}$, then $\pi$ is a universal torsor of $\Xt$.
\end{theorem}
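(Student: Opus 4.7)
The plan is to prove the two claims separately. For the first, I would reduce to a local triviality argument on the affine cover $\{U_i\}$. Fixing $i\in\{1,\dots,m\}$, condition \eqref{picardgeneration} allows me to choose homogeneous invertible elements $u_1,\dots,u_r \in R[f_i^{-1}]^\times$ whose degrees form a $\ZZ$-basis of $\pic(\overline\Xt)\cong \ZZ^r$. Every homogeneous element of degree $m=(m_1,\dots,m_r)\in \ZZ^r$ in $R[f_i^{-1}]$ becomes a degree-$0$ element after division by $u_1^{m_1}\cdots u_r^{m_r}$, so one obtains a $\pic(\overline\Xt)$-graded isomorphism of $R_i$-algebras
\begin{equation*}
R[f_i^{-1}]\ \cong\ R_i[u_1^{\pm 1},\dots,u_r^{\pm 1}].
\end{equation*}
This yields a $\G_{m,A}^r$-equivariant isomorphism $U_i\cong V_i\times_{\spec A}\G_{m,A}^r$ compatible with \eqref{action_on_points}, so $\pi$ is Zariski-locally trivial. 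In particular $\pi$ is flat, and the morphism $\phi$ from Construction \ref{hypot3} becomes an isomorphism after restriction to each $U_i$, hence is an isomorphism globally. Thus $\pi$ is an $\Xt$-torsor under $\G_{m,\Xt}^r$.

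For the second claim, I would apply Salberger's definition \cite[Definition 5.14]{MR1679841}: a $\G_{m,\Xt}^r$-torsor is universal when its type --- the canonical map from the character module $\ZZ^r$ to the relative Picard functor $\pic_{\Xt/A}$ --- is an isomorphism compatible with the chosen identification $\pic(\overline\Xt)\cong \ZZ^r$. The smoothness, properness, and geometric integrality of $\Xt/A$ together with the cohomological hypotheses place us in the setting where Grothendieck's representability theorem applies: the vanishing of $H^2$ on fibers rules out obstructions to deforming line bundles, while the vanishing of $H^1$ forces the relative Picard scheme to be étale over $A$. The hypothesis $\Xt(A)\neq\emptyset$ kills any Brauer-class obstruction, yielding $\pic_{\Xt/A}(A) = \pic(\Xt)$. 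Combined with $\pic(\Xt_K)=\pic(\overline\Xt)$ and étaleness, this identifies $\pic_{\Xt/A}$ with the constant sheaf $\ZZ^r$ over $\spec A$.

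It remains to identify the type of $\pi$ with the identity. The type is a section of an étale $A$-scheme and is therefore determined by its value at the generic point. Over $K$, Remark \ref{udesck} tells us that $\pi_K$ is a universal torsor, so its type equals the identity on $\pic(\overline\Xt)$. Consequently the type of $\pi$ itself is the identity, which is exactly what is needed to conclude that $\pi$ is a universal torsor in Salberger's sense. The main obstacle will be the second part, specifically matching the conventions of Salberger's definition to Construction \ref{hypot3} and verifying in detail that the cohomological hypotheses on the fibers are strong enough to make $\pic_{\Xt/A}$ étale and representable in a way that permits the generic-fiber type computation to descend to all of $\spec A$.
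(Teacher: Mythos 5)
Your proposal is correct and follows essentially the same route as the paper: condition \eqref{picardgeneration} supplies homogeneous units in each degree, which the paper uses to verify flatness of $R_i\to R[f_i^{-1}]$ and to write down an explicit inverse of $\varphi_i$ (your Zariski-local trivialization $R[f_i^{-1}]\cong R_i[u_1^{\pm1},\dots,u_r^{\pm1}]$ is the same mechanism, packaged slightly more structurally). For universality, the paper likewise invokes representability of the relative Picard functor as a (twisted) constant group scheme under the stated hypotheses, identifies it with $\ZZ^r$ via $\pic(\Xt_K)=\pic(\overline\Xt)$, and reduces the type computation to the generic fibre using injectivity of $\Hom_A(\widehat{\G_{m,A}^r},\pic_{\Xt/A})\to\Hom_K(\widehat{\G_{m,K}^r},\pic_{\Xt_K/K})$ together with Remark \ref{udesck}.
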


\begin{proof}
  Flatness of $\pi$ is equivalent to flatness of all the inclusions
  $R_i\to R[f_i^{-1}]$, i.e., to injectiveness of the induced
  morphisms $J\otimes_{R_i}R[f_i^{-1}]\to R[f_i^{-1}]$ for all ideals
  $J$ of $R_i$. Fix $i\in\{1,\dots,m\}$. Let $J$ be an ideal of $R_i$
  and $J\otimes_{R_i}R[f_i^{-1}]\to R[f_i^{-1}]$ the induced
  morphism. A general element in the kernel of this morphism is
  $h=\sum_{j=1}^nh_j\otimes h_j'$, where $h_j\in J$ has degree $0$ and
  $h_j'\in R[f_i^{-1}]$, and such that $\sum_{j=1}^nh_jh_j'=0$ in
  $R[f_i^{-1}]$. Since $R[f_i^{-1}]$ is a graded ring, it is enough to
  consider homogeneous elements $h$, i.e., with all $h_j'$ homogeneous
  of fixed degree $\deg h\in \ZZ^r$.  Since the degrees of the
  homogeneous invertible elements of $R[f_i^{-1}]$ generate
  $\pic(\overline \Xt)\cong\ZZ^r$, there exists $f\in
  R[f_i^{-1}]^\times$ of degree $\deg h$.  Then
  $h=(\sum_{j=1}^nh_jh_j'f^{-1})\otimes f=0$ in
  $J\otimes_{R_i}R[f_i^{-1}]$.

  In order to prove that $\phi$ is an isomorphism, it suffices to
  prove that all $\varphi_i$ are isomorphisms. For every
  $i\in\{1,\dots,m\}$ and $k\in\{1,\dots,r\}$, let $h_{i,k}\in
  R[f_i^{-1}]^\times$ be a homogeneous element of degree $\ell_k$.
  Then the morphism
  \begin{equation*}
    \psi_i: A[\zt_1,\zt_1^{-1},\dots,\zt_r,\zt_r^{-1}]\otimes_AR[f_i^{-1}]\to R[f_i^{-1}]\otimes_{R_i}R[f_i^{-1}]
  \end{equation*}
  that sends
  \begin{equation*}
    1\otimes\eta_j\mapsto1\otimes\eta_j\quad \text{ and } \quad \zt_k\otimes 1\mapsto h_{i,k}\otimes h_{i,k}^{-1}
  \end{equation*} 
  for all $j\in\{1,\dots,N\}$ and $k\in\{1,\dots,r\}$, is well defined
  and inverse to $\varphi_i$, for all $i\in\{1,\dots,m\}$. 

  By \cite[Proposition 2.1]{MR2950702} the relative \'etale Picard
  functor of $\Xt$ over $A$ is representable by a twisted constant
  $A$-group scheme $\pic_{\Xt/A}$. Since
  $\pic(\Xt_K)=\pic(\Xt_{\overline K})$, the group scheme
  $\pic_{\Xt/A}$ is constant and represented by $\ZZ^r$ by \'etale
  descent. By \cite[Corollary III.12.9]{MR0463157},
  $R^2f_*\mathcal{O}_{\Xt}=0$, where $f:X\to\spec(A)$ is the structure
  morphism.  Since $\Yt_K$ is a universal torsor of $\Xt_K$ by Remark
  \ref{udesck} and the morphism \begin{equation*}
    \Hom_{A}(\widehat{\G_{m,A}^r},\pic_{\Xt/A})\to
    \Hom_{K}(\widehat{\G_{m,K}^r},\pic_{\Xt_K/K})\end{equation*}
  is injective, the torsor $\Yt\to\Xt$ is universal, as the exact sequences
  \cite[5.13]{MR1679841} are functorial.
\end{proof}

\begin{rem}\label{geometric_interpretation}
A geometric interpretation of \eqref{picardgeneration} is the following equivalent formulation:  the open subset complement to the support of the effective divisor defined by $f_i$ has trivial Picard group for all $i\in\{1,\dots,m\}$. 
\end{rem}

The rest of this section provides criteria to check the various
hypotheses of Theorem \ref{universaltorsor}. We start by showing that
the model $X$ of Construction \ref{hypot3} is independent of the
choice of $f_1,\dots,f_m$ under some conditions.
\begin{lemma}\label{gluinglemma}
Let $f_1',\dots,f'_{m'}\in\overline K[\eta_1,\dots,\eta_N]\smallsetminus I$ be monic monomials such that $(R;f_1,\dots,f_m,f'_1,\dots,f'_{m'})$ satisfies the condition $\eqref{conditionstar}$. Let $C_A$ and $C'_A$ be the ideals of $A[\eta_1,\dots,\eta_N]$ generated by $f_1,\dots,f_m,g_1,\dots,g_s$, and $f'_1,\dots,f'_{m'},g_1,\dots,g_s$ respectively, and assume that $\sqrt{C'_A}=\sqrt{C_A}$. Then $\Xt$ is isomorphic to the $A$-model $X'$ of $\overline \Xt$ constructed using $f_1',\dots,f_{m'}'$ in Construction \ref{hypot3}.
\end{lemma}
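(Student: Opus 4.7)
The plan is to introduce an auxiliary $A$-model $X''$ of $\overline{X}$ obtained by applying Construction \ref{hypot3} to the combined family $(f_1,\dots,f_m,f'_1,\dots,f'_{m'})$ --- this is legitimate because condition \eqref{conditionstar} is assumed precisely for this larger list --- and then to establish canonical isomorphisms $X\cong X''$ and $X'\cong X''$, from which $X\cong X'$ follows.

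The first step will be to exploit the hypothesis $\sqrt{C_A}=\sqrt{C'_A}$. Since both ideals contain $(g_1,\dots,g_s)$, this equality descends to an equality of radicals of $(f_1,\dots,f_m)$ and $(f'_1,\dots,f'_{m'})$ inside $R$, and hence $V(f_1,\dots,f_m)=V(f'_1,\dots,f'_{m'})$ as closed subsets of $\spec(R)$. Therefore the open subscheme $Y=\spec(R)\smallsetminus V(f_1,\dots,f_m)$ is intrinsic to the data: in particular, each of the affine open families $\{U_i\}_{1\leq i\leq m}$ and $\{U'_j\}_{1\leq j\leq m'}$ already covers $Y$ by itself.

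Next, I would invoke the surjectivity of the natural morphism $\pi'':Y\to X''$ (established in Construction \ref{hypot3}) to conclude that the subfamily $\{V_i\}_{1\leq i\leq m}=\{\pi''(U_i)\}_{1\leq i\leq m}$ is already an affine open covering of $X''$. The transition isomorphisms along the overlaps $V_i\cap V_j$ inside $X''$ are intrinsically described by the degree-$0$ part of $R[(f_if_j)^{-1}]$ --- exactly the gluing data used in Construction \ref{hypot3} to build $X$. This yields a canonical identification $X\cong X''$. The symmetric argument, using that $\{V'_j\}_{1\leq j\leq m'}$ covers $X''$ as well, gives $X'\cong X''$, and consequently $X\cong X'$.

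The step requiring the most care is the last one: verifying the compatibility of the two gluing descriptions --- that the scheme obtained by gluing $V_1,\dots,V_m$ as prescribed in Construction \ref{hypot3} agrees with the scheme one gets by viewing them as an open subcovering of $X''$. I expect this to be more a bookkeeping check than a genuine obstacle, because both gluings are induced by the same $\G_{m,A}^r$-action on the common open subscheme $Y\subseteq\spec(R)$, and the transition rings $(R[(f_if_j)^{-1}])_0$ are constructed from the same ambient ring $R$ in either picture.
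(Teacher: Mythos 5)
Your proof is correct and is essentially the paper's argument in a slightly different packaging: the paper skips the auxiliary model $X''$ and instead works directly with the common refinement $V_{i,j}:=\spec(R_i[(f_j^{n_{i,j}}h_{i,j})^{-1}])\subseteq V_i$ (these principal opens exist thanks to condition \eqref{conditionstar} for the enlarged family), uses $\sqrt{C_A}=\sqrt{C'_A}$ to show $V_i=\bigcup_{j>m}V_{i,j}$ for $i\le m$ and symmetrically for $i>m$, and glues the identifications $R_i[(f_j^{n_{i,j}}h_{i,j})^{-1}]=R_j[(f_i^{n_{j,i}}h_{j,i})^{-1}]$ inside $R[(f_if_j)^{-1}]$ directly into an isomorphism $X\cong X'$. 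The ``bookkeeping check'' you defer is precisely this identification of the overlaps with degree-$0$ parts of $R[(f_if_j)^{-1}]$, and your covering claim via surjectivity of $\pi''$ is a valid substitute for the paper's explicit verification that $1=f_i^nf_i^{-n}$ lies in the ideal of $R_i$ generated by the $f_j^{n_{i,j}}h_{i,j}$, $j>m$.
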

\begin{proof}
  For every $i\in\{m+1,\dots,m+m'\}$, let $f_{i}:=f'_{i-m}$, and
  $V_{i}:=\spec(R_{i})$, where $R_i$ is the degree-0-part of
  $R[{f_i}^{-1}]$. For every $i,j\in\{1,\dots,m+m'\}$, let $h_{i,j}\in
  R[f_i^{-1}]^\times$ be a homogeneous element of degree $-n_{i,j}\deg
  f_j$ for some positive integer $n_{i,j}$, and let
  $V_{i,j}:=\spec(R_i[(f_j^{n_{i,j}}h_{i,j})^{-1}])\subseteq V_i$.
  Since $\sqrt{C'_A}=\sqrt{C_A}$, the ideal of $R_i$ generated by
  ${f_{m+1}}^{n_{i,m+1}}h_{i,m+1},\dots,{f_{m+m'}}^{n_{i,m+m'}}h_{i,m+m'}$
  contains $f_i^nf_i^{-n}=1$ for some positive integer $n$. Hence,
  $V_i=\bigcup_{j=m+1}^{m+m'}V_{i,j}$ for every
  $i\in\{1,\dots,m\}$. Likewise, $V_i=\bigcup_{j=1}^{m}V_{i,j}$ for
  every $i\in\{m+1,\dots,m+m'\}$.

The identifications $R_i[(f_j^{n_{i,j}}h_{i,j})^{-1}]=R_j[(f_i^{n_{j,i}}h_{j,i})^{-1}]$ inside $R[(f_if_j)^{-1}]$ induce isomorphisms $V_{i,j}\cong V_{j,i}$, for all $i,j\in\{1,\dots,m+m'\}$, that are compatible on the intersections. The schemes $X$ and $X'$ are the gluing of $\{V_i\}_{1\leq i\leq m}$, and $\{V_i\}_{m+1\leq i\leq m+m'}$ respectively, along the isomorphisms mentioned above. 
Since
  $\{V_{i,j}\}_{\substack{1\leq i\leq m,m+1\leq j\leq m+m'}}$ is an open
  covering of $\Xt$, $\{V_{j,i}\}_{\substack{1\leq i\leq m,m+1\leq j\leq
      m+m'}}$ is an open covering of
  $\Xt'$, and all the
  isomorphisms $V_{i,j}\cong V_{j,i}$ are compatible on the
  intersections, they glue to a global isomorphism
  $\Xt\cong\Xt'$.
\end{proof}

The next three propositions provide sufficient conditions for $X$
having geometrically integral fibers, and being smooth and projective over
$A$.

\begin{prop}\label{geomintegralfibers}
If $\spec(R)\to\spec(A)$ has geometrically integral fibers, then $\Xt\to\spec(A)$ has geometrically integral fibers.
\end{prop}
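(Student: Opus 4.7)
The plan is to fix a prime $\p\in\spec(A)$, set $L:=\overline{k(\p)}$, and show directly that $\Xt_L:=\Xt\times_{\spec(A)}\spec(L)$ is integral by combining the explicit gluing description of $\Xt$ with the surjectivity of $\pi:\Yt\to\Xt$.

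Since base change commutes with gluing, $\Xt_L$ is the gluing of the affine opens $(V_i)_L=\spec(R_i\otimes_AL)$ for $i\in\{1,\dots,m\}$. The essential algebraic input is the identification
\begin{equation*}
  R_i\otimes_AL\;\cong\;\bigl((R\otimes_AL)[f_i^{-1}]\bigr)_0,
\end{equation*}
which follows because the $\ZZ^r$-grading on $R[f_i^{-1}]$ is preserved under $-\otimes_AL$ (tensor product commutes with direct sums), so the degree-$0$-part of $(R\otimes_AL)[f_i^{-1}]$ is exactly $(R[f_i^{-1}])_0\otimes_AL=R_i\otimes_AL$.

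By hypothesis $R\otimes_AL$ is an integral domain. Its homogeneous localization $(R\otimes_AL)[f_i^{-1}]$ is therefore either $0$ (if $f_i$ vanishes in $R\otimes_AL$) or again an integral domain, and the same dichotomy passes to the subring $R_i\otimes_AL$. Consequently each $(V_i)_L$ is either empty or integral, so $\Xt_L$ is reduced. For irreducibility, $\pi$ is surjective by Construction \ref{hypot3}, hence so is the base change $\pi_L:\Yt_L\to\Xt_L$. Now $\Yt_L$ is an open subscheme of the irreducible scheme $\spec(R\otimes_AL)$, so it is irreducible (if it is empty, then so is $\Xt_L$ by the surjectivity of $\pi_L$, and the conclusion is vacuous). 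The continuous image of an irreducible topological space is irreducible, so $\Xt_L$ is irreducible, and combined with reducedness this yields integrality.

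The only step that is not pure bookkeeping is the grading-preserving identification of $R_i\otimes_AL$ with the degree-$0$-part of the localization; everything else is a standard consequence of surjectivity of $\pi$ and the stability of integrality under localization and passage to subrings.
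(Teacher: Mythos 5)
Your proof is correct. The reducedness half coincides with the paper's first step: both arguments come down to the observation that $R_i\otimes_A\overline{k(\p)}$ is zero or a domain, being the degree-$0$-part of a localization of the domain $R\otimes_A\overline{k(\p)}$; your explicit check that the $\ZZ^r$-grading survives base change is exactly the justification the paper leaves implicit. Where you genuinely diverge is the irreducibility step. The paper never invokes $\Yt$: it notes that the charts $W_i=\spec(R_i\otimes_Ak)$ pairwise meet in nonempty (hence dense) opens, since $W_i\cap W_j$ is the spectrum of the degree-$0$-part of $(R\otimes_Ak)[(f_if_j)^{-1}]$, and then verifies directly that the product of two nonzero sections over any open is nonzero. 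You instead import irreducibility from upstairs: $\Yt_{\overline{k(\p)}}$ is a nonempty open subscheme of the irreducible $\spec(R\otimes_A\overline{k(\p)})$, and surjectivity of $\pi$ (established in Construction \ref{hypot3} and stable under base change) pushes irreducibility down to $\Xt_{\overline{k(\p)}}$. Your route is a little more economical and conceptually transparent, at the cost of leaning on the surjectivity of $\pi$; the paper's argument is self-contained in the gluing data and applies verbatim to every algebraic extension $k$ of $k(\p)$ rather than only the algebraic closure (an immaterial difference, by the standard equivalence). Both treatments pass equally lightly over the degenerate case in which all $f_i$ vanish on a fiber.
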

\begin{proof}
  Let $\p$ be a prime ideal of $A$, and let $k$ be an algebraic
  extension of the residue field $k(\p)$. Since $R\otimes_A k$ is an
  integral domain by hypothesis, the ring $R_i\otimes_A k$ is an
  integral domain for all $i\in\{1,\dots,m\}$. Thus, $\Xt_k$ is covered by a
  family of integral open subsets $\{W_i:=\spec(R_i\otimes_A
  k)\}_{1\leq i\leq m}$ such that $W_i\cap W_j$ is nonempty for all
  nonempty $W_i$ and $W_j$.  Indeed, for $i,j\in\{1,\dots,m\}$, the
  intersection $W_i\cap W_j$ is the spectrum of the degree-0-part of
  the ring $(R\otimes_A k)[(f_if_j)^{-1}]$, which is nonzero whenever
  $f_i$ and $f_j$ are nonzero elements of $R\otimes_A k$.

Given any nonempty open subset $U$ of $\Xt_k$ and nonzero sections
$s_1,s_2\in\mathcal{O}_{\Xt_k}(U)$, there exist
$i_1,i_2\in\{1,\dots,m\}$ such that $s_j|_{U\cap W_{i_j}}\neq0$ for
$j\in\{1,2\}$. Therefore, $s_j|_{U\cap W_{i_1}\cap W_{i_2}}\neq0$ for
$j\in\{1,2\}$ as $W_{i_1}$, $W_{i_2}$ are integral, and $U\cap
W_{i_1}$, $W_{i_1}\cap W_{i_2}$ are dense in $W_{i_1}$. Thus,
$(s_1s_2)|_{U\cap W_{i_1}\cap W_{i_2}}\neq0$ and $s_1s_2\neq0$ in
$\mathcal{O}_{\Xt_k}(U)$.
\end{proof}

\begin{prop}\label{smoothnesscriterion}
Assume that $A$ is a Dedekind domain, $\spec(R)\to\spec(A)$ has geometrically integral fibers, and  $\pi$ is flat (the last holds, for example, if \eqref{picardgeneration} is satisfied). If
the Jacobian matrix \begin{equation*}
\left(\frac{\partial g_i}{\partial\eta_j}(\coordtuple)\right)_{\substack{1\leq i\leq s\\ 1\leq j\leq N}}
\end{equation*} 
has rank $N-\dim \overline\Xt-r$ for all $\coordtuple\in Y(\overline{k(\p)})$ and $\p\in\spec(A)$, where $\overline{k(\p)}$ is an algebraic closure of the residue field $k(\p)$, then $X$ is smooth over $A$.
\end{prop}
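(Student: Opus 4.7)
The plan is to prove smoothness in two steps: first establish that $Y\to\spec(A)$ is smooth by combining the Jacobian rank hypothesis with the geometric integrality of the fibers, and then transfer smoothness to $X\to\spec(A)$ by faithfully flat descent along $\pi$.

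For the first step, fix $y\in Y$ lying over $\p\in\spec(A)$, and set $s':=N-\dim\overline\Xt-r$. By hypothesis, the Jacobian of $g_1,\dots,g_s$ has rank $s'$ at $y$, so I can choose $s'$ of these polynomials, say $h_1,\dots,h_{s'}$, whose Jacobian matrix has rank $s'$ at $y$. An appropriate $s'\times s'$ minor is then nonzero on an open neighborhood of $y$ in $\A^N_A$, so by the classical relative Jacobian criterion the closed subscheme $V(h_1,\dots,h_{s'})\subseteq\A^N_A$ is smooth over $A$ of relative dimension $N-s'=\dim\overline\Xt+r$ on this neighborhood. Clearly $\spec(R)\subseteq V(h_1,\dots,h_{s'})$. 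To upgrade this inclusion to a local equality near $y$, I compare fibers over $\p$: the fiber $\spec(R\otimes_Ak(\p))$ is integral of dimension $\dim\cox(\overline\Xt)=\dim\overline\Xt+r$, while the fiber of $V(h_1,\dots,h_{s'})$ over $\p$ is smooth of the same relative dimension and hence locally irreducible near $y$ of the same dimension. An integral closed subscheme equal in dimension to the ambient locally irreducible smooth scheme must agree with it locally, so the fibers coincide on a neighborhood of $y$. Applying Nakayama's lemma to the ideal defining $\spec(R)$ inside $V(h_1,\dots,h_{s'})$ at the local ring $\mathcal{O}_{V(h_1,\dots,h_{s'}),y}$ then promotes this fiberwise equality to an equality of local rings. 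Consequently $\spec(R)$, and hence its open subscheme $Y$, is smooth over $A$ in a neighborhood of $y$, and $Y\to\spec(A)$ is smooth since $y$ was arbitrary.

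For the second step, $\pi$ is flat by hypothesis, surjective by Construction \ref{hypot3}, and of finite presentation (as noted there), hence faithfully flat and locally of finite presentation. Standard faithfully flat descent then transfers smoothness from $Y\to\spec(A)$ to $X\to\spec(A)$: flatness of $X\to\spec(A)$ follows from flatness of $Y\to\spec(A)$ via faithful flatness of $\pi$, local finite presentation of $X\to\spec(A)$ is preserved by descent, and geometric regularity of each fiber of $X\to\spec(A)$ follows from geometric regularity of the corresponding fiber of $Y\to\spec(A)$ via the induced faithfully flat map on fibers.

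The main technical obstacle I foresee is the local identification of $\spec(R)$ with the smooth complete intersection $V(h_1,\dots,h_{s'})$ near $y$: the geometric integrality of the fibers together with the constancy of the Jacobian rank on $Y$ are precisely the inputs needed to carry this out via Nakayama's lemma. Once this identification is in place, the descent of smoothness along $\pi$ is essentially formal.
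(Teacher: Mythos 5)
Your second step (descending smoothness from $Y$ to $X$ along the flat, surjective, finitely presented $\pi$) matches the paper, which invokes \cite[Proposition 17.7.7]{MR0238860}. The genuine problem is in your first step, at the ``Nakayama'' promotion of fiberwise equality to equality of local rings. Write $B:=\mathcal{O}_{V(h_1,\dots,h_{s'}),y}$ and let $J\subseteq B$ be the ideal cutting out $\spec(R)$ near $y$. The fiberwise equality you establish says precisely that $J\subseteq \p B$. Nakayama's lemma needs $J=\mathfrak{m}_yJ$, and $J\subseteq\p B\subseteq\mathfrak{m}_yB$ does not give that: for $A$ a discrete valuation ring, $B=A$, $J=\p A$, the closed subscheme $\spec(A/\p)\subseteq\spec(A)$ has the same fiber over $\p$ as the ambient scheme but its local ring at $\p$ is not $A_\p$. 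What is missing is flatness of $R$ over $A$ at $y$: only then does tensoring $0\to J\to B\to B/J\to 0$ with $k(\p)$ remain exact, so that $J\subseteq\p B$ upgrades to $J=\p J\subseteq\mathfrak{m}_yJ$ and Nakayama applies. Your standing hypotheses do not give this for free: geometrically integral fibers do not imply flatness (take $R=A[y]/(ty)$ over a discrete valuation ring with uniformizer $t$; all fibers are integral, yet $y$ is $t$-torsion). Contrary to your closing remark, integrality of the fibers and constancy of the Jacobian rank are \emph{not} the precise inputs needed here. The paper closes exactly this hole at the outset: since $\cox(\overline{X})$ is a domain and $I\cap A[\eta_1,\dots,\eta_N]=(g_1,\dots,g_s)$, the ring $R$ is a domain with $A\hookrightarrow R$, hence torsion-free, hence flat because $A$ is Dedekind --- this is where the Dedekind hypothesis enters, and your argument never uses it.

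A smaller gap: you assert without justification that $\spec(R\otimes_Ak(\p))$ has dimension $\dim\overline{X}+r$; the lower bound requires semicontinuity of fiber dimension (\cite[Lemme 13.1.1]{MR0217086}, as the paper cites), not merely integrality of the fiber. Once flatness of $R$ is in hand, the paper's route is also simpler than yours: it avoids the local complete-intersection comparison entirely, checking instead that each geometric fiber $Y_{\overline{k(\p)}}$ has the right dimension and is regular at its closed points by the Jacobian criterion over the field $k(\p)$, and then concluding smoothness of $Y$ over $A$ from flatness plus fiberwise smoothness via \cite[Proposition 17.8.2]{MR0238860}.
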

\begin{proof}
  We prove first that $Y$ is smooth over $A$. By \cite[Proposition
  17.8.2]{MR0238860}, the scheme $Y$ is smooth over $A$ if and only if
  $Y\to\spec(A)$ is flat and $Y_{k(\p)}$ is smooth over $k(\p)$ for
  all $\p\in\spec(A)$. Since $\cox(\overline\Xt)$ is an integral
  domain (see \cite[Theorem 1.5.1.1]{cox}) and $I\cap
  A[\eta_1,\dots,\eta_N]=(g_1,\dots,g_s)$, the ring $R$ is an integral
  domain. Moreover, $A\to R$ is injective. Thus, $R$ is a flat
  $A$-algebra by \cite[Corollary 1.2.14]{MR1917232} as $A$ is a
  Dedekind domain, and in particular $Y$ is flat over $A$.

  Since $\overline\pi:\overline \Yt\to\overline \Xt$ is a torsor under
  $\G_{m,\overline \Xt}^r$, the fiber $\overline\Yt_x$ of $\overline
  \pi$ at a point $x\in\overline\Xt$ is a trivial $k(x)$-torsor under
  $\G_{m,k(x)}^r$, where $k(x)$ is the residue field of $\overline
  \Xt$ at $x$, (see \cite[Corollary III.4.7 and Lemma
  III.4.10]{MR559531}). Hence, $\overline\Yt_x\cong\G_{m,k(x)}^r$ has
  dimension $r$ for all $x\in \overline \Xt$, and $\overline\Yt$ has
  dimension $\dim \overline \Xt+r$ by \cite[Exercise
  II.3.22]{MR0463157}. Then $\dim\Yt_{k(\p)}\geq\dim \overline \Xt+r$
  for all $\p\in\spec(A)$ by \cite[Lemme 13.1.1]{MR0217086}. Let $\p\in\spec(A)$. By the
  assumptions on the Jacobian matrix and \cite[Theorem I.3.2 (c),
  Theorem I.5.1 and Proposition I.5.2A]{MR0463157}, we conclude that
  $\dim\Yt_{k(\p)}=\dim \overline \Xt+r$ and $\Yt_{\overline{k(\p)}}$
  is regular at all its closed points. Then $Y_{k(\p)}$ is
  smooth over $k(\p)$.

  Therefore, $X$ is smooth over $A$ by
  \cite[Proposition 17.7.7]{MR0238860}, as $Y$ is smooth over $A$ and
  $\pi$ is flat and surjective.
\end{proof}

\begin{prop}\label{tors_projectivity}
Assume that $f_1,\dots,f_m$ have all the same degree $[D]$, which is an ample class in $\pic(\overline\Xt)$.
Let $C_{\overline K}$ and $C_A$ be the ideals of $\overline K[\eta_1,\dots,\eta_N]$ and $A[\eta_1,\dots,\eta_N]$ generated by $f_1,\dots,f_m,g_1,\dots,g_s$. If $\sqrt{C_{\overline K}}\cap A[\eta_1,\dots,\eta_N]=\sqrt{C_A}$ then
$X$ is projective over $A$.
\end{prop}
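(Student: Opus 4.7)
The plan is to realize $\Xt$ as $\proj$ of a finitely generated $\NN$-graded $A$-algebra. Let
\begin{equation*}
S := \bigoplus_{n\geq 0} R_{n[D]},
\end{equation*}
where $R_{n[D]}\subseteq R$ is the degree-$n[D]$ piece for the $\pic(\overline\Xt)\cong\ZZ^r$-grading of $R$ inherited from $\cox(\overline\Xt)$. Each $f_i$ is homogeneous of degree $1$ with respect to this $\NN$-grading on $S$, and the identification $(S[f_i^{-1}])_0=(R[f_i^{-1}])_0=R_i$ matches the affine open $V_i=\spec(R_i)$ of $\Xt$ with the standard affine open $D_+(f_i)\subseteq\proj(S)$. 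These identifications glue to a canonical open immersion $\iota:\Xt\hookrightarrow\proj(S)$ with image $\bigcup_{i=1}^m D_+(f_i)$.

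The main step is to prove that $\iota$ is surjective, i.e., that $S_+\subseteq\sqrt{(f_1,\ldots,f_m)\,S}$. The ampleness of $[D]$ enters here: since $\cox(\overline\Xt)$ is a finitely generated Cox ring, the characteristic space $\overline\Yt$ coincides with the GIT semistable locus in $\spec(\cox(\overline\Xt))$ for the linearization $[D]$, and its complement---cut out set-theoretically by $f_1,\ldots,f_m$ by Construction~\ref{hypot3}---therefore shares its radical with the ideal generated by $\bigoplus_{n\geq 1}\cox(\overline\Xt)_{n[D]}$. Consequently, every homogeneous polynomial of $\overline K[\eta_1,\ldots,\eta_N]$ of degree $n[D]$ with $n\geq 1$ lies in $\sqrt{C_{\overline K}}$. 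Applied to a finite set of $A$-module generators of each graded piece $R_{n[D]}$ (finite because only finitely many monomials in $\eta_1,\ldots,\eta_N$ of a given $\pic(\overline\Xt)$-degree occur modulo $(g_1,\ldots,g_s)$), the hypothesis $\sqrt{C_{\overline K}}\cap A[\eta_1,\ldots,\eta_N]=\sqrt{C_A}$ yields relations $h^k\in C_A$ in $A[\eta_1,\ldots,\eta_N]$. Reducing modulo $(g_1,\ldots,g_s)$ and extracting the homogeneous component of degree $kn[D]$ places these relations in $(f_1,\ldots,f_m)\,S$, which gives the claim.

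It remains to verify that $S$ is a finitely generated $A$-algebra, so that $\proj(S)\cong\Xt$ is projective over $\spec(A)$. Here $S\otimes_A\overline K$ is the section ring of the ample line bundle $\mathcal{O}([D])$ on $\overline\Xt$, which is finitely generated over $\overline K$, and a bounded-degree generating set can be chosen from $A$-module generators of finitely many low-degree pieces $R_{n[D]}$; these then generate $S$ as an $A$-algebra. The principal obstacle lies in the second paragraph: the hypothesis $\sqrt{C_{\overline K}}\cap A[\eta_1,\ldots,\eta_N]=\sqrt{C_A}$ is precisely the device that allows the GIT/ampleness-based description of the unstable locus in $\spec(\cox(\overline\Xt))$ to descend from the geometric fiber down to the integral model.
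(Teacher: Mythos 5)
Your strategy is a legitimate variant of the paper's argument, and the core of it works. The paper first passes to the $d$-th Veronese subring $R'=\bigoplus_{n}R_{nd[D]}$, which is generated in degree one, so that $\proj(R')$ is automatically covered by the $D_+(f'_i)$ for $A$-module generators $f'_i$ of $R_{d[D]}$; it then identifies $\Xt\cong\proj(R')$ via Lemma \ref{gluinglemma}, whose hypothesis $\sqrt{C_A}=\sqrt{C'_A}$ is exactly where the assumption $\sqrt{C_{\overline K}}\cap A[\eta_1,\dots,\eta_N]=\sqrt{C_A}$ and the description of the complement of the characteristic space (\cite[Corollary 1.6.3.6]{cox}) enter. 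You instead work with $S=\bigoplus_n R_{n[D]}$ directly and prove $\bigcup_i D_+(f_i)=\proj(S)$ by showing $S_+\subseteq\sqrt{(f_1,\dots,f_m)S}$. The two ingredients you use there are the same as the paper's, your identification $(S[f_i^{-1}])_0=R_i$ is correct, and the extraction of the degree-$kn[D]$ component is valid because $(f_1,\dots,f_m)R$ is a $\ZZ^r$-homogeneous ideal; your appeal to the ``GIT semistable locus'' should, in this framework, be pinned to \cite[Corollary 1.6.3.6]{cox}, which is the statement actually available and from which the claim for arbitrary degrees $n[D]$, $n\ge 1$, follows.

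The genuine gap is in your final paragraph, the finite generation of $S$ as an $A$-algebra. Knowing that $S\otimes_A\overline K$ (the section ring of the ample class on $\overline\Xt$) is generated in degrees $\le n_0$ over $\overline K$ does \emph{not} imply that $S$ is generated over $A$ by $S_1,\dots,S_{n_0}$: if $T$ is the $A$-subalgebra they generate, the equality $T\otimes_A\overline K=S\otimes_A\overline K$ only shows that $S/T$ is an $A$-torsion module, and a torsion quotient need not vanish (each graded piece could retain, say, a $\ZZ/p$ discrepancy when $A=\ZZ$). The correct justification works directly over $A$: $S$ is the subalgebra of the finitely generated $\ZZ^r$-graded $A$-algebra $R$ supported on the subsemigroup $\NN[D]$, and such subalgebras are finitely generated over $A$ --- this is \cite[Proposition 1.1.2.4]{cox}, which is precisely what the paper invokes at the start of its proof. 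With that substitution your argument closes.
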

\begin{proof}
  Since $R$ is a finitely generated $A$-algebra, the subring
  $\bigoplus_{n\in\mathbb{N}}R_{n[D]}$, where $R_{n[D]}$ denotes the
  degree $n[D]$-part of $R$, is a finitely generated $A$-algebra by
  \cite[Proposition 1.1.2.4]{cox}. By \cite[Lemme 2.1.6]{MR0163909},
  there exists a positive integer $d$ such that
  $R':=\bigoplus_{n\in\mathbb{N}}R_{nd[D]}$ is generated by $R_{d[D]}$
  as $A$-algebra.  Let $f'_1,\dots,f'_{m'}$ be generators of the
  $A$-module $R_{d[D]}$.

  For all $i\in\{1,\dots,m'\}$, denote by $R'_i$ the
  degree-$0$-part of $R'[{f'_i}^{-1}]$, which is generated by
  $f'_1{f'_i}^{-1},\dots,f'_{m'}{f'_i}^{-1}$ and coincides with the
  degree-$0$-part of $R[{f'_i}^{-1}]$. We
  recall that $\proj(R')$ is defined
  as gluing of $\{V'_i:=\spec(R'_i)\}_{1\leq i\leq m'}$ along the
  isomorphisms on principal open subsets induced by the
  identifications $R'_i[f'_i{f'_j}^{-1}]=R'_j[f'_j{f'_i}^{-1}]$ inside
  $R[(f'_if'_j)^{-1}]$ for $1\leq i,j\leq m'$.
  
  Let $C'_A$ be the ideal of $A[\eta_1,\dots,\eta_N]$ generated by
  $f'_1,\dots,f'_{m'}, g_1,\dots,g_s$. Since $\sqrt{C_A}=\sqrt{C_A^d}$
  and $C_A^d\subseteq C'_A$ by construction, there is an inclusion of
  radical ideals $\sqrt{C_A}\subseteq\sqrt{C'_A}$.  By \cite[Corollary
  1.6.3.6]{cox}, the polynomials $f'_1,\dots,f'_{m'}$ and
  $g_1,\dots,g_s$ generate an ideal of $\overline
  K[\eta_1,\dots,\eta_N]$ whose radical is $\sqrt{C_{\overline
      K}}$. Hence, $\sqrt{C'_A}\subseteq\sqrt{C_A}$.  Since
  $(R;f_1,\dots,f_m,f'_1,\dots,f'_{m'})$ satisfies the condition
  \eqref{conditionstar}, there is an isomorphism $\Xt\cong\proj(R')$
  by Lemma \ref{gluinglemma}.
\end{proof}

In the applications that we have in mind, $\overline{X}$ is obtained
from $\PP^2_{\overline{K}}$ by a chain of blowing-ups at closed
points. The next proposition provides some conditions that make
Construction \ref{hypot3} compatible with such blowing-ups. This can
be used to verify the cohomology conditions of Theorem
\ref{universaltorsor}.

In the situation of Construction \ref{hypot3}, we assume that
$\overline\Xt$ is a surface. We assume that the effective divisor on
$\overline\Xt$ corresponding to the section $\eta_i$ is an integral
curve $E_i$ for all $i\in\{1,\dots,N\}$, and that $E_N$ is a
$(-1)$-curve on $\overline\Xt$. Let $b:\overline\Xt\to\overline\Xt'$
be a birational morphism that contracts exactly $E_N$ according to
Castelnuovo's criterion. For every $i\in\{1,\dots,N-1\}$, let
$E_i'=b(E_i)$. Assume that $x=b(E_N)$ belongs to $E_i'$ exactly for
$i\in\{1,2\}$, and $E_1\cap E_2=\emptyset$. Then
$\cox(\overline\Xt')\cong\cox(\overline\Xt)/(\eta_N-1)$ by
\cite[Proposition 2.2]{computing_Cox_rings}, and the canonical
pull-back of sections is defined by
\begin{equation*}
b^*:\cox(\overline\Xt')\to\cox(\overline\Xt),\quad \eta_i\mapsto\begin{cases}\eta_i\eta_N &\text{ if $i\in\{1,2\}$;}\\ \eta_i&\text{ otherwise.}\end{cases}
\end{equation*}
Let $\overline\Yt'\subseteq\spec(\cox(\overline\Xt'))$ be the
characteristic space of $\overline\Xt'$, and let
$f_1',\dots,f_{m'}'\in \overline K[\eta_1,\dots,\eta_{N-1}]$ be monic
monomials that define the closed subset of
$\spec(\cox(\overline\Xt'))$ complement to $\overline\Yt'$. Let $I'$
be the ideal of $\overline K[\eta_1,\dots,\eta_N]$ generated by
$g_1,\dots,g_s$ and $\eta_N-1$. Assume that $I'\cap
A[\eta_1,\dots,\eta_N]=(g_1,\dots,g_s,\eta_N-1)$. Let
$R'=R/(\eta_N-1)$, and let $\Yt'\to \Xt'$ be the $A$-model of the
universal torsor $\overline\Yt'\to\overline\Xt'$ defined in
Construction \ref{hypot3}.  Let $C_A$ and $C'_A$ be the ideals of
$A[\eta_1,\dots,\eta_N]$ generated by $f_1,\dots,f_m,g_1,\dots,g_s$,
and
$b^*(f'_1)\eta_1,\dots,b^*(f_{m'}')\eta_1,b^*(f'_1)\eta_2,\dots,b^*(f_{m'}')\eta_2,g_1,\dots,g_s$
respectively.  We assume that $\sqrt{C_A}=\sqrt{C'_A}$, that
\eqref{picardgeneration} holds for $(R;f_1,\dots,f_m)$ and
$(R';f'_1,\dots,f'_{m'})$, and that $(\eta_1,\eta_2)$ is a prime ideal
in $\cox(\overline\Xt')$.

\begin{prop}\label{under_blowing-up}
  Under the hypotheses listed above, $\Xt$ is a blowing-up of $\Xt'$
  with center the closed subscheme defined by $\eta_1,\eta_2$.
\end{prop}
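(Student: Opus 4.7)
The plan is to construct an explicit morphism $\beta:\Xt\to\Xt'$ from the Cox ring map $b^*$ and verify, via the universal property of blowing-ups, that it realizes $\Xt$ as the blowing-up of $\Xt'$ along $W:=V(\eta_1,\eta_2)$.

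First, I construct $\beta$. The pullback $b^*$ restricts to an $A$-algebra homomorphism $b^\sharp:R'\to R$ sending $\eta_i\mapsto\eta_i\eta_N$ for $i\in\{1,2\}$ and $\eta_i\mapsto\eta_i$ otherwise; this is well defined on $R'=R/(\eta_N-1)$ by the hypothesis $I'\cap A[\eta_1,\ldots,\eta_N]=(g_1,\ldots,g_s,\eta_N-1)$ together with the ring homomorphism property of $b^*$ on Cox rings. Under the decomposition $\pic(\overline\Xt)=b^*\pic(\overline\Xt')\oplus\ZZ[E_N]$ (with $b^*[E_i']=[E_i]+[E_N]$ for $i\in\{1,2\}$ and $b^*[E_i']=[E_i]$ otherwise), the map $b^\sharp$ is graded, so it induces homomorphisms of the degree-$0$ parts on matching localizations, gluing to a morphism $\beta:\Xt\to\Xt'$.

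Second, I verify the universal property of blowing-up. The ideal sheaf $\mathcal{I}_W$ is locally generated by $\eta_1,\eta_2$, so $\beta^{-1}\mathcal{I}_W\cdot\mathcal{O}_{\Xt}$ is locally generated by $\eta_1\eta_N$ and $\eta_2\eta_N$. The hypothesis $\sqrt{C_A}=\sqrt{C_A'}$ forces $V(\eta_1,\eta_2)\cap\Yt=\emptyset$ as closed subsets of $\spec R$, whence $(\eta_1,\eta_2)\cdot R[f_i^{-1}]=R[f_i^{-1}]$ for every $i$. Therefore $(\eta_1\eta_N,\eta_2\eta_N)=(\eta_N)$ on each chart of $\Xt$, so $\beta^{-1}\mathcal{I}_W\cdot\mathcal{O}_{\Xt}$ is the effective Cartier divisor cut out by $\eta_N$. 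By the universal property of blowing-ups, $\beta$ factors uniquely as $\Xt\xrightarrow{\tilde\beta}\tilde\Xt'\xrightarrow{\sigma}\Xt'$, where $\sigma$ is the blowing-up of $\Xt'$ along $W$.

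Finally, I show $\tilde\beta$ is an isomorphism. The blowing-up $\tilde\Xt'$ has two standard affine charts on which $\eta_2/\eta_1$, respectively $\eta_1/\eta_2$, becomes regular. Using Lemma \ref{gluinglemma} to replace the family $\{f_i\}$ by $\{b^*(f_j')\eta_k\}_{j,k}$ (permitted precisely by $\sqrt{C_A}=\sqrt{C_A'}$), the scheme $\Xt$ is covered by pairs of affine opens on which $\eta_1$ or $\eta_2$ is invertible, each lying over the corresponding chart of $\tilde\Xt'$. On each such chart, explicit ring computation identifies the rings of functions: the blowing-up variable $\eta_2/\eta_1$ (resp.\ $\eta_1/\eta_2$) arises as the ratio $b^\sharp(\eta_2')/b^\sharp(\eta_1')=\eta_2/\eta_1$ in the localization, and the primeness of $(\eta_1,\eta_2)$ in $\cox(\overline\Xt')$ ensures that the exceptional divisor is integral, identified with $V(\eta_N)\subset\Xt$. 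The main obstacle is precisely this last step: verifying the isomorphism at the level of local rings with carefully tracked gradings, for which the reformulation via Lemma \ref{gluinglemma} is essential to produce a tractable, compatible affine cover.
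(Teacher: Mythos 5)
Your overall route is the same as the paper's: everything reduces to identifying, chart by chart over $\Xt'$, the degree-$0$ rings of $R$ localized at the monomials $b^*(f')\eta_j$ with the standard affine charts of the blow-up, using Lemma \ref{gluinglemma} to pass from the cover $\{f_i\}$ to the cover $\{b^*(f'_j)\eta_k\}$. The universal-property wrapper you add on top is logically fine (your deduction that $\sqrt{C_A}=\sqrt{C'_A}$ forces $V(\eta_1,\eta_2)\cap \Yt=\emptyset$, hence $(\eta_1,\eta_2)R[f_i^{-1}]=R[f_i^{-1}]$, is correct, and $R$ is a domain so the inverse-image ideal is invertible once you interpret $\eta_N$ correctly via \eqref{picardgeneration} as a local equation), but it buys you nothing: to show $\tilde\beta$ is an isomorphism you must still perform exactly the local identification that the direct argument consists of. The paper simply does that identification and glues.

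The genuine issue is that this local identification is precisely the step you defer ("explicit ring computation identifies the rings of functions ... the main obstacle is precisely this last step"), and it is not routine bookkeeping: one must show that $b^*$ induces an isomorphism $R'_0[\eta_ih_i(\eta_jh_j)^{-1}]\to R[(b^*(f')\eta_j)^{-1}]_0$ (for $\{i,j\}=\{1,2\}$, with $h_j\in R'[f'^{-1}]^\times$ homogeneous of degree $-\deg\eta_j$, whose existence is exactly hypothesis \eqref{picardgeneration} for $(R';f'_1,\dots,f'_{m'})$), and that its inverse is induced by the quotient map $R\to R'=R/(\eta_N-1)$. Without exhibiting this inverse the claim that the rings match is unsupported. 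You also omit the case distinction the paper makes: when $f'\in(\eta_1,\eta_2)$ the center misses $\Xt'_{f'}$, the two opens $\Xt_{f\eta_1},\Xt_{f\eta_2}$ are not the two Rees charts but together equal $\spec(R[f^{-1}]_0)\cong \Xt'_{f'}$ (here one uses that $f=b^*(f')$ is a multiple of $\eta_N$, so $\eta_N$ is already invertible on these charts); your description of "pairs of affine opens on which $\eta_1$ or $\eta_2$ is invertible, each lying over the corresponding chart of $\tilde\Xt'$" only fits the case $f'\notin(\eta_1,\eta_2)$. Finally, a small inaccuracy: $(\eta_1\eta_N,\eta_2\eta_N)$ is not literally an ideal of the charts $R_i$ of $\Xt$ (these elements have nonzero degree); all such statements must be made after multiplying by homogeneous invertible elements of complementary degree, which is where conditions \eqref{conditionstar} and \eqref{picardgeneration} enter.
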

\begin{proof}
  Let $f'\in\{f_1',\dots,f'_{m'}\}$ and $f:=b^*(f')$. Since
  $\pic\overline\Xt\cong\pic(\overline\Xt')\oplus\mathbb{Z}[E_N]$ and
  $\deg\eta_j=\deg b^*\eta_j -[E_N]$ for $j\in\{1,2\}$, the degrees of
  the homogeneous invertible elements of $R[(f\eta_j)^{-1}]$ generate
  $\pic(\overline\Xt)$ for $j\in\{1,2\}$. Hence,
  \eqref{picardgeneration} holds for
  $(R;b^*(f'_1)\eta_1,\dots,b^*(f_{m'}')\eta_1,b^*(f'_1)\eta_2,\dots,b^*(f_{m'}')\eta_2)$.
  Let $\Xt'_{f'}$ be the spectrum of the degree-0-part $R'_0$ of the
  ring $R'[{f'}^{-1}]$, and let $\Xt_{f\eta_j}$ be the spectrum of the
  degree-0-part $R[(f\eta_j)^{-1}]_0$ of the ring $R[(f\eta_j)^{-1}]$
  for $j\in\{1,2\}$.  Let $\overline\Xt'_{f'}$ be the complement in
  $\overline\Xt'$ of the support of the effective divisor
  corresponding to the section $f'$, analogously we define
  $\overline\Xt_{f}$ and $\overline\Xt_{f\eta_j}$ for $j\in\{1,2\}$.
  By \cite[Corollary 1.6.3.5]{cox},
  $\overline\Xt'_{f'}=\spec(R'_0\otimes_A\overline K)$.  Since
  $E_1\cap E_2=\emptyset$ in $\overline\Xt$,
  $\overline\Xt_{f}=\overline\Xt_{f\eta_1}\cup\overline\Xt_{f\eta_2}$.

  Let $h_1,h_2\in R'[f'^{-1}]^\times$ be homogeneous elements of
  degrees $-\deg\eta_1,-\deg\eta_2$ respectively. Then
  $(\eta_1h_1,\eta_2h_2)$ is the ideal of $R'_0\otimes_A\overline K$
  defining
  $\{x\}\cap\overline\Xt'_{f'}$.

  If $f'\in (\eta_1,\eta_2)$ in $R'$, then
  $x\notin\overline\Xt'_{f'}$, and $b$ induces an isomorphism between
  $\overline\Xt_{f}=b^{-1}(\overline\Xt'_{f'})$ and
  $\overline\Xt'_{f'}$. That is, $b^*$ induces an isomorphism between
  the degree-0-part of $\cox(\overline\Xt')[{f'}^{-1}]$ and the
  degree-0-part of $\cox(\overline\Xt)[f^{-1}]$ that descends to an
  isomorphism between $R'_0$ and the degree-0-part $R[f^{-1}]_0$ of
  $R[f^{-1}]$ with the quotient morphism as inverse.  Moreover,
  $\Xt_{f\eta_j}$ is the spectrum of the degree-0-part of
  $R[(f\eta_j\eta_N)^{-1}]$ for $j\in\{1,2\}$, as $f$ is a multiple of
  $\eta_N$ in $R$. Then
  $\Xt_{f\eta_1}\cup\Xt_{f\eta_2}=\spec(R[f^{-1}]_0)$, as
  $1\in(\eta_1\eta_Nb^*(h_1),\eta_2\eta_Nb^*(h_2))$ in $R[f^{-1}]_0$.

If $f'\notin (\eta_1,\eta_2)$ in $R'$, then
$x\in\overline\Xt'_{f'}$, and $\overline\Xt_f=b^{-1}(\overline\Xt'_{f'})$ is the
blowing-up of $\overline\Xt'_{f'}$ with center $x$.
The blowing-up of $\Xt'_{f'}$ with center $V(\eta_1h_1,\eta_2h_2)$ is
covered by two open subsets that are the spectra of the degree-0-parts
of the localizations of $\bigoplus_{d\geq0}(\eta_1h_1,\eta_2h_2)^d$ at
its degree-1-elements $\eta_jh_j$ for $j\in\{1,2\}$,
respectively. Such an open covering is isomorphic to the gluing of the
spectra of $R'_0[\eta_ih_i(\eta_jh_j)^{-1}]$, for $\{i,j\}=\{1,2\}$.
Since, for $\{i,j\}=\{1,2\}$, $b^*$ induces an isomorphism
$R'_0[\eta_ih_i(\eta_jh_j)^{-1}]\to R[(f\eta_j)^{-1}]_0$ with the
quotient morphism as inverse, the gluing of $\Xt_{f\eta_1}$ and
$\Xt_{f\eta_2}$ is the blowing-up of $\Xt'_{f'}$ with center
$V(\eta_1h_1,\eta_2h_2)$.

By Lemma \ref{gluinglemma}, the scheme $\Xt$ is isomorphic to the gluing of $\Xt_{b^*(f')\eta_j}$ for $f'\in\{f'_1,\dots,f'_{m'}\}$ and $j\in\{1,2\}$. Hence, it is a blowing-up of $\Xt'$ with center the closed subscheme  defined by $\eta_1,\eta_2$.
\end{proof}

\section{Parameterization of rational points on $S$}\label{sec:passage}

We recall that $S$ is the anticanonically embedded del Pezzo surface
of degree $4$ and type $\Athree+\Aone$ given by (\ref{eq:def_S}). Let
$\overline K$ be an algebraic closure of $K$, and $\tS_{\overline K}$
the minimal desingularization of $S_{\overline K}$ as in
\cite{math.AG/0604194}.

The aim of this section is to apply Theorem \ref{thm:param} to an
$\Ok$-model of a universal torsor of $\tS_{\overline K}$ obtained by
Construction \ref{hypot3} in order to get a parameterization of 
the set of $K$-rational points on the open subset $U$ complement of the lines in $S$
via integral points on twisted torsors. An elementary application of
the results in \cite{arXiv:1302.6151} would lead to the same
parameterization.

We start by describing the universal torsor of $\tS_{\overline K}$
inside the spectrum of its Cox ring.  By the data provided in \cite[\S
3.4]{math.AG/0604194}, $\tS_{\overline K}$ is a blowing-up of
$\PP^2_{\overline K}$ in five points in almost general position with
Picard group $\pic(\tS_{\overline K})\cong\Z^6\cong N^1(\tS_{\overline
  K})$, and the Cox ring of $\tS_{\overline K}$ is a
$\pic(\tS_{\overline K})$-graded $\overline K$-algebra with nine
generators and one homogeneous relation:
\begin{equation*}
\cox(\tS_{\overline K})=\overline K[\eta_1,\dots,\eta_9]/(\eta_1\eta_9+\eta_2\eta_8+\eta_3\eta_4^2\eta_5^3\eta_7).
\end{equation*}
 For $i\in\{1,\dots,9\}$, the degree of $\eta_i$ is
$[E_i]\in\pic(\tS_{\overline K})$, 
where  $[E_i]$ are the divisor classes listed below. 
Let $\ell_0,\dots,\ell_5$ be the basis of $\pic(\tS_{\overline K})$ given in \cite{math.AG/0604194}. Then  the intersection form is defined by $\ell_0^2=1$, $\ell_i^2=-1$ for $1\leq i\leq 5$, and $\ell_i.\ell_j=0$ for all $0\leq i<j\leq5$. The classes

\begin{equation*}
 [E_1]=\ell_5,\quad
 [E_2]=\ell_4,\quad
 [E_5]=\ell_3
\end{equation*}
are the $(-1)$-curves on $\tS_{\overline K}$,

\begin{equation*}
[E_3]=\ell_1-\ell_2, \quad 
[E_4]=\ell_2-\ell_3,\quad
[E_6]=\ell_0-\ell_1-\ell_4-\ell_5,\quad
[E_7]=\ell_0-\ell_1-\ell_2-\ell_3
\end{equation*}
are the $(-2)$-curves on $\tS_{\overline K}$, and
 \begin{equation*}
[E_8]=\ell_0-\ell_4,\quad
[E_9]=\ell_0-\ell_5.
\end{equation*}
 
\begin{figure}[ht]
  \centering
  \[\xymatrix@R=0.05in @C=0.05in{E_9 \ar@{-}[dr] \ar@{-}[dd] \ar@{-}[rrrr]& & & & \li{1} \ar@{-}[dr]\\
    & \ex{7} \ar@{-}[r] & \li{5} \ar@{-}[r] & \ex{4} \ar@{-}[r] & \ex{3} \ar@{-}[r] & \ex{6}\\
    E_8 \ar@{-}[ur] \ar@{-}[rrrr] & & & & \li{2} \ar@{-}[ur]}\]
  \caption{Configuration of curves on $\tS_{\overline K}$.}
  \label{fig:A3+A1_dynkin}
\end{figure}

The Dynkin diagram in Figure \ref{fig:A3+A1_dynkin} encodes the configuration of curves on $\tS_{\overline K}$. For any $i\neq j$ the number of edges between $E_i$ and $E_j$ is the intersection number $[E_i].[E_j]$. 

To construct an $\Ok$-model of the universal torsor $\overline\tT\to\tS_{\overline K}$ which is a universal torsor over a projective $\Ok$-model of $\tS_{\overline K}$, we consider the following monomials. For all $1\leq i<j\leq9$, let $\mA_{i,j}:=\prod_{l\in\{1,\dots,9\}\smallsetminus\{i,j\}}\eta_l$, and  $\mA_{7,8,9}:=\eta_1\eta_2\eta_3\eta_4\eta_5\eta_6$.
Let $J$ be the ideal of $\cox(\tS_{\overline K})$ generated by the following monomials:
\begin{equation}\label{firstgenerators}
\mA_{7,8,9}, \quad \mA_{1,6},\quad  \mA_{1,9}, \quad \mA_{2,6},\quad  \mA_{2,8},\quad  \mA_{3,4},\quad \mA_{3,6},\quad  \mA_{4,5}, \quad \mA_{5,7},
\end{equation}
which are obtained from the Dynkin diagram in Figure \ref{fig:A3+A1_dynkin} by considering the subsets of vertices that are pairwise connected by at least one edge.

Since $E_7\cap E_8\cap E_9\neq\emptyset$ by \cite{math.AG/0604194}, the open subscheme $\overline\tT$ complement to $V(J)$ in $\spec(\cox(\tS_{\overline K}))$ is a universal torsor of $\tS_{\overline K}$ by \cite[Remark 6]{MR2783385}.

We denote by $f_1,\dots,f_9$  the monomials in \eqref{firstgenerators}.
Let
\begin{equation*}
R:=\Ok[\eta_1,\dots,\eta_9]/(\eta_1\eta_9+\eta_2\eta_8+\eta_3\eta_4^2\eta_5^3\eta_7)
\end{equation*}
and let $\TZ\to\tSZ$ be the $\Ok$-model of the universal torsor
$\overline\tT\to\tS_{\overline K}$ defined by $f_1,\dots,f_9$ in
Construction \ref{hypot3}.  Some properties of this model are
described in the following proposition, which is an application of the
results of Section \ref{sec:universal_torsors}.

\begin{prop}\label{model_properties}
 \begin{enumerate}[label=(\roman{*}), ref=\emph{(\roman{*})}]
 \item The scheme $\tSZ$ is smooth, projective, and with geometrically
   integral fibers over $\Ok$.\label{smooth_geom_integral}
 \item For every prime ideal $\p$ of $\Ok$, the fibre $\tSZ_{k(\p)}$
   is obtained from $\PP^2_{k(\p)}$ by a chain of $5$ blowing-ups at
   $k(\p)$-points.\label{splitness}
 \item The morphism $\TZ\to\tSZ$ is a universal torsor under
   $\G_{m,\tSZ}^6$.
 \end{enumerate}
\end{prop}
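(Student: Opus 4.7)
The plan is to apply Theorem \ref{universaltorsor} together with the supporting Propositions \ref{geomintegralfibers}, \ref{smoothnesscriterion}, \ref{tors_projectivity}, and \ref{under_blowing-up}. The central combinatorial ingredient, used in several places, is condition \eqref{picardgeneration}: for each $i\in\{1,\dots,9\}$ the classes $[E_j]\in\pic(\tS_{\overline K})$ of the variables $\eta_j$ appearing in $f_i$ must generate $\pic(\tS_{\overline K})\cong\ZZ^6$. I would verify this by a case-by-case computation in the basis $\ell_0,\dots,\ell_5$; since each $f_i$ omits at most three of the nine $\eta_j$ and the explicit relations for $[E_1],\dots,[E_9]$ recorded in this section quickly reduce to that basis, there are always enough classes available.

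For part (i), geometrically integral fibers follow immediately from Proposition \ref{geomintegralfibers}, since the relation $\eta_1\eta_9+\eta_2\eta_8+\eta_3\eta_4^2\eta_5^3\eta_7$ is linear in $\eta_9$ with coefficient $\eta_1$ and is therefore absolutely irreducible over every residue field. Smoothness follows from Proposition \ref{smoothnesscriterion} once \eqref{picardgeneration} gives flatness of $\pi$: among the partial derivatives of $g$ one finds $\eta_9,\eta_8,\eta_2,\eta_1$, and the simultaneous vanishing $\eta_1=\eta_2=\eta_8=\eta_9=0$ forces every $f_i$ to vanish as well, because each of the monomials in \eqref{firstgenerators} contains $\eta_1$ or $\eta_2$ as a factor. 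Projectivity is the most delicate ingredient here, because the $f_i$ do not share a common degree; I would replace them by a set $f_1',\dots,f_{m'}'$ of monomials in $R$ of a common ample degree and cutting out the same radical ideal over $\Ok$, and then apply Proposition \ref{tors_projectivity} together with Lemma \ref{gluinglemma}. A careful choice of the ample class is needed, since $-K_{\tS_{\overline K}}$ is itself only nef and big (the $(-2)$-curves $E_3,E_4,E_6,E_7$ are orthogonal to it).

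For part (ii), I would iterate Proposition \ref{under_blowing-up} five times, contracting the $(-1)$-curves $E_5,E_4,E_3,E_2,E_1$ in that order and relabeling indices at each step. From Figure \ref{fig:A3+A1_dynkin}, $E_5$ meets exactly $E_4$ and $E_7$ with $E_4\cap E_7=\emptyset$; after contracting $E_5$, the image of $E_4$ is a $(-1)$-curve meeting only the disjoint images of $E_3$ and $E_7$; and so on, until one reaches $\PP^2_{k(\p)}$ after five blow-downs. At each stage I would verify the hypotheses of Proposition \ref{under_blowing-up}: the identification of the new Cox ring via \cite{computing_Cox_rings}, the radical equality $\sqrt{C_A}=\sqrt{C'_A}$, and the primality of $(\eta_1,\eta_2)$ (after the relabeling) in the relevant Cox ring. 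The $k(\p)$-rationality of the five blown-up points is automatic because the entire configuration of curves is defined over $K$. This is the main obstacle in terms of effort: five iterations of three technical conditions each, with renaming at every step.

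Finally, part (iii) is a direct application of Theorem \ref{universaltorsor}: condition \eqref{picardgeneration} has been verified; $\tSZ(\Ok)$ is nonempty because $\tS$ carries $K$-rational points (for instance, any $K$-rational smooth point of $S$ lifts uniquely) and $\tSZ\to\spec(\Ok)$ is proper by~(i); the remaining structural hypotheses all come from~(i); the equality $\pic(\tSZ_K)=\pic(\tS_{\overline K})$ holds because $\tS$ is already split over $K$; and the cohomology vanishing $H^i(\tSZ_{k(\p)},\mathcal{O}_{\tSZ_{k(\p)}})=0$ for $i\in\{1,2\}$ is automatic for the smooth rational surface $\tSZ_{k(\p)}$, which is an iterated blow-up of $\PP^2_{k(\p)}$ by~(ii).
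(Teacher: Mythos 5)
Your proposal is correct in outline, but for the structural parts (i) and (ii) it takes a genuinely different route from the paper. For part (ii) the paper applies Proposition \ref{under_blowing-up} only \emph{once}: it contracts the single $(-1)$-curve $E_1$ to land on a split toric variety $S'$, takes for $\SZ'$ Salberger's toric $\Ok$-model attached to the fan $\Delta$, reads off the monomials $f_1',\dots,f_7'$ and condition \eqref{picardgeneration} from the maximal cones (each affine toric chart has trivial Picard group), and then absorbs the remaining four blow-ups into the statement that $\SZ'$ is an iterated toric blow-up of $\PP^2_{\Ok}$. Your plan of iterating Proposition \ref{under_blowing-up} five times along the contraction sequence $E_5, E_4, E_3, E_2, E_1$ is geometrically sound (at each stage the curve being contracted meets exactly two others, which are disjoint, as you note), but it forces you to redo Construction \ref{hypot3}, condition \eqref{picardgeneration}, and the radical equality $\sqrt{C_A}=\sqrt{C'_A}$ for four intermediate non-toric models, plus the base case identifying the Construction-\ref{hypot3} model of $\PP^2$ with $\PP^2_{\Ok}$ — considerably more verification than the paper's single application, in exchange for not invoking Salberger's toric results. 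For part (i), the paper's main proof extracts projectivity and geometric integrality of the fibers directly from the blow-up description (a blow-up of a projective $\Ok$-scheme along a flat closed center is projective with the expected fibers), whereas your route via Propositions \ref{geomintegralfibers} and \ref{tors_projectivity} is exactly the alternative the authors sketch in the Remark following the proposition; note that their explicit ample class there is $[D]=9\ell_0-3\ell_1-2\ell_2-\ell_3-\ell_4-\ell_5$ (checked by Nakai--Moishezon), so the ``careful choice'' you defer is a real but manageable computation. Your smoothness argument (the partials $\eta_1,\eta_2,\eta_8,\eta_9$ of $g$ cannot all vanish on $\TZ$ since every $f_i$ lies in $(\eta_1,\eta_2)$) and your derivation of (iii) from Theorem \ref{universaltorsor} coincide with the paper's.
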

\begin{proof}

We start by proving that the model $\tSZ$ is obtained from $\PP^2_{\Ok}$ by a chain of five blowing-ups.
By the data provided in \cite{math.AG/0604194}, $\tS_{\overline K}$ is
a blowing-up of a split toric ${\overline K}$-variety $S_{\overline
  K}'$ at a closed point and with exceptional divisor corresponding to
the section $\eta_1\in\cox(\tS_{\overline K})$. The center of the
blowing-up $b:\tS_{\overline K}\to S_{\overline K}'$ is the
intersection of the prime divisors of $S'_{\overline K}$ corresponding
to the sections $\eta_6,\eta_9\in\cox(S'_{\overline K})$ under the
identification
  \begin{equation*}
  \cox(S'_{\overline K})\cong \cox(\tS_{\overline K})/(\eta_1-1)\cong \overline K[\eta_2,\dots,\eta_9]/(\eta_9+\eta_2\eta_8+\eta_3\eta_4^2\eta_5^3\eta_7)
   \end{equation*}
   provided by \cite[Proposition 2.2]{computing_Cox_rings}. The rays
   of the fan $\Delta$ defining $S_{\overline K}'$ correspond to the
   generators $\eta_2,\dots,\eta_8$ of $\cox(S'_{\overline K})$. We
   denote them by $\rho_2,\dots,\rho_8$ (see Figure \ref{fig:fan}).
\begin{figure}[ht]
 \begin{tikzpicture}
    \draw[step=1cm,gray,thin,dotted] (-1,-1) grid (2,1);
    \begin{scope}[thick]
      \draw(0,0) -- (1,0) node[anchor=west] {$\rho_7$};
      \draw(0,0) -- (0,1) node[anchor=west] {$\rho_8$};
      \draw(0,0) -- (-1,0) node[anchor=east] {$\rho_2$};
      \draw(0,0) -- (-1,-1) node[anchor=east] {$\rho_6$};
      \draw(0,0) -- (0,-1) node[anchor=west] {$\rho_3$};
      \draw(0,0) -- (1,-1) node[anchor=west] {$\rho_4$};
      \draw(0,0) -- (2,-1) node[anchor=west] {$\rho_5$};
    \end{scope}
  \end{tikzpicture}
\caption{The fan $\Delta$.}
  \label{fig:fan}
\end{figure}
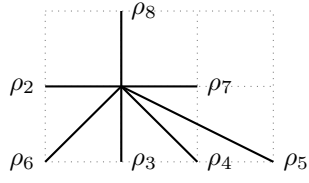
Let $\SZ'$ be the $\Ok$-toric scheme defined by $\Delta$ as in \cite[Remarks 8.6 (b)]{MR1679841}, and
\begin{equation*}
R':=\Ok[\eta_2,\dots,\eta_9]/(\eta_9+\eta_2\eta_8+\eta_3\eta_4^2\eta_5^3\eta_7)\cong\Ok[\eta_2,\dots,\eta_8].
\end{equation*}
The fan $\Delta$ has 7 maximal cones. For $1\leq i\leq 7$, let $f_i'$
be the product $\prod\eta_j$ running over the indices
$j\in\{2,\dots,8\}$ such that the ray $\rho_j$ does not belong to the
$i$-th maximal cone. 
By \cite[\S8]{MR1679841}, the monomials $f_1',\dots,f_7'$ define the
complement of the universal torsor of $S'_{\overline K}$ contained in
$\spec(\cox(S'_{\overline K}))$. 

For every $i\in\{1,\dots,7\}$, the open affine toric subvariety of
$S'_{\overline K}$ corresponding to the $i$-th maximal cone has
trivial Picard group, and its complement consists of the effective
divisor defined by the section $f_i'$. Hence, $(R';f'_1,\dots,f'_7)$
satisfies \eqref{picardgeneration} by Remark
\ref{geometric_interpretation}, and $\SZ'$ is the $\Ok$-model of $S'$
defined by Construction \ref{hypot3}.

Recall the notation before Proposition \ref{under_blowing-up}. Since
the radical of the ideal of $\Ok[\eta_1,\dots,\eta_9]$ generated by
$f_1,\dots,f_9$ is the radical of the ideal generated by
${b}^*(f'_1)\eta_6,\dots,{b}^*(f'_7)\eta_6,{b}^*(f'_1)\eta_9,\dots,{b}^*(f'_7)\eta_9$,
the model $\tSZ$ is a blowing-up of $\SZ'$ with center the closed
subscheme defined by $\eta_6,\eta_9$ by Proposition
\ref{under_blowing-up}.
We observe that  $\SZ'$ is obtained from $\PP^2_{\Ok}$ by a chain of four toric blowing-ups, which correspond to adding the rays $\rho_2,\rho_3,\rho_4,\rho_5$, respectively, to the fan of $\PP^2_{\Ok}$ with rays $\rho_6,\rho_7,\rho_8$. Hence, the model $\tSZ$ is 
obtained from $\PP^2_{\Ok}$ by a chain of five blowing-ups, and it is projective (cf.~\cite[Proposition 13.96]{MR2675155}.

To prove \ref{splitness}, let $\p$ be a prime ideal of $\Ok$, and
$\overline{k(\p)}$ an algebraic closure of the residue field $k(\p)$.
Since the closed subscheme of $\SZ'$ defined by $\eta_6,\eta_9$ 
is flat over $\OO_K$, the variety
$\tSZ_{k(\p)}$ is the blowing-up of $\SZ'_{k(\p)}$ in the
$k(\p)$-point defined by $\eta_6, \eta_9$. Moreover, $\SZ'_{k(\p)}$ is
the split toric $k(\p)$-variety defined by $\Delta$, which is obtained
from $\PP^2_{k(\p)}$ by four toric blowing-ups at
$k(\p)$-points. Therefore,
$\SZ$ has geometrically integral fibers over $\Ok$, and 
$H^i(\tSZ_{k(\p)},\mathcal{O}_{\tSZ_{k(\p)}})=0$ for $i\in\{1,2\}$ by
\cite[Proposition V.3.4]{MR0463157} and
\cite[p.~74]{MR1234037}.

  Simple computations show that the degrees of the variables $\eta_j$
  appearing in $f_i$ generate $\pic(\tS_{\overline K})$ for all
  $i\in\{1,\dots,9\}$. Since these $\eta_j$ are invertible in
  $R[{f_i}^{-1}]$, the condition \eqref{picardgeneration} holds for
  $(R;f_1,\dots,f_9)$.

  The Jacobian matrix $(\partial g/\partial\eta_i)_{1\leq i\leq 9}$ is
\begin{equation*}
(\eta_9, \ \eta_8, \ \eta_4^2\eta_5^3\eta_7, \ 2\eta_3\eta_4\eta_5^3\eta_7, \ 3\eta_3\eta_4^2\eta_5^2\eta_7, \ 0, \ \eta_3\eta_4^2\eta_5^3, \ \eta_2, \ \eta_1),
\end{equation*}
and has rank 1 on $\TZ(\overline{k(\p)})$ because the monomials
$f_1,\dots,f_9$ belong to the ideal generated by $\eta_1,\eta_2$. Then
$\tSZ$ is smooth by Proposition \ref{smoothnesscriterion}. Hence, all the hypotheses of Theorem \ref{universaltorsor} are satisfied.
\end{proof}

\begin{Remark}
The projectiveness and integrality of the fibers of the $\Ok$-model $\tSZ$ can be verified also using Propositions \ref{geomintegralfibers} and \ref{tors_projectivity}, which are not restricted to the case of surfaces. To show  a concrete application, we verify their hypotheses for our surface $\tS$.
Since $g:=\eta_1\eta_9+\eta_2\eta_8+\eta_3\eta_4^2\eta_5^3\eta_7$ is
  irreducible in $\overline{k(\p)}[\eta_1,\dots,\eta_9]$ for all prime
  ideals $\p$ of $\Ok$, the $\Ok$-scheme $\tSZ$ has geometrically
  integral fibers by Proposition \ref{geomintegralfibers}. 
To verify the hypotheses of Proposition \ref{tors_projectivity}, we
define $C'_{\overline K}$ and $C'_{\Ok}$ as the ideals of $\overline
K[\eta_1,\dots,\eta_9]$ and $\Ok[\eta_1,\dots,\eta_9]$, respectively, generated by
 $f_1,\dots,f_9$ and $g$. One can check
that $C'_{\overline K}$ has a Gr\"obner basis
$\{h_1,\dots,h_l\}\subseteq C'_{\Ok}$ consisting of polynomials whose
coefficients are all equal to $1$. This implies that $C'_{\overline
  K}\cap\Ok[\eta_1,\dots,\eta_9]=(h_1,\dots,h_l)=C'_{\Ok}$. 
According to  \cite[\S3.4]{math.AG/0604194}, the surface $\tS_{\overline K}$ is 
a blowing-up of $\PP^2_{\overline K}$ in five points. Such a description of $\tS_{\overline K}$ 
allows us to determine the irreducible curves and the intersection pairing on 
$\tS_{\overline K}$ (see \cite[\S V.3]{MR0463157}),  and to show that the divisor class 
\begin{equation*}
[D]:=9\ell_0-3\ell_1-2\ell_2-\ell_3-\ell_4-\ell_5 
\end{equation*}
is ample by the Nakai-Moishezon criterion.
Let $C_{\overline K}$ and $C_{\Ok}$ be the ideals of $\overline
K[\eta_1,\dots,\eta_9]$ and $\Ok[\eta_1,\dots,\eta_9]$, respectively, generated by $g$ and by the following monomials of degree $[D]$: 
\begin{gather}
\eta_1^8\eta_2^8\eta_3^6\eta_4^4\eta_5^3\eta_6^9, 
\eta_2^3\eta_3\eta_4^3\eta_5^6\eta_7^4\eta_8^4\eta_9, 
\eta_2^5\eta_3\eta_4^2\eta_5^4\eta_6\eta_7^3\eta_8^5, 
\eta_1^3\eta_3\eta_4^3\eta_5^6\eta_7^4\eta_8\eta_9^4,
\eta_1^5\eta_3\eta_4^2\eta_5^4\eta_6\eta_7^3\eta_9^5,\nonumber\\
\eta_1^5\eta_2\eta_5\eta_6\eta_7^2\eta_8\eta_9^5,
\eta_1^2\eta_2^2\eta_4\eta_5^3\eta_7^3\eta_8^3\eta_9^3,
\eta_1^6\eta_2^3\eta_3\eta_6^3\eta_7\eta_8\eta_9^4,
\eta_1^7\eta_2^6\eta_3^3\eta_4\eta_6^6\eta_8\eta_9^2.\label{monomials_second_generators}
\end{gather} 
We observe that these monomials are obtained from $f_1,\dots,f_9$ by increasing the exponents of some variables. Hence, the $\Ok$-model of the universal torsor
$\overline\tT\to\tS_{\overline K}$ defined by the monomials \eqref{monomials_second_generators}  in
Construction \ref{hypot3} is the same as the one defined by $f_1,\dots,f_9$, namely $\tSZ$, and 
the radical of the ideal $C_{\overline K}$ (resp. $C_{\Ok}$) 
coincides with the radical of $C'_{\overline K}$ (resp. of
$C'_{\Ok}$). Thus, $\tSZ$ is projective over $\Ok$ by Proposition \ref{tors_projectivity}.
\end{Remark}

The action of $\G_{m,\Ok}^6(\Ok)\cong(\Ok^\times)^6$ on $\TZ(\Ok)$ is
given by (\ref{action_on_points}), where $m^{(1)}$, $\dots$,
$m^{(9)}\in \mathbb{Z}^6$ denote the degrees of $\eta_1,\dots,\eta_9$,
respectively, under the identification $\pic(\tS_{\overline
  K})\cong\mathbb{Z}^6$ provided by the basis
$\ell_0,\dots,\ell_5$. Namely,
\begin{align*}
  m^{(1)}&=(0,0,0,0,0,1),& m^{(2)}&=(0,0,0,0,1,0),& m^{(3)}&=(0,1,-1,0,0,0),\\
  m^{(4)}&=(0,0,1,-1,0,0),& m^{(5)}&=(0,0,0,1,0,0),& m^{(6)}&=(1,-1,0,0,-1,-1),\\
  m^{(7)}&=(1,-1,-1,-1,0,0),&
  m^{(8)}&=(1,0,0,0,-1,0),&m^{(9)}&=(1,0,0,0,0,-1).
\end{align*}

Before we apply Theorem \ref{thm:param} to obtain a parameterization
of $U(K)$ by integral points on twists of $\TZ$, we describe the
preimage of $U$ inside the universal torsor, and we fix some more
notation.

Let $\tS:=\tSZ_K$, $\tT:=\TZ_K$, and $\pi:\tT\to\tS$ the base change
of the torsor morphism under the inclusion $\Ok\subseteq K$. We
observe that $\pi$ is a universal torsor of $\tS$ by Remark
\ref{udesck}.

Let $\overline\Psi:\overline\tT\to S_{\overline K}$ be the composition
of the universal torsor morphism $\overline\tT\to\tS_{\overline K}$
with the minimal desingularization morphism $\tS_{\overline K}\to
S_{\overline K}$.  According to \cite[\S 3.4]{math.AG/0604194}, the
map $\overline\Psi:\overline\tT(\overline K)\to S_{\overline
  K}(\overline K)$ sends a point $(\coord_1, \ldots,
\coord_9)\in\overline\tT(\overline K)$ to the point
  \begin{equation}\label{Psi}
    (\coord_2\coord_3\coord_4\coord_5\coord_6\coord_7\coord_8: \coord_1^2\coord_2^2\coord_3^2\coord_4\coord_6^3 : \coord_1\coord_2\coord_3^2\coord_4^2\coord_5^2\coord_6^2\coord_7 :\coord_1\coord_3\coord_4\coord_5\coord_6\coord_7\coord_9: \coord_7\coord_8\coord_9)
  \end{equation}
  in $S_{\overline K}(\overline K)\subseteq \PP^4(\overline K)$.
  Since $\overline\Psi$ is defined over $K$, it induces a morphism
  $\Psi:\tT\to S\subseteq\mathbb{P}^4_K$ which is given by (\ref{Psi})
  on $K$-rational points.

  Since $\pi:\tT\to\tS$ is a geometric quotient, the invariant
  morphism $\Psi$ factors through a minimal desingularization
  $\gamma:\tS\to S$, which is a model of the minimal desingularization
  $\tS_{\overline K}\to S_{\overline K}$.
  
  We recall that $U$ is defined as the complement of the lines in
  $S$. By \cite[Table 6]{math.AG/0604194}, the surface $S_{\overline
    K}$ contains exactly three lines of $\PP^4_{\overline K}$. These
  are defined over $K$ and an easy computation shows that
\begin{equation*}
\lines=\{x_0x_1=x_0x_3=x_1x_3=x_2=0\}.
\end{equation*} 
Then $\Psi^{-1}(S\smallsetminus U)=\{\eta_1\eta_2\eta_3\eta_4\eta_5\eta_6\eta_7=0\}$, and
  \begin{equation}\label{eq:inverse_image_U}
  \Psi^{-1}(U(K))=\tT(K)\cap (K^\times)^7\times K^2.\end{equation}

  From now on, $\classrepsyst$ refers to a fixed system of integral
  representatives for $\cl$, that is, it contains exactly one integral
  ideal from each class. For any given $\classtuple = (\classrep_0,
  \dots, \classrep_5) \in\classrepsyst^6$, we denote by
  $\cpi:\cT\to\tSZ$ the twist of $\TZ$ constructed as in Definition
  \ref{defin:twisted_torsors}. We write 
  \begin{gather*}
  u_\classtuple := \N(\classrep_0^3
  \classrep_1^{-1}\cdots \classrep_5^{-1}),\qquad
  \coordfracideal_j:=\classtuple^{m^{(j)}}\text{ for }1\leq j\leq 9,\\
  \text{and}\quad
  \coordfracideal_{j*} :=
  \begin{cases}
    \coordfracideal_j^{\neq 0} &\text{ if } j \in \{1,\ldots, 7\},\\
    \coordfracideal_j &\text{ if } j \in \{8,9\}.
  \end{cases}
\end{gather*}
For $\coord_j \in \coordfracideal_j$, let
\begin{equation*}
  \coordideal_j := \coord_j \coordfracideal_j^{-1}\text.
\end{equation*}

For $v \in \places$ and $(x_1, \ldots, x_8) \in K_v^8$ with $x_1 \neq
0$, we write
\begin{equation*}
  \tN_v(x_1, \ldots, x_8) := \max\left\{
    \begin{aligned}
      &\absv{x_2x_3x_4x_5x_6x_7x_8},\absv{x_1^2x_2^2x_3^2x_4x_6^3},\\
      &\absv{x_1x_2x_3^2x_4^2x_5^2x_6^2x_7},\\
      &\absv{x_3x_4x_5x_6x_7(x_3x_4^2x_5^3x_7 + x_2x_8)},\\
      &\absv{\frac{x_2x_7x_8^2 + x_3x_4^2x_5^3x_7^2x_8}{x_1}}
    \end{aligned}\right\}.
\end{equation*}
Let $\FDtot$ be a fundamental domain for the action
\begin{equation*}
  \text{of}\quad U_K\times(\units)^5 \quad\text{ on }\quad (K^\times)^7\times K^2,
\end{equation*}
induced by (\ref{action_on_points}), where $\underline u=(u_0, \ldots, u_5)$ maps $(\coord_1, \ldots, \coord_9)$ to
\begin{equation}\label{eq:unit_action}
  \begin{aligned}
   (\underline u^{m^{(1)}}\cdot\coord_1,\dots, \underline u^{m^{(9)}}\cdot\coord_9).
  \end{aligned}
\end{equation}
After all these preparations, we define $M_\classtuple(B)$ as
the set of all
\begin{equation*}
  (\coord_1, \ldots, \coord_9) \in (\coordfracideal_{1*} \times \cdots \times \coordfracideal_{9*}) \cap \FDtot
\end{equation*}
that satisfy the \emph{height conditions}
\begin{equation}\label{eq:A3+A1_height}
  \prod_{v\in\archplaces}\tN_v(\sigma_v(\coord_1, \ldots, \coord_8)) \leq u_\classtuple B,
\end{equation}
the \emph{torsor equation}
\begin{equation}\label{eq:A3+A1_torsor}
   \coord_1\coord_9 + \coord_2\coord_8 + \coord_3\coord_4^2\coord_5^3\coord_7 = 0,
\end{equation}
and the \emph{coprimality conditions}
\begin{equation}\label{eq:A3+A1_coprimality}
  \coordideal_j + \coordideal_k = \integersk \text{ for all distinct nonadjacent vertices $E_j$, $E_k$ in Figure~\ref{fig:A3+A1_dynkin}.}
\end{equation}
We can now reduce our counting problem to counting the
$M_\classtuple(B)$.
\begin{lemma}\label{lem:A3+A1_passage_to_torsor}
  With the sets $M_\classtuple(B)$ defined as above and $N_{U,H}(B)$ as in Theorem \ref{maintheorem}, we have
  \begin{equation*}
    N_{U,H}(B) = \frac{1}{|\mu_K|}\sum_{\classtuple \in \classrepsyst^6}|M_\classtuple(B)|.
  \end{equation*}
\end{lemma}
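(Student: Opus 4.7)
The plan is to combine Theorem \ref{thm:param} applied to $\pi:\TZ\to\tSZ$ (see Proposition \ref{model_properties}) with the explicit description of $\Psi$ in \eqref{Psi} and the restriction to the fundamental domain $\FDtot$. Since $\tSZ$ is proper over $\Ok$, part \ref{thm:param_union} of Theorem \ref{thm:param} yields the disjoint union $\tS(K)=\bigsqcup_{\classtuple\in\classrepsyst^6}\cpi(\cT(\Ok))$, while part \ref{thm:param_coord} identifies $\cT(\Ok)$ with the set of tuples $(\coord_1,\dots,\coord_9)$ with $\coord_i\in\coordfracideal_i$ satisfying the torsor equation \eqref{eq:A3+A1_torsor} and the coprimality condition $\sum_C(\mA_C(\coord))\classtuple^{-\deg\mA_C}=\Ok$, where $C$ ranges over the maximal cliques of the Dynkin graph in Figure \ref{fig:A3+A1_dynkin}; a standard reformulation (examining each prime $\ppp$) shows this is equivalent to \eqref{eq:A3+A1_coprimality}. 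Composing $\pi$ with the desingularization $\gamma:\tS\to S$, which is an isomorphism over $U$, and using \eqref{eq:inverse_image_U} refines this to $U(K)=\bigsqcup_\classtuple\Psi(\cT(\Ok)\cap((K^\times)^7\times K^2))$, i.e., the same parameterization restricted to tuples with $\coord_i\in\coordfracideal_{i*}$.

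Next, I translate the height condition $H(\Psi(\coord))\le B$. Writing $H(\xx)=\prod_{v\in\archplaces}\max_i\absv{x_i}\cdot\N((x_0,\dots,x_4))^{-1}$ for $\xx=(x_0:\dots:x_4)\in\PP^4(K)$, and using \eqref{Psi} together with \eqref{eq:A3+A1_torsor} to eliminate $\coord_9$ from $x_3$ and $x_4$, the archimedean factor equals $\prod_{v\in\archplaces}\tN_v(\sigma_v(\coord_1,\dots,\coord_8))$. For the non-archimedean factor, each fractional ideal $(x_i)$ splits as $\classtuple^{(3,-1,-1,-1,-1,-1)}\cdot\mathfrak m_i$, where $\mathfrak m_i$ is a product of the integral ideals $\coordideal_j:=(\coord_j)\coordfracideal_j^{-1}$ and $(3,-1,-1,-1,-1,-1)$ is the common anticanonical degree $m^{(2)}+\cdots+m^{(8)}\in\Z^6$. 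Hence $(x_0,\dots,x_4)=\classtuple^{(3,-1,-1,-1,-1,-1)}\cdot(\mathfrak m_0,\dots,\mathfrak m_4)$, and the key combinatorial step is to show that $(\mathfrak m_0,\dots,\mathfrak m_4)=\Ok$: for any prime $\ppp$ dividing distinct $\coordideal_j,\coordideal_k$, the conditions \eqref{eq:A3+A1_coprimality} force $E_j,E_k$ adjacent, so $\{j:\ppp\mid\coordideal_j\}$ lies in a clique of Figure \ref{fig:A3+A1_dynkin}; checking each such clique $\{7,8,9\}$, $\{1,6\}$, $\{1,9\}$, $\{2,6\}$, $\{2,8\}$, $\{3,4\}$, $\{3,6\}$, $\{4,5\}$, $\{5,7\}$ against the variable sets of $\mathfrak m_0,\dots,\mathfrak m_4$ produces an $\mathfrak m_i$ containing no $\coordideal_j$ in the clique. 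Consequently $\N((x_0,\dots,x_4))=u_\classtuple$, and $H(\Psi(\coord))\le B$ becomes precisely \eqref{eq:A3+A1_height}.

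Finally, I pass to the fundamental domain. Since $\cpi$ is a $\G_{m,\tSZ}^6$-torsor by Theorem \ref{thm:param}\emph{(i)}, the $(\units)^6$-action on $\cT(\Ok)\cap\Psi^{-1}(U(K))$ is free: a direct calculation shows that $\underline u^{m^{(i)}}=1$ for all $i\in\{1,\dots,7\}$ already forces $\underline u=1$. Writing $(\units)^6=\mu_K\times(U_K\times(\units)^5)$, the subgroup $U_K\times(\units)^5$ has index $|\mu_K|$, so the fundamental domain $\FDtot$ for its action on $(K^\times)^7\times K^2$ meets each $(\units)^6$-orbit in exactly $|\mu_K|$ points. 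Therefore $|M_\classtuple(B)|=|\mu_K|\cdot|\cpi(\cT(\Ok)\cap\Psi^{-1}(U(K)))\cap\{H\le B\}|$, and summing over $\classtuple\in\classrepsyst^6$ produces the claimed identity. The main obstacle is the combinatorial verification that $(\mathfrak m_0,\dots,\mathfrak m_4)$ is the unit ideal, which requires the choice of generators of the irrelevant ideal in \eqref{firstgenerators} and the clique structure of the Dynkin diagram to interact just right.
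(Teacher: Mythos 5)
Your proposal is correct and follows essentially the same route as the paper: torsor parameterization via Theorem \ref{thm:param}, translation of the irrelevant-ideal condition into the pairwise coprimality conditions \eqref{eq:A3+A1_coprimality}, elimination of $\coord_9$ to identify the height with \eqref{eq:A3+A1_height}, and the index-$|\mu_K|$ comparison between the $(\units)^6$-orbits and the fundamental domain $\FDtot$. The only cosmetic differences are that you verify the non-archimedean factor $\N((x_0,\dots,x_4))=u_\classtuple$ by an explicit clique-by-clique check (which the paper leaves implicit) and replace the paper's radical-of-monomial-ideals argument for the coprimality equivalence by a prime-by-prime clique argument; both are valid.
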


\begin{proof}
  Since $U$ is contained in the smooth locus of $S$, the minimal
  desingularization $\gamma :\tS\to S$ induces an isomorphism
  $\gamma^{-1}(U) \to U$, so
  \begin{equation*} N_{U, H}(B) = |\{x \in \gamma^{-1}(U)(K) \mid
    H(\gamma(x)) \leq B\}|.
\end{equation*}

By Theorem \ref{thm:param}, \ref{thm:param_union}, there is a disjoint union
\begin{equation*}
\gamma^{-1}(U)(K)=\bigsqcup_{\classtuple\in\classrepsyst^6}\cpi(\cT(\Ok)\cap\Psi^{-1}(U(K))).
\end{equation*}

Let $\classtuple\in\classrepsyst^6$. By \eqref{eq:inverse_image_U} and Theorem \ref{thm:param}, \ref{thm:param_coord}, we see that
 $\cT(\Ok)\cap\Psi^{-1}(U(K))$ is the set of all 
\begin{equation*}
(\coord_1, \ldots, \coord_9) \in (\coordfracideal_{1*} \times \cdots \times \coordfracideal_{9*})
\end{equation*}
that satisfy (\ref{eq:A3+A1_torsor}) and
\begin{equation}\label{eq:coprim_with_f_i}
 \sum_{i=1}^9f_i(\coordidealtuple) = \Ok.  
\end{equation}
By $f_i(\coordidealtuple)$, we mean the ideal $\coordideal_1^{e_1}\cdots\coordideal_9^{e_9}$, if $f_i$ is the monomial $\eta_1^{e_1}\cdots\eta_9^{e_9}$.
Let us show that \eqref{eq:coprim_with_f_i} is equivalent to the coprimality conditions \eqref{eq:A3+A1_coprimality}. These are certainly equivalent to
\begin{equation}\label{eq:coprim_product}
\prod_{\substack{1\leq i<j\leq9\\ [E_i].[E_j]=0}}(\coordideal_i+\coordideal_j)=\Ok.
\end{equation}
The ideal
\begin{equation*}
I=\prod_{\substack{1\leq i<j\leq9\\ [E_i].[E_j]=0}}(\eta_i,\eta_j)
\end{equation*}
in $K[\eta_1, \ldots, \eta_9]$ is generated by $t:=2^{25}$ monic monomials $h_1, \ldots, h_t$, obtained by choosing for each of the $25$ factors $(\eta_i,\eta_j)$ one of the generators $\eta_i,\eta_j$ and multiplying them all.  Due to the distributive property of ideals, condition \eqref{eq:coprim_product} is equivalent to 
\begin{equation}\label{eq:coprim_with_h_i}
  \sum_{i=1}^th_i(\coordidealtuple) = \Ok.
\end{equation}

The radical of $I$ is generated by $f_1, \ldots, f_9$. Since the $f_i$ and $h_i$ are monic monomials, this implies that for each $h_i$ there is an $f_j$ such that a power $h_i^{m}$ is divisible by $f_j$, and vice versa. Therefore, \eqref{eq:coprim_with_h_i} is equivalent to \eqref{eq:coprim_with_f_i}.

Next, we consider the height condition. Let $(\coord_1, \ldots, \coord_9)\in\cT(\Ok)$. Using
  the torsor equation \eqref{eq:A3+A1_torsor} to eliminate $\coord_9$,
  we see that
  \begin{equation*}
    H(\Psi(\coord_1, \ldots, \coord_9)) = \prod_{v \in \places}\tN_v(\sigma_v(\coord_1, \ldots, \coord_8)).
  \end{equation*}
  Moreover, the coprimality conditions \eqref{eq:A3+A1_coprimality}
  imply that
  \begin{equation*}
    \prod_{v \in \nonarchplaces}\tN_v(\sigma_v(\coord_1, \ldots, \coord_8)) = \N(\coord_2\coord_3\coord_4\coord_5\coord_6\coord_7\coord_8\integersk + \cdots + \coord_7\coord_8\coord_9\integersk)^{-1} = u_\classtuple^{-1}. 
  \end{equation*}
Thus the condition $H(\Psi(\coord_1, \ldots, \coord_9))\leq B$ is
  equivalent to our height conditions \eqref{eq:A3+A1_height}.
  
Let $\FDtot'\subseteq\FDtot$ be a fundamental domain for the action of $\G^6_{m,\Ok}(\Ok)=(\Ok^\times)^6$ on $(K^\times)^7\times K^2$ and consider the set $M'_\classtuple(B)$ of all
\begin{equation*}
  (\coord_1, \ldots, \coord_9) \in (\coordfracideal_{1*} \times \cdots \times \coordfracideal_{9*}) \cap \FDtot'
\end{equation*}
that satisfy (\ref{eq:A3+A1_height}), (\ref{eq:A3+A1_torsor}) and (\ref{eq:A3+A1_coprimality}).

Since the action of $(\units)^6$ on $\cT(\Ok)$ is free, each
  orbit is the union of $|\mu_K|$ orbits of the induced action of $U_K
  \times (\units)^5$. Each of these orbits has exactly one
  representative in $\FDtot$, so $|M_\classtuple(B)|=|\mu_K|\cdot|M'_\classtuple(B)|$. 
  
Finally, we observe that the fibers of $\cpi$ are the orbits of the action of $\G_{m,\tSZ}^6$ on $\cT$. Hence, there is a bijection between the sets $U(K)$ and $\bigsqcup_{\classtuple\in\classrepsyst^6}(\cT(\Ok)\cap\Psi^{-1}(U(K))\cap\FDtot')$, so
\begin{equation*}
N_{U,H}(B) = \sum_{\classtuple \in \classrepsyst^6}|M'_\classtuple(B)|=\frac{1}{|\mu_K|}\sum_{\classtuple \in \classrepsyst^6}|M_\classtuple(B)|.\qedhere
\end{equation*}
\end{proof}

\begin{Remark}\label{rem:other_cases}
  Our surface $S$ is one of $30$ types of del Pezzo surfaces (over
  $\bar{\QQ}$) whose universal torsors are hypersurfaces, classified
  in \cite{math.AG/0604194}. It is not hard to adapt our arguments to
  the remaining 29 cases. Indeed, in each case we checked that
  applying Construction \ref{hypot3} to the universal torsor given by
  the monomials obtained from the Dynkin diagram (like the monomials
  \eqref{firstgenerators} for $S$) provides us with $\Ok$-models of
  the surface and the universal torsor, for which the analogues of
  Proposition \ref{model_properties} and Lemma
  \ref{lem:A3+A1_passage_to_torsor} hold. That is, these monomials
  satisfy condition \eqref{picardgeneration}, the equation defining
  the Cox ring as a hypersurface is irreducible modulo every prime,
  and the hypotheses of Proposition \ref{smoothnesscriterion} and of
  Proposition \ref{under_blowing-up} are satisfied. Regarding the last
  one, we recall that blowing-down a (-1)-curve on a weak del Pezzo
  surface produces another weak del Pezzo surface of smaller
  degree. Hence, we checked the hypotheses of Proposition
  \ref{under_blowing-up} going down step by step for the chain of
  blowing-ups of $\PP^2$ that define the del
  Pezzo surfaces in \cite{math.AG/0604194}.
 
The verification of these facts in each case requires straightforward but lengthy computations entirely analogous to the arguments from this section. Since they are not needed for our main result, we omit the details here.

During the verification of our claims, we found the following two misprints in \cite{math.AG/0604194}: in the description of the del Pezzo surface of degree 5 of type $\Atwo$ at page 656 the class of the curve $E_1$ is $\bar E_1=\ell_0-\ell_1-\ell_3-\ell_4$, and in the description of the del Pezzo surface of degree 2 of type $\Dfive+\Aone$ at page 671 the curves 
$E_5$ and $E_6$ must be exchanged in the second and third line.
\end{Remark}

\section{Construction of a fundamental domain}\label{sec:fundamental_domain}
In this section, we choose our fundamental domain $\FDtot$ for the
action \eqref{eq:unit_action}. Our main objective is to find a
fundamental domain that lends itself well to lattice point
counting. In a much simpler case, such a fundamental domain was
constructed by Schanuel \cite{MR557080}.  Our notation is inspired by
\cite{MR2247898}.

Let $\Sigma$ be the hyperplane in $\RR^\archplaces$ where the sum of
the coordinates vanishes, and $\delta := (d_v)_{v\in\archplaces}\in
\RR^\archplaces$. By Dirichlet's unit theorem, the usual logarithmic
embedding $l : \units \to \Sigma$,
\begin{equation*}
  l(u) := (\log\absv{\sigma_v(u)})_{v\in\archplaces},
\end{equation*}
has $\mu_K$ as its kernel and a lattice $l(\units) = l(U_K)$ in
$\Sigma$ as its image.

Let us fix, once and for all, a fundamental parallelotope $F$ for this
lattice and denote the vector sum $F + \RR\delta$ by $F(\infty)$. Then
\begin{equation}\label{eq:fundamental_domain_embedded}
  \text{$F(\infty)$ is a fundamental domain for the additive action of
    $l(U_K)$ on $\RR^\archplaces$.}
\end{equation}
For fixed $\coordtuple' := (\coord_1, \ldots, \coord_5)\in (K^\times)^5$, we define $\tN_v(\coordtuple'; \cdot) :
(K_v^\times)^2 \times K_v \to (0, \infty)$ by
\begin{equation*}
  \tN_v(\coordtuple'; x_6, x_7, x_8) := \tN_v(\coord_1^{(v)}, \ldots , \coord_5^{(v)}, x_{6}, x_{7}, x_{8}).  
\end{equation*}
Let $S_F(\coordtuple'; \infty)$ be the set of all
\begin{equation*}
  (x_{jv})_{\substack{j \in \{6,7,8\}\\v\in \archplaces}} \in \prod_{v\in\archplaces}((K_v^\times)^2\times K_v)
\end{equation*}
such that
\begin{equation*}
  \frac{1}{3}\cdot(\log\tN_v(\coordtuple'; x_{6v}, x_{7v}, x_{8v}))_{v\in\archplaces} \in F(\infty).
\end{equation*}
Since all terms of the maximum in $\tN_v$ are homogeneous of degree
$3$ in $x_6, x_7, x_8$, the relation
\begin{equation*}
  (\log\tN_v(\coordtuple'; u^{(v)}\cdot(x_{6v},x_{7v},x_{8v})))_{v \in \archplaces} = 3l(u) + (\log \tN_v(\coordtuple'; x_{6v},x_{7v},x_{8v}))_{v \in \archplaces}
\end{equation*}
holds for all $u \in U_K$. Due to this and
\eqref{eq:fundamental_domain_embedded}, the set
\begin{equation*}
  \FDxi(\coordtuple') := \{(\coord_6,\coord_7,\coord_8) \in (K^\times)^2 \times K \mid \sigma(\coord_6,\coord_7,\coord_8) \in S_F(\coordtuple'; \infty)\}
\end{equation*}
is a fundamental domain for the action of $U_K$ on $(K^\times)^2
\times K$ by scalar multiplication.

Let $\FDei$ be a fundamental domain for the multiplicative action of
$\units$ on $K^\times$, chosen in such a way that
\begin{equation}\label{eq:conj_norm}
  N(a)^{d_v/d}\ll
  \absv{a} \ll N(a)^{d_v/d}
\end{equation}
holds for all $a \in \FDei$ and all $v \in \archplaces$. By ignoring the last coordinate, the action described by \eqref{eq:unit_action} induces an action of $U_K\times(\units)^5$ on $(K^\times)^7\times K$. Basic linear algebra with the exponents $m^{(1)}, \ldots, m^{(8)}$ shows that this action is free and has the fundamental domain
\begin{equation*}
  \FDtot' := \bigg\{(\coord_1, \ldots, \coord_8) \in (K^\times)^7\times K \WHERE
  \begin{aligned}
    &\coordtuple' \in \FDei^5,\\
    &(\coord_6,\coord_7,\coord_8) \in \FDxi(\coordtuple')
  \end{aligned}
  \bigg\}.
\end{equation*}
Therefore, we may choose
\begin{equation*}
  \FDtot := \FDtot' \times K
\end{equation*}
as our fundamental domain for the action of $U_K\times(\units)^5$ on $(K^\times)^7\times K^2$.

The main advantage of this fundamental domain is that it allows a
natural incorporation of the height conditions
\eqref{eq:A3+A1_height}. Indeed, let
\begin{equation*}
  F(B) := F + (-\infty, \log B]\cdot \delta.
\end{equation*}
It follows immediately from the definitions that a tuple
$(x_{jv})_{j,v} \in S_F(\coordtuple', \infty)$ satisfies
\begin{equation*}
  \prod_{v\in\archplaces}\tN_v(\coordtuple'; x_{6v},x_{7v},x_{8v}) \leq B
\end{equation*}
if and only if it is an element of the subset
\begin{align*}
  S_F(\coordtuple'; B) :=\left\{(x_{jv})_{j,v}\ \Big|\
    \frac{1}{3}\cdot(\log\tN_v(\coordtuple';x_{6v}, x_{7v},
    x_{8v}))_{v\in \archplaces}\in F(B^{1/(3d)})\right\}
\end{align*}
of $\prod_{v\in\archplaces}((K_v^\times)^2\times K_v)$. Let
$\FDxi(\coordtuple'; u_\classtuple B)$ be the set of all $(\coord_6,
\coord_7, \coord_8) \in \FDxi(\coordtuple')$ that satisfy
\eqref{eq:A3+A1_height}. Then
\begin{equation*}
  \FDxi(\coordtuple'; u_\classtuple B) = \{(\coord_6,\coord_7,\coord_8) \in (K^\times)^2 \times K \mid \sigma(\coord_6,\coord_7,\coord_8) \in S_F(\coordtuple'; u_\classtuple B)\}.
\end{equation*}

The following observation will be crucial for all our upcoming error
estimates. Our construction of $F(\infty)$ implies that
\begin{equation*}
  \tN_v(\coordtuple'; x_{6v},x_{7v},x_{8v})^{1/d_v} \ll \tN_w(\coordtuple'; x_{6w},x_{7w},x_{8w})^{1/d_w} \ll \tN_v(\coordtuple'; x_{6v},x_{7v},x_{8v})^{1/d_v}
\end{equation*}
for all $(x_{jv})_{j,v} \in S_F(\coordtuple'; \infty)$ and all $v, w
\in \archplaces$. In particular,
\begin{equation}\label{eq:Nv_same_size}
  \tN_v(\coordtuple'; x_{6v},x_{7v},x_{8v})\ll B^{d_v/d}
\end{equation}
holds for all $(x_{jv})_{j,v} \in S_F(\coordtuple'; u_\classtuple B)$ and
all $v \in \archplaces$.

If we identify $\CC$ with $\RR^2$ then $\prod_{v\in\archplaces}K_v^3 =
\RR^{3d}$. Hence, we define the volume of a (measurable) subset of
$\prod_{v\in\archplaces}K_v^3$ as its usual Lebesgue measure. As one
would expect, the volume of $S_F(\coordtuple'; u_\classtuple B)$ will
appear at a later point as part of an asymptotic formula. Therefore, we
compute it here.

\begin{lemma}\label{lem:vol_SFB}
  For $B \geq 0$, the set $S_F(\coordtuple'; B)$ is measurable with
  volume
  \begin{equation*}
    \vol(S_F(\coordtuple'; B)) = \frac{1}{3}\cdot 2^{r_1}\cdot \left(\frac{\pi}{4}\right)^{r_2}\cdot\left(\prod_{v \in \archplaces}\omega_v(\tS)\right) \cdot R_K \cdot \frac{B}{|N(\coord_2\coord_3\coord_4\coord_5)|}.
  \end{equation*}
\end{lemma}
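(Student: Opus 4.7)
The strategy is to pass to polar-type coordinates on each archimedean factor $K_v^3$ separately, convert each local volume $\vol\{\tN_v(\coordtuple';\cdot)\leq 1\}$ into $\omega_v(\tS)/|a_2a_3a_4a_5|_v$ via a linear substitution that sends $\tN_v$ to a scalar multiple of $N_v$, and then evaluate the remaining one-dimensional radial integral using the decomposition $\RR^{\archplaces}=\Sigma\oplus\RR\delta$.

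First I would note that each term of $\tN_v(\coordtuple';\cdot)$ is homogeneous of degree $3$ in $(|x_6|_v,|x_7|_v,|x_8|_v)$, so $\tN_v(\coordtuple';\cdot)$ is homogeneous of degree $3d_v$ under positive real scaling of $(x_6,x_7,x_8)$. Hence $T_v:=\tN_v(\coordtuple';\cdot)^{1/(3d_v)}$ is $1$-homogeneous and admits the polar factorization $dx_v=r_v^{3d_v-1}\,dr_v\,d\mu_v(u_v)$ with $x_v=r_vu_v$ and $u_v\in\Sigma_v:=\{T_v=1\}$. Computing $\vol\{T_v\leq R\}$ in two ways gives $\mu_v(\Sigma_v)=3d_v\vol\{\tN_v(\coordtuple';\cdot)\leq 1\}$. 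Measurability of $S_F(\coordtuple';B)$ is immediate from continuity of $\tN_v$.

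Next I would set $\beta_v:=a_4^{(v)}(a_5^{(v)})^2/(a_1^{(v)}a_2^{(v)})$ and $\gamma_v:=(a_1^{(v)}a_3^{(v)}a_4^{(v)}a_5^{(v)})^{-1}$, and perform the diagonal change of variables $(x_6,x_7,x_8)\mapsto(y_1,y_2,y_0):=(x_6,\beta_vx_7,\gamma_vx_8)$. The key identity $a_3a_4^2a_5^3x_7+a_2x_8=a_1a_2a_3a_4a_5(y_0+y_2)$ reduces the comparison to matching the five monomials on each side; a direct check then yields $\tN_v(\coordtuple';x_6,x_7,x_8)=C_vN_v(y_0,y_1,y_2)$ with $C_v:=|a_1^2a_2^2a_3^2a_4|_v$ and $C_v|\beta_v\gamma_v|_v=|a_2a_3a_4a_5|_v$, while the real Jacobian of the change is $|\beta_v\gamma_v|_v$ in both the real and complex case. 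Homogeneity of $N_v$ gives $\vol\{\tN_v(\coordtuple';\cdot)\leq 1\}=\vol\{N_v\leq 1\}/|a_2a_3a_4a_5|_v$, so the definition of $\omega_v(\tS)$ yields $\mu_v(\Sigma_v)=2\omega_v(\tS)/|a_2a_3a_4a_5|_v$ for real $v$ and $(\pi/2)\omega_v(\tS)/|a_2a_3a_4a_5|_v$ for complex $v$; multiplying over $v$ gives $\prod_v\mu_v(\Sigma_v)=2^{r_1}(\pi/2)^{r_2}\prod_v\omega_v(\tS)/|N(a_2a_3a_4a_5)|$.

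Finally the residual radial integral
\begin{equation*}
  I:=\int\prod_v r_v^{3d_v-1}\,dr_v,\qquad(d_v\log r_v)_v\in F+(-\infty,(\log B)/(3d)]\cdot\delta,
\end{equation*}
is evaluated by setting $\rho_v:=\log r_v$ and writing $(d_v\rho_v)_v=\phi+t\delta$ with $\phi\in F\subset\Sigma$ and $t\leq(\log B)/(3d)$; the integrand becomes $\exp(3td)$ (the $\Sigma$-part vanishes since $\phi$ has zero coordinate sum), the Jacobian of $(\rho_v)_v\mapsto(\phi,t)$ works out to $d/(2^{r_2}\sqrt{r_1+r_2})$ (with $\prod_vd_v=2^{r_2}$ and $d/\sqrt{r_1+r_2}$ being the $\Sigma^{\perp}$-component of $\delta$), and the $t$-integral produces $B/(3d)$. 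Invoking the identification $\vol_\Sigma(F)=\sqrt{r_1+r_2}\cdot R_K$, obtained by comparing the Lebesgue measure induced on $\Sigma$ with $q$-dimensional Lebesgue measure on $\RR^{\archplaces\smallsetminus\{v_0\}}$ via the projection omitting one archimedean coordinate, produces $I=R_KB/(3\cdot 2^{r_2})$; multiplying with $\prod_v\mu_v(\Sigma_v)$ and using $(\pi/2)^{r_2}/2^{r_2}=(\pi/4)^{r_2}$ gives the claimed formula. The main obstacle will be the second step: checking term-by-term that a single diagonal scaling $(\beta_v,\gamma_v)$ simultaneously normalises all five rather different expressions in $\tN_v$ to the five corresponding ones in $N_v$ with a \emph{common} scalar $C_v$. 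Once that matching is in place, the rest is careful bookkeeping of Jacobians, with the regulator convention $\vol_\Sigma(F)=\sqrt{r_1+r_2}R_K$ being the only remaining point to pin down.
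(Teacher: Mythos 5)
Your proof is correct and follows essentially the same route as the paper's: a diagonal linear substitution converting $\tN_v$ into $N_v$ (yours differs from the paper's only by an overall rescaling, absorbed into your constant $C_v$, and the combined factor $C_v\absv{\beta_v\gamma_v}=\absv{\coord_2\coord_3\coord_4\coord_5}$ indeed reproduces the Jacobian $|N(\coord_2\coord_3\coord_4\coord_5)|^{-1}$), followed by the use of homogeneity to split off the local factors $\omega_v(\tS)$ and an evaluation of the residual integral over $F(B^{1/(3d)})$ yielding $R_KB/3$ up to the powers of $2$. The only cosmetic differences are that the paper organizes the splitting via a pushforward measure rather than polar coordinates, and computes $\vol(\exp(3F(1)))$ by a direct exponential substitution instead of invoking the covolume identity $\vol_\Sigma(F)=\sqrt{r_1+r_2}\,R_K$; all your intermediate constants check out.
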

\begin{proof}
  First of all, we observe that $S_F(\coordtuple'; B) = B^{1/(3d)}
  S_F(\coordtuple'; 1)$ is homogeneously expanding, so it suffices to
  compute $\vol(S_F(\coordtuple'; 1))$. For $v\in\archplaces$, we
  define a scaling factor $l_v :=
  \absv{\coord_1\coord_2\coord_3\coord_4^2\coord_5^3}$. We transform
  the coordinates $x_{jv}$ in $S_F(\coordtuple';1)$ to
  \begin{align*}
    y_{0v} &= l_v^{-1/(3d_v)}\sigma_v(\coord_2)\cdot x_{8v}\\
    y_{1v} &= l_v^{-1/(3d_v)}\sigma_v(\coord_{1}\coord_{2}\coord_{3}\coord_{4}\coord_{5})\cdot x_{6v}\\
    y_{2v} &=
    l_v^{-1/(3d_v)}\sigma_v(\coord_{3}\coord_{4}^2\coord_{5}^3)\cdot
    x_{7v},
  \end{align*}
  for $v \in \archplaces$. The Jacobi determinant of this
  transformation has absolute value
  $|N(\coord_2\coord_3\coord_4\coord_5)|^{-1}$, and we easily
  verify that $\tN_v(\coordtuple'; x_{6v}, x_{7v}, x_{8v}) = N_v(y_{0v}, y_{1v},
  y_{2v})$. Thus,
  \begin{equation*}
    \vol(S_F(\coordtuple'; 1)) = \frac{1}{|N(\coord_2\coord_3\coord_4\coord_5)|}\int_{\substack{(y_{jv})_{j,v}\in \prod_{v \in \archplaces}(K_v\times(K_v^\times)^2)\\1/3\cdot(\log N_v(y_{0v},y_{1v}, y_{2v}))_{v\in\archplaces} \in F(1)}}\prod_{\substack{v\in\archplaces\\j\in\{0,1,2\}}}\dd y_{jv}.
  \end{equation*}
  Let $f : \prod_{v\in\archplaces}K_v^3 \to \RR_{\geq
    0}^\archplaces$ be given by
  \begin{equation*}
    f((y_{jv})_{j,v}) := (N_v(y_{0v}, y_{1v}, y_{2v}))_{v\in\archplaces}.
  \end{equation*}
  Then $f$ is Lebesgue-measurable and
  \begin{equation*}
    \vol(S_F(\coordtuple';1)) = \frac{1}{|N(\coord_2\coord_3\coord_4\coord_5)|}\cdot f_*(\vol)(\exp(3F(1))),
  \end{equation*}
  where $\exp$ is the coordinate-wise exponential map.  Let us compute
  the pushforward measure $f_*(\vol)$.  For $T \geq 0$, let $S_v(T) :=
  \{(y_0, y_1, y_2) \in K_v^3 \mid N_v(y_0,y_1,y_2)\leq T\}$. Since
  \begin{equation*}
    N_v(t \cdot (y_{0},y_{1}, y_{2})) =
    \absv{t}^3N_v(y_{0},y_{1},y_{2})
  \end{equation*}
  for all $t \in K_v$, we have $S_v(T) = T^{1/(3d_v)}S_v(1)$, and
  $\vol(S_v(T)) = T \vol(S_v(1))$. Hence, for any cell
  $E:=\prod_{v\in\archplaces}(\alpha_v, \beta_v]$ in $\RR_{\geq 0}^\archplaces$,
  we have
  \begin{equation*}
    (\vol\circ f^{-1})(E) = \prod_{v \in \archplaces}\int_{N_v(y_0,y_1,y_2)\in (\alpha_v, \beta_v]}\dd y_0\dd y_1 \dd y_2 = \prod_{v\in\archplaces}(\beta_v-\alpha_v)\vol(S_v(1)).
  \end{equation*}
  We conclude that
  \begin{equation*}
    f_*(\vol) = \prod_{v\in\archplaces}\vol(S_v(1)) \cdot \vol  = \left(\frac{2}{3}\right)^{r_1}\left(\frac{\pi}{12}\right)^{r_2}\prod_{v\in\archplaces}(\omega_v(\tS)) \cdot \vol.
  \end{equation*}
  To finish the proof, we need to compute $\vol(\exp(3F(1)))$. To this
  end, choose $w\in\archplaces$ and transform the coordinates by
  \begin{align*}
    x_v &= e^{3 y_v+ 3 d_v t}\text{, for } v \in \archplaces\smallsetminus\{w\},\\
    x_{w} &= e^{-3(\sum_{v\in\archplaces\smallsetminus\{w\}}y_v) +3d_w t},
  \end{align*}
  with Jacobi determinant $3^{|\archplaces|} d
  \prod_{v\in\archplaces} x_v = 3^{|\archplaces|}de^{3dt}$. We
  obtain
  \begin{equation*}
    \int_{\exp(3F(1))}\prod_{v\in\archplaces}\dd x_v = \int_{F}\prod_{v\in\archplaces\smallsetminus\{w\}}\dd y_v\int_{-\infty}^03^{|\archplaces|}de^{3d \cdot t}\dd t = 3^{|\archplaces|-1} R_K.\qedhere
  \end{equation*}
\end{proof}

\section{M\"obius inversions}\label{sec:lattice_point_problem}
In this section, we fix $\classtuple \in \classrepsyst^6$ and reduce
the task of counting, for fixed $\coord_1, \ldots, \coord_5$, the set
of all $(\coord_6, \coord_7, \coord_8, \coord_9)$ with $(\coord_1,
\ldots, \coord_9) \in M_\classtuple(B)$ to a lattice point
problem. The main job here is to deal with the coprimality conditions
\eqref{eq:A3+A1_coprimality} for $\coord_6, \coord_7, \coord_8,
\coord_9$. We write
\begin{align*}
  \coordtuple' &:=  (\coord_1, \ldots, \coord_5)\\
  \coordfracidealproduct &:= \coordfracideal_{1*}\times \cdots \times \coordfracideal_{5*}\\
  \coordidealtuple'&:=
  (\coordideal_1, \ldots, \coordideal_5).
\end{align*}
To encode the coprimality conditions \eqref{eq:A3+A1_coprimality} for
$\coordtuple'\in\coordfracidealproduct$, we define the function
$\theta_0(\coordidealtuple') :=
\prod_\ppp\theta_{0,\ppp}(J_\ppp(\coordidealtuple'))$, where
$J_\ppp(\coordidealtuple') := \{j \in \{1,\ldots,5\}\ :\ \ppp \mid
\coordideal_j\}$ and
\begin{equation*}
  \theta_{0,\ppp}(J):=
  \begin{cases}
    1 &\text{ if } J = \emptyset, \{1\}, \{2\}, \{3\}, \{4\}, \{5\}, \{3,4\}, \{4,5\},\\
    0 &\text{ otherwise.}
  \end{cases}
\end{equation*}
The product over $\ppp$ runs over all nonzero prime ideals of
$\Ok$. Clearly, $\theta_{0}(\coordidealtuple') = 1$ if and only if
\eqref{eq:A3+A1_coprimality} holds for all $j,k \in \{1,\ldots,5\}$,
and $\theta_{0}(\coordidealtuple') = 0$ otherwise. We rewrite the
coprimality conditions \eqref{eq:A3+A1_coprimality} as follows.

\begin{lemma}\label{lem:coprimality_rewritten}
  Let $(\coord_1, \ldots, \coord_9) \in \coordfracideal_{1}\times
  \cdots \times \coordfracideal_{9}$ satisfy the torsor equation \eqref{eq:A3+A1_torsor}. Then the coprimality conditions \eqref{eq:A3+A1_coprimality} hold if
  and only if the following conditions are satisfied:
  \begin{align}
    \theta_0(\coordidealtuple') &= 1\label{eq:cop_remaining_vars}\\
    \coordideal_6 + \coordideal_4\coordideal_5 &= \integersk\label{eq:cop_e3}\\
    \coordideal_7 + \coordideal_1\coordideal_2\coordideal_3\coordideal_4\cdot \coordideal_6 &= \integersk\label{eq:cop_e7}\\
    \coordideal_8 + \coordideal_3\coordideal_4\coordideal_5\cdot \coordideal_6 &= \integersk\label{eq:cop_e8}\\
    \coordideal_9 + \coordideal_6 &= \integersk\label{eq:cop_e9}\text.
  \end{align}
\end{lemma}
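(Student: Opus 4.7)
My plan is to reduce both implications to local statements at each prime. The forward implication is nearly automatic: for every prime $\ppp$ of $\Ok$, each of the reduced coprimalities \eqref{eq:cop_remaining_vars}--\eqref{eq:cop_e9} unravels into a conjunction of pairwise coprimalities $\coordideal_i + \coordideal_j = \Ok$ already listed in \eqref{eq:A3+A1_coprimality}. For example, $\coordideal_6 + \coordideal_4\coordideal_5 = \Ok$ is equivalent to $\coordideal_6 + \coordideal_4 = \Ok$ and $\coordideal_6 + \coordideal_5 = \Ok$, both of which appear in \eqref{eq:A3+A1_coprimality}. The content of the lemma is therefore the reverse implication. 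A direct inspection of Figure \ref{fig:A3+A1_dynkin} shows that the reduced conditions together cover every coprimality in \eqref{eq:A3+A1_coprimality} \emph{except} the five
\[\coordideal_8 + \coordideal_1 = \Ok \quad\text{and}\quad \coordideal_9 + \coordideal_j = \Ok \text{ for } j \in \{2,3,4,5\},\]
which therefore have to be deduced from the reduced conditions plus the torsor equation \eqref{eq:A3+A1_torsor}.

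The key observation is that the three monomial terms in \eqref{eq:A3+A1_torsor} have a common degree in $\pic(\tS_{\overline K})$ (this follows at once from the exponent vectors $m^{(1)},\dots,m^{(9)}$), so they lie in the common fractional ideal $\coordfracideal := \coordfracideal_1\coordfracideal_9 = \coordfracideal_2\coordfracideal_8 = \coordfracideal_3\coordfracideal_4^2\coordfracideal_5^3\coordfracideal_7$. Hence for any prime $\ppp$ of $\Ok$ the $\ppp$-adic valuation of each term equals $v_\ppp(\coordfracideal)$ plus the valuation of the corresponding product of $\coordideal_i$'s. Since three elements of $K$ summing to zero must have their minimum $\ppp$-adic valuation attained by at least two of them, I will derive a contradiction by showing that the assumed failure of any of the five missing coprimalities forces exactly one term in \eqref{eq:A3+A1_torsor} to realize the minimum value $v_\ppp(\coordfracideal)$, while the other two are strictly larger.

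The remainder is case analysis. For $\coordideal_8 + \coordideal_1$, if $\ppp$ divides both $\coordideal_1$ and $\coordideal_8$, then \eqref{eq:cop_remaining_vars} forces $v_\ppp(\coordideal_j) = 0$ for $j \in \{2,3,4,5\}$ and \eqref{eq:cop_e7} forces $v_\ppp(\coordideal_7) = 0$, so the third monomial has valuation exactly $v_\ppp(\coordfracideal)$ while the other two exceed it. For $\coordideal_9 + \coordideal_j$ with $j \in \{2,3,4,5\}$, \eqref{eq:cop_e9} first yields $v_\ppp(\coordideal_6) = 0$; then the restriction imposed on $J_\ppp(\coordidealtuple')$ by \eqref{eq:cop_remaining_vars}, combined with \eqref{eq:cop_e7} or \eqref{eq:cop_e8}, kills $v_\ppp(\coordideal_7)$ or $v_\ppp(\coordideal_8)$ (the choice depending on $j$), after which the same valuation comparison isolates a unique term of minimal valuation. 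I expect the only real obstacle to be bookkeeping in the subcase $j = 4$, where $J_\ppp(\coordidealtuple')$ can be any of $\{4\}, \{3,4\}, \{4,5\}$; but the valuation-comparison argument applies uniformly in these subcases, so no conceptual difficulty arises.
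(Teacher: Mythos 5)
Your proposal is correct and follows essentially the same route as the paper: the paper likewise reduces the claim to the five coprimalities $\coordideal_8+\coordideal_1=\Ok$ and $\coordideal_9+\coordideal_j=\Ok$ ($j\in\{2,3,4,5\}$) not already implied by the weakened conditions, and deduces them from the torsor equation via the observation that all three monomials lie in the common fractional ideal $\classrep_0$, so a prime dividing two of the corresponding integral ideals must divide the third. Your "minimum valuation attained twice" formulation and the ensuing case analysis (contradicting $\theta_0=1$ together with \eqref{eq:cop_e7} or \eqref{eq:cop_e8}) is exactly the paper's argument in different words.
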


\begin{proof}
  The conditions \eqref{eq:A3+A1_coprimality} are equivalent to
  \eqref{eq:cop_remaining_vars}, \eqref{eq:cop_e3}, \eqref{eq:cop_e7},
  and
  \begin{align}
    \coordideal_8 + \coordideal_1\coordideal_3\coordideal_4\coordideal_5\cdot\coordideal_6 &= \integersk\label{eq:cop_e8_orig}\\
    \coordideal_9 +
    \coordideal_2\coordideal_3\coordideal_4\coordideal_5\cdot\coordideal_6
    &= \integersk.\label{eq:cop_e9_orig}
  \end{align}
  We show that the torsor equation \eqref{eq:A3+A1_torsor} and conditions \eqref{eq:cop_remaining_vars},
  \eqref{eq:cop_e7}, and \eqref{eq:cop_e8} already imply
  \eqref{eq:cop_e8_orig}. Assume that $\ppp \mid \coordideal_8 +
  \coordideal_1$.  Then in particular,
  \begin{equation*}
    \coord_2\coord_8\classrep_0^{-1} = \coordideal_2\coordideal_8 \subseteq \ppp \text{ and } \coord_1\coord_9\classrep_0^{-1} = \coordideal_1\coordideal_9 \subseteq\ppp.
  \end{equation*}
  Using \eqref{eq:A3+A1_torsor},
  \begin{equation*}
    \coordideal_3\coordideal_4^2\coordideal_5^3\coordideal_7 = \coord_3\coord_4^2\coord_5^3\coord_7\classrep_0^{-1} = (\coord_2\coord_8+\coord_1\coord_9)\classrep_0^{-1}\subseteq \coord_2\coord_8\classrep_0^{-1}+\coord_1\coord_9\classrep_0^{-1} \subseteq \ppp.
  \end{equation*}
  We conclude that $\ppp \mid
  \coordideal_1+\coordideal_3\coordideal_4^2\coordideal_5^3\coordideal_7$,
  which contradicts \eqref{eq:cop_remaining_vars} or
  \eqref{eq:cop_e7}. Similarly, one can show that
  \eqref{eq:A3+A1_torsor}, \eqref{eq:cop_remaining_vars},
  \eqref{eq:cop_e7}, \eqref{eq:cop_e8}, and \eqref{eq:cop_e9} imply
  \eqref{eq:cop_e9_orig}.
\end{proof}
We use the following notation for certain $6$-tuples of nonzero
ideals:
\begin{align*}
  \divisoridealtuple &:= (\divisorideal_{67}, \divisorideal_{68}, \divisorideal_{69}, \divisorideal_6, \divisorideal_7, \divisorideal_8),\\
  \mu_K(\divisoridealtuple) &:=
  \mu_K(\divisorideal_{67})\mu_K(\divisorideal_{68})\mu_K(\divisorideal_{69})\mu_K(\divisorideal_6)\mu_K(\divisorideal_7)\mu_K(\divisorideal_8),
\end{align*}
where $\mu_K(\aaa)$ is the M\"obius function for nonzero ideals $\aaa$
of $\Ok$.  In the next lemma, $\divisoridealtuple$ runs over all
$6$-tuples of nonzero ideals satisfying the conditions (depending on
$\coordidealtuple'$):
\begin{equation}\label{eq:dij_conditions}
  \begin{aligned}
    \divisorideal_{67} + \coordideal_1\coordideal_2\coordideal_3\coordideal_4\coordideal_5 &=\integersk, \\
    \divisorideal_{68} + \coordideal_1\coordideal_3\coordideal_4\coordideal_5 &=\integersk, \\
    \divisorideal_{69} + \coordideal_2\coordideal_3\coordideal_4\coordideal_5 &=\integersk, \\
    \divisorideal_{68} + \divisorideal_{69} &=\integersk,
  \end{aligned}
\end{equation}
and
\begin{equation}\label{eq:di_conditions}
  \begin{aligned}
    \divisorideal_6 &\mid \coordideal_4\coordideal_5, \\
    \divisorideal_7 &\mid \coordideal_1\coordideal_2\coordideal_3\coordideal_4,\\
    \divisorideal_8 &\mid \coordideal_3\coordideal_4\coordideal_5.\\
  \end{aligned}
\end{equation}
For any fixed $\classtuple, \coordtuple', \divisoridealtuple$
satisfying $\theta_0(\coordidealtuple')=1$ and the above conditions,
we define the fractional ideals
\begin{align*}
  \latticefracideal_6 &:=\divisorideal_6(\divisorideal_{67}\cap(\divisorideal_{68}\divisorideal_{69}))\coordfracideal_{6}\\
  \latticefracideal_7 &:=\divisorideal_7\divisorideal_{67}\coordfracideal_{7}\\
  \latticefracideal_8
  &:=\coordideal_1\divisorideal_8\divisorideal_{68}\divisorideal_{69}\coordfracideal_{8}.
\end{align*}
Conditions \eqref{eq:dij_conditions} and \eqref{eq:di_conditions}, together with $\theta_0(\coordidealtuple')=1$, imply that
\begin{equation*}
\coordideal_2\divisorideal_8\divisorideal_{68}\classrep_1\classrep_2\classrep_3 + \coordideal_1\divisorideal_{69}\classrep_1\classrep_2\classrep_3=\classrep_1\classrep_2\classrep_3. 
\end{equation*}
Moreover, $\coord_3\coord_4^2\coord_5^3\Ok = \coordideal_3\coordideal_4^2\coordideal_5^3\coordfracideal_3\coordfracideal_4^2\coordfracideal_5^3 =\coordideal_3\coordideal_4^2\coordideal_5^3\classrep_1\classrep_2\classrep_3$, so $\coord_3\coord_4^2\coord_5^3\equiv 0 \bmod \classrep_1\classrep_2\classrep_3$. Unique ideal factorization in $\Ok$ allows us to apply the Chinese remainder theorem with not necessarily coprime moduli to conclude that the congruence
\begin{equation}\label{eq:gamma_8_cong_syst}
  \gamma_8^* \equiv
  \begin{cases}
    0 &\mod \coordideal_2\divisorideal_8\divisorideal_{68}\classrep_1\classrep_2\classrep_3,\\
    -\coord_3\coord_4^2\coord_5^3 &\mod
    \coordideal_1\divisorideal_{69}\classrep_1\classrep_2\classrep_3
  \end{cases}
\end{equation}
has a solution $\gamma_8^*(\classtuple,
\coordtuple', \divisoridealtuple) \in \Ok$, which is unique modulo
\begin{equation*}
  \coordideal_2\divisorideal_8\divisorideal_{68}\classrep_1\classrep_2\classrep_3 \cap \coordideal_1\divisorideal_{69}\classrep_1\classrep_2\classrep_3 = \coordideal_1\coordideal_2\divisorideal_8\divisorideal_{68}\divisorideal_{69}\classrep_1\classrep_2\classrep_3
= \coord_2\coordfracideal_7^{-1}\latticefracideal_8.
\end{equation*}
Let $\gamma_8 := \gamma_8^*/\coord_2$. Then we define $\countinggroup(\classtuple,
\coordtuple', \divisoridealtuple)$ as the additive subgroup of $K^3$
consisting of all $(\coord_6,\coord_7,\coord_8)$ with
\begin{align*}
  \coord_6 &\in \latticefracideal_6\\
  \coord_7 &\in \latticefracideal_7\\
  \coord_8 &\in \gamma_8\cdot\coord_7 + \latticefracideal_8.
\end{align*}
Note that $\countinggroup(\classtuple, \coordtuple',
\divisoridealtuple)$ does not depend on the choice of $\gamma_8^*$.

\begin{lemma}\label{lem:removed_torsor_coprim}
  Let $\classtuple \in \classrepsyst^6$ and $B \geq 0$. With $\countinggroup(\classtuple,
\coordtuple', \divisoridealtuple)$ as defined above and $\FDei, \FDxi(\coordtuple';
    u_\classtuple B)$ as defined in Section \ref{sec:fundamental_domain},
  \begin{align*}
    &|M_\classtuple(B)|=\sum_{\coordtuple' \in
      \FDei^5\cap\coordfracidealproduct}\theta_0(\coordidealtuple')\sum_{\substack{\divisoridealtuple\\\eqref{eq:dij_conditions},\eqref{eq:di_conditions}}}\mu_K(\divisoridealtuple)|\countinggroup(\classtuple,
    \coordtuple', \divisoridealtuple) \cap \FDxi(\coordtuple';
    u_\classtuple B)|.
  \end{align*}
\end{lemma}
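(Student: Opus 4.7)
The plan is to transform the definition of $|M_\classtuple(B)|$ through three stages: separating off the variables $\coord_1,\dots,\coord_5$, using the torsor equation \eqref{eq:A3+A1_torsor} to eliminate $\coord_9$, and expanding the remaining coprimality conditions by M\"obius inversion.

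First, since $\FDtot=\FDtot'\times K$ and $\coord_9$ enters the conditions only through \eqref{eq:A3+A1_torsor}, I will partition the sum by fixing $\coordtuple' = (\coord_1, \dots, \coord_5) \in \FDei^5\cap \coordfracidealproduct$. By Lemma \ref{lem:coprimality_rewritten}, the conditions \eqref{eq:A3+A1_coprimality} decompose into $\theta_0(\coordidealtuple')=1$, a condition on $\coordtuple'$ alone that factors out of the inner sum, and the coprimality conditions \eqref{eq:cop_e3}--\eqref{eq:cop_e9}. With $\coord_1\in\coordfracideal_{1*}$ nonzero, the torsor equation uniquely determines $\coord_9 = -(\coord_2\coord_8 + \coord_3\coord_4^2\coord_5^3\coord_7)/\coord_1$, so the inner sum reduces to one over $(\coord_6,\coord_7,\coord_8)\in\FDxi(\coordtuple'; u_\classtuple B)$, with the requirement $\coord_9\in\coordfracideal_9$ translating into a linear congruence relating $\coord_8$ and $\coord_7$ modulo a fractional ideal.

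Next, I apply M\"obius inversion to each of \eqref{eq:cop_e3}, \eqref{eq:cop_e7}, \eqref{eq:cop_e8}, \eqref{eq:cop_e9}. Condition \eqref{eq:cop_e3} yields $\sum_{\divisorideal_6\mid\gcd(\coordideal_6,\coordideal_4\coordideal_5)}\mu_K(\divisorideal_6)$, which introduces $\divisorideal_6$ with the divisibility $\divisorideal_6\mid\coordideal_4\coordideal_5$ of \eqref{eq:di_conditions}. Each of \eqref{eq:cop_e7}, \eqref{eq:cop_e8} factors as a pair of simultaneous coprimalities, one with a product of the fixed $\coordideal_i$ and one with $\coordideal_6$; multiplicativity of $\mu_K$ on the induced coprime factorization of a squarefree divisor introduces the pairs $(\divisorideal_7,\divisorideal_{67})$, $(\divisorideal_8,\divisorideal_{68})$ with the corresponding restrictions from \eqref{eq:di_conditions} and \eqref{eq:dij_conditions}, and similarly \eqref{eq:cop_e9} introduces $\divisorideal_{69}$. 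The extended coprimalities in \eqref{eq:dij_conditions} for the ``cross'' divisors, as well as $\divisorideal_{68}+\divisorideal_{69}=\integersk$, follow from combining the coprime factorization with $\theta_0(\coordidealtuple')=1$, \eqref{eq:cop_e3}, \eqref{eq:cop_e7}, and the torsor equation, which force the relevant gcds to be trivial whenever the M\"obius integrand is nonzero.

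Finally, I assemble the divisibilities produced by the M\"obius sums together with the $\coord_9$-congruence coming from $\coord_9 \in \coordfracideal_9\divisorideal_{69}$ into the single description $(\coord_6,\coord_7,\coord_8)\in\countinggroup(\classtuple,\coordtuple',\divisoridealtuple)$: the divisibilities of $\coordideal_6$ combine via the established pairwise coprimalities to $\coord_6\in\latticefracideal_6$; those of $\coordideal_7$ give $\coord_7\in\latticefracideal_7$; and the divisibilities of $\coordideal_8$ together with the $\coord_9$-congruence form exactly the system \eqref{eq:gamma_8_cong_syst}, whose Chinese-remainder solution is $\coord_8\in\gamma_8\coord_7+\latticefracideal_8$. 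The main obstacle I expect is in this last identification: verifying that the intersection of moduli collapses to $\latticefracideal_8=\coordideal_1\divisorideal_8\divisorideal_{68}\divisorideal_{69}\coordfracideal_8$ requires carefully checking the pairwise coprimalities among $\coordideal_1, \coordideal_2, \divisorideal_8, \divisorideal_{68}, \divisorideal_{69}$ using $\theta_0(\coordidealtuple')=1$ together with \eqref{eq:di_conditions} and \eqref{eq:dij_conditions}, and confirming that the compatibility condition for \eqref{eq:gamma_8_cong_syst} is automatic under these restrictions.
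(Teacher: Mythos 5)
Your proposal is correct and follows essentially the same route as the paper's proof: eliminate $\coord_9$ via the torsor equation (converting the condition $\coord_9\in\divisorideal_{69}\coordfracideal_9$ into the congruence on $\coord_2\coord_8+\coord_3\coord_4^2\coord_5^3\coord_7$), perform M\"obius inversion on \eqref{eq:cop_e3}--\eqref{eq:cop_e9} to introduce $\divisoridealtuple$ with the restrictions \eqref{eq:dij_conditions}, \eqref{eq:di_conditions} forced by $\theta_0=1$ and the surviving conditions, and then use the Chinese remainder theorem to package the divisibilities and the congruence as the coset description of $\countinggroup(\classtuple,\coordtuple',\divisoridealtuple)$. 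The only difference is cosmetic: the paper carries out the inversions sequentially (starting with \eqref{eq:cop_e9}) rather than in parallel, which yields the same expansion.
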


\begin{proof}
  For now, let us fix $\coordtuple'$ with
  $\theta_{0}(\coordidealtuple') = 1$ and write $\FDxi :=
  \FDxi(\coordtuple';u_\classtuple B)$. We define
  \begin{equation*}
    \tM = \tM(\classtuple, \coordtuple', B) := |\{(\coord_6, \coord_7, \coord_8, \coord_9) \mid (\coord_1, \ldots, \coord_9) \in M_\classtuple(B)\}|\text.
  \end{equation*}
  M\"obius inversion for the coprimality condition \eqref{eq:cop_e9} shows that
  \begin{equation*}
    \tM=\sum_{\divisorideal_{69}\in \idealsk}\mu_K(\divisorideal_{69})\Bigg|\left\{
      \begin{aligned}
        (\coord_6,\coord_7,\coord_8,\coord_9) \in
        ((\divisorideal_{69}\coordfracideal_{6}\times\coordfracideal_{7}\times\coordfracideal_{8})
        \cap \FDxi)\times
        \divisorideal_{69}\coordfracideal_{9})& \\
        :\eqref{eq:A3+A1_torsor}, \eqref{eq:cop_e3}-\eqref{eq:cop_e8}&
      \end{aligned}
    \right\}\Bigg|.
  \end{equation*}
  There is a one-to-one correspondence between
  $(\coord_6,\coord_7,\coord_8,\coord_9)$ as above satisfying the torsor equation  \eqref{eq:A3+A1_torsor} and triples $(\coord_6,\coord_7,\coord_8)\in
  (\divisorideal_{69}\coordfracideal_{6}\times\coordfracideal_{7}\times\coordfracideal_{8})\cap
  \FDxi$ satisfying the congruence
  \begin{equation}\label{eq:torsor_congruence}
    \coord_3\coord_4^2\coord_5^3\coord_7 + \coord_2\coord_8 \equiv 0 \mod \coord_1\divisorideal_{69}\coordfracideal_9 = \divisorideal_{69}\coordideal_1\classrep_0. 
  \end{equation}
  Moreover, \eqref{eq:torsor_congruence} and
  \eqref{eq:cop_e3}-\eqref{eq:cop_e8} imply that $\divisorideal_{69} +
  \coordideal_2\coordideal_3\coordideal_4\coordideal_5 =
  \integersk$. We apply M\"obius inversion to resolve the coprimality
  condition $\coordideal_6+\coordideal_8 = \integersk$ resulting from
  \eqref{eq:cop_e8}. As a result, $\tM$ is equal to
  \begin{equation*}
    \sum_{\substack{\divisorideal_{68},\divisorideal_{69} \in \idealsk\\\divisorideal_{69}+\coordideal_2\coordideal_3\coordideal_4\coordideal_5=\integersk}}\hspace{-5mm}\mu_K(\divisorideal_{68}, \divisorideal_{69})\Bigg|\left\{
      \begin{aligned}
        (\coord_6,\coord_7,\coord_8) \in
        ((\divisorideal_{68}\cap\divisorideal_{69})\coordfracideal_{6}\times
        \coordfracideal_{7} \times
        \divisorideal_{68}\coordfracideal_{8})\cap\FDxi&\\:
        \eqref{eq:torsor_congruence},
        \eqref{eq:cop_e3},\eqref{eq:cop_e7}, \coordideal_8 +
        \coordideal_3\coordideal_4\coordideal_5 = \integersk&
      \end{aligned}
    \right\}\Bigg|,
  \end{equation*}
  where $\mu_K(\divisorideal_{68}, \divisorideal_{69}) :=
  \mu_K(\divisorideal_{68})\mu_K(\divisorideal_{69})$. Clearly, the
  summand is $0$ whenever
  $\divisorideal_{68}+\coordideal_3\coordideal_4\coordideal_5\neq
  \integersk$. Moreover, due to \eqref{eq:torsor_congruence} and
  \eqref{eq:cop_e7}, we see that
  $\divisorideal_{68}+\divisorideal_{69}\coordideal_1 =
  \integersk$. One further application of M\"obius inversion to
  resolve the coprimality condition between $\coordideal_6$ and
  $\coordideal_7$ resulting from \eqref{eq:cop_e7} shows that
  \begin{align*}
    \tM=&\sum_{\substack{\divisorideal_{67},\divisorideal_{68},\divisorideal_{69} \in \idealsk\\\divisorideal_{68}+\coordideal_1\coordideal_3\coordideal_4\coordideal_5=\integersk\\\divisorideal_{69}+\coordideal_2\coordideal_3\coordideal_4\coordideal_5=\integersk\\\divisorideal_{68}+\divisorideal_{69}=\integersk}}\hspace{-5mm}\mu_K(\divisorideal_{67}, \divisorideal_{68}, \divisorideal_{69})\cdot\\
    &\Bigg|\left\{
      \begin{aligned}
        (\coord_6,\coord_7,\coord_8) \in
        ((\divisorideal_{67}\cap(\divisorideal_{68}\divisorideal_{69}))\coordfracideal_{6}\times
        \divisorideal_{67}\coordfracideal_{7} \times
        \divisorideal_{68}\coordfracideal_{8})\cap\FDxi&\\:
        \eqref{eq:torsor_congruence}, \eqref{eq:cop_e3},\coordideal_7
        +
        \coordideal_1\coordideal_2\coordideal_3\coordideal_4=\integersk,
        \coordideal_8 + \coordideal_3\coordideal_4\coordideal_5 =
        \integersk&
      \end{aligned}
    \right\}\Bigg|,
  \end{align*}
  with $\mu_K(\divisorideal_{67}, \divisorideal_{68},
  \divisorideal_{69})$ defined similarly as above. Clearly, we may add
  the condition
  $\divisorideal_{67}+\coordideal_1\coordideal_2\coordideal_3\coordideal_4\coordideal_5=\Ok$
  under the sum. After three more applications of M\"obius inversion
  to resolve the remaining coprimality conditions,
  \begin{align*}
    \tM=\sum_{\substack{\divisoridealtuple\\\eqref{eq:dij_conditions},\eqref{eq:di_conditions}}}\mu_K(\divisoridealtuple)\Bigg|\left\{
      \begin{aligned}
        (\coord_6,\coord_7,\coord_8) \in (\latticefracideal_6\times
        \latticefracideal_7 \times
        \divisorideal_8\divisorideal_{68}\coordfracideal_{8})\cap\FDxi&\\:
        \eqref{eq:torsor_congruence}
      \end{aligned}
    \right\}\Bigg|.
  \end{align*}
  The conditions $\coord_8 \in
  \divisorideal_8\divisorideal_{68}\coordfracideal_{8}$ and
  \eqref{eq:torsor_congruence} are equivalent to the system of
  congruences
  \begin{equation}\label{eq:a_2_a_8_cong_syst}
    \begin{aligned}
      \coord_2\coord_8 &\equiv 0& &\mod \coordideal_2\divisorideal_8\divisorideal_{68}\classrep_0\\
      \coord_2\coord_8 &\equiv -\coord_3\coord_4^2\coord_5^3\coord_7&
      &\mod \divisorideal_{69}\coordideal_1\classrep_0.
    \end{aligned}
  \end{equation}
  Recall that $\gamma_8^*$ is a solution to the system
  \eqref{eq:gamma_8_cong_syst}. Multiplying by $a_7$, we see that
  $a_2a_8 = \gamma_8^*a_7$ is a solution to
  \eqref{eq:a_2_a_8_cong_syst}. Hence, \eqref{eq:a_2_a_8_cong_syst} is equivalent to
  \begin{equation*}
    \coord_2\coord_8
    \equiv \gamma_8^*\coord_7 \mod
    (\coordideal_2\divisorideal_8\divisorideal_{68}\classrep_0\cap\divisorideal_{69}\coordideal_1\classrep_0) = \coordideal_1\coordideal_2\divisorideal_8\divisorideal_{68}\divisorideal_{69}\classrep_0.
  \end{equation*}
  Dividing by $\coord_2$ proves the lemma.
\end{proof}

\section{Small conjugates}\label{sec:small_conjugates}
From the conditions $\coord_6, \coord_7 \neq 0$ in
$\FDxi(\coordtuple'; u_\classtuple B)$, we see that every
$(\coord_6,\coord_7,\coord_8) \in
\countinggroup(\classtuple,\coordtuple',\divisoridealtuple)\cap
\FDxi(\coordtuple'; u_\classtuple B)$ satisfies $N(\coord_6) \geq
\N\latticefracideal_6$ and $N(\coord_7) \geq
\N\latticefracideal_7$. We would like to replace these by the stronger
conditions
\begin{equation*}
  \abs{\coord_6}_v \geq \N\latticefracideal_6^{d_v/d}\text{ and }\abs{\coord_7}_v \geq \N\latticefracideal_7^{d_v/d} \text{ for all }v \in \archplaces. 
\end{equation*}
If $|\archplaces| = 1$ then there is nothing to do. Let us first
prove some auxiliary results.

\begin{lemma}\label{lem:box_counting}
  Let $\aaa$ be a nonzero fractional ideal of $K$ and $b \in K$. For each
  $v \in \archplaces$, let $y_v \in K_v$ and $c_v > 0$. Define
  \begin{equation*}
    \countingbox := \{a \in K \mid |a^{(v)}+y_v|_v \leq c_v \text{ for all }v\in\archplaces\}. 
  \end{equation*}
  Then
  \begin{equation*}
    |(b+\aaa) \cap \countingbox| \ll \frac{1}{\N\aaa}\left(\prod_{v\in\archplaces}c_v\right)+1.
  \end{equation*}
\end{lemma}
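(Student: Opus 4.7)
The strategy is to transfer the problem to a standard lattice-point-counting estimate for number-field lattices, in three steps.

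\emph{Step 1: Geometric reformulation.} The archimedean embedding $\sigma:K\to\prod_{v\in\archplaces}K_v\cong\RR^d$ (identifying $\CC\cong\RR^2$ at complex places) turns $\aaa$ into a full-rank lattice $\Lambda:=\sigma(\aaa)\subseteq\RR^d$ of covolume $\covol(\Lambda)=2^{-r_2}|\Delta_K|^{1/2}\N\aaa$, and turns $\countingbox$ into a product box $\mathcal B\subseteq\RR^d$ (a product of intervals at real places and disks at complex places), of Lebesgue volume $\vol(\mathcal B)=2^{r_1}\pi^{r_2}\prod_v c_v$. With these identifications, the assertion becomes
\begin{equation*}
  |(\sigma(b)+\Lambda)\cap\mathcal B|\ll\frac{\vol(\mathcal B)}{\covol(\Lambda)}+1,
\end{equation*}
where the implicit constant is allowed to depend on $K$.

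\emph{Step 2: Reduction to a centered box.} If $(b+\aaa)\cap\countingbox=\emptyset$, there is nothing to prove. Otherwise, fix some $a_0$ in this set. For any other $a$ in it, $a-a_0\in\aaa$ and the triangle inequality (applied to the ordinary absolute value, squared at complex places) yields $|(a-a_0)^{(v)}|_v\leq 4c_v$ for every $v\in\archplaces$. Consequently
\begin{equation*}
  |(b+\aaa)\cap\countingbox|\;\leq\;|\aaa\cap\mathcal B_0|,\qquad\mathcal B_0:=\{x\in K:|x^{(v)}|_v\leq 4c_v\text{ for all }v\in\archplaces\},
\end{equation*}
a centrally symmetric product box whose volume is still comparable to $\prod_v c_v$.

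\emph{Step 3: The lattice-point estimate.} It remains to prove $|\Lambda\cap\mathcal B_0|\ll\vol(\mathcal B_0)/\covol(\Lambda)+1$ with an implicit constant depending only on $K$. I would deduce this from Minkowski's second theorem, applied to the centrally symmetric convex body $\mathcal B_0$ and the lattice $\Lambda$: if $\lambda_1\leq\cdots\leq\lambda_d$ are the successive minima, then $|\Lambda\cap\mathcal B_0|\ll\prod_{i=1}^d(1+\lambda_i^{-1})$, while Minkowski II gives $\lambda_1\cdots\lambda_d\asymp\covol(\Lambda)/\vol(\mathcal B_0)$. The main obstacle is that such a bound fails for generic lattices in $\RR^d$ paired with arbitrary product boxes (consider a thin box containing a coordinate axis of $\ZZ^d$); one must use the product formula $\prod_v|x|_v=|N(x)|\geq\N\aaa$ for nonzero $x\in\aaa$ to show that the successive minima of $\mathcal B_0$ with respect to $\Lambda$ cannot all be very small simultaneously. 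Concretely, any $i$ linearly independent lattice vectors in $\lambda_i\mathcal B_0$ each have product of $v$-absolute values at least $\N\aaa$, which translates into a lower bound on products of successive minima sufficient to control $\prod_i\max(1,\lambda_i^{-1})$ by $\vol(\mathcal B_0)/\covol(\Lambda)+1$. This number-field-specific estimate is the heart of the lemma and is essentially the classical counting principle underlying Schanuel's theorem \cite{MR557080}.
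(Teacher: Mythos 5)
Your argument is correct, and it reaches the estimate by a route that is related to, but technically different from, the paper's. The paper first translates $b$ away (absorbing it into the $y_v$), then applies the determinant-one rescaling $x_v\mapsto c\,c_v^{-1/d_v}x_v$ with $c=(\prod_v c_v)^{1/d}$; this turns the region into one whose boundary is Lipschitz-parameterizable with constant $\ll c$ and turns $\sigma(\aaa)$ into a lattice of unchanged determinant whose first successive minimum is $\geq\N\aaa^{1/d}$ (by the product formula $|N(x)|\geq\N\aaa$ for $0\neq x\in\aaa$); it then quotes the classical Davenport/Lipschitz counting lemma, whose error term $\sum_{i<d}(c/\lambda_1)^i$ is $\ll c^d/\N\aaa+1$. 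You instead reduce to a centered box by differencing against a fixed point $a_0$ (correct, including the factor $4$ at complex places because $\absv{\cdot}$ is the square of the modulus there) and then invoke the successive-minima counting bound $|\Lambda\cap\mathcal B_0|\ll_d\prod_i\max(1,\lambda_i^{-1})$ together with Minkowski's second theorem. Both proofs hinge on exactly the same number-field-specific input, namely the lower bound on $\lambda_1$ forced by $|N(x)|\geq\N\aaa$; you correctly flag this as the heart of the matter. To close your Step 3 cleanly you in fact only need $\lambda_1$, not all the minima: with $S=\{i:\lambda_i<1\}$, if $S=\{1,\dots,d\}$ then $\prod_{i\in S}\lambda_i^{-1}\asymp_d\vol(\mathcal B_0)/\det\Lambda$ by Minkowski II, while if $|S|=k<d$ then $\prod_{i\in S}\lambda_i^{-1}\leq\lambda_1^{-k}\ll(\vol(\mathcal B_0)/\det\Lambda)^{k/d}\leq\vol(\mathcal B_0)/\det\Lambda+1$. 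Your approach avoids any discussion of boundary regularity; the paper's rescaling buys a directly citable lemma but nothing your argument does not also deliver.
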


\begin{proof}
  Replacing $y_v$ by $y_v+b^{(v)}$, we may assume that $b =
  0$. Denote by $B_{-y_v}(c_v)$ the closed ball in $K_v$ (with respect
  to $\absv{\cdot}$) with center $-y_v$ and radius $c_v$. Let $M :=
  \prod_{v\in\archplaces}B_{-y_v}(c_v)$. Let $c :=
  (\prod_{v\in\archplaces}c_v)^{1/d}$, and let $\tau :
  \prod_{v\in\archplaces}K_v \to \prod_{v\in\archplaces}K_v$ be the
  $\RR$-linear transformation of determinant $1$ given by $\tau(x_v) =
  cc_v^{-1/d_v} \cdot x_v$.  Clearly,
  \begin{equation*}
    |\aaa \cap \countingbox| = |\sigma(\aaa) \cap M| = |\tau(\sigma(\aaa)) \cap \tau(M)|.
  \end{equation*}
  With the usual identification $\prod_{v\in\archplaces}K_v=\RR^{d}$,
  the boundary of $\tau(M)$ is Lipschitz-parameterizable with Lipschitz
  constant $\ll c$ (cf. \cite{MR2247898}, \cite[V,~\textsection
  2]{MR1282723}) and $\tau(\sigma(\aaa))$ is a lattice in $\RR^d$ with
  determinant $2^{-s}\N\aaa\sqrt{|\Delta_K|}$ and first successive
  minimum $\lambda_1 \geq \N\aaa^{1/d}$ (cf. \cite[Lemma
  5.1]{MR3107569}). A classical counting argument
  (cf. \cite[Lemma 2]{MR2247898}) shows that
  \begin{equation*}
    |\tau(\sigma(\aaa)) \cap \tau(M)| \ll \frac{c^d}{\N\aaa}+1.\qedhere
  \end{equation*}
\end{proof}

\begin{lemma}\label{lem:norm_sum_no_lower_bound}
  Let $\aaa$ be a nonzero fractional ideal of $K$ and $c_v > 0$ for
  all $v \in \archplaces$. For $\alpha \in [0, 1)$,
  \begin{equation*}
    \sum_{\substack{0 \neq \coord \in \aaa\\\absv{\coord} \leq c_v\ \forall v\in\archplaces}}\frac{1}{N(\coord)^\alpha} \ll_\alpha \frac{1}{\N\aaa}\prod_{v\in\archplaces}c_v^{(1-\alpha)}.
  \end{equation*}
  For $\alpha > 1$,
  \begin{equation*}
    \sum_{\substack{0 \neq \coord \in \aaa\\\absv{\coord} \geq c_v\ \forall v\in\archplaces}}\frac{1}{N(\coord)^\alpha} \ll_\alpha \frac{1}{\N\aaa}\prod_{v\in\archplaces}c_v^{(1-\alpha)}.
  \end{equation*}
\end{lemma}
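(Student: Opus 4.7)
The plan is to prove both estimates by a dyadic decomposition on the absolute values $|a|_v$, reducing each sum to lattice-point counts in boxes to which Lemma~\ref{lem:box_counting} applies. The essential extra input throughout is the norm inequality $|N(a)|\geq\N\aaa$ for every $0\neq a\in\aaa$, valid because $(a)\aaa^{-1}$ is an integral ideal.

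For the first inequality with $\alpha\in[0,1)$, I would partition the summation domain into shells indexed by $\vec k=(k_v)_{v\in\archplaces}\in\ZZ_{\geq 0}^{|\archplaces|}$ with $|a|_v\in(c_v 2^{-k_v-1},c_v 2^{-k_v}]$. Setting $C:=\prod_v c_v$ and $n:=\sum_v k_v$, each element of such a shell satisfies $N(a)\asymp C\cdot 2^{-n}$, and Lemma~\ref{lem:box_counting} bounds the number of lattice points in the shell by $\ll C\cdot 2^{-n}/\N\aaa+1$. The norm lower bound forces nonempty shells to satisfy $n\leq\log_2(C/\N\aaa)+O(1)$; in particular, the sum vanishes when $C\ll\N\aaa$. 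Assuming $C\gg\N\aaa$ and grouping shells by $n$ (which contributes a polynomial factor $P(n)$ of degree $|\archplaces|-1$), the main-term contribution yields $(C^{1-\alpha}/\N\aaa)\sum_{n\geq 0}P(n)\,2^{-(1-\alpha)n}\ll_\alpha C^{1-\alpha}/\N\aaa$ since $1-\alpha>0$. The $+1$ contribution $C^{-\alpha}\sum_n P(n)\,2^{\alpha n}$ would diverge if unrestricted, but is truncated at $n\leq\log_2(C/\N\aaa)+O(1)$; its endpoint evaluates to $\ll_\alpha(\log(C/\N\aaa))^{|\archplaces|-1}\N\aaa^{-\alpha}$, and writing $y:=C/\N\aaa\geq 1$ this equals $(\log y)^{|\archplaces|-1}y^{-(1-\alpha)}\cdot C^{1-\alpha}/\N\aaa$, which is $O(C^{1-\alpha}/\N\aaa)$ because $(\log y)^{|\archplaces|-1}y^{-(1-\alpha)}$ is bounded on $[1,\infty)$ when $1-\alpha>0$.

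The second inequality is handled symmetrically using the shells $|a|_v\in[c_v 2^{k_v},c_v 2^{k_v+1})$ with $k_v\geq 0$. Now $N(a)\asymp C\cdot 2^n$, and the norm lower bound truncates $n$ from below at $n\geq\max(0,\log_2(\N\aaa/C)-O(1))$. Because $\alpha>1$, both series $\sum_n P(n)\,2^{(1-\alpha)n}$ and $\sum_n P(n)\,2^{-\alpha n}$ are geometrically summable on $n\geq 0$; the $+1$ contribution is dominated by its lower endpoint, and an analysis analogous to the first case yields $\ll_\alpha C^{1-\alpha}/\N\aaa$. The main obstacle throughout is taming the $+1$ term in Lemma~\ref{lem:box_counting}, which summed over all shells would give a divergent geometric-polynomial series; controlling it depends crucially on truncating the admissible range of $n$ via the minimum-norm inequality $|N(a)|\geq\N\aaa$ to precisely the level where the endpoint contribution still fits into the claimed bound.
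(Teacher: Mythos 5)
Your argument is correct, but it follows a genuinely different route from the paper's. You decompose the region $\prod_{v\in\archplaces}\{\absv{\coord}\leq c_v\}$ dyadically in Minkowski space and bound each shell by the box-counting Lemma \ref{lem:box_counting}, using the minimum-norm inequality $N(\coord)\geq\N\aaa$ (for $0\neq\coord\in\aaa$) to truncate the range of admissible shells and thereby tame the ``$+1$'' terms; the key inequalities $(\log y)^{|\archplaces|-1}y^{-(1-\alpha)}\ll 1$ for $\alpha<1$, respectively $(\log z)^{|\archplaces|-1}z^{-(\alpha-1)}\ll 1$ for $\alpha>1$, close the endpoint estimates, and your treatment of the degenerate case $\alpha=0$ (where the truncated sum is not endpoint-dominated but only picks up an extra logarithm) still lands inside the claimed bound. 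The paper instead groups the sum by the principal ideal $\bbb=\coord\integersk$, writing it as $\sum_{\bbb\subseteq\aaa,\,\N\bbb\leq c}n(\bbb)\N\bbb^{-\alpha}$, bounds the number $n(\bbb)$ of admissible generators by counting unit-lattice points in a logarithmic box of edge length $\log(c/\N\bbb)$ (so $n(\bbb)\ll_\alpha(c/\N\bbb)^{\epsilon}$ with $\epsilon=(1-\alpha)/2$), and then sums over integral ideals in a fixed class using the standard ideal-counting estimate; the constraint $\bbb\subseteq\aaa$ plays the role of your minimum-norm truncation. The two proofs are of comparable difficulty: yours reuses Lemma \ref{lem:box_counting} and keeps all the counting in Minkowski space, while the paper's isolates the unit-counting step, which makes the source of the loss $(c/\N\bbb)^{\epsilon}$ (namely Dirichlet's unit theorem) more transparent and handles both signs of $1-\alpha$ by literally reversing inequalities.
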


\begin{proof}
  Write $c := \prod_{v\in\archplaces}c_v$. Let us start by
  proving the assertion for $\alpha\in [0,1)$. We have
  \begin{equation*}
    \sum_{\substack{0 \neq \coord \in \aaa\\\absv{\coord} \leq c_v\ \forall v\in\archplaces}}\frac{1}{N(\coord)^\alpha} = \sum_{\substack{\bbb \in \principalidealsk\\\bbb\subseteq\aaa \\\N\bbb \leq c}}\frac{n(\bbb)}{\N\bbb^\alpha},
  \end{equation*}
  with
  \begin{equation*}
    n(\bbb) := |\{\coord \in K^\times \mid \coord\integersk = \bbb,\ \absv{\coord}\leq c_v \text{ for all }v\in\archplaces\}|.
  \end{equation*}
  Let us find an upper bound for $n(\bbb)$.  Let $b \in
  K^\times$ with $b\integersk = \bbb$, write $q := |\archplaces|-1$,
  and let $u_1, \ldots, u_{q}$ be a system of
  fundamental units of $\integersk$. Then
  \begin{align*}
    n(\bbb) &= |\{u \in \units \mid \absv{u b} \leq
    c_v\text{ for all }v\in\archplaces\}|\\ &=
    |\mu_K|\cdot|\{(e_1, \ldots, e_{q})\in \ZZ^{q} \mid
    \absv{u_1^{e_1}\cdots u_{q}^{e_{q}}} \leq
    c_v/\absv{b} \text{ for all }v\in\archplaces\}.
  \end{align*}
  Choose $w \in \archplaces$. Taking logarithms
  and using the fact that $|N(u_j)|=1$ to express the condition
  for $w$, we see that $n(\bbb)$ is $|\mu_K|$ times the
  number of solutions $(e_1, \ldots, e_{q}) \in \ZZ^{q}$ of the system
  \begin{align*}
    &e_1\log\absv{u_1} + \cdots + e_{q}\log\absv{u_{q}} \leq \log(c_v/\absv{b}),\quad\text{ for }v\in\archplaces\smallsetminus\{w\}\text{ and}\\
    &e_1\left(-\sum_{v\in\archplaces\smallsetminus\{w\}}\log\absv{u_1}\right)
    + \cdots +
    e_{q}\left(-\sum_{v\in\archplaces\smallsetminus\{w\}}\log\absv{u_{q}}\right)
    \leq \log(c_{w}/\abs{b}_{w}).
  \end{align*}
  For every $v\in\archplaces\smallsetminus\{w\}$, we add the first
  inequality for all $v_0 \neq v$ to the second one and obtain
  \begin{equation*}
    \log(c_v/\absv{b})-\log(c/\N\bbb) \leq e_1\log\absv{u_1} + \cdots + e_{q}\log\absv{u_{q}} \leq \log(c_v/\absv{b}).
  \end{equation*}
  The fundamental units $u_1, \ldots, u_{q}$ can be chosen such that
  the $(\log\absv{u_i})_{v\in\archplaces\smallsetminus\{w\}}$, $1 \leq i
  \leq q$, are a basis of a lattice in $\RR^{q}$ of determinant and
  first successive minimum $\gg 1$. Hence, we need to estimate the
  number of lattice points in a box of edge-length
  $\log(c/\N\bbb)$. With $\epsilon := (1-\alpha)/2>0$, we have
  \begin{equation*}
    n(\bbb) \ll \log(c/\N\bbb)^{q}+1 \ll_\alpha (c/\N\bbb)^{\epsilon}.
  \end{equation*}
  We conclude that
  \begin{equation*}
    \sum_{\substack{\bbb \in \principalidealsk\\\bbb\subseteq\aaa\\\N\bbb \leq c}}\frac{n(\bbb)}{\N\bbb^\alpha} \ll_\alpha\hspace{-0.1cm} \sum_{\substack{\bbb \in \principalidealsk\\\bbb\subseteq\aaa\\\N\bbb \leq c}}\frac{c^{\epsilon}}{\N\bbb^{\alpha+\epsilon}} = \frac{c^{\epsilon}}{\N\aaa^{\alpha+\epsilon}}\hspace{-0.2cm}\sum_{\substack{\bbb\in [\aaa^{-1}]\cap \idealsk\\\N\bbb\leq c/\N\aaa}}\frac{1}{\N\bbb^{\alpha+\epsilon}} \ll_{\alpha} \frac{c^{\epsilon}}{\N\aaa^{\alpha+\epsilon}}\cdot\left(\frac{c}{\N\aaa}\right)^{1-\alpha-\epsilon}.
  \end{equation*}
  The second assertion follows analogously. This time,
  \begin{equation*}
    \sum_{\substack{0 \neq \coord \in \aaa\\\absv{\coord} \geq c_v\ \forall v\in\archplaces}}\frac{1}{N(\coord)^\alpha} = \sum_{\substack{\bbb \in \principalidealsk\\\bbb\subseteq\aaa \\\N\bbb \geq c}}\frac{n^*(\bbb)}{\N\bbb^\alpha},
  \end{equation*}
  with
  \begin{equation*}
    n^*(\bbb) := |\{\coord \in K^\times \mid \coord\integersk = \bbb,\ \absv{\coord}\geq c_v \text{ for all }v\in\archplaces\}|.
  \end{equation*}
  The same argument as before with reversed inequalities shows that
  $n^*(\bbb)$ can be estimated by the number of lattice points in a
  box of edge-length $\log(\N\bbb/c)$, so
  $n^*(\bbb)\ll_{\alpha}(\N\bbb/c)^{\epsilon}$ for $\epsilon :=
  (a-1)/2>0$. Thus,
  \begin{equation*}
    \sum_{\substack{\bbb \in \principalidealsk\\\bbb\subseteq\aaa\\\N\bbb \geq c}}\frac{n(\bbb)}{\N\bbb^\alpha} \ll_\alpha\hspace{-0.1cm}\sum_{\substack{\bbb \in \principalidealsk\\\bbb\subseteq\aaa\\\N\bbb \geq c}}\frac{c^{-\epsilon}}{\N\bbb^{\alpha-\epsilon}} = \frac{c^{-\epsilon}}{\N\aaa^{\alpha-\epsilon}}\hspace{-0.2cm}\sum_{\substack{\bbb\in [\aaa^{-1}]\cap \idealsk\\\N\bbb\geq c/\N\aaa}}\frac{1}{\N\bbb^{\alpha-\epsilon}} \ll_{\alpha} \frac{c^{-\epsilon}}{\N\aaa^{\alpha-\epsilon}}\cdot\left(\frac{c}{\N\aaa}\right)^{1-\alpha+\epsilon}.\qedhere
  \end{equation*}
\end{proof}

The following technical lemma provides conditions under which certain
error terms are summable. We use it in our error estimates here and
later in Sections \ref{sec:volumes_of_projections} and
\ref{sec:first_summation}. Recall the definitions of
$\divisoridealtuple$ and of the $\latticefracideal_j$ from Section
\ref{sec:lattice_point_problem}. For $\beta\in\RR^{\neq 0}$, let $\sgn(\beta)\in\{\pm 1\}$ be its sign.

\begin{lemma}\label{lem:error_sum}
  Let $\classtuple \in \classrepsyst^6$, let $\epsilon, \alpha_6,
  \alpha_7, \alpha_8 > 0$, and $\alpha \in [0,1]$. Let $e_1, \ldots,
  e_5 \in \ZZ$, not all equal to $0$, and $\beta \in \RR^{\neq
    0}$. Consider norm conditions
  \begin{equation}\label{eq:general_height_conditions}
    \begin{aligned}
      N(\coord_1^{e_1}\cdots \coord_5^{e_5})^{\sgn(\beta)} &\ll
      B^{\sgn(\beta)} \text{ and }\\
      N(\coord_j) &\ll B \text{ for all }j\in \{1, \ldots, 5\}.
    \end{aligned}
  \end{equation}
  If
  \begin{equation}\label{eq:dij_exp_greater_1}
    \alpha\alpha_6+\alpha_7 > 1 \quad\text{ and }\quad (1-\alpha)\alpha_6 + \alpha_8 > 1
  \end{equation}
  then the sum
  \begin{equation*}
    \sum_{\substack{\coordtuple' \in \FDei^5\cap\coordfracidealproduct\\\eqref{eq:general_height_conditions}}}\hspace{-0.2cm}\sum_{\substack{\divisoridealtuple\\\eqref{eq:dij_conditions},\eqref{eq:di_conditions}}}\hspace{-0.3cm}\frac{|\mu_K(\divisoridealtuple)|}{\N\latticefracideal_6^{\alpha_6}\N\latticefracideal_7^{\alpha_7}\N\latticefracideal_8^{\alpha_8}}\cdot \frac{B}{|N(\coord_1)|^{1-\alpha_8}|N(\coord_2\coord_3\coord_4\coord_5)|}\cdot \left(\frac{B}{|N(\coord_1^{e_1}\cdots \coord_5^{e_5})|}\right)^{-\beta}
  \end{equation*}
  is $\ll B(\log B)^{4+\epsilon}$, for $B \geq 3$. The implicit
  constant depends on $K$, $\epsilon$, $\alpha_6$, $\alpha_7$, $\alpha_8$,
  $\alpha$, $e_1, \ldots, e_5$, $\beta$ and on the implicit constants in
  \eqref{eq:general_height_conditions}.
\end{lemma}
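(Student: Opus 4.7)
My plan is to split the sum into three nested stages: first, peel off the ``large'' divisor components $\divisorideal_{67}$ and $\divisorideal_{68}\divisorideal_{69}$ using absolutely convergent Dirichlet series (justified by \eqref{eq:dij_exp_greater_1}); second, leave the ``small'' divisor components $\divisorideal_6,\divisorideal_7,\divisorideal_8$ in place as bounded multiplicative weights $h_{\alpha_j}(\cdot):=\sum_{\divisorideal\mid\cdot}\N\divisorideal^{-\alpha_j}$; third, bound the remaining $5$-fold sum over $\coordtuple'$ by a dyadic decomposition, extracting one logarithmic saving from the half-space constraint in \eqref{eq:general_height_conditions}.

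The first stage begins with the inequality
\begin{equation*}
\N(\divisorideal_{67}\cap\divisorideal_{68}\divisorideal_{69})\geq\max\{\N\divisorideal_{67},\N(\divisorideal_{68}\divisorideal_{69})\}\geq\N\divisorideal_{67}^{\alpha}\,\N(\divisorideal_{68}\divisorideal_{69})^{1-\alpha},
\end{equation*}
which holds because $\divisorideal_{67}\cap\divisorideal_{68}\divisorideal_{69}$ is contained in each factor. Substituting into the definitions of $\latticefracideal_j$ and using $\N\coordideal_1^{-\alpha_8}|N(\coord_1)|^{-(1-\alpha_8)}=\N\coordfracideal_1^{\alpha_8}|N(\coord_1)|^{-1}$, the summand is majorized (up to constants depending on the $\coordfracideal_j$) by
\begin{equation*}
\frac{|\mu_K(\divisoridealtuple)|\,B\,(B/|N(\coord_1^{e_1}\cdots\coord_5^{e_5})|)^{-\beta}}{\N\divisorideal_6^{\alpha_6}\N\divisorideal_7^{\alpha_7}\N\divisorideal_8^{\alpha_8}\,\N\divisorideal_{67}^{\alpha\alpha_6+\alpha_7}\,\N(\divisorideal_{68}\divisorideal_{69})^{(1-\alpha)\alpha_6+\alpha_8}\,|N(\coord_1\cdots\coord_5)|}.
\end{equation*}
By \eqref{eq:dij_exp_greater_1} the exponents of $\divisorideal_{67}$ and of $\divisorideal_{68}\divisorideal_{69}$ exceed $1$, so the sums over $\divisorideal_{67}$ (coprime to $\coordideal_1\cdots\coordideal_5$) and over coprime pairs $(\divisorideal_{68},\divisorideal_{69})$---reparametrized by $\bbb=\divisorideal_{68}\divisorideal_{69}$, with $2^{\omega(\bbb)}$ coprime factorizations controlled via the identity for $\sum_\bbb 2^{\omega(\bbb)}\N\bbb^{-s}=\zeta_K(s)^2/\zeta_K(2s)$---are absolutely convergent and contribute $O(1)$ uniformly in the remaining variables.

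For the third stage, I perform the sums over $\divisorideal_6\mid\coordideal_4\coordideal_5$, $\divisorideal_7\mid\coordideal_1\coordideal_2\coordideal_3\coordideal_4$, $\divisorideal_8\mid\coordideal_3\coordideal_4\coordideal_5$ to obtain weights $h_{\alpha_j}$ on products of $\coordideal_i$'s, and then decompose each coordinate dyadically by $|N(\coord_j)|\asymp 2^{k_j}$ with $k_j\in\{0,\ldots,\lceil\log_2 B\rceil\}$; by \eqref{eq:conj_norm} the number of $\coord_j\in\coordfracideal_j\cap\FDei$ in such a shell is $\ll 2^{k_j}$. Writing $e\cdot k:=\sum_je_jk_j$, the sum reduces to a weighted lattice count over $(k_1,\ldots,k_5)\in[0,\log_2 B]^5$ with $\sgn(\beta)(e\cdot k-\log_2 B)\leq 0$ and weight $2^{\beta(e\cdot k-\log_2 B)}$, times the averages of the $h_{\alpha_j}$. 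Changing coordinates so that $t:=e\cdot k-\log_2 B$ is one axis, the sum over $t$ collapses to the convergent geometric series $\sum_{\sgn(\beta)t\leq 0}2^{\beta t}=O(1)$, saving one logarithmic factor, while the four transverse dyadic parameters contribute $(\log B)^4$. The step I expect to require the most care is to verify that the $h_{\alpha_j}$ contribute only a factor $(\log B)^\epsilon$ rather than $B^\epsilon$: this uses the submultiplicativity $h_{\alpha_j}(xy)\leq h_{\alpha_j}(x)h_{\alpha_j}(y)$ to factor across variables, together with the observation that each Dirichlet series $\sum_n h_{\alpha_j}(n)/\N n^s=\zeta_K(s)\zeta_K(s+\alpha_j)$ has only a simple pole at $s=1$, so that the average contribution per variable is $O(\log B)$ and is already subsumed in the $(\log B)^5$ produced by the $5$ dyadic parameters. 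Putting everything together yields the claimed bound $B(\log B)^{4+\epsilon}$.
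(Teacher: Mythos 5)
Your proposal is correct, and its first stage coincides with the paper's: the same convexity bound $\N(\divisorideal_{67}\cap\divisorideal_{68}\divisorideal_{69})\geq\N\divisorideal_{67}^{\alpha}\N(\divisorideal_{68}\divisorideal_{69})^{1-\alpha}$, the same absorption of $\N\coordideal_1^{\alpha_8}$ into $|N(\coord_1)|^{1-\alpha_8}$, and the same use of \eqref{eq:dij_exp_greater_1} to sum the large divisor variables into an $O(1)$. Where you genuinely diverge is the remaining sum over $\coordtuple'$. The paper bounds the small-divisor sums by $(1+\epsilon')^{\omega_K(\coordideal_1\cdots\coordideal_5)}$, passes to ideals, and then saves the crucial logarithm by partial summation in a single distinguished variable (one with $e_j\neq 0$ of the right sign), quoting Lemmas 2.4 and 2.9 of \cite{arXiv:1302.6151}; this is where the $\epsilon$ in the exponent comes from. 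You instead keep the exact multiplicative weights $h_{\alpha_j}$, split them across variables by submultiplicativity, and run a dyadic decomposition in all five variables, collapsing the direction $e\cdot k$ against the geometric weight $2^{\beta(e\cdot k-\log_2 B)}$ under the half-space constraint from \eqref{eq:general_height_conditions}; the count of $\ll(\log B)^4$ boxes per value of $e\cdot k$ then gives the result. This is self-contained and, if done carefully, actually yields the cleaner bound $B(\log B)^4$, since each $G_j:=\prod h_{\alpha_i}(\coordideal_j)$ has Dirichlet series $\zeta_K(s)\cdot(\text{holomorphic near }s=1)$ and hence \emph{bounded} average over each dyadic shell.

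One spot needs tightening. Your final accounting of the $h_{\alpha_j}$ ("average contribution per variable is $O(\log B)$ and is already subsumed in the $(\log B)^5$ produced by the $5$ dyadic parameters") is internally inconsistent with your own collapse, which leaves only $(\log B)^4$ from the four transverse parameters; an extra $O(\log B)$ per variable stacked on top would overshoot. The correct statement, which your Dirichlet-series observation does support, is that $\sum_{\N\aaa\asymp 2^{k}}G_j(\aaa)\ll 2^{k}$ uniformly in $k$, so the weights cost only an $O(1)$ factor per box and the dyadic count is unaffected. With that phrasing fixed (and noting that the bijection between $\FDei\cap\coordfracideal_{j*}$ and integral ideals in a fixed class, rather than \eqref{eq:conj_norm}, is what gives the shell count $\ll 2^{k_j}$), the argument is complete.
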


\begin{proof}
  From the definitions of the $\latticefracideal_j$, we see that
  \begin{equation*}
    \N\latticefracideal_6^{\alpha_6}\N\latticefracideal_7^{\alpha_7}\N\latticefracideal_8^{\alpha_8} \gg \N\divisorideal_6^{\alpha_6}\N\divisorideal_7^{\alpha_7}\N\divisorideal_8^{\alpha_8}\N\divisorideal_{67}^{\alpha\alpha_6+\alpha_7}\N(\divisorideal_{68}\divisorideal_{69})^{(1-\alpha)\alpha_6+\alpha_8}N(a_1)^{\alpha_8}.
  \end{equation*}
  By \eqref{eq:dij_exp_greater_1},  
  \begin{equation*}
    \sum_{\divisorideal_{67}\in\idealsk}\frac{1}{\N\divisorideal_{67}^{\alpha\alpha_6+\alpha_7}}\ll  1,
  \end{equation*}
  and similar estimates hold for the sums over
  $\divisorideal_{68}, \divisorideal_{69}$. To sum over the
  $\divisorideal_j$, we use the assumption that $\alpha_j > 0$. Let
  $\epsilon' := \min\{\epsilon/35, 1\}$. For $j=6$, for example, we
  have
  \begin{equation*}
    \sum_{\divisorideal_6 \mid \coordideal_4\coordideal_5}\frac{|\mu_K(\divisorideal_6)|}{\N\divisorideal_6^{\alpha_6}}=\prod_{\ppp \mid \coordideal_4\coordideal_5}(1+\N\ppp^{-\alpha_6}) \ll\prod_{\ppp\mid\coordideal_4\coordideal_5}(1+\epsilon') = (1+\epsilon')^{\omega_K(\coordideal_4\coordideal_5)},
  \end{equation*}
  where $\omega_K(\coordideal)$ is the number of distinct prime ideals
  dividing the ideal $\coordideal$. Similar bounds hold for
  $j=7,8$. Since
  \begin{equation*}
     (1+\epsilon')^{\omega_K(\coordideal_4\coordideal_5) + \omega_K(\coordideal_1\coordideal_2\coordideal_3\coordideal_4) + \omega_K(\coordideal_3\coordideal_4\coordideal_5)} \leq (1+\epsilon/5)^{\omega_K(\coordideal_1\coordideal_2\coordideal_3\coordideal_4\coordideal_5)}, 
  \end{equation*}
  the sum in the lemma is
  \begin{equation*}
    \ll  \sum_{\substack{\coordtuple' \in \FDei^5\cap\coordfracidealproduct\\\eqref{eq:general_height_conditions}}}\frac{(1+\epsilon/5)^{\omega_K(\coordideal_1\coordideal_2\coordideal_3\coordideal_4\coordideal_5)}B}{|N(\coord_1\coord_2\coord_3\coord_4\coord_5)|}\left(\frac{B}{|N(\coord_1^{e_1}\cdots \coord_5^{e_5})|}\right)^{-\beta}.
  \end{equation*}
  If $\coord_j$ runs through $\FDei\cap\coordfracideal_{j*}$ then
  $\coordideal_j = \coord_j\coordfracideal_j^{-1}$ runs through all
  nonzero ideals in the class of $\coordfracideal_j^{-1}$. Moreover,
  $\N\coordideal_j \ll N(\coord_j) \ll \N\coordideal_j$, so the above
  sum is
  \begin{equation}\label{eq:final_sum}
    \ll \sum_{\substack{\coordidealtuple' \in \idealsk^5\\\eqref{eq:general_height_condition_ideals}, \eqref{eq:aj_bound_ideals}}}\frac{(1+\epsilon/5)^{\omega_K(\coordideal_1\coordideal_2\coordideal_3\coordideal_4\coordideal_5)}B}{N(\coordideal_1\coordideal_2\coordideal_3\coordideal_4\coordideal_5)}\left(\frac{B}{N(\coordideal_1^{e_1}\cdots \coordideal_5^{e_5})}\right)^{-\beta},
  \end{equation}
  where $\coordidealtuple'$ runs through all $5$-tuples of ideals
  $(\coordideal_1, \ldots, \coordideal_5) \in \idealsk^5$ with
  \begin{align}
      N(\coordideal_1^{e_1}\cdots \coordideal_5^{e_5})^{\sgn(\beta)} &\ll
    B^{\sgn(\beta)} \text{ and }\label{eq:general_height_condition_ideals}\\
    N(\coordideal_j) &\ll B \text{ for all }j\in \{1, \ldots, 5\}\label{eq:aj_bound_ideals}.
  \end{align}
  All of the following summations are elementary applications of partial
  summation. For details, see \cite[Lemma 2.9, Lemma
  2.4]{arXiv:1302.6151}.

  Let us first consider the case where $\beta
  > 0$. If $e_j\leq 0$ for all $j$ then the sum in
  \eqref{eq:final_sum} is $\ll B^{1-\beta}(\log
  B)^{4+4\epsilon/5}$. Hence, we may assume without loss of generality
  that $e_1 > 0$. Using \eqref{eq:general_height_condition_ideals} to
  sum over $\coordideal_1$, we see that the sum in
  \eqref{eq:final_sum} is
  \begin{equation}\label{eq:final_sum_result}
    \ll \sum_{\substack{\coordideal_2, \ldots, \coordideal_5\\\eqref{eq:aj_bound_ideals}}}\frac{(1+\epsilon/5)^{\omega_K(\coordideal_2\coordideal_3\coordideal_4\coordideal_5)}B(\log B)^{\epsilon/5}}{N(\coordideal_2\coordideal_3\coordideal_4\coordideal_5)} \ll B(\log B)^{4+\epsilon}.
  \end{equation}
  Now we assume that $\beta < 0$. If $e_j\leq 0$ for all $j$ then the
  sum in \eqref{eq:final_sum} is $\ll 1$. If
  $e_1 > 0$ then we may use \eqref{eq:general_height_condition_ideals}
  again to sum over $\coordideal_1$ and obtain
  \eqref{eq:final_sum_result}.
\end{proof}

For $w \in \archplaces$, let
\begin{align*}
  \cutoff_{6}^{(w)}(\classtuple, \coordtuple', \divisoridealtuple; B) &:= \{(\coord_6, \coord_7, \coord_8) \in \countinggroup(\classtuple, \coordtuple', \divisoridealtuple)\cap \FDxi(\coordtuple'; u_\classtuple B) \mid \abs{\coord_6}_w< \N\latticefracideal_6^{d_w/d}\},\\
  \cutoff_{7}^{(w)}(\classtuple, \coordtuple', \divisoridealtuple; B)
  &:= \{(\coord_6,\coord_7,\coord_8) \in \countinggroup(\classtuple,
  \coordtuple', \divisoridealtuple)\cap \FDxi(\coordtuple';
  u_\classtuple B) \mid \abs{\coord_7}_w<
  \N\latticefracideal_7^{d_w/d}\}.
\end{align*}

We show that the contribution of all $\cutoff_{6}^{(w)}$ and
$\cutoff_{7}^{(w)}$ to $|\countinggroup(\classtuple, \coordtuple',
\divisoridealtuple)\cap \FDxi(\coordtuple'; u_\classtuple B)|$ is
insignificant. To this end, we note some conditions satisfied by all
$(\coord_6,\coord_7,\coord_8)\in\countinggroup(\classtuple,
\coordtuple', \divisoridealtuple)\cap \FDxi(\coordtuple';
u_\classtuple B)$, which follow from \eqref{eq:Nv_same_size} and the
definition of $\tN_v$. For all $v \in \archplaces$, we have
\begin{align}
  \absv{\coord_2\coord_7\coord_8^2 + \coord_3\coord_4^2\coord_5^3\coord_7^2\coord_8} &\ll \absv{\coord_1}B^{d_v/d},\label{eq:cusps_a8_bound}\\
  \absv{\coord_3^2\coord_4^3\coord_5^4\coord_6\coord_7^2} &\ll
  B^{d_v/d},\label{eq:cusps_a7_bound}\\
  \absv{\coord_1^2\coord_2^2\coord_3^2\coord_4\coord_6^3} &\ll
  B^{d_v/d}.\label{eq:cusps_a6_bound}
\end{align}
Moreover, any $\coordtuple'$, $\divisoridealtuple$ with
$\countinggroup(\classtuple, \coordtuple', \divisoridealtuple)\cap
\FDxi(\coordtuple'; u_\classtuple B) \neq \emptyset$ satisfies
\begin{align}
  N(\coord_j) &\ll B \text{ for all } j \in \{1,\ldots, 5\},\label{eq:cusps_aj_bound}\\
  N(\coord_1^2\coord_2^2\coord_3^2\coord_4) &\ll B.\label{eq:cusps_a1_bound}
\end{align}
In our calculations, we will encounter the new height condition 
\begin{equation}
  \label{eq:additional_height_condition}
  N(\coord_3^2\coord_4^4\coord_5^6)\leq N(\coord_1\coord_2)v_\classtuple B,
\end{equation}
with $v_\classtuple :=
\N(\coordfracideal_3^2\coordfracideal_4^4\coordfracideal_5^6\coordfracideal_1^{-1}\coordfracideal_{2}^{-1})
$, so $1\ll v_\classtuple \ll 1$.
 
\begin{lemma}\label{lem:a6_conj_lower_bound}
  Let $\classtuple \in \classrepsyst^6$ and $\epsilon > 0$. Then, for $B \geq 3$,
  \begin{equation*}
    \sum_{\coordtuple' \in \FDei^5\cap\coordfracidealproduct} \theta_0(\coordidealtuple')\sum_{\substack{\divisoridealtuple\\\eqref{eq:dij_conditions},\eqref{eq:di_conditions}}}|\mu_K(\divisoridealtuple)|\cdot |\cutoff_6^{(w)}(\classtuple, \coordtuple', \divisoridealtuple; B)| \ll_\epsilon B(\log B)^{4+\epsilon}.
  \end{equation*}
\end{lemma}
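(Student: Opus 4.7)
The plan is to bound $|\cutoff_6^{(w)}|$ by counting the triples $(\coord_6, \coord_7, \coord_8)$ successively in the order $\coord_8, \coord_7, \coord_6$, and then to sum the resulting estimate over $\coordtuple'$ and $\divisoridealtuple$ via Lemma~\ref{lem:error_sum}. The claim is trivial when $|\archplaces| = 1$, so we may assume $d_w < d$. We may further restrict to $\coordtuple', \divisoridealtuple$ such that $\cutoff_6^{(w)} \neq \emptyset$, so that \eqref{eq:cusps_a8_bound}--\eqref{eq:cusps_a1_bound} hold. At each stage we invoke Lemma~\ref{lem:box_counting}, with box sizes extracted from the individual terms of the maximum defining $\tN_v$, together with \eqref{eq:Nv_same_size}.

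For fixed $\coord_6, \coord_7$, the variable $\coord_8$ lies in the coset $\gamma_8\coord_7 + \latticefracideal_8$ and, by the first term of $\tN_v$, satisfies $\absv{\coord_8} \ll B^{d_v/d}/\absv{\coord_2\coord_3\coord_4\coord_5\coord_6\coord_7}$; Lemma~\ref{lem:box_counting} gives $|\{\coord_8\}| \ll B/(\N\latticefracideal_8\cdot|N(\coord_2\coord_3\coord_4\coord_5\coord_6\coord_7)|) + 1$. For the sum over $\coord_7 \in \latticefracideal_7$ inside the box $\absv{\coord_7} \ll B^{d_v/d}/\absv{\coord_1\coord_2\coord_3^2\coord_4^2\coord_5^2\coord_6^2}$ coming from the third term of $\tN_v$, the main-term contribution is handled by Lemma~\ref{lem:norm_sum_no_lower_bound} (applied with $\alpha$ slightly less than $1$ to the sum $\sum_{\coord_7}|N(\coord_7)|^{-(1-\epsilon')}$ after using $|N(\coord_7)| \geq \N\latticefracideal_7$), while the $+1$ contribution is dealt with by another application of Lemma~\ref{lem:box_counting}.

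The decisive step is the count of $\coord_6 \in \latticefracideal_6$ subject both to the cusp condition $\absv[w]{\coord_6} < \N\latticefracideal_6^{d_w/d}$ and, for $v \neq w$, to the bound $\absv{\coord_6} \ll (B^{d_v/d}/\absv{\coord_1^2\coord_2^2\coord_3^2\coord_4})^{1/3}$ from the second term of $\tN_v$. Since $\coord_j \in \FDei$ implies $\absv{\coord_j} \asymp N(\coord_j)^{d_v/d}$ by \eqref{eq:conj_norm}, Lemma~\ref{lem:box_counting} yields
$$|\{\coord_6\}| \ll \N\latticefracideal_6^{d_w/d - 1}\cdot \frac{B^{(d-d_w)/(3d)}}{|N(\coord_1^2\coord_2^2\coord_3^2\coord_4)|^{(d-d_w)/(3d)}} + 1.$$
The key observation is that the exponent $(d-d_w)/(3d)$ on $B$ is strictly smaller than the exponent $1/3$ that would appear without the cusp condition; this deficit of $d_w/(3d) > 0$ on the $B$-power provides, after all summations are carried out, an extra negative power of $B$ that absorbs exactly one logarithm.

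Substituting the three counts into the left-hand side of the lemma and summing over $\coordtuple'\in\FDei^5\cap\coordfracidealproduct$ and $\divisoridealtuple$ satisfying \eqref{eq:dij_conditions}, \eqref{eq:di_conditions}, we arrive at an expression to which Lemma~\ref{lem:error_sum} applies, with suitable exponents $\alpha_6, \alpha_7, \alpha_8$ that can be arranged to satisfy \eqref{eq:dij_exp_greater_1}. This gives the desired bound $\ll_\epsilon B(\log B)^{4+\epsilon}$. The main obstacle is the bookkeeping for the main-term and $+1$ contributions from each application of Lemma~\ref{lem:box_counting}: one must argue by cases depending on which of the two summands dominates at each stage, and verify in every case that the resulting expression can still be fit into the framework of Lemma~\ref{lem:error_sum} with an exponent on $\log B$ not exceeding $4+\epsilon$.
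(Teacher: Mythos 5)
Your overall strategy---count $\coord_8$, then $\coord_7$, then $\coord_6$, and feed the result into Lemma \ref{lem:error_sum}---is the same as the paper's, and your preliminary reductions are fine. But the specific terms of $\tN_v$ you choose at each stage make the powers of $B$ fail to balance, and this is fatal. Using the first term $\absv{x_2x_3x_4x_5x_6x_7x_8}\ll B^{d_v/d}$ for $\coord_8$ spends the entire power $B^1$ at the first stage, since your count is $\ll B/(\N\latticefracideal_8 N(\coord_2\cdots\coord_7))+1$. The remaining sums then involve $\sum N(\coord_7)^{-1}$ and $\sum N(\coord_6)^{-1}$, exactly the borderline exponent excluded from Lemma \ref{lem:norm_sum_no_lower_bound}; your workaround ($\alpha=1-\epsilon'$) converts each into an extra factor $B^{\epsilon'}$. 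Worse, your displayed $\coord_6$-bound is a pure lattice-point count carrying its own positive power $B^{(d-d_w)/(3d)}$, while the summand it multiplies still contains $N(\coord_6)^{-1-2\epsilon'}$ from the earlier stages. You must either (a) bound that factor by $\N\latticefracideal_6^{-1-2\epsilon'}$ and multiply by the count, giving a total of order $B^{1+\epsilon'+(d-d_w)/(3d)}$, strictly larger than $B$ whenever $d_w<d$ and hence not $O(B(\log B)^{4+\epsilon})$ even for a single $\coordtuple'$; or (b) sum $N(\coord_6)^{-1-2\epsilon'}$ honestly, in which case the box---and with it the cusp condition---drops out and you retain no saving factor $(B/N(\cdots))^{-\beta}$, $\beta>0$, to feed into Lemma \ref{lem:error_sum}, landing at $B(\log B)^{5+\epsilon}$ at best. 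Your ``key observation'' compares the $\coord_6$-count against $B^{1/3}$, but the correct benchmark is that the unrestricted count of $(\coord_6,\coord_7,\coord_8)$ is $\asymp B/N(\coord_2\coord_3\coord_4\coord_5)$, with exponent exactly $1$ on $B$; saving $d_w/(3d)$ off an exponent that already exceeds $1$ buys nothing.

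The fix is a different choice of terms. Bound $\coord_8$ by the last term of $\tN_v$ (quadratic in $x_8$, via \eqref{eq:cusps_a8_bound}), which gives $\ll\frac{1}{\N\latticefracideal_8}(BN(\coord_1)/N(\coord_2\coord_7))^{1/2}$---only half a power of $B$, plus a factor $N(\coord_7)^{-1/2}$. Then sum over $\coord_7$ with $\alpha=1/2$ against \eqref{eq:cusps_a7_bound}, producing $B^{1/4}$ and $N(\coord_6)^{-1/4}$; finally sum over $\coord_6$ with $\alpha=1/4$ against \eqref{eq:cusps_a6_bound} and the cusp condition $\abs{\coord_6}_w\le\N\latticefracideal_6^{d_w/d}$, producing $B^{(1-d_w/d)/4}$ times the explicit saving $(B/N(\coord_1^2\coord_2^2\coord_3^2\coord_4))^{-d_w/(4d)}$. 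The exponents $\tfrac12+\tfrac14+\tfrac14=1$ balance, no logarithms are created along the way, and the saving factor is exactly of the shape Lemma \ref{lem:error_sum} requires. A second point you gloss over: the ``$+1$'' terms are not handled by further applications of Lemma \ref{lem:box_counting}, but are eliminated at the outset by showing the main term of the $\coord_8$-count is $\gg1$ whenever the set is nonempty, using $\divisorideal_{68}\divisorideal_{69}\mid\coordideal_6$, $\divisorideal_8\mid\coordideal_3\coordideal_4\coordideal_5$ and the height condition \eqref{eq:A3+A1_height}.
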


\begin{proof}
  Let us first fix $\coordtuple', \divisoridealtuple, \coord_6,
  \coord_7$ and find an upper bound for the number of $\coord_8$ with
  $(\coord_6,\coord_7,\coord_8) \in
  \cutoff_6^{(w)}(\classtuple,\coordtuple',\divisoridealtuple;B)$. Condition
  \eqref{eq:cusps_a8_bound} implies that, for all $v \in \archplaces$,
  one of
  \begin{equation*}
    \absv{\coord_8} \ll \frac{B^{d_v/(2d)}\absv{\coord_1}^{1/2}}{\absv{\coord_2\coord_7}^{1/2}}\quad\text{ or }\quad \absv{\coord_8+ \frac{\coord_3\coord_4^2\coord_5^3\coord_7}{\coord_2}} \ll \frac{B^{d_v/(2d)}\absv{\coord_1}^{1/2}}{\absv{\coord_2\coord_7}^{1/2}}
  \end{equation*}
  holds. By Lemma \ref{lem:box_counting}, the number of such
  $\coord_8$ in $\gamma_8\coord_7 + \latticefracideal_8$ is
  \begin{equation}\label{eq:sum_a8}
    \ll \frac{1}{\N\latticefracideal_8}\left(\frac{BN(\coord_1)}{N(\coord_2\coord_7)}\right)^{1/2}.
  \end{equation}
  If there is an $\coord_8$ with
  $(\coord_6,\coord_7,\coord_8)\in\cutoff_6^{(w)}(\classtuple,\coordtuple',\divisoridealtuple;
  B)$ then this expression is indeed $\gg 1$: Since
  $\divisorideal_{68}\divisorideal_{69}\mid \coordideal_6$ and
  $\divisorideal_8 \mid \coordideal_3\coordideal_4\coordideal_5$, we
  have $\N(\divisorideal_8\divisorideal_{68}\divisorideal_{69}) \ll
  N(\coord_3\coord_4\coord_5\coord_6)$. Thus,
  \begin{equation*}
    \N\latticefracideal_8^2N(\coord_1^{-1}\coord_2\coord_7)\ll N(\coord_1\coord_2\coord_7)\N(\divisorideal_8\divisorideal_{68}\divisorideal_{69})^2\ll N(\coord_1\coord_2\coord_3^2\coord_4^2\coord_5^2\coord_6^2\coord_7) \ll B,
  \end{equation*}
  by \eqref{eq:A3+A1_height}.

  Next, we still fix $\coordtuple', \divisoridealtuple, \coord_6$ and
  sum the expression in \eqref{eq:sum_a8} over all
  $\coord_7\in\latticefracideal_7^{\neq 0}$ with
  \eqref{eq:cusps_a7_bound} for all $v \in \archplaces$. By Lemma
  \ref{lem:norm_sum_no_lower_bound}, the result is
  \begin{equation*}
    \ll \frac{1}{\N\latticefracideal_8}\left(\frac{BN(\coord_1)}{N(\coord_2)}\right)^{1/2} \cdot \frac{1}{\N\latticefracideal_7}\left(\frac{B}{N(\coord_3^2\coord_4^3\coord_5^4\coord_6)}\right)^{1/4}.
  \end{equation*}
  We use Lemma \ref{lem:norm_sum_no_lower_bound} again to sum this
  over all $\coord_6 \in \latticefracideal_6^{\neq 0}$ with
  $\abs{\coord_6}_w \leq \N\latticefracideal_6^{d_w/d}$ and
  \eqref{eq:cusps_a6_bound} for all $v \in
  \archplaces\smallsetminus\{w\}$. Keeping \eqref{eq:conj_norm} in
  mind, we see that $|\cutoff_{6}^{(w)}(\classtuple, \coordtuple', \divisoridealtuple; B)|$ is
  \begin{align*}
    &\ll \frac{1}{\N\latticefracideal_8}\left(\frac{BN(\coord_1)}{N(\coord_2)}\right)^{\frac{1}{2}} \cdot\frac{1}{\N\latticefracideal_7}\left(\frac{B}{N(\coord_3^2\coord_4^3\coord_5^4)}\right)^{\frac{1}{4}} \cdot \frac{1}{\N\latticefracideal_6^{1-\frac{3d_w}{4d}}}\left(\frac{B}{N(\coord_1^2\coord_2^2\coord_3^2\coord_4)}\right)^{\frac{1}{4} - \frac{d_w}{4d}}\\
    &\ll
    \frac{B}{\N\latticefracideal_6^{1-\frac{3d_w}{4d}}\N\latticefracideal_7\N\latticefracideal_8N(\coord_2\coord_3\coord_4\coord_5)}\left(\frac{B}{N(\coord_1^2\coord_2^2\coord_3^2\coord_4)}\right)^{-\frac{d_w}{4d}}.
  \end{align*}
  Lemma \ref{lem:error_sum} with \eqref{eq:cusps_aj_bound} and \eqref{eq:cusps_a1_bound} now shows the claimed estimate.
\end{proof}

\begin{lemma}\label{lem:a7_conj_lower_bound}
  Let $\classtuple \in \classrepsyst^6$ and $\epsilon > 0$. Then, for $B\geq 3$,
  \begin{equation*}
    \sum_{\coordtuple' \in \FDei^5\cap\coordfracidealproduct}\theta_0(\coordidealtuple')\sum_{\substack{\divisoridealtuple\\\eqref{eq:dij_conditions},\eqref{eq:di_conditions}}}|\mu_K(\divisoridealtuple)|\cdot |\cutoff_7^{(w)}(\classtuple, \coordtuple', \divisoridealtuple; B)| \ll_\epsilon B(\log B)^{4+\epsilon}.
  \end{equation*}
\end{lemma}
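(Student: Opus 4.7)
The plan is to mirror the proof of Lemma~\ref{lem:a6_conj_lower_bound}, summing the coordinates in the order $\coord_8,\coord_7,\coord_6$, but applying the small-conjugate restriction $|\coord_7|_w<\N\latticefracideal_7^{d_w/d}$ during the second summation step. Step~1, counting $\coord_8$ for fixed $\coordtuple',\divisoridealtuple,\coord_6,\coord_7$, is identical to the previous proof and yields $\ll\N\latticefracideal_8^{-1}(BN(\coord_1)/N(\coord_2\coord_7))^{1/2}$ via Lemma~\ref{lem:box_counting} and \eqref{eq:cusps_a8_bound}.

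In step~2, I sum over $\coord_7\in\latticefracideal_7^{\neq 0}$ with $|\coord_7|_w\leq\N\latticefracideal_7^{d_w/d}$ and $|\coord_7|_v\ll(B^{d_v/d}/|\coord_3^2\coord_4^3\coord_5^4\coord_6|_v)^{1/2}$ for $v\neq w$ (from \eqref{eq:cusps_a7_bound}), applying Lemma~\ref{lem:norm_sum_no_lower_bound} with $\alpha=1/2$. Splitting $\prod_v c_v^{1/2}$ into its $v=w$ and $v\neq w$ pieces and using $\prod_{v\neq w}|x|_v=N(x)/|x|_w$ introduces an extra factor $|\coord_3^2\coord_4^3\coord_5^4\coord_6|_w^{1/4}/N(\coord_3^2\coord_4^3\coord_5^4\coord_6)^{1/4}$, so the $\coord_6$-dependent part becomes $|\coord_6|_w^{1/4}/N(\coord_6)^{1/4}$.

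In step~3, I estimate $|\coord_6|_w^{1/4}\leq (c_w^{(6)})^{1/4}$ using the global cutoff $c_w^{(6)}\ll(B^{d_w/d}/|\coord_1^2\coord_2^2\coord_3^2\coord_4|_w)^{1/3}$ from \eqref{eq:cusps_a6_bound}, and apply Lemma~\ref{lem:norm_sum_no_lower_bound} with $\alpha=1/4$ to the remaining $\sum 1/N(\coord_6)^{1/4}$. Using \eqref{eq:conj_norm} to pass between $|{\cdot}|_w$ and the full norm for the variables $\coord_1,\ldots,\coord_5\in\FDei$, a careful tally of exponents produces
\begin{equation*}
|\cutoff_7^{(w)}(\classtuple,\coordtuple',\divisoridealtuple;B)|\ll\frac{B}{\N\latticefracideal_6\,\N\latticefracideal_7^{1-d_w/(2d)}\,\N\latticefracideal_8\,N(\coord_2\coord_3\coord_4\coord_5)}\left(\frac{BN(\coord_1\coord_2)}{N(\coord_3^2\coord_4^4\coord_5^6)}\right)^{-d_w/(6d)}.
\end{equation*}

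This fits the form of Lemma~\ref{lem:error_sum} with $\alpha_6=1$, $\alpha_7=1-d_w/(2d)$, $\alpha_8=1$, $\beta=d_w/(6d)$, $(e_1,\ldots,e_5)=(-1,-1,2,4,6)$, and any $\alpha\in(d_w/(2d),1)$, a nonempty range whenever $|\archplaces|\geq 2$ (in the remaining case $|\archplaces|=1$ the set $\cutoff_7^{(w)}$ is empty and the lemma is vacuous). Condition \eqref{eq:dij_exp_greater_1} holds for such $\alpha$, while \eqref{eq:general_height_conditions} is furnished by \eqref{eq:additional_height_condition} together with \eqref{eq:cusps_aj_bound}. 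Lemma~\ref{lem:error_sum} therefore yields the desired bound $\ll_\epsilon B(\log B)^{4+\epsilon}$. The main obstacle, absent in Lemma~\ref{lem:a6_conj_lower_bound}, is the factor $|\coord_6|_w^{1/4}$ that emerges after step~2: since $\coord_6\notin\FDei$, one cannot directly control $|\coord_6|_w$ by $N(\coord_6)^{d_w/d}$ and must appeal to the global estimate \eqref{eq:cusps_a6_bound}, whose cost is precisely compensated by the auxiliary height condition \eqref{eq:additional_height_condition} when Lemma~\ref{lem:error_sum} is invoked with negative $e_1,e_2$.
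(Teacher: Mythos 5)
Your Steps 1--3 correctly reproduce the first half of the paper's argument: the $\coord_8$-count via Lemma \ref{lem:box_counting}, the $\coord_7$-sum with the cutoff at $w$ giving the paper's bound \eqref{eq:a7_a8_bound}, and the subsequent treatment of $\prod_{v\neq w}\absv{\coord_6}^{-1/4}=N(\coord_6)^{-1/4}\abs{\coord_6}_w^{1/4}$ with $\abs{\coord_6}_w$ controlled by \eqref{eq:cusps_a6_bound}. The exponent bookkeeping leading to the factor $\bigl(N(\coord_1\coord_2)B/N(\coord_3^2\coord_4^4\coord_5^6)\bigr)^{-d_w/(6d)}$ is exactly the paper's.

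However, there is a genuine gap at the final step. You invoke Lemma \ref{lem:error_sum} with $\beta=d_w/(6d)>0$ and $(e_1,\dots,e_5)=(-1,-1,2,4,6)$, for which hypothesis \eqref{eq:general_height_conditions} demands $N(\coord_3^2\coord_4^4\coord_5^6)\ll N(\coord_1\coord_2)B$, i.e.\ condition \eqref{eq:additional_height_condition}. But the sum in Lemma \ref{lem:a7_conj_lower_bound} runs over \emph{all} $\coordtuple'\in\FDei^5\cap\coordfracidealproduct$, and \eqref{eq:additional_height_condition} is not implied by the height conditions at this stage: it is only imposed later, in Lemma \ref{lem:additional_height_condition}, whose proof in turn relies on the lower bounds $\absv{\coord_7}\geq\N\latticefracideal_7^{d_v/d}$ that the present lemma is meant to establish --- so appealing to it here would be circular. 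On the complementary range $N(\coord_3^2\coord_4^4\coord_5^6)\gg N(\coord_1\coord_2)B$ your bound is useless: the factor with exponent $-d_w/(6d)$ is large (it can grow like a power of $B$), and the resulting sum over $\coordtuple'$ is not $O(B(\log B)^{4+\epsilon})$. The paper closes this gap with a second case: when \eqref{eq:additional_height_condition_2} holds, it exploits the fact that the $\coord_7$-range is nonempty to raise the quantity $\bigl(B/N(\coord_3^2\coord_4^3\coord_5^4)\bigr)^{1/4-d_w/(4d)}\prod_{v\neq w}\absv{\coord_6}^{-1/4}\gg 1$ to the power $5d/(4(d-d_w))$, producing after the $\coord_6$-summation a bound carrying the factor $\bigl(N(\coord_1\coord_2)B/N(\coord_3^2\coord_4^4\coord_5^6)\bigr)^{1/24}$ with a \emph{positive} exponent, to which Lemma \ref{lem:error_sum} applies with $\beta<0$ and \eqref{eq:additional_height_condition_2} supplying \eqref{eq:general_height_conditions}. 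Your proposal is missing this entire second case.
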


\begin{proof}
  If $|\archplaces| = 1$ then the left-hand side is $0$. Hence, assume
  that $|\archplaces| \geq 2$.  As in the previous lemma, we start by
  fixing $\coordtuple', \divisoridealtuple, \coord_6, \coord_7$ and
  see that the number of $\coord_8$ with $(\coord_6,\coord_7,\coord_8)
  \in \cutoff_7^{(w)}$ is bounded by \eqref{eq:sum_a8}. We apply Lemma
  \ref{lem:norm_sum_no_lower_bound} to sum this over all
  $\coord_7\in\latticefracideal_7^{\neq 0}$ with $\abs{\coord_7}_w
  \leq \N\latticefracideal_7^{d_w/d}$ and \eqref{eq:cusps_a7_bound}
  for all $v \in\archplaces\smallsetminus\{w\}$. This gives the bound
  \begin{equation}\label{eq:a7_a8_bound}
    \ll \frac{1}{\N\latticefracideal_8}\left(\frac{BN(\coord_1)}{N(\coord_2)}\right)^{\frac{1}{2}}\cdot \frac{1}{\N\latticefracideal_7^{1-\frac{d_w}{2d}}}\left(\frac{B}{N(\coord_3^2\coord_4^3\coord_5^4)}\right)^{\frac{1}{4}-\frac{d_w}{4d}}\cdot\prod_{v\neq w}\frac{1}{\absv{\coord_6}^{\frac{1}{4}}}.
  \end{equation}
  Our further procedure depends on $\coordtuple'$. We first consider
  all $\coordtuple'$ that satisfy the additional condition
  \eqref{eq:additional_height_condition}.

  In this case, we note that $\prod_{v\neq w}\absv{\coord_6}^{-1} =
  N(\coord_6)^{-1}\abs{\coord_6}_w$ and estimate $\abs{\coord_6}_w$
  by \eqref{eq:cusps_a6_bound} and \eqref{eq:conj_norm}. Hence, the
  expression in \eqref{eq:a7_a8_bound} is
  \begin{equation*}
    \ll \frac{1}{\N\latticefracideal_8}\left(\frac{B N(\coord_1)}{N(\coord_2)}\right)^{\frac{1}{2}}\cdot\frac{1}{\N\latticefracideal_7^{1-\frac{d_w}{2d}}}\left(\frac{B}{N(\coord_3^2\coord_4^3\coord_5^4)}\right)^{\frac{1}{4}-\frac{d_w}{4d}}\cdot\left(\frac{B}{N(\coord_1^2\coord_2^2\coord_3^2\coord_4)}\right)^{\frac{d_w}{12 d}}\cdot \frac{1}{N(\coord_6)^{\frac{1}{4}}}.
  \end{equation*}
  Using Lemma \ref{lem:norm_sum_no_lower_bound} again to sum this over
  all $\coord_6 \in \latticefracideal_6^{\neq 0}$ with
  \eqref{eq:cusps_a6_bound}, we get the upper bound
  \begin{equation*}
    \ll \frac{B}{\N\latticefracideal_6\N\latticefracideal_7^{1-\frac{d_w}{2d}}\N\latticefracideal_8N(\coord_2\coord_3\coord_4\coord_5)}\cdot \left(\frac{N(\coord_1\coord_2)B}{N(\coord_3^2\coord_4^4\coord_5^6)}\right)^{-\frac{d_w}{6d}}.
  \end{equation*}
  We sum this over all $\coordtuple'$ with \eqref{eq:cusps_aj_bound}
  and \eqref{eq:additional_height_condition} and all
  $\divisoridealtuple$ using Lemma \ref{lem:error_sum}.

  Now, let us consider all $\coordtuple'$ with the additional
  condition
  \begin{equation}
    \label{eq:additional_height_condition_2}
    N(\coord_3^2\coord_4^4\coord_5^6)\gg N(\coord_1\coord_2)B.
  \end{equation}
  We already know that the number of $(\coord_7,\coord_8)$ for fixed
  $\coordtuple', \divisoridealtuple, \coord_6$ is bounded by
  \eqref{eq:a7_a8_bound}. For the existence of an
  $\coord_7 \in \latticefracideal_7^{\neq 0}$ with $\abs{\coord_7}_w
  \leq \N\latticefracideal_7^{d_w/d}$ and \eqref{eq:cusps_a7_bound}
  for all $v \neq w$, it is required that
  \begin{equation*}
    1 \ll \left(\frac{B}{N(\coord_3^2\coord_4^3\coord_5^4)}\right)^{\frac{1}{4}-\frac{d_w}{4d}}\cdot\prod_{v\neq w}\frac{1}{\absv{\coord_6}^{\frac{1}{4}}} \ll \left(\left(\frac{B}{N(\coord_3^2\coord_4^3\coord_5^4)}\right)^{\frac{1}{4}-\frac{d_w}{4d}}\cdot\prod_{v\neq w}\frac{1}{\absv{\coord_6}^{\frac{1}{4}}}\right)^{\frac{5d}{4(d-d_w)}}.
  \end{equation*}
  Hence, we may further estimate the expression in
  \eqref{eq:a7_a8_bound} by
  \begin{align*}
    &\frac{1}{\N\latticefracideal_8}\left(\frac{BN(\coord_1)}{N(\coord_2)}\right)^{\frac{1}{2}}\hspace{-0.2cm}\frac{1}{\N\latticefracideal_7^{1-\frac{d_w}{2d}}}\left(\frac{B}{N(\coord_3^2\coord_4^3\coord_5^4)}\right)^{\frac{5}{16}}\hspace{-0.1cm}\left(\frac{B}{N(\coord_1^2\coord_2^2\coord_3^2\coord_4)}\right)^{\frac{5 d_w}{48(d-d_w)}}\hspace{-0.2cm}\frac{1}{N(\coord_6)^{\frac{5d}{16(d-d_w)}}}.
  \end{align*}
  The exponent of $N(\coord_6)$ is in $(0,1)$ since $|\archplaces|
  \geq 2$ and $d_w \leq 2$. Hence, we can sum this over all $\coord_6
  \in \latticefracideal_6^{\neq 0}$ with \eqref{eq:cusps_a6_bound}
  using Lemma \ref{lem:norm_sum_no_lower_bound}. We obtain the bound
  \begin{equation*}
    \ll \frac{B}{\N\latticefracideal_6\N\latticefracideal_7^{1-\frac{d_w}{2d}}\N\latticefracideal_8N(\coord_2\coord_3\coord_4\coord_5)}\cdot\left(\frac{N(\coord_1\coord_2)B}{N(\coord_3^2\coord_4^4\coord_5^6)}\right)^{\frac{1}{24}}.
  \end{equation*}
  Again, we use Lemma \ref{lem:error_sum} to sum this over
  $\coordtuple'$ satisfying \eqref{eq:additional_height_condition_2}
  and all $\divisoridealtuple$.
\end{proof}

To recapitulate the results of Lemma \ref{lem:a6_conj_lower_bound} and Lemma
\ref{lem:a7_conj_lower_bound}, we introduce the sets
\begin{equation*}
  S_F^*(\classtuple, \coordtuple', \divisoridealtuple; B) := \{(x_{jv})_{j,v}\in S_F(\coordtuple'; B) \mid \forall v:\absv{x_{6v}}\geq \N\latticefracideal_6^{d_v/d}, \absv{x_{7v}}\geq \N\latticefracideal_7^{d_v/d}\}
\end{equation*}
and
\begin{equation*}
  \FDxi^*(\classtuple, \coordtuple', \divisoridealtuple; u_\classtuple B) := \{(\coord_6,\coord_7,\coord_8)\in (K^\times)^2\times K \mid \sigma(\coord_6,\coord_7,\coord_8)\in S_F^*(\classtuple, \coordtuple',\divisoridealtuple; u_\classtuple B)\}.
\end{equation*}
We have just proved that, for $\classtuple \in \classrepsyst^6$ and
$\epsilon > 0$,
\begin{equation}\label{eq:MCB_with_lower_bounds}
  \begin{aligned}
    |M_\classtuple(B)| &= \sum_{\coordtuple' \in
      \FDei^5\cap\coordfracidealproduct}\theta_0(\coordidealtuple')\sum_{\substack{\divisoridealtuple\\\eqref{eq:dij_conditions},\eqref{eq:di_conditions}}}\mu_K(\divisoridealtuple)\cdot|\countinggroup(\classtuple,
    \coordtuple', \divisoridealtuple) \cap \FDxi^*(\classtuple,
    \coordtuple',\divisoridealtuple; u_\classtuple B)|\\ &+
    O_\epsilon(B(\log B)^{4+\epsilon}).
  \end{aligned}
\end{equation}

The lower bounds for the $\absv{\coord_6}$, $\absv{\coord_7}$ allow us
to introduce \eqref{eq:additional_height_condition} as an additional
height condition:

\begin{lemma}\label{lem:additional_height_condition}
  For $\classtuple \in \classrepsyst^6$ and $\epsilon > 0$, we have
  \begin{equation}\label{eq:MCB_with_lower_bounds}
    \begin{aligned}
      |M_\classtuple(B)| &= \sum_{\substack{\coordtuple' \in
          \FDei^5\cap\coordfracidealproduct\\\eqref{eq:additional_height_condition}}}\theta_0(\coordidealtuple')\sum_{\substack{\divisoridealtuple\\\eqref{eq:dij_conditions},\eqref{eq:di_conditions}}}\mu_K(\divisoridealtuple)\cdot|\countinggroup(\classtuple,
      \coordtuple', \divisoridealtuple) \cap \FDxi^*(\classtuple, \coordtuple',\divisoridealtuple; u_\classtuple B)|\\
      &+ O_\epsilon(B(\log B)^{4+\epsilon}).
    \end{aligned}
  \end{equation}
\end{lemma}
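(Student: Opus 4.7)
By subtracting the sum in the statement of the lemma from the identity \eqref{eq:MCB_with_lower_bounds} (the version proved at the end of Section \ref{sec:small_conjugates}, without the additional height condition), the claim reduces to showing that the ``complementary'' contribution
\[
  W := \sum_{\substack{\coordtuple' \in \FDei^5\cap\coordfracidealproduct\\\eqref{eq:additional_height_condition_2}}}\theta_0(\coordidealtuple')\sum_{\substack{\divisoridealtuple\\\eqref{eq:dij_conditions},\eqref{eq:di_conditions}}}|\mu_K(\divisoridealtuple)|\,\bigl|\countinggroup(\classtuple,\coordtuple',\divisoridealtuple)\cap\FDxi^*(\classtuple,\coordtuple',\divisoridealtuple;u_\classtuple B)\bigr|
\]
is $\ll_\epsilon B(\log B)^{4+\epsilon}$, where the outer sum is restricted to $\coordtuple'$ satisfying the complementary bound $N(\coord_3^2\coord_4^4\coord_5^6)\gg N(\coord_1\coord_2) B$ of \eqref{eq:additional_height_condition_2}.

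For fixed $\coordtuple',\divisoridealtuple$, I would bound $|\countinggroup\cap\FDxi^*|$ along exactly the three-step template used in the proofs of Lemmas \ref{lem:a6_conj_lower_bound} and \ref{lem:a7_conj_lower_bound}: for fixed $\coord_6,\coord_7$, use Lemma \ref{lem:box_counting} together with \eqref{eq:cusps_a8_bound} to bound the number of admissible $\coord_8$ by \eqref{eq:sum_a8}; then sum over $\coord_7\in\latticefracideal_7^{\neq 0}$ constrained by \eqref{eq:cusps_a7_bound} via Lemma \ref{lem:norm_sum_no_lower_bound}, and finally over $\coord_6\in\latticefracideal_6^{\neq 0}$ constrained by \eqref{eq:cusps_a6_bound} via Lemma \ref{lem:norm_sum_no_lower_bound} again. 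Unlike the cusp lemmas, we are now in the \emph{non-cusp} set $\FDxi^*$, where $\absv{\coord_6}\geq\N\latticefracideal_6^{d_v/d}$ and $\absv{\coord_7}\geq\N\latticefracideal_7^{d_v/d}$ for every $v\in\archplaces$. Picking a place $w$, I would combine the upper bound from $\tN_w$ with these lower bounds at $w$ and interpolate, in the spirit of the second half of the proof of Lemma \ref{lem:a7_conj_lower_bound}, to produce a bound of the form
\[
  |\countinggroup\cap\FDxi^*|\ll \frac{B}{\N\latticefracideal_6\N\latticefracideal_7\N\latticefracideal_8\,|N(\coord_2\coord_3\coord_4\coord_5)|}\cdot\left(\frac{N(\coord_1\coord_2)B}{N(\coord_3^2\coord_4^4\coord_5^6)}\right)^{\gamma}
\]
for some exponent $\gamma>0$; under \eqref{eq:additional_height_condition_2} the last factor is $\ll 1$, so the estimate is strictly stronger than the crude volume bound.

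Finally, I would invoke Lemma \ref{lem:error_sum} with $\alpha=1/2$, $\alpha_6=\alpha_7=\alpha_8=1$, $(e_1,\ldots,e_5)=(-1,-1,2,4,6)$ and $\beta=-\gamma$: the condition $\sgn(\beta)=-1$ version of \eqref{eq:general_height_conditions} is precisely \eqref{eq:additional_height_condition_2} (combined with \eqref{eq:cusps_aj_bound} and \eqref{eq:cusps_a1_bound}), while \eqref{eq:dij_exp_greater_1} holds with room to spare. This yields $W\ll_\epsilon B(\log B)^{4+\epsilon}$, completing the proof. The main technical obstacle is calibrating the interpolation step so that a \emph{strictly positive} $\gamma$ really emerges from the lower bounds on $\absv{\coord_6},\absv{\coord_7}$; once the exponent is in place, the remaining hypotheses of Lemma \ref{lem:error_sum} are verified by inspection.
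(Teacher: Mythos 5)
Your reduction and your endgame are exactly right: subtracting from the identity established at the end of Section \ref{sec:small_conjugates} reduces the lemma to bounding the contribution of the complementary region \eqref{eq:additional_height_condition_2}, and once one has a per-tuple bound of the shape $\frac{B}{\N\latticefracideal_6\N\latticefracideal_7\N\latticefracideal_8\,|N(\coord_2\coord_3\coord_4\coord_5)|}\bigl(\frac{N(\coord_1\coord_2)B}{N(\coord_3^2\coord_4^4\coord_5^6)}\bigr)^{\gamma}$ with $\gamma>0$, the application of Lemma \ref{lem:error_sum} with $e=(-1,-1,2,4,6)$ and $\beta=-\gamma$ finishes the proof as you describe. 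The gap is the middle step, which you explicitly defer (``calibrating the interpolation''): as written, your plan does not produce a positive $\gamma$. If you keep the summation order of the cusp lemmas ($\coord_8$, then $\coord_7$ via \eqref{eq:cusps_a7_bound} with exponent $1/2$, then $\coord_6$ via \eqref{eq:cusps_a6_bound} with exponent $1/4$), Lemma \ref{lem:norm_sum_no_lower_bound} returns exactly the volume bound $s_0$ with no saving, because the lower bounds $\absv{\coord_7}\geq\N\latticefracideal_7^{d_v/d}$ are useless in the regime $\alpha<1$ of that lemma; and the single-place interpolation you invoke from Lemma \ref{lem:a7_conj_lower_bound} does not transfer, since there the saving came from an \emph{upper} bound on one conjugate (the cusp condition), whereas here you only have \emph{lower} bounds.

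The paper's device is to reverse the order of the last two summations. For fixed $\coord_7$, the constraint \eqref{eq:cusps_a7_bound} bounds $\coord_6$, and Lemma \ref{lem:norm_sum_no_lower_bound} with $\alpha=0$ gives $\ll\frac{1}{\N\latticefracideal_6}\cdot\frac{B}{N(\coord_3^2\coord_4^3\coord_5^4\coord_7^2)}$; combined with the $N(\coord_7)^{-1/2}$ from \eqref{eq:sum_a8} this puts $N(\coord_7)$ in the denominator with exponent $5/2>1$, and now the all-places lower bound $\absv{\coord_7}\geq\N\latticefracideal_7^{d_v/d}$ feeds into the \emph{second} part of Lemma \ref{lem:norm_sum_no_lower_bound}, yielding $\N\latticefracideal_7^{-5/2}$ and, after rewriting, precisely the factor $\bigl(\frac{N(\coord_1\coord_2)B}{N(\coord_3^2\coord_4^4\coord_5^6)}\bigr)^{1/2}$ (so $\gamma=1/2$ and $\alpha_7=5/2$ in Lemma \ref{lem:error_sum}, rather than your $\alpha_7=1$). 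Your order can in fact be salvaged by a H\"older step, writing $N(\coord_7)^{-1/2}=N(\coord_7)^{-\alpha'}N(\coord_7)^{\alpha'-1/2}$ with $1<\alpha'<5/2$ and bounding the second factor by \eqref{eq:cusps_a7_bound}; this yields $\gamma=(\alpha'-1)/3>0$, but note that $\alpha'=5/2$ is forbidden (the subsequent $\coord_6$-sum then sits at the critical exponent $1$ and costs an extra logarithm). Either way, this computation is the substance of the lemma and must actually be carried out; without it the claimed exponent $\gamma>0$ is an assertion, not a proof.
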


\begin{proof}
  It is enough to prove that
  \begin{equation}\label{eq:sum_with_additional_height_condition_2}
    \sum_{\substack{\coordtuple' \in
        \FDei^5\cap\coordfracidealproduct\\\eqref{eq:additional_height_condition_2}}}\theta_0(\coordidealtuple')\sum_{\substack{\divisoridealtuple\\\eqref{eq:dij_conditions},\eqref{eq:di_conditions}}}|\mu_K(\divisoridealtuple)|\cdot|\countinggroup(\classtuple,
    \coordtuple', \divisoridealtuple) \cap \FDxi^*(\classtuple, \coordtuple',\divisoridealtuple; u_\classtuple B)|\ll B(\log B)^{4+\epsilon}.
  \end{equation}
  Again, we fix $\coordtuple', \divisoridealtuple, \coord_6, \coord_7$
  and bound the number of $\coord_8$ with
  $(\coord_6,\coord_7,\coord_8) \in \countinggroup(\classtuple,
  \coordtuple', \divisoridealtuple) \cap \FDxi^*(\classtuple,
  \coordtuple',\divisoridealtuple; u_\classtuple B)$ by
  \eqref{eq:sum_a8}.

  We sum this over all $\coord_6\in\latticefracideal_6^{\neq 0}$ with
  \eqref{eq:cusps_a7_bound} and obtain an upper bound
  \begin{align*}
    \ll
    \frac{1}{\N\latticefracideal_8}\left(\frac{BN(\coord_1)}{N(\coord_2)}\right)^{1/2}\cdot\frac{1}{\N\latticefracideal_6}\frac{B}{N(\coord_3^2\coord_4^3\coord_5^4)}\cdot\frac{1}{N(\coord_7)^{5/2}}.
  \end{align*}
  By Lemma \ref{lem:norm_sum_no_lower_bound}, the sum of this
  expression over all $\coord_7 \in \latticefracideal_7 \neq 0$ with
  $\absv{\coord_7}\geq\N\latticefracideal_7^{d_v/d}$ for all $v$ is
  \begin{equation*}
    \ll \frac{B}{\N\latticefracideal_6\N\latticefracideal_7^{5/2}\N\latticefracideal_8N(\coord_2\coord_3\coord_4\coord_5)}\left(\frac{N(\coord_1\coord_2)B}{N(\coord_3^2\coord_4^4\coord_5^6)}\right)^{1/2}.
  \end{equation*}
  We apply Lemma \ref{lem:error_sum} with \eqref{eq:additional_height_condition_2}.
\end{proof}

\section{Symmetries}\label{sec:linear_transformations}

In this section, we consider $\classtuple, \coordtuple',
\divisoridealtuple$ as fixed. From here on, it will be convenient to
write $\countinggroup := \countinggroup(\classtuple, \coordtuple',
\divisoridealtuple)$, $\FDxi^* := \FDxi^*(\classtuple,
\coordtuple',\divisoridealtuple; u_\classtuple B)$, and $S_F^* :=
S_F^*(\classtuple,\coordtuple',\divisoridealtuple; u_\classtuple B)$.

In Lemma \ref{lem:additional_height_condition}, we established that in
order to find an asymptotic formula for $|M_\classtuple(B)|$, we need
to count $|\countinggroup \cap \FDxi^*|$. The embedding $\sigma : K^3
\to \prod_{v\in\archplaces}K_v^3$ transforms this to
\begin{equation*}
  |\sigma(\countinggroup) \cap S_F^*|.
\end{equation*}
We use some symmetries of $S_F^*$ to facilitate our counting
problem.  For any $M \subseteq \archplaces$, let $S_F^{M} =
S_F^{M}(\classtuple,\coordtuple',\divisoridealtuple; u_\classtuple B)$
be the set of all $(x_{jv})_{j,v}\in S_F^*$ with
\begin{align*}
  &\absv{\sigma_v(\coord_2)x_{8v}} \geq \absv{\sigma_v(\coord_2)x_{8v} + \sigma_v(\coord_3\coord_4^2\coord_5^3)x_{7v}} \text{ for all } v \in M\\
  &\absv{\sigma_v(\coord_2)x_{8v}} \leq \absv{\sigma_v(\coord_2)x_{8v}
    + \sigma_v(\coord_3\coord_4^2\coord_5^3)x_{7v}} \text{ for all } v
  \notin M.
\end{align*}
Of these sets, $S_F^{\emptyset}$ is the most convenient to count
lattice points in it. Let $\phi_M : \prod_{v\in\archplaces}K_v^3\to
\prod_{v\in\archplaces}K_v^3$ be the $\RR$-linear involution given by
$\phi_M((x_{jv})_{j,v}) := (x_{jv}')_{j,v}$, with
\begin{align*}
  x_{6v}' &:= x_{6v} \text{ for } v\in\archplaces\\
  x_{7v}' &:=
  \begin{cases}
    x_{7v} &\text{ for } v\in\archplaces\smallsetminus M\\
    -x_{7v} &\text{ for } v \in M
  \end{cases}\\
  x_{8v}' &:=
  \begin{cases}
    x_{8v} &\text{ for } v \in \archplaces\smallsetminus M\\
    x_{8v}+\sigma_v(\coord_3\coord_4^2\coord_5^3/\coord_2) \cdot
    x_{7v} &\text{ for }v\in M.
  \end{cases}
\end{align*}
Then $|\det \phi_M| = 1$, and one readily verifies that
\begin{equation*}
  \tN_v(\coordtuple'; x_{6v}, x_{7v}, x_{8v}) = \tN_v(\coordtuple'; x_{6v}', x_{7v}', x_{8v}')
\end{equation*}
for all $v \in \archplaces$ and all $(x_{6v},x_{7v},x_{8v}) \in
K_v^3$. Therefore, $\phi_M$ induces a bijection between $S_F^{M}$ and
$S_F^{\emptyset}$.

Let $\tau : \prod_{v\in\archplaces}K_v^3 \to \prod_{v\in\archplaces}K_v^3$
be the $\RR$-endomorphism of determinant $\det \tau =
\N(\latticefracideal_6\latticefracideal_7\latticefracideal_8)^{-1}$
given by
\begin{equation*}
  \tau((x_{jv})_{j,v}) := (\N\latticefracideal_j^{-1/d}\cdot x_{jv})_{j,v}.
\end{equation*}
Define $\Lambda_M = \Lambda_M(\classtuple,\coordtuple',\divisoridealtuple) :=
\tau(\phi_M(\sigma(\countinggroup)))$.

\begin{lemma}\label{lem:symmetries}
  For any $\classtuple,\coordtuple',\divisoridealtuple$ as in Lemma
  \ref{lem:additional_height_condition}, we have
  \begin{align*}
    |\countinggroup\cap \FDxi^*| &= \sum_{M \subseteq
      \archplaces}|\Lambda_M \cap \tau(S_F^{\emptyset})| +
    O\left(\max_{\substack{M,N\subseteq
          \archplaces\\N\neq\emptyset}}|\Lambda_M \cap
      \tau(S_F^{\emptyset}\cap S_F^{N})| \right).
  \end{align*}
\end{lemma}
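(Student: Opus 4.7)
The plan is to decompose $S_F^*$ into pieces indexed by subsets $M\subseteq\archplaces$, use the involutions $\phi_M$ to transport each piece to the fixed piece $S_F^\emptyset$, and absorb the boundary overlaps into the claimed error term.

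First I would observe that $S_F^*=\bigcup_{M\subseteq\archplaces}S_F^M$: given $(x_{jv})_{j,v}\in S_F^*$, put $M:=\{v\in\archplaces : |\sigma_v(\coord_2)x_{8v}|_v\geq|\sigma_v(\coord_2)x_{8v}+\sigma_v(\coord_3\coord_4^2\coord_5^3)x_{7v}|_v\}$. Since $|\archplaces|=r_1+r_2$ is fixed, there are $O(1)$ subsets $M$, and hence by an inclusion-exclusion / union bound
\[
|\sigma(\countinggroup)\cap S_F^*|=\sum_{M\subseteq\archplaces}|\sigma(\countinggroup)\cap S_F^M|+O\Bigl(\max_{M\neq M'}|\sigma(\countinggroup)\cap S_F^M\cap S_F^{M'}|\Bigr).
\]
Here I use that the sets $S_F^M\cap S_F^{M'}$ with $M\neq M'$ capture precisely the boundary points where at least one of the two quantities in the definition is attained with equality at some place.

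Next I would verify the two key properties of $\phi_M$, both by direct calculation at each archimedean place. At $v\in M$ one computes
\[
\sigma_v(\coord_2)x_{8v}'=\sigma_v(\coord_2)x_{8v}+\sigma_v(\coord_3\coord_4^2\coord_5^3)x_{7v},\qquad \sigma_v(\coord_2)x_{8v}'+\sigma_v(\coord_3\coord_4^2\coord_5^3)x_{7v}'=\sigma_v(\coord_2)x_{8v},
\]
so the two quantities defining $S_F^M$ at $v$ swap; at $v\notin M$ they are unchanged. Combined with the $\tN_v$-invariance already recorded in the excerpt, this shows that $\phi_M$ is a measure-preserving involution with $\phi_M(S_F^M)=S_F^\emptyset$ and, more generally, $\phi_M(S_F^{M'})=S_F^{M\triangle M'}$. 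Since $\tau$ is an $\RR$-linear bijection of $\prod_v K_v^3$, applying $\tau\circ\phi_M$ gives
\[
|\sigma(\countinggroup)\cap S_F^M|=|\tau\phi_M(\sigma(\countinggroup))\cap\tau(S_F^\emptyset)|=|\Lambda_M\cap\tau(S_F^\emptyset)|,
\]
and similarly $|\sigma(\countinggroup)\cap S_F^M\cap S_F^{M'}|=|\Lambda_M\cap\tau(S_F^\emptyset\cap S_F^{M\triangle M'})|$.

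Finally, writing $N:=M\triangle M'$, the condition $M\neq M'$ is equivalent to $N\neq\emptyset$, and as $M'$ ranges over subsets $\neq M$, $N$ ranges over all nonempty subsets of $\archplaces$. Combining the two displays, together with the identification $|\countinggroup\cap\FDxi^*|=|\sigma(\countinggroup)\cap S_F^*|$ (from injectivity of $\sigma$), yields the claimed formula. The only nontrivial step is the at-each-place computation confirming how $\phi_M$ permutes the pieces $S_F^{M'}$; once this combinatorial bookkeeping is in place, the rest is a routine union/inclusion-exclusion argument.
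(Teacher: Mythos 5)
Your proposal is correct and follows essentially the same route as the paper: decompose $S_F^*$ as $\bigcup_M S_F^M$, apply the union bound (valid since the number of subsets $M$ is $O(1)$), and transport each piece by the measure-preserving bijection $\tau\circ\phi_M$. The paper's proof is only two sentences and leaves implicit the bookkeeping you spell out, namely that $\phi_M$ swaps the two quantities at each $v\in M$, hence $\phi_M(S_F^{M'})=S_F^{M\triangle M'}$, so the pairwise intersections in the error term become $\tau(S_F^\emptyset\cap S_F^N)$ with $N=M\triangle M'\neq\emptyset$.
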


\begin{proof}
  Since $S_F^*$ is the union of the sets $S_F^M$, $M \subseteq
  \archplaces$, we have
  \begin{equation*}
    |\countinggroup \cap \FDxi^*| = |\sigma(\countinggroup)\cap S_F^*| = \sum_{M \subseteq \archplaces}|\sigma(\countinggroup)\cap S_F^M| + O\left(\max_{M\neq N\subseteq\archplaces}|\sigma(\countinggroup)\cap S_F^M \cap S_F^N|\right).
  \end{equation*}
  To prove the lemma, we apply $\tau \circ \phi_M$ to each summand and
  to the argument of the maximum in the error term.
\end{proof}

Let us collect some information about
$\Lambda_M(\classtuple,\coordtuple',\divisoridealtuple)$.

\begin{lemma}\label{lem:Lamda_J}
  The subset $\Lambda_M(\classtuple,\coordtuple',\divisoridealtuple)
  \subseteq \prod_{v\in\archplaces}K_v = \RR^{3d}$ is a lattice of
  rank $3d$ and determinant $(2^{-r_2}\sqrt{|\Delta_K|})^3$. Let
  $\lambda_1$ be its first successive minimum in the sense of
  Minkowski, with respect to the unit ball in $\RR^{3d}$. Then
  $\lambda_1 \ge 1$.
\end{lemma}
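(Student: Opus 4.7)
My plan is to handle the covolume and the successive-minimum bound as two independent assertions, each built from a small number of standard facts.

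For the covolume, I would first exhibit $\sigma(\countinggroup)$ as a full-rank lattice by applying the unipotent substitution $(\coord_6,\coord_7,\coord_8)\mapsto(\coord_6,\coord_7,\coord_8-\gamma_8\coord_7)$ of $K$-determinant~$1$, which identifies $\countinggroup$ as an abelian group with $\latticefracideal_6\oplus\latticefracideal_7\oplus\latticefracideal_8$. Combined with the standard covolume $2^{-r_2}\N\latticefracideal_j\sqrt{|\Delta_K|}$ of $\sigma(\latticefracideal_j)\subset\RR^d$, this places the covolume of $\sigma(\countinggroup)$ in $\RR^{3d}$ at $2^{-3r_2}\N(\latticefracideal_6\latticefracideal_7\latticefracideal_8)|\Delta_K|^{3/2}$. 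I would then compute $|\det\phi_M|=1$ blockwise: for each $v\in M$ the $K_v$-linear map on $K_v^3$ is a sign flip in the $\coord_7$-coordinate composed with a shear on $\coord_8$, so its $K_v$-determinant is $-1$ and its absolute $\RR$-determinant equals $1$ in both the real and complex cases. Since $\tau$ scales each of the three $\RR^d$-blocks by $\N\latticefracideal_j^{-1/d}$, one has $|\det\tau|=(\N\latticefracideal_6\N\latticefracideal_7\N\latticefracideal_8)^{-1}$. Multiplying the three determinants recovers $(2^{-r_2}\sqrt{|\Delta_K|})^3$, as required.

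The key ingredient for $\lambda_1\geq 1$ will be the AM-GM estimate
\[
\|\sigma(a)\|_{\RR^d}^2 \;\geq\; d\cdot 2^{-2r_2/d}|N(a)|^{2/d} \;\geq\; |N(a)|^{2/d}
\]
for every $a\in K^{\neq 0}$. I would derive the first inequality by applying AM-GM to the $d=r_1+2r_2$ real squared coordinates of $\sigma(a)$, where each complex place contributes two equal terms $|\sigma_v(a)|_{\CC}^2/2$; the second inequality reduces to $d\log d\geq 2r_2\log 2$, which is immediate from $d\geq 2r_2$. For $a\in\latticefracideal^{\neq 0}$, combining this with $|N(a)|\geq\N\latticefracideal$ yields $\|\N\latticefracideal^{-1/d}\sigma(a)\|\geq 1$.

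To conclude I would split by cases on the smallest index at which a nonzero coordinate occurs in a given nonzero $(\coord_6,\coord_7,\coord_8)\in\countinggroup$. If $\coord_6\neq 0$, the $6$-block of $\tau(\phi_M(\sigma(\coord_6,\coord_7,\coord_8)))$ equals $\N\latticefracideal_6^{-1/d}\sigma(\coord_6)$, because $\phi_M$ fixes this block, so its Euclidean norm alone is $\geq 1$. If $\coord_6=0$ but $\coord_7\neq 0$, the $7$-block is $\pm\N\latticefracideal_7^{-1/d}\sigma(\coord_7)$ and again has norm $\geq 1$. In the remaining case $\coord_6=\coord_7=0$, the defining conditions of $\countinggroup$ force $\coord_8\in\latticefracideal_8$; since $\phi_M$ acts trivially on the $8$-block when $\coord_7=0$, that block also has norm $\geq 1$. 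The only subtlety, which I expect to verify most carefully, is the $\phi_M$-shear on the $8$-coordinate, but the nested-zero case analysis renders it harmless.
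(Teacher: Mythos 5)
Your proposal is correct and follows essentially the same route as the paper: the covolume is obtained by composing the standard covolume of $\sigma(\latticefracideal_6\times\latticefracideal_7\times\latticefracideal_8)$ with the determinant-one shear, $\phi_M$, and $\tau$, and the bound $\lambda_1\ge 1$ is obtained by the same case split on the first nonzero coordinate, noting that $\phi_M$ acts harmlessly on the relevant block in each case. The only difference is that you derive the inequality $\|\N\latticefracideal^{-1/d}\sigma(a)\|\ge 1$ for $a\in\latticefracideal^{\neq 0}$ from scratch via AM--GM (correctly, including the check $d\cdot 2^{-2r_2/d}\ge 1$), whereas the paper simply cites this standard fact about the first successive minimum of $\sigma(\latticefracideal)$.
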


\begin{proof}
  It is well known that
  $\sigma(\latticefracideal_6\times\latticefracideal_7\times\latticefracideal_8)$
  is a lattice in $\prod_{v\in\archplaces}K_v^3$ of rank $3d$ and
  determinant
  $(2^{-r_2}\sqrt{|\Delta_K|})^3\N(\latticefracideal_6\latticefracideal_7\latticefracideal_8)$.

  It is clear from the definition of $\countinggroup(\classtuple,
  \coordtuple', \divisoridealtuple)$ that
  $\sigma(\countinggroup(\classtuple,\coordtuple',\divisoridealtuple))$
  arises from this lattice via the $\RR$-endomorphism $\phi:
  \prod_{v\in\archplaces}K_v^3 \to \prod_{v\in\archplaces}K_v^3$ of
  determinant $1$ defined by
  \begin{equation*}
    \phi((x_{jv})_{j,v})_{iw}=
    \begin{cases}
      x_{iw} &\text{ if }i \in \{6,7\},\\
      \gamma_8^{(w)}x_{7w}+x_{8w} &\text{ if }i=8.
    \end{cases}
  \end{equation*}
  Hence,
  $\sigma(\countinggroup(\classtuple,\coordtuple',\divisoridealtuple))$
  is a lattice of the same rank and determinant. Since
  $\tau\circ\phi_M$ is a linear transformation with
  $|\det(\tau\circ\phi)|=
  \N(\latticefracideal_6\latticefracideal_7\latticefracideal_8)^{-1}\neq
  0$, the set $\Lambda_M(\classtuple,\coordtuple',\divisoridealtuple)$
  is a lattice, and its rank and determinant are as claimed.

  We still need to consider $\lambda_1$. To this end, let
  $0\neq(\coord_6,\coord_7,\coord_8) \in
  \countinggroup(\classtuple,\coordtuple',\divisoridealtuple)$. We
  show that $(\tau\circ\phi_M\circ\sigma)(\coord_6,\coord_7,\coord_8)$
  has length $\ge 1$. Assume first that $\coord_6 \neq 0$. Since
  $\tau(\phi_M(\sigma(\coord_6,\coord_7,\coord_8)))_{6v} =
  \N\latticefracideal_6^{-1/d}\cdot\coord_6^{(v)}$ for all
  $v\in\archplaces$, we have
  \begin{equation*}
    |\tau(\phi_M(\sigma(\coord_6,\coord_7,\coord_8)))| \geq |\N\latticefracideal_6^{-1/d}\sigma(\coord_6)|\geq 1.
  \end{equation*}
  In the second inequality we used the fact that the first successive
  minimum of $\sigma(\latticefracideal_6)$ is at least
  $\N\latticefracideal_6^{1/d}$ (cf. \cite[Lemma 5]{MR2247898}). A
  similar argument shows the statement if $\coord_7 \neq 0$, and if
  $\coord_6=\coord_7=0$ and $\coord_8\neq 0$.
\end{proof}

\section{Definability in an o-minimal structure}\label{sec:definability}
In Lemma \ref{lem:symmetries}, we reduced our counting problem to
controlling the quantities
\begin{equation*}
  |\Lambda_M\cap \tau(S_F^\emptyset\cap S_F^N)|,
\end{equation*}
for $M, N \subseteq \archplaces$. We already know the determinant and
a lower bound for the first successive minimum of the lattice
$\Lambda_M = \Lambda_M(\classtuple,\coordtuple',\divisoridealtuple)$.

To count the lattice points in $\tau(S_F^\emptyset\cap S_F^N)$, we use
a technique going back to Davenport \cite{MR0043821}, which was
recently adapted to the framework of $o$-minimal structures by
Barroero and Widmer \cite{arXiv:1210.5943}. We will apply
\cite[Theorem 1.3]{arXiv:1210.5943}, so our sets
$\tau(S_F^\emptyset\cap S_F^N)$ should be fibers of definable families
$Z^{(N)}$ with bounded fibers in an o-minimal structure. For a quick
introduction to o-minimal structures, we refer to the survey
\cite{sem:Wilkie2007}.

By \eqref{eq:conj_norm}, there is a constant $c_1 \gg 1$ such that
$\absv{\coord}\geq c_1$ for all $v\in\archplaces$ and $\coord \in
\FDei\cap\coordfracideal_{j*}$, with $j \in \{1, \ldots, 5\}$.

Let $Z^{(N)}$ be the set of all
\begin{equation*}
  (\beta, \beta_6, \beta_7, \beta_8, (x_{jv})_{\substack{1\leq j \leq 8\\v\in\archplaces}})\in \RR^4 \times \prod_{v\in\archplaces}K_v^8
\end{equation*}
that satisfy the conditions
\begin{equation*}
  \begin{aligned}
    \beta, \beta_3, \beta_7, \beta_8 &> 0,\\
    \absv{x_{jv}}&\geq c_1 \text{ for all }j\in\{1,\ldots, 5\}, v\in\archplaces,\\
    \absv{x_{6v}}, \absv{x_{7v}}&\geq 1 \text{ for all }v\in\archplaces,\\
    \absv{x_{2v}\beta_8x_{8v}}&\leq \absv{x_{2v}\beta_8x_{8v} +
      x_{3v}x_{4v}^2x_{5v}^3\beta_7x_{7v}} \text{ for all }v\in\archplaces\smallsetminus N,\\
    \absv{x_{2v}\beta_8x_{8v}}&= \absv{x_{2v}\beta_8x_{8v} +
      x_{3v}x_{4v}^2x_{5v}^3\beta_7x_{7v}} \text{ for all }v\in N,\\
  \end{aligned}
\end{equation*}
\begin{equation*}
\left(\tN_v(x_{1v},\ldots, x_{5v}, \beta_6x_{6v},\beta_7x_{7v},\beta_8x_{8v})^{1/3}\right)_{v\in\archplaces} \in \exp(F(\beta^{1/(3d)})),
\end{equation*}
where $\exp : \RR^\archplaces \to \RR_{>0}^{\archplaces}$ is the
coordinate-wise exponential function. For
\begin{equation*}
  T = (\beta, \beta_3, \beta_7, \beta_8, (x_{jv})_{\substack{1\leq j\leq 5\\v\in\archplaces}}) \in \RR^4 \times \prod_{v\in\archplaces}K_v^5,
\end{equation*}
we define the fiber
\begin{equation*}
  Z_T^{(N)} := \left\{(x_{jv})_{\substack{j\in\{6,7,8\}\\v\in\archplaces}}\in \prod_{v\in\archplaces}K_v^3\ \Big| \ (\beta, \beta_3, \beta_7, \beta_8, (x_{jv})_{\substack{1\leq j\leq 8\\v\in\archplaces}}) \in Z^{(N)}\right\}.
\end{equation*}
We see immediately from the definitions that $\tau(S_F^\emptyset\cap
S_F^N)$ is just the fiber $Z_T^{(N)}$, where
\begin{equation*}
  T := (u_\classtuple B, \N\latticefracideal_6^{1/d},\N\latticefracideal_7^{1/d}, \N\latticefracideal_8^{1/d}, (\coord_{j}^{(v)})_{\substack{j\in\{1,\ldots,5\}\\v\in\archplaces}}).
\end{equation*}
Hence, the following lemma allows us to apply \cite[Theorem
1.3]{arXiv:1210.5943}. Recall that we identify
$\prod_{v\in\archplaces}K_v^8$ with $\RR^{8d}$ by identifying $K_v$
with $\RR$ or $\RR^2$.

\begin{lemma}\label{eq:definable_in_R_exp}
  For any $N\subseteq \archplaces$, the subset $Z^{(N)} \subseteq
  \RR^{4+8d}$ is definable in the o-minimal structure $\RR_{\exp} =
  \langle\RR;<, +,\cdot,-,\exp\rangle$. Moreover, the fibers
  $Z_T^{(N)}$ are bounded.
\end{lemma}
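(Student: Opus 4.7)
The plan is to treat definability and boundedness of the fibers as two essentially independent tasks. For the first, most of the defining conditions of $Z^{(N)}$ are visibly semi-algebraic once one identifies $K_v$ with $\RR^{d_v}$; the only part that genuinely requires $\exp$ is the constraint $(\tN_v^{1/3})_v \in \exp(F(\beta^{1/(3d)}))$. For boundedness, fixing the tuple $T$ turns that same constraint into a uniform upper bound on each $\tN_v$, and the shape of $\tN_v$ together with the lower bounds built into $Z^{(N)}$ then pins each of $x_{6v}, x_{7v}, x_{8v}$ inside a bounded region.

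For definability, I would begin by making the parallelotope $F$ explicit. Fixing fundamental units $u_1,\dots,u_q$ of $\units$, one may take $F = \{\sum_{i=1}^q \alpha_i l(u_i) : \alpha_i \in [0,1]\}$, so that $F(B) = F + (-\infty,\log B]\delta$ is parameterized by $(\alpha, t) \in [0,1]^q \times (-\infty,\log B]$ through $(y_v)_v = \bigl(\sum_i \alpha_i \log|u_i|_v + t d_v\bigr)_v$. Exponentiating coordinate-wise and setting $s = e^t$, the condition $(\tN_v^{1/3})_v \in \exp(F(\beta^{1/(3d)}))$ becomes the existential statement that there exist $\alpha_1,\dots,\alpha_q \in [0,1]$ and $s > 0$ with $s^{3d} \leq \beta$ such that, for every $v \in \archplaces$,
\[
\tN_v(x_{1v},\dots,x_{5v},\beta_6 x_{6v},\beta_7 x_{7v},\beta_8 x_{8v}) = s^{3d_v} \prod_{i=1}^q \exp(3\alpha_i \log|u_i|_v).
\]
Since $d_v \in \{1,2\}$ and the real numbers $3\log|u_i|_v$ can be used as parameters, this is manifestly a first-order formula in $\langle\RR;<,+,\cdot,-,\exp\rangle$. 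The remaining conditions defining $Z^{(N)}$ involve only field operations, the normalized absolute value $|\cdot|_v$ (which is polynomial in the real coordinates once $\CC \cong \RR^2$ is fixed, since $|a+bi|_v = a^2 + b^2$ at complex places), and the maximum of finitely many definable functions, so they are all semi-algebraic.

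For boundedness, I would fix $T$ and use that $F$ is a bounded subset of $\Sigma$: every coordinate of $F(\beta^{1/(3d)})$ is then bounded above by a constant depending only on $F$, $\beta$ and $d$. Exponentiating yields a uniform upper bound on each $\tN_v$ on the fiber $Z_T^{(N)}$. From this, I would extract successive upper bounds on the coordinates by isolating three monomials of the maximum defining $\tN_v$: the monomial $|x_1^2 x_2^2 x_3^2 x_4 (\beta_6 x_{6v})^3|_v \leq \tN_v$, combined with the lower bounds $|x_{jv}|_v \geq c_1$ for $1 \leq j \leq 5$, bounds $|x_{6v}|_v$; the monomial $|x_1 x_2 x_3^2 x_4^2 x_5^2 (\beta_6 x_{6v})^2 (\beta_7 x_{7v})|_v \leq \tN_v$, combined with $|x_{6v}|_v \geq 1$, bounds $|x_{7v}|_v$; and the fully mixed monomial $|x_2 x_3 x_4 x_5 (\beta_6 x_{6v})(\beta_7 x_{7v})(\beta_8 x_{8v})|_v \leq \tN_v$, combined with $|x_{6v}|_v, |x_{7v}|_v \geq 1$, bounds $|x_{8v}|_v$. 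Since this is uniform across the finitely many $v \in \archplaces$, the fiber $Z_T^{(N)}$ is bounded in $\RR^{3d}$.

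The main obstacle I anticipate is the first paragraph on definability, and specifically verifying that the translation of the condition on $(\tN_v^{1/3})_v$ genuinely captures the fundamental domain $F(B)$ without introducing hidden non-definable operations. Once the parameterization of $F$ by fundamental units is written down cleanly, the remaining semi-algebraic checks and the boundedness bookkeeping are routine.
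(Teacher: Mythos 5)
Your proposal is correct and follows essentially the same route as the paper's (very terse) proof: definability comes down to unwinding the definition of $F(B)$ via the logarithmic embedding of the fundamental units, and boundedness of the fibers follows from the boundedness of $\exp(F(\beta^{1/(3d)}))$ together with the lower bounds $\absv{x_{jv}}\geq c_1$ and $\absv{x_{6v}},\absv{x_{7v}}\geq 1$. You merely spell out the existential formula and the monomial-by-monomial extraction of bounds on $x_{6v},x_{7v},x_{8v}$ that the paper leaves implicit.
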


\begin{proof}
  O-minimality of the structure $\RR_{\exp}$ is a well-known
  consequence of Wilkie's theorem \cite{MR1398816}. After recalling
  the definitions of $F$ and $F(B) \subseteq \RR^\archplaces =
  \RR^{q+1}$ from Section \ref{sec:fundamental_domain}, it is clear
  that $Z^{(N)}$ is definable in $\RR_{\exp}$. Since
  $\exp(F(\beta^{1/(3d)}))$ is bounded for any fixed $\beta>0$,
  boundedness of the fibers follows at once.
\end{proof}

\section{Volumes of projections}\label{sec:volumes_of_projections}
For any coordinate subspace $W$ of
$\prod_{v\in\archplaces}K_v^3=\RR^{3d}$, obtained by equating some
coordinates in $\RR^{3d}$ to $0$, we write $V_W =
V_W(\classtuple,\coordtuple',\divisoridealtuple;u_\classtuple B)$ for
the $(\dim W)$-dimensional volume (i.e.,~Lebesgue measure) of the orthogonal
projection of $\tau(S_F^\emptyset)$ to $W$. By convention, the
zero-dimensional volume of a point is $1$. The following lemma
summarizes our progress of the last sections.

\begin{lemma}\label{lem:barroero_widmer_estimate}
  For any $\classtuple,\coordtuple',\divisoridealtuple$ as in Lemma
  \ref{lem:additional_height_condition}, we have
  \begin{equation*}
    |\countinggroup\cap \FDxi^*| = \frac{2^{3r_2}\vol S_F^*}{|\Delta_K|^{3/2}\N(\latticefracideal_6\latticefracideal_7\latticefracideal_8)} + O\left(\sum_{W}V_W\right).
  \end{equation*}
  The implied constant in the error term depends only on $K$, and $W$
  runs over all proper coordinate subspaces of $\RR^{3d}$.
\end{lemma}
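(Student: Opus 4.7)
The plan is to combine Lemma \ref{lem:symmetries} with the Barroero--Widmer counting theorem \cite[Theorem 1.3]{arXiv:1210.5943}, applied to the definable family $Z^{(\emptyset)}$ (and its relatives $Z^{(N)}$) from Section \ref{sec:definability}. By Lemma \ref{eq:definable_in_R_exp}, each set $\tau(S_F^\emptyset)$ and $\tau(S_F^\emptyset \cap S_F^N)$ arises as a bounded fiber of a family definable in $\RR_{\exp}$, so the implicit constants in Barroero--Widmer depend only on this family, and in particular are independent of $\classtuple$, $\coordtuple'$, $\divisoridealtuple$, and $B$. Combined with Lemma \ref{lem:Lamda_J}, which gives $\det\Lambda_M = 2^{-3r_2}|\Delta_K|^{3/2}$ and first successive minimum $\lambda_1 \geq 1$, the uniform form of the theorem yields, for each $M\subseteq\archplaces$,
$$|\Lambda_M\cap\tau(S_F^\emptyset)| = \frac{2^{3r_2}\vol(\tau(S_F^\emptyset))}{|\Delta_K|^{3/2}} + O\Bigl(\sum_W V_W\Bigr),$$
the sum running over proper coordinate subspaces $W\subseteq\RR^{3d}$. (The lower bound $\lambda_1 \ge 1$ ensures that each projected volume is divided by at least $1$, so no extra factors appear.)

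Since $\tau(S_F^\emptyset)$ is independent of $M$, summing the main term over the $2^{|\archplaces|} = 2^{r_1+r_2}$ choices of $M$ produces
$$\frac{2^{r_1+r_2}\cdot 2^{3r_2}\vol(\tau(S_F^\emptyset))}{|\Delta_K|^{3/2}}.$$
I would then use the volume identity $\vol(S_F^*) = 2^{r_1+r_2}\vol(S_F^\emptyset)$, which follows because each $\phi_M$ has $|\det\phi_M|=1$ and induces a bijection between $S_F^M$ and $S_F^\emptyset$, while the $S_F^M$ together cover $S_F^*$ and their pairwise intersections are contained in the loci where one of the inequalities defining $S_F^M$ becomes an equality, hence have Lebesgue measure zero. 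Combined with $\vol(\tau(S_F^\emptyset)) = \N(\latticefracideal_6\latticefracideal_7\latticefracideal_8)^{-1}\vol(S_F^\emptyset)$ coming from $|\det\tau| = \N(\latticefracideal_6\latticefracideal_7\latticefracideal_8)^{-1}$, the main term collapses exactly to the expression claimed in the lemma, and the accumulated error from the $2^{r_1+r_2}$ applications of Barroero--Widmer is still $O(\sum_W V_W)$.

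Finally, I would dispose of the error term from Lemma \ref{lem:symmetries}, namely $|\Lambda_M\cap\tau(S_F^\emptyset\cap S_F^N)|$ for nonempty $N$, by a second application of Barroero--Widmer to the definable set $\tau(S_F^\emptyset\cap S_F^N)$. Since imposing the equality $|\sigma_v(\coord_2)x_{8v}| = |\sigma_v(\coord_2)x_{8v}+\sigma_v(\coord_3\coord_4^2\coord_5^3)x_{7v}|$ at each $v\in N$ confines this set to a real-analytic hypersurface, it has $3d$-dimensional Lebesgue measure zero, so the theorem yields a vanishing main term and an error controlled by the volumes of the projections of $\tau(S_F^\emptyset\cap S_F^N)$ onto proper coordinate subspaces. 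These projections are subsets of the corresponding projections of $\tau(S_F^\emptyset)$, so their volumes are dominated by the $V_W$ and contribute $O(\sum_W V_W)$. The only point requiring care is the uniformity of the Barroero--Widmer constants, which is ensured precisely by the fibered definable setup of Section \ref{sec:definability} together with the uniform lattice data in Lemma \ref{lem:Lamda_J}; this is the step I would expect to check most carefully.
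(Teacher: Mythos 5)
Your proposal is correct and follows essentially the same route as the paper: Lemma \ref{lem:symmetries}, then Barroero--Widmer applied to the definable fibers $\tau(S_F^\emptyset\cap S_F^N)$ with the lattice data from Lemma \ref{lem:Lamda_J}, the vanishing of $\vol(\tau(S_F^\emptyset\cap S_F^N))$ for $N\neq\emptyset$ together with the containment of its projections in those of $\tau(S_F^\emptyset)$, and finally the identity $\sum_M\vol(S_F^M)=\vol(S_F^*)$ coming from the measure-preserving bijections $\phi_M$ and the measure-zero pairwise intersections. The uniformity point you flag at the end is exactly what the fibered definable setup of Section \ref{sec:definability} is designed to deliver, so there is no gap.
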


\begin{proof}
  We start from Lemma \ref{lem:symmetries}. By the results of the
  previous section, the sets $\tau(S_F^\emptyset \cap S_F^N)$ are
  fibers of families $Z^{(N)}$ definable in the o-minimal structure
  $\RR_{\exp}$. Hence, by \cite[Theorem 1.3]{arXiv:1210.5943} and
  Lemma \ref{lem:Lamda_J},
  \begin{equation*}
    |\Lambda_M \cap \tau(S_F^\emptyset \cap S_F^N)| = \frac{\vol(\tau(S_F^\emptyset\cap S_F^{N}))}{\det \Lambda_M} + O\left(\sum_{j=0}^{3d-1}V_j^{(N)}\right),
  \end{equation*}
  where $V_j^{(N)}$ is the sum of the $j$-dimensional volumes of the
  orthogonal projections of $\tau(S_F^\emptyset\cap S_F^{N})$ to all
  $j$-dimensional coordinate spaces of $\RR^{3d}$.

  If $N\neq \emptyset$ then $\vol(\tau(S_F^\emptyset\cap S_F^N))=0$.
  Moreover, $\tau(S_F^\emptyset\cap S_F^N)\subseteq
  \tau(S_F^\emptyset)$, so the same inclusion holds for the
  projections.

  For $N = \emptyset$, we have $\vol(\tau(S_F^\emptyset)) =
  \vol(S_F^\emptyset)/\N(\latticefracideal_6\latticefracideal_7\latticefracideal_8)
  =
  \vol(S_F^M)/\N(\latticefracideal_6\latticefracideal_7\latticefracideal_8)$
  for all $M \subseteq \archplaces$. Since $S_F^*$ is the union of all
  $S_F^M$ and the intersection of any two of them has volume zero, the
  lemma follows immediately.
\end{proof}

Our next goal is to find good estimates for the $V_W$. Recall that all
$(x_{jv})_{j,v} \in S_F^\emptyset$ satisfy
\begin{equation*}
  \begin{aligned}
    \absv{x_{6v}} &\ge \N\latticefracideal_6^{d_v/d},\\
    \absv{x_{7v}} &\ge \N\latticefracideal_7^{d_v/d},\\
    \absv{\sigma_v(\coord_2)x_{8v}} &\leq \absv{\sigma_v(\coord_2)x_{8v} +
      \sigma_v(\coord_3\coord_4^2\coord_5^3)x_{7v}},\\
    \tN_v(\coordtuple';x_{6v},x_{7v},x_{8v}) &\ll B^{d_v/d},
  \end{aligned}
\end{equation*}
for all $v \in \archplaces$. Let
\begin{equation*}
  c_6 := \left(\frac{B}{N(\coord_1^2\coord_2^2\coord_3^2\coord_4)}\right)^{1/3},\quad c_7 := \left(\frac{B}{N(\coord_3^2\coord_4^3\coord_5^4)}\right)^{1/2},\quad c_8 := \left(\frac{N(\coord_1)B}{N(\coord_2)}\right)^{1/2}.
\end{equation*}
Using \eqref{eq:conj_norm}, we see that every $(x_{jv})_{j,v} \in
S_F^\emptyset$ satisfies in particular, for $v \in \archplaces$,
\begin{align}
  &\N\latticefracideal_6^{d_v/d}\ll \absv{x_{6}}\ll c_6^{d_v/d},\label{eq:e3_bound_split}\\
  &\N\latticefracideal_7^{d_v/d} \ll \absv{x_{7}}\ll c_7^{d_v/d}\cdot \frac{1}{\absv{x_6}^{1/2}},\label{eq:e7_bound_split}\\
  &\absv{x_8} \ll c_8^{d_v/d}\cdot\frac{1}{\absv{x_7}^{1/2}}\label{eq:e8_bound_split},\\
  &\N\latticefracideal_8^{d_v/d} \ll c_8^{d_v/d}\cdot\frac{1}{\absv{x_7}^{1/2}}.\label{eq:c8_lower_bound}
\end{align}
Here, \eqref{eq:e3_bound_split} -- \eqref{eq:e8_bound_split} follow
directly from the properties listed above, and
\eqref{eq:c8_lower_bound} follows similarly as in the paragraph after
(\ref{eq:sum_a8}).

For fixed $\classtuple, \coordtuple', \divisoridealtuple$, and $v \in
\archplaces$, let $S^{(v)}_F =
S_F^{(v)}(\classtuple,\coordtuple',\divisoridealtuple;u_\classtuple
B)$ be the set of all $(x_6,x_7,x_8)\in K_v^3$ that satisfy
\eqref{eq:e3_bound_split}--\eqref{eq:c8_lower_bound}.

Let $\tau_v : K_v^3 \to K_v^3$, $(x_6,x_7,x_8)\mapsto
(\N\latticefracideal_6^{-1/d}x_6, \N\latticefracideal_7^{-1/d}x_7,
\N\latticefracideal_8^{-1/d}x_8)$. Then
\begin{equation}\label{eq:contained_in_product}
  \tau(S_F^\emptyset) \subseteq \prod_{v\in\archplaces} \tau_v(S^{(v)}_F).
\end{equation}
Hence, each projection of $\tau(S_F^\emptyset)$ to a coordinate subspace
is contained in a product of projections of the $\tau_v(S^{(v)}_F)$ to
coordinate subspaces in $K_v^3=\RR^{3d_v}$. Let us investigate these
projections. In our estimates, we will use the quantities
\begin{align*}
  s_0 &:= \frac{c_8c_7^{1/2}c_6^{3/4}}{\N\latticefracideal_6\N\latticefracideal_7\N\latticefracideal_8}=\frac{B}{\N\latticefracideal_6\N\latticefracideal_7\N\latticefracideal_8N(\coord_2\coord_4\coord_5\coord_6)},\\
  s_6 &:= \frac{c_8c_7^{1/2}c_6^{1/4}}{\N\latticefracideal_6^{1/2}\N\latticefracideal_7\N\latticefracideal_8}=\frac{B}{\N\latticefracideal_6^{\frac{1}{2}}\N\latticefracideal_7\N\latticefracideal_8N(\coord_2\coord_3\coord_4\coord_5)}\left(\frac{B}{N(\coord_1^2\coord_2^2\coord_3^2\coord_4)}\right)^{-\frac{1}{6}},\\
  s_7 &:= \frac{c_8c_7^{1/4}c_6^{7/8}}{\N\latticefracideal_6\N\latticefracideal_7^{1/2}\N\latticefracideal_8}=\frac{B}{\N\latticefracideal_6\N\latticefracideal_7^{\frac{1}{2}}\N\latticefracideal_8N(\coord_2\coord_3\coord_4\coord_5)}\left(\frac{N(\coord_1\coord_2)B}{N(\coord_3^{2}\coord_4^{4}\coord_5^{6})}\right)^{-\frac{1}{12}},\\
  s_8 &:=\frac{c_8^{1/2}c_7^{3/4}c_6^{5/8}}{\N\latticefracideal_6\N\latticefracideal_7\N\latticefracideal_8^{1/2}}=
  \frac{B}{\N\latticefracideal_6\N\latticefracideal_7\N\latticefracideal_8^{\frac{1}{2}}N(\coord_1)^{\frac{1}{2}}N(\coord_2\coord_3\coord_4\coord_5)}\left(\frac{N(\coord_3\coord_4^2\coord_5^3)B}{N(\coord_1^2\coord_2^2)}\right)^{-\frac{1}{6}}.
\end{align*}

\subsection{Real places}
Here, we investigate $\tau_v(S_F^{(v)})$ when $v$ is a real place, so
$K_v^3 = \RR^3$.

\begin{lemma}\label{lem:vol_proj_estimate_real}
  Let $v \in \archplaces$ be a real place. For any $P = (p_6,p_7,p_8)
  \in \{0,1\}^3$, let $V_P$ be the $(3-(p_6+p_7+p_8))$-dimensional
  volume of the orthogonal projection of $\tau_v(S_F^{(v)})$ to the
  coordinate subspace of $\RR^3$ given by
  \begin{equation}\label{eq:def_proj_real}
    \begin{aligned}
      x_j = 0  &\text{ for all $j\in \{6,7,8\}$ with }p_j = 1.\\
    \end{aligned}
  \end{equation}
  Then
  \begin{equation*}
    V_P \ll
    \begin{cases}
      s_0^{1/d} &\text{ if } p_3=p_7=p_8=0,\\
      s_6^{1/d} &\text{ if } p_6 = 1,\ p_7=p_8=0,\\
      s_7^{1/d} &\text{ if } p_7 = 1,\ p_8=0,\\
      s_8^{1/d} &\text{ if } p_8 =1.\\
    \end{cases}
  \end{equation*}
\end{lemma}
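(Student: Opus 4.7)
The plan is to reduce each $V_P$ to an iterated Lebesgue integral over $S_F^{(v)} \subseteq \RR^3$ using the nested chain of constraints \eqref{eq:e3_bound_split}--\eqref{eq:c8_lower_bound}. Since $\tau_v$ is coordinate-wise, projection to a coordinate subspace commutes with it, so
\begin{equation*}
V_P = \biggl(\prod_{j\,:\,p_j=0}\N\latticefracideal_j^{-1/d}\biggr) \cdot \vol(W_P),
\end{equation*}
where $W_P$ denotes the orthogonal projection of $S_F^{(v)}$ onto the subspace $\{x_j = 0 : p_j = 1\}$. For real $v$ one has $d_v=1$, and the defining constraints give the nested chain $\N\latticefracideal_6^{1/d} \ll |x_6| \ll c_6^{1/d}$, $\N\latticefracideal_7^{1/d} \ll |x_7| \ll c_7^{1/d}/|x_6|^{1/2}$, $|x_8| \ll c_8^{1/d}/|x_7|^{1/2}$, together with the side condition $\N\latticefracideal_8^{1/d} \ll c_8^{1/d}/|x_7|^{1/2}$.

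For $P = (0,0,0)$, iterated integration in the order $dx_8\,dx_7\,dx_6$ yields an integrable power singularity $|x_6|^{-1/4}$ and the bound $c_8^{1/d}c_7^{1/(2d)}c_6^{3/(4d)}$, from which $V_{(0,0,0)} \ll s_0^{1/d}$ follows. For $P = (1,0,0)$, projecting out $x_6$ enlarges the admissible $|x_7|$-range to $c_7^{1/d}/\N\latticefracideal_6^{1/(2d)}$ (attained as $|x_6|$ tends to its lower bound); the resulting two-fold integral then gives a bound sharper than $s_6^{1/d}$ by a factor $(\N\latticefracideal_6/c_6)^{1/(4d)} \le 1$.

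The remaining six cases rely on the interpolation $\min(a,b) \le a^\alpha b^{1-\alpha}$ applied either to the two admissible upper bounds on $|x_7|$, namely $c_7^{1/d}/|x_6|^{1/2}$ and $(c_8/\N\latticefracideal_8)^{2/d}$ (the latter coming from \eqref{eq:c8_lower_bound}), or, after projecting out $x_7$, to the two admissible upper bounds on $|x_6|$, namely $c_6^{1/d}$ and $c_7^{2/d}/\N\latticefracideal_7^{2/d}$. For the four $p_8 = 1$ cases, the choice $\alpha = 3/4$ matches the exponents in $s_8$ exactly. The degenerate case $P = (1,1,1)$ reduces to the inequality $s_8 \gg 1$, which I would establish by substituting the ``minimum-coordinate'' lower bounds $\N\latticefracideal_8\N\latticefracideal_7^{1/2} \ll c_8$, $\N\latticefracideal_7\N\latticefracideal_6^{1/2} \ll c_7$ and $\N\latticefracideal_6 \ll c_6$ (each obtained by inserting the smallest admissible coordinate into the relevant constraint) into the definition of $s_8$. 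For $P = (0,1,0)$ the choice $\alpha = 7/8$ produces the $c_6^{7/8}c_7^{1/4}$ prefactor of $s_7$.

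The main obstacle is the subcase $P = (1,1,0)$: projecting out $x_6$ and $x_7$ leaves a shadow on the $x_8$-axis of length $\ll c_8^{1/d}/\N\latticefracideal_7^{1/(2d)}$, and dominating the scaled bound by $s_7^{1/d}$ reduces to the numeric inequality $c_6 \ll c_7^2$. Unwrapping the definitions of $c_6$ and $c_7$, this is exactly equivalent to $N(\coord_3^2\coord_4^4\coord_5^6) \ll N(\coord_1\coord_2)B$, which is the additional height condition \eqref{eq:additional_height_condition} incorporated into Lemma \ref{lem:additional_height_condition}. Without this input, the lemma fails in this subcase; with it, the remaining bookkeeping of interpolation exponents is routine.
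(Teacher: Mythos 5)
Your framework is the same as the paper's: rescale so that $V_P$ equals $\vol(W_P)$ divided by $\prod_{p_j=0}\N\latticefracideal_j^{1/d}$, replace each projected-out coordinate by its minimal admissible value $\N\latticefracideal_j^{1/d}$, and integrate along the nested chain \eqref{eq:e3_bound_split}--\eqref{eq:c8_lower_bound}, interpolating between competing upper bounds where needed. The cases $P=(0,0,0)$, $(1,0,0)$, $(0,1,0)$, the four cases with $p_8=1$, and your reduction of $P=(1,1,1)$ to $s_8\gg1$ via the three ``minimum-coordinate'' inequalities all match the paper's computations and are correct.

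The gap is in $P=(1,1,0)$, and specifically in your assertion that the lemma fails there without \eqref{eq:additional_height_condition}. That assertion is false: the bound $V_{(1,1,0)}\ll s_7^{1/d}$ holds unconditionally (assuming only $S_F^{(v)}\neq\emptyset$), and a proof that invokes \eqref{eq:additional_height_condition} establishes a strictly weaker statement than the one asserted. The problem is that your shadow estimate $\vol(W_P)\ll c_8^{1/d}/\N\latticefracideal_7^{1/(2d)}$ throws away the coupling between $\latticefracideal_6$, $\latticefracideal_7$ and $c_7$: from \eqref{eq:e7_bound_split} together with $\absv{x_6}\gg\N\latticefracideal_6^{1/d}$ one gets $\N\latticefracideal_7^{1/d}\ll c_7^{1/d}/\N\latticefracideal_6^{1/(2d)}$, and since $\N\latticefracideal_7\gg1$ (the $\divisorideal$'s are integral and $\N\coordfracideal_7\gg1$) this gives $\N\latticefracideal_6\ll c_7^2$ \emph{directly}. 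Combined with $\N\latticefracideal_6\ll c_6$ this yields $\N\latticefracideal_6=\N\latticefracideal_6^{7/8}\,\N\latticefracideal_6^{1/8}\ll c_6^{7/8}c_7^{1/4}$, which is exactly the inequality your reduction requires. You instead tried to reach it through the weaker chain $\N\latticefracideal_6\ll c_6\ll c_7^2$, whose second link is equivalent to \eqref{eq:additional_height_condition} and is genuinely unavailable here. (The paper implements the correct version by inserting the free factor $1\ll c_7^{1/(4d)}/\absv{y_6}^{1/8}$, valid on all of $D$, before evaluating the Dirac mass at $y_6=\N\latticefracideal_6^{1/d}$.) With this one correction your argument proves the lemma as stated; as written, it only proves it under a hypothesis the lemma does not carry, and misattributes the role of \eqref{eq:additional_height_condition}, which is needed for the summation in Lemma \ref{lem:sum_vol_projections} via Lemma \ref{lem:additional_height_condition}, not for the pointwise volume bounds.
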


\begin{remark}
  The bounds $s_j$ are adapted to the complex case (Lemma
  \ref{lem:vol_proj_estimate_complex}), so the following proof is more
  complicated than it could be with different bounds.
\end{remark}

\begin{proof}
  We may assume that $S_F^{(v)} \neq \emptyset$. Let $W_P$ be the
  projection of $S_F^{(v)}$ to the subspace given by
  \eqref{eq:def_proj_real}, which we identify with
  $\RR^{3-p_1-p_2-p_3}$. Since $\tau_v$ is just a rescaling of the
  coordinates,
  \begin{equation}\label{eq:Vp_Wp_real}
    V_P = \frac{\vol(W_P)}{(\N\latticefracideal_6^{1-p_6}\N\latticefracideal_7^{1-p_7}\N\latticefracideal_8^{1-p_8})^{1/d}}.
  \end{equation}
  For any $\xx'=(x_j)_{\substack{j\in\{6,7,8\}\\p_j=0}} \in W_P$, we
  consider the point $\mathbf{y}(\xx')=(y_6,y_7,y_8)$ with
  \begin{equation*}
    y_j :=
    \begin{cases}
      x_j &\text{ if } p_j = 0,\\
      \N\latticefracideal_j^{1/d} &\text{ if } p_j = 1.
    \end{cases}
  \end{equation*}
  Then it is not hard to see from
  \eqref{eq:e3_bound_split}--\eqref{eq:c8_lower_bound} that
  $\mathbf{y}(\xx')$ is an element of the set $D\subseteq \RR^3$
  defined by the following conditions:
  \begin{align}
    \N\bbb_6^{1/d} \ll \absv{y_6} &\ll c_6^{1/d},\label{eq:y3_bound_real}\\
    \N\bbb_7^{1/d} \ll \absv{y_7} &\ll \frac{c_7^{1/d}}{\absv{y_6}^{1/2}},\label{eq:y7_bound_real}\\
    \absv{y_8} &\ll
    \frac{c_8^{1/d}}{\absv{y_7}^{1/2}}\label{eq:y8_bound_real}.
  \end{align}
  In the following integrals, $\dd y_j$ indicates the usual Lebesgue
  measure on $\RR$ if $p_j=0$, and the Dirac measure at the point
  $\N\latticefracideal_j^{1/d}$ if $p_j=1$. Then
  \begin{equation*}
    \vol(W_P) \leq  \int_{\mathbf{y}(\xx') \in D}\prod_{p_j= 0}\dd x_j = \int_{\eqref{eq:y3_bound_real}-\eqref{eq:y8_bound_real}}\dd  y_6\dd y_7 \dd y_8.
  \end{equation*}
  If $P = (0,0,0)$, this implies $\vol(W_P) \ll c_8^{1/d} c_7^{1/(2d)}
  c_6^{3/(4d)}$, which, together with \eqref{eq:Vp_Wp_real}, proves
  the lemma in this case. Next, let $p_7=p_8=0$ and $p_6 = 1$. Then
  \begin{align*}
    \vol(W_P) &\leq
    \int_{(\N\bbb_6^{1/d},y_7,y_8)\in D}\dd y_7
    \dd y_8 \ll \frac{c_8^{1/d}c_7^{1/(2d)}}{\N\latticefracideal_6^{1/(4d)}}
    \ll \frac{c_8^{1/d}c_7^{1/(2d)}c_6^{1/(4d)}}{\N\latticefracideal_6^{1/(2d)}}.
  \end{align*}
  Again, together with \eqref{eq:Vp_Wp_real}, this provides the
  desired bound. Now let us investigate the cases with $p_8 = 0$ and
  $p_7 = 1$. Here, with $D_1$ denoting the set of all $(y_6,y_7) \in \RR^2$ that satisfy \eqref{eq:y3_bound_real} and \eqref{eq:y7_bound_real},
  \begin{align*}
    \vol(W_P) &\ll \frac{c_8^{1/d}}{\N\latticefracideal_7^{1/(2d)}}
    \int_{(y_6,\N\bbb_7^{1/d})\in D_1}\dd y_6 \ll
    \frac{c_8^{1/d}c_7^{1/(4d)}}{\N\latticefracideal_7^{1/(2d)}}
    \int_{\eqref{eq:y3_bound_real}} \frac{1}{\absv{y_6}^{1/8}}\dd y_6\\
    &\ll \frac{c_8^{1/d}c_7^{1/(4d)}}{\N\latticefracideal_7^{1/(2d)}}
    \begin{cases}
      c_6^{7/(8d)} &\text{ if }p_6=0,\\
      \N\latticefracideal_6^{-1/(8d)} &\text{ if }p_6=1
    \end{cases}\ll
    \frac{c_8^{1/d}c_7^{1/(4d)}c_6^{7/(8d)}}{\N\latticefracideal_7^{1/(2d)}\N\latticefracideal_6^{p_6/d}}.
  \end{align*}

  Finally, let us consider all $P$ with $p_8 = 1$. We have
  \begin{align*}
    \vol(W_P) &\ll
    \int_{(y_6,y_7,\N\bbb_8^{1/d})\in D}\dd y_6 \dd y_7 \ll
    \frac{c_8^{1/(2d)}}{\N\latticefracideal_8^{1/(2d)}}\int_{D_1}\frac{1}{\absv{y_7}^{1/4}}\dd
    y_6\dd y_7.
  \end{align*}
  For fixed $y_6$,
  \begin{align*}
    &\int_{\eqref{eq:y7_bound_real}}\frac{1}{\absv{y_7}^{1/4}}\dd y_7
    \ll
    \begin{cases}
      (c_7^{1/d}/\absv{y_6}^{1/2})^{3/4} &\text{ if }p_7=0,\\
      \N\latticefracideal_7^{-1/(4d)} &\text{ if }p_7=1\\
    \end{cases} \ll
    \frac{c_7^{3/(4d)}}{\N\latticefracideal_7^{p_7/d}\absv{y_6}^{3/8}},
  \end{align*}
  so
  \begin{align*}
    \vol(W_P) &\ll \frac{c_8^{1/(2d)}c_7^{3/(4d)}}{\N\latticefracideal_8^{1/(2d)}\N\latticefracideal_7^{p_7/d}}\int_{\eqref{eq:y3_bound_real}}\frac{1}{\absv{y_6}^{3/8}}\dd y_6\\
    &\ll
    \frac{c_8^{1/(2d)}c_7^{3/(4d)}}{\N\latticefracideal_8^{1/(2d)}\N\latticefracideal_7^{p_7/d}}
    \begin{cases}
      c_6^{5/(8d)}&\text{ if }p_6 = 0,\\
      \N\latticefracideal_6^{-3/(8d)}&\text{ if }p_6 = 1\\
    \end{cases}
    \ll
    \frac{c_8^{1/(2d)}c_7^{3/(4d)}c_6^{5/(8d)}}{\N\latticefracideal_8^{1/(2d)}\N\latticefracideal_7^{p_7/d}\N\latticefracideal_6^{p_6/d}}.
  \end{align*}
\end{proof}

\subsection{Complex places}
Now, we consider $\tau_v(S_F^{(v)})$ for complex places $v$. Then $d_v
= 2$ and $K_v^3 = \CC^3$, which we identify with $\RR^6$. The
following lemma and its proof are similar to the real case, but more
complicated. Recall that $\absv{\ \cdot\ } = \abs{\ \cdot\ }^2$ on
$\CC$.

\begin{lemma}\label{lem:vol_proj_estimate_complex}
  Let $v \in \archplaces$ be a complex place. For any $P =
  (p_6,p_7,p_8) \in \{0,1,2\}^3$, let $V_P$ be the
  $(6-(p_6+p_7+p_8))$-dimensional volume of the orthogonal projection
  of $\tau_v(S_F^{(v)})$ to one of the coordinate subspaces of $\CC^3
  = \RR^6$ given as follows: for every $j
  \in \{6,7,8\}$ with $p_j = 1$, we take one of the equations
  \begin{equation*}
     \Re x_j=0\quad \text{ or }\quad \Im x_j = 0, 
  \end{equation*}
  and for every $j \in \{6,7,8\}$ with $p_j = 2$, we take the equation
  \begin{equation*}
    x_j=0
  \end{equation*}
  to define the coordinate subspace. Then
  \begin{equation*}
    V_P \ll
    \begin{cases}
      s_0^{2/d} &\text{ if } p_6=p_7=p_8=0,\\
      s_6^{2/d} &\text{ if } p_6 \in \{1, 2\},\ p_7=p_8=0,\\
      s_7^{2/d} &\text{ if } p_7 \in \{1, 2\},\ p_8=0,\\
      s_8^{2/d} &\text{ if } p_8 \in \{1, 2\}.\\
    \end{cases}
  \end{equation*}
\end{lemma}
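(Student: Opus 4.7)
The plan is to mirror the proof of Lemma \ref{lem:vol_proj_estimate_real}, tracking carefully the fact that each complex variable contributes two real dimensions and that at complex places $\absv{\cdot} = |\cdot|^2$. We may assume $S_F^{(v)} \neq \emptyset$. Let $W_P$ denote the image of the orthogonal projection of $S_F^{(v)}$ to the coordinate subspace specified by $P$ (together with the chosen equations $\Re x_j=0$ or $\Im x_j = 0$ for each $j$ with $p_j = 1$). Since $\tau_v$ rescales each $x_j$ by $\N\latticefracideal_j^{-1/d}$, and hence each real coordinate of $x_j$ by the same factor, we have
\begin{equation*}
  V_P \;=\; \frac{\vol(W_P)}{\N\latticefracideal_6^{(2-p_6)/d}\N\latticefracideal_7^{(2-p_7)/d}\N\latticefracideal_8^{(2-p_8)/d}}.
\end{equation*}

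For $\xx' \in W_P$ (parameterized by the surviving real coordinates of $x_6,x_7,x_8$), I would build an auxiliary point $\mathbf{y}(\xx') = (y_6,y_7,y_8) \in \CC^3$ by retaining the surviving coordinates, setting the killed real coordinate to $0$ when $p_j=1$, and setting $y_j := \N\latticefracideal_j^{1/d}$ (so $\absv{y_j} = \N\latticefracideal_j^{2/d}$) when $p_j=2$. Just as in the real case, the definition of $S_F^{(v)}$ together with $\absv{y_j} \leq \absv{x_j}$ for $p_j=1$ and the chosen boundary value for $p_j=2$ ensures that $\mathbf{y}(\xx')$ lies in the box-like region $D \subseteq \CC^3$ defined by
\begin{align*}
  \N\latticefracideal_6^{2/d} \ll \absv{y_6} &\ll c_6^{2/d},\\
  \N\latticefracideal_7^{2/d} \ll \absv{y_7} &\ll c_7^{2/d}/\absv{y_6}^{1/2},\\
  \absv{y_8} &\ll c_8^{2/d}/\absv{y_7}^{1/2}.
\end{align*}
Interpreting $\dd y_j$ as two-dimensional Lebesgue measure on $\CC$ when $p_j=0$, as one-dimensional Lebesgue measure on the surviving real line when $p_j=1$, and as the Dirac measure at $\N\latticefracideal_j^{1/d}$ when $p_j=2$, we obtain $\vol(W_P) \leq \int_{\mathbf{y}(\xx')\in D}\dd y_6\dd y_7\dd y_8$.

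The key integration estimates over a region $\{\absv{y_j}\leq\beta\}$ become $\int \dd y_j \ll \beta$ for $p_j=0$, $\ll \beta^{1/2}$ for $p_j=1$, and $\ll 1$ for $p_j=2$, that is $\ll \beta^{(2-p_j)/2}$; and the weighted versions $\int \absv{y_j}^{-1/4} \dd y_j$ used when $y_j$ appears in the upper bound for the next variable are computed in polar coordinates exactly as in the real case. Performing the three integrations in the order $\dd y_8, \dd y_7, \dd y_6$, substituting the bounds $c_j^{2/d}$ and $\N\latticefracideal_j^{2/d}$, and finally dividing by $\N\latticefracideal_6^{(2-p_6)/d}\N\latticefracideal_7^{(2-p_7)/d}\N\latticefracideal_8^{(2-p_8)/d}$ produces bounds that exactly double the exponents of $s_0,s_6,s_7,s_8$ appearing in Lemma \ref{lem:vol_proj_estimate_real}. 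The main obstacle is purely bookkeeping: one must verify in each of the cases ($p_8\neq 0$; $p_7\neq 0$ and $p_8=0$; $p_6\neq 0$ and $p_7=p_8=0$; all $p_j=0$) and for each possible value of the nonzero $p_j\in\{1,2\}$ that the resulting power of $c_6,c_7,c_8,\N\latticefracideal_6,\N\latticefracideal_7,\N\latticefracideal_8$ does combine into $s_0^{2/d}$, $s_6^{2/d}$, $s_7^{2/d}$, or $s_8^{2/d}$ as claimed; this is the reason the bounds $s_j$ in the preceding section were written with fractional exponents adapted to the complex setting.
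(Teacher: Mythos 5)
Your overall strategy (project, attach an auxiliary point $\mathbf{y}(\xx')$, and integrate iteratively in the order $\dd y_8,\dd y_7,\dd y_6$) is the same as the paper's, but the region $D$ you write down is wrong in a way that breaks the argument. When $p_j=1$ the surviving coordinate is $\Re x_j$, and the modulus bound $\absv{x_j}\geq\N\latticefracideal_j^{2/d}$ gives no lower bound whatsoever on $\absv{\Re x_j}$: the real part of a complex number of large modulus can vanish. Hence the claimed containments $\N\latticefracideal_6^{2/d}\ll\absv{y_6}$ and $\N\latticefracideal_7^{2/d}\ll\absv{y_7}$ fail for $p_6=1$, resp.\ $p_7=1$, and $W_P$ is not contained in your $D$. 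The correct region retains the lost information in the form of maxima, namely $\max\{\N\latticefracideal_j^{2/d},\absv{y_j}\}$ on the left-hand sides and $\max\{\N\latticefracideal_j^{1/d},\absv{y_j}^{1/2}\}$ in the denominators governing the next variable; this is what the paper's proof does.

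This is not a cosmetic difference. In the case $p_8=0$, $p_7=1$ your iterated integral diverges: integrating out $y_8$ against $\absv{y_8}\ll c_8^{2/d}/\absv{y_7}^{1/2}$ leaves the factor $\absv{y_7}^{-1/2}=|y_7|^{-1}$, which is not integrable over an interval of $\RR$ containing $0$. With the denominator $\max\{\N\latticefracideal_7^{1/d},\absv{y_7}^{1/2}\}$ the integrand is capped near $y_7=0$, one obtains a logarithm, and the paper absorbs it into the small power $(c_7^{2/d}/\max\{\N\latticefracideal_6^{1/d},\absv{y_6}^{1/2}\})^{1/4}$ at the cost of the factor $\N\latticefracideal_7^{-(p_7-1)/d}$. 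This is precisely the extra difficulty of the complex case relative to Lemma \ref{lem:vol_proj_estimate_real}, and your closing remark that the bounds ``exactly double the exponents'' of the real case misses it; as written your derivation does not go through for any $P$ with $p_7=1$ and $p_8=0$. (A smaller point: for $p_j=1$ the variable $y_j$ is real with $\absv{y_j}=|y_j|^2$, so the weighted integrals are one-dimensional with shifted exponents and are not computed ``in polar coordinates exactly as in the real case''; also, the reduction to the single choice $\Im x_j=0$ requires the symmetry of $\tau_v(S_F^{(v)})$ under swapping real and imaginary parts, which you should state.)
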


\begin{proof}
  Again, we may assume that $S_F^{(v)} \neq \emptyset$.  Clearly,
  $\tau_v(S_F^{(v)})$ is invariant with respect to swapping real and
  imaginary parts, so it suffices to consider projections to $\Im x_j
  = 0$ and $x_j = 0$. Then every $P \in \{0,1,2\}^3$ describes a
  unique coordinate subspace. Let $W_P$ be the projection of
  $S_F^{(v)}$ to this subspace, which we identify with
  $\RR^{6-p_6-p_7-p_8}$. Then
  \begin{equation}\label{eq:Vp_Wp}
    V_P = \frac{\vol(W_P)}{(\N\latticefracideal_6^{2-p_6}\N\latticefracideal_7^{2-p_7}\N\latticefracideal_8^{2-p_8})^{1/d}}.
  \end{equation}
  Let $\xx'=(x'_j)_{\substack{j\in\{6,7,8\}\\p_j\in\{0,1\}}}\in W_P$,
  i.e., there is an element $(x_6,x_7,x_8)\in S_F^{(v)}$ with
  \begin{equation*}
    x'_j =
    \begin{cases}
       x_j \in \CC=\RR^2 &\text{ if } p_j = 0,\\
       \Re x_j \in \RR &\text{ if } p_j = 1.
    \end{cases}
  \end{equation*}
  Similarly as in the real case, we consider the point
  $\mathbf{y}(\xx')=(y_6,y_7,y_8)$ with
  \begin{equation*}
    y_j :=
    \begin{cases}
      x'_j &\text{ if } p_j \in \{0,1\},\\
      \N\latticefracideal_j^{1/d} &\text{ if } p_j = 2.
    \end{cases}
  \end{equation*}
  Let $K_j := \CC$ if $p_j=0$ and $K_j := \RR$ if $p_j \in
  \{1,2\}$. Then $\mathbf{y}(\xx')$ is an element of the subset $D
  \subseteq K_6\times K_7 \times K_8$ defined by
  \begin{align}
    \max\{\N\latticefracideal_6^{2/d},\absv{y_6}\} &\ll c_6^{2/d},\label{eq:y3_bound}\\
    \max\{\N\latticefracideal_7^{2/d},\absv{y_7}\} &\ll \frac{c_7^{2/d}}{\max\{\N\latticefracideal_6^{1/d}, \absv{y_6}^{1/2}\}},\label{eq:y7_bound}\\
    \absv{y_8} &\ll
    \frac{c_8^{2/d}}{\max\{\N\latticefracideal_7^{1/d},
      \absv{y_7}^{1/2}\}}.\label{eq:y8_bound}
  \end{align}
  In the following integrals, $\dd y_j$ indicates the Lebesgue measure
  on $\CC=\RR^2$ if $p_j=0$, the Lebesgue measure on $\RR$ if $p_j =
  1$, and the Dirac measure on $\RR$ at the point $\N\bbb_j^{1/d}$ if
  $p_j = 2$. Then
  \begin{equation*}
    \vol(W_P) \leq \int_{\mathbf{y}(\xx')\in D}\prod_{p_j\in\{0,1\}}\dd x_j' = \int_{\eqref{eq:y3_bound}-\eqref{eq:y8_bound}}\dd y_6 \dd y_7 \dd y_8.
  \end{equation*}
  As in Lemma \ref{lem:vol_proj_estimate_real}, we consider first the
  trivial case $P = (0,0,0)$. Using polar coordinates, we see that
  $\vol(W_P) \ll (c_8 c_7^{1/2} c_6^{3/4})^{2/d}$, which, together
  with \eqref{eq:Vp_Wp}, proves the lemma in this case. Next, let
  $p_7=p_8=0$ and $p_6 \in \{1,2\}$. Here we obtain
  \begin{align*}
    \vol(W_P) &\ll (c_8c_7^{1/2})^{2/d}\int_{\eqref{eq:y3_bound}}\frac{1}{\absv{y_6}^{1/4}}\dd y_6\\
    &\ll (c_8 c_7^{1/2})^{2/d}
    \begin{cases}
      c_6^{1/(2d)} &\text{ if }p_6 = 1,\\
      \N\latticefracideal_6^{-1/(2d)} &\text{ if }p_6 = 2
    \end{cases}
    \ll
    \frac{(c_8c_7^{1/2}c_6^{1/4})^{2/d}}{\N\latticefracideal_6^{(p_6-1)/d}}.
  \end{align*}
  The last estimate holds by \eqref{eq:y3_bound}. Together
  with \eqref{eq:Vp_Wp}, this provides the desired bound.

  Now let us investigate all $P$ with $p_8 = 0$ and $p_7 \in \{1,
  2\}$. Then
  \begin{align*}
    \vol(W_P) &\ll c_8^{2/d}
    \int_{\eqref{eq:y3_bound},\eqref{eq:y7_bound}}\frac{1}{\max\{\N\latticefracideal_7^{1/d},\absv{y_7}^{1/2}\}}\dd
    y_6\dd y_7.
  \end{align*}
  For any $y_6$ with
  $c_7^{2/d}/\max\{\N\latticefracideal_6^{1/d},\absv{y_6}^{1/2}\}\gg
  1$, we have
  \begin{align*}
    &\int_{\eqref{eq:y7_bound}}\frac{\dd y_7}{\max\{\N\latticefracideal_7^{1/d},\absv{y_7}^{1/2}\}} \ll
    \begin{cases}
      \log(c_7^{2/d}/\max\{\N\latticefracideal_6^{1/d},\absv{y_6}^{1/2}\}+2) &\text{ if }p_7 = 1\\
      \N\latticefracideal_7^{-1/d} &\text{ if }p_7 = 2\\
    \end{cases}\\
    &\ll
    \frac{1}{\N\latticefracideal_7^{(p_7-1)/d}}\left(\frac{c_7^{2/d}}{\max\{\N\latticefracideal_6^{1/d},\absv{y_6}^{1/2}\}}\right)^{1/4}
    \ll
    \frac{c_7^{1/(2d)}}{\N\latticefracideal_7^{(p_7-1)/d}\N\latticefracideal_6^{p_6/(8d)}}\frac{1}{\absv{y_6}^{(2-p_6)/16}}.
  \end{align*}
  Thus,
  \begin{align*}
    \vol(W_P) &\ll \frac{(c_8
      c_7^{1/4})^{2/d}}{\N\latticefracideal_7^{(p_7-1)/d}\N\latticefracideal_6^{p_6/(8d)}}\int_{\eqref{eq:y3_bound}}\frac{1}{\absv{y_6}^{(2-p_6)/16}}\dd
    y_6\\ &\ll \frac{(c_8
      c_7^{1/4})^{2/d}}{\N\latticefracideal_7^{(p_7-1)/d}\N\latticefracideal_6^{p_6/(8d)}}
    \begin{cases}
      c_6^{7/(4d)}& \text{ if } p_6 = 0,\\
      c_6^{7/(8d)}& \text{ if } p_6 = 1,\\
      1 & \text{ if } p_6 = 2
    \end{cases}
    \ll
    \frac{(c_8c_7^{1/4}c_6^{7/8})^{2/d}}{\N\latticefracideal_7^{(p_7-1)/d}\N\latticefracideal_6^{p_6/d}}.
  \end{align*}
  
  Finally, let us consider all $P$ with $p_8 \in \{1,2\}$. We have
  \begin{align*}
    &\int_{\eqref{eq:y8_bound}}\dd y_8 \ll
    \begin{cases}
      (c_8^{2/d}/\max\{\N\latticefracideal_7^{1/d},\absv{y_7}^{1/2}\})^{1/2}&\text{ if } p_8 = 1,\\
      1 &\text{ if } p_8 = 2\\
    \end{cases}\\
    &\ll
    \frac{c_8^{1/d}}{\N\latticefracideal_8^{(p_8-1)/d}\max\{\N\latticefracideal_7^{1/d},\absv{y_7}^{1/2}\}^{1/2}}\ll
    \frac{c_8^{1/d}}{\N\latticefracideal_8^{(p_8-1)/d}\
      N\latticefracideal_7^{p_7/(4d)}}\cdot
    \frac{1}{\absv{y_7}^{(2-p_7)/8}},
  \end{align*}
  due to \eqref{eq:y8_bound}. Then
  \begin{align*}
    &\int_{\eqref{eq:y7_bound}}\frac{1}{\absv{y_7}^{(2-p_7)/8}}\dd y_7
    \ll
    \begin{cases}
      (c_7^{2/d}/\max\{\N\latticefracideal_6^{1/d},\absv{y_6}^{1/2}\})^{3/4} &\text{ if }p_7=0,\\
      (c_7^{2/d}/\max\{\N\latticefracideal_6^{1/d},\absv{y_6}^{1/2}\})^{3/8} &\text{ if }p_7=1,\\
      1 &\text{ if }p_7=2
    \end{cases}\\
    &\ll
    \frac{c_7^{3/(2d)}}{\N\latticefracideal_7^{3p_7/(4d)}\max\{\N\latticefracideal_6^{1/d},
      \absv{y_6}^{1/2}\}^{3/4}}\ll
    \frac{c_7^{3/(2d)}}{\N\latticefracideal_7^{3p_7/(4d)}\N\latticefracideal_6^{3p_6/(8d)}}\cdot\frac{1}{\absv{y_6}^{3(2-p_6)/16}},
  \end{align*}
  by \eqref{eq:y7_bound}. Hence,
  \begin{align*}
    &\vol(W_P) \ll \frac{(c_8^{1/2}c_7^{3/4})^{2/d}}{\N\latticefracideal_8^{(p_8-1)/d}\N\latticefracideal_7^{p_7/d}\N\latticefracideal_6^{3p_6/(8d)}}\int_{\eqref{eq:y3_bound}}\frac{1}{\absv{y_6}^{3(2-p_6)/16}}\dd y_6\\
    &\ll
    \frac{(c_8^{1/2}c_7^{3/4})^{2/d}}{\N\latticefracideal_8^{\frac{p_8-1}{d}}\N\latticefracideal_7^{\frac{p_7}{d}}\N\latticefracideal_6^{\frac{3p_6}{8d}}}
    \begin{cases}
      c_6^{5/(4d)}&\text{ if }p_6 = 0,\\
      c_6^{5/(8d)}&\text{ if }p_6 = 1,\\
      1&\text{ if }p_6 = 2
    \end{cases}
    \ll
    \frac{(c_8^{1/2}c_7^{3/4}c_6^{5/8})^{2/d}}{\N\latticefracideal_8^{\frac{p_8-1}{d}}\N\latticefracideal_7^{\frac{p_7}{d}}\N\latticefracideal_6^{\frac{p_6}{d}}}.\qedhere
  \end{align*}
\end{proof}
 
Next, we use the bounds from Lemma \ref{lem:vol_proj_estimate_real}
and Lemma \ref{lem:vol_proj_estimate_complex} to show that the sum
over all $\classtuple, \coordtuple', \divisoridealtuple$ of the error
term in Lemma \ref{lem:barroero_widmer_estimate} is sufficiently
small. We have already seen in Lemma
\ref{lem:additional_height_condition} that it suffices to sum over all
$\coordtuple'$ with \eqref{eq:additional_height_condition}. Moreover,
it is clearly enough to sum the error term over all $\classtuple, \coordtuple',
\divisoridealtuple$ with
\begin{equation}\label{eq:hc_satisfied}
  S_F^*(\classtuple,\coordtuple',\divisoridealtuple;u_\classtuple B) \neq \emptyset,
\end{equation}
since otherwise $|\countinggroup\cap \FDxi^*|$ and the main term are
both $0$.

\begin{lemma}\label{lem:sum_vol_projections}
  Let $\classrep \in \classrepsyst^6$. Let $W$ be a proper coordinate subspace of
  $\prod_{v\in\archplaces}K_v^3 = \RR^{3d}$, and let
  $V_W(\classtuple,\coordtuple',\divisoridealtuple; u_\classtuple B)$
  be the $\dim(W)$-dimensional volume of the orthogonal projection of
  $\tau(S_F^\emptyset(\classtuple,\coordtuple',\divisoridealtuple;u_\classtuple
  B))$ to $W$. For $\epsilon > 0$, we have
  \begin{equation}\label{eq:sum_V_estimate}
    \sum_{\substack{\coordtuple' \in
        \FDei^5\cap\coordfracidealproduct\\\eqref{eq:additional_height_condition}}}\theta_0(\coordidealtuple')\sum_{\substack{\divisoridealtuple\\\eqref{eq:dij_conditions},\eqref{eq:di_conditions}\\\eqref{eq:hc_satisfied}}}|\mu_K(\divisoridealtuple)|\cdot V_W(\classtuple,\coordtuple',\divisoridealtuple; u_\classtuple B)\ll_\epsilon B(\log B)^{5-1/d+\epsilon}.
  \end{equation}
\end{lemma}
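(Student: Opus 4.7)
The plan is to combine the inclusion \eqref{eq:contained_in_product} with the place-wise volume bounds of Lemmas \ref{lem:vol_proj_estimate_real} and \ref{lem:vol_proj_estimate_complex}, and then invoke Lemma \ref{lem:error_sum}. By \eqref{eq:contained_in_product}, the orthogonal projection of $\tau(S_F^\emptyset)$ to $W$ is contained in the product of the projections of the $\tau_v(S_F^{(v)})$ to the slices $W_v$, whence $V_W \leq \prod_{v \in \archplaces} V_{P_v}$, where $P_v$ encodes the slice $W_v$ (an element of $\{0,1\}^3$ for real $v$ and of $\{0,1,2\}^3$ for complex $v$). Lemmas \ref{lem:vol_proj_estimate_real} and \ref{lem:vol_proj_estimate_complex} then yield $V_{P_v} \ll s_{j_v}^{d_v/d}$, with $j_v \in \{0,6,7,8\}$ equal to the largest index having a nontrivial projection in $P_v$ (and $j_v = 0$ when $P_v$ is trivial).

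Since $W$ is proper, $j^* := \max_v j_v \in \{6,7,8\}$. Pick $v_0$ with $j_{v_0} = j^*$ and write $t = d_{v_0}/d \in (0,1]$. In the principal configuration where $P_v$ is trivial for every $v \neq v_0$, the bound $V_W \ll s_{j^*}^t s_0^{1-t}$ rearranges, via the identity $c_8 c_7^{1/2} c_6^{3/4} = B/|N(\coord_2\coord_3\coord_4\coord_5)|$, into the form
\begin{equation*}
V_W \ll \frac{B}{\N\latticefracideal_6^{\alpha_6}\N\latticefracideal_7^{\alpha_7}\N\latticefracideal_8^{\alpha_8}|N(\coord_1)|^{1-\alpha_8}|N(\coord_2\coord_3\coord_4\coord_5)|} \cdot \left(\frac{B}{|N(\coord_1^{e_1}\cdots\coord_5^{e_5})|}\right)^{-\beta}
\end{equation*}
required by Lemma \ref{lem:error_sum}, where for $j^* = 6$ one has $\alpha_6 = 1-t/2$, $\alpha_7 = \alpha_8 = 1$, $\beta = t/6$ and $(e_1,\ldots,e_5) = (2,2,2,1,0)$; for $j^* = 7$, $\alpha_6 = \alpha_8 = 1$, $\alpha_7 = 1-t/2$, $\beta = t/12$ and $(e_i) = (-1,-1,2,4,6)$; and for $j^* = 8$, $\alpha_6 = \alpha_7 = 1$, $\alpha_8 = 1-t/2$, $\beta = t/6$ and $(e_i) = (-2,-2,1,2,3)$.

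In all three cases condition \eqref{eq:dij_exp_greater_1} holds with the respective choices $\alpha = 1/2$, $3/4$, $1/4$, using $1-t/2 \geq 1/2$. The height requirement \eqref{eq:general_height_conditions} reduces to \eqref{eq:cusps_a1_bound} for $j^* = 6$, to \eqref{eq:additional_height_condition} for $j^* = 7$, and (after the easy consequence $|N(\coord_3\coord_4^2\coord_5^3)| \leq \sqrt{|N(\coord_1\coord_2)|\cdot B}$ of \eqref{eq:additional_height_condition}) to an analogous bound for $j^* = 8$. Lemma \ref{lem:error_sum} thus bounds the inner sum by $\ll B(\log B)^{4+\epsilon}$, and since $4 \leq 5 - 1/d$ for every $d \geq 1$, by $\ll B(\log B)^{5-1/d+\epsilon}$. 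Configurations with several nontrivial $P_v$ produce at most three candidate $\beta$-factors, but the ones for $j = 6$ and $j = 7$ are each $\leq 1$ under \eqref{eq:cusps_a1_bound} and \eqref{eq:additional_height_condition} respectively, so only the factor for the largest $j$ present survives; Lemma \ref{lem:error_sum} applies with the resulting convex-combined exponents $\alpha_i$, which still satisfy \eqref{eq:dij_exp_greater_1} because the sum of the relevant $t$-values is at most $1$. Since only $O_K(1)$ configurations of $(P_v)_v$ occur, summing over them yields \eqref{eq:sum_V_estimate}.

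The main obstacle is the combinatorial bookkeeping in multi-place configurations: one must decide which of the $\beta$-factors survive versus those absorbed as $O(1)$ via the existing height conditions, and verify that \eqref{eq:dij_exp_greater_1} continues to hold when several places jointly lower the $\N\latticefracideal_j$-exponents.
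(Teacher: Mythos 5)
Your strategy is sound and it is genuinely different from the paper's. The paper also starts from the pointwise bound $V_W \ll \prod_{v\in\archplaces} s_{i(v)}^{d_v/d}$ coming from \eqref{eq:contained_in_product} and Lemmas \ref{lem:vol_proj_estimate_real}--\ref{lem:vol_proj_estimate_complex}, but then applies H\"older's inequality to decouple the places, reducing everything to the four separate sums $\Sigma_i = \sum|\mu_K(\divisoridealtuple)|\,s_i$, bounded by $B(\log B)^{4+\epsilon}$ for $i\in\{6,7,8\}$ and by $B(\log B)^{5+\epsilon}$ for $i=0$; the weighted geometric mean $\prod_v\Sigma_{i(v)}^{d_v/d}$ is exactly what produces the exponent $5-1/d$. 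You instead keep the product over places together and apply Lemma \ref{lem:error_sum} once per configuration. Since at least one place contributes a factor $(\,\cdot\,)^{-\beta t}$ with $t\geq 1/d$, this saves a full logarithm and would in fact give the sharper bound $B(\log B)^{4+\epsilon}$, which implies the stated one. The extra bookkeeping you flag does check out: the combined exponents are $\alpha_j=1-t_j/2\geq 1/2$, the factor $N(\coord_1)^{1-\alpha_8}=N(\coord_1)^{t_8/2}$ coming from $s_8$ matches the shape required by Lemma \ref{lem:error_sum}, and \eqref{eq:dij_exp_greater_1} is satisfiable for some $\alpha\in[0,1]$ precisely because $t_6+t_7+t_8\leq 1<2$. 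Discarding the $j=6$ and $j=7$ factors as $O(1)$ in mixed configurations is legitimate since they are $\ll 1$ under \eqref{eq:cusps_a1_bound} and \eqref{eq:additional_height_condition}, and this step does not alter the $\N\latticefracideal_j$-exponents.

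One correction is needed in the $j^*=8$ case: the exponent vector must be $(e_1,\dots,e_5)=(2,2,-1,-2,-3)$, not $(-2,-2,1,2,3)$, because the last factor of $s_8$ is $\bigl(N(\coord_3\coord_4^2\coord_5^3)B/N(\coord_1^2\coord_2^2)\bigr)^{-1/6}=\bigl(B/N(\coord_1^2\coord_2^2\coord_3^{-1}\coord_4^{-2}\coord_5^{-3})\bigr)^{-1/6}$. Consequently the instance of \eqref{eq:general_height_conditions} you must verify is $N(\coord_1^2\coord_2^2)\ll N(\coord_3\coord_4^2\coord_5^3)B$; this does not follow from the consequence of \eqref{eq:additional_height_condition} you cite (which bounds $N(\coord_3\coord_4^2\coord_5^3)$ from above, the wrong direction), but it follows trivially from \eqref{eq:cusps_a1_bound} together with $N(\coord_3), N(\coord_4), N(\coord_5)\gg 1$. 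With this repair the argument is complete.
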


\begin{proof}
  For $i \in \{0,6,7,8\}$, let
  \begin{equation*}
    \Sigma_i :=    \sum_{\substack{\coordtuple' \in
        \FDei^5\cap\coordfracidealproduct\\\eqref{eq:additional_height_condition}}}\sum_{\substack{\divisoridealtuple\\\eqref{eq:dij_conditions},\eqref{eq:di_conditions}\\\eqref{eq:hc_satisfied}}}|\mu_K(\divisoridealtuple)| \cdot s_i.
  \end{equation*}
  Let us start by estimating the $\Sigma_i$ from above. Condition \eqref{eq:hc_satisfied} has the consequences 
  \begin{align}
    N(\coord_j)&\ll B \text{ for all }i,j,\label{eq:vars_bounded_by_B}\\
    N(\coord_1^2\coord_2^2\coord_3^2\coord_4)&\leq \N(\coordfracideal_1^2\coordfracideal_2^2\coordfracideal_3^2\coordfracideal_4)B,\label{eq:height_2}\\
    N(\coord_1^2\coord_2^2)&\ll N(\coord_4\coord_5^3\coord_6^2)B.\nonumber
  \end{align}
  Using these and \eqref{eq:additional_height_condition} with Lemma
  \ref{lem:error_sum}, we see that $\Sigma_i \ll_\epsilon B(\log
  B)^{4+\epsilon}$ holds for $i \in \{6,7,8\}$. A simple computation
  using just \eqref{eq:vars_bounded_by_B} shows that $\Sigma_0
  \ll_\epsilon B(\log B)^{5+\epsilon}$.

  Now let us prove \eqref{eq:sum_V_estimate}. By
  \eqref{eq:contained_in_product}, the projection of
  $\tau(S_F^\emptyset(\classtuple,\coordtuple',\divisoridealtuple;u_\classtuple
  B))$ to $W$ is contained in a product of projections of
  $\tau_v(S_F^{(v)}(\classtuple,\coordtuple',\divisoridealtuple;u_\classtuple
  B))$ to subspaces of $K_v^3$. The volume of each such projection is
  bounded by an $s_{i(v)}^{d_v/d}$, so
  \begin{equation*}
    V_W(\classtuple, \coordtuple', \divisoridealtuple; u_\classtuple B) \leq \prod_{v\in\archplaces}s_{i(v)}^{d_v/d}.
  \end{equation*}
  Since $W$ is a proper subspace of $\prod_{v\in\archplaces}K_v^3$,
  there is at least one $v \in \archplaces$ with $i(v) \neq 0$. Using
  H\"older's inequality, we see that the sum in
  \eqref{eq:sum_V_estimate} is
  \begin{align*}
    &\leq \sum_{\substack{\coordtuple' \in
        \FDei^5\cap\coordfracidealproduct\\\eqref{eq:additional_height_condition}}}\sum_{\substack{\divisoridealtuple\\\eqref{eq:dij_conditions},\eqref{eq:di_conditions}\\\eqref{eq:hc_satisfied}}}|\mu_K(\divisoridealtuple)|\prod_{v\in\archplaces}s_{i(v)}^{d_v/d} \leq \prod_{v\in\archplaces}\Sigma_{i(v)}^{d_v/d}\\
    &\ll_\epsilon (B(\log B)^{5+\epsilon})^{(d-1)/d}(B(\log
    B)^{4+\epsilon})^{1/d} = B(\log B)^{5-1/d+\epsilon}.\qedhere
  \end{align*}
\end{proof}

\section{Completion of the first summation}\label{sec:first_summation}

We have already seen in Lemma \ref{lem:additional_height_condition}
that we may restrict ourselves to $\classtuple, \coordtuple'$ with
\eqref{eq:additional_height_condition}. Moreover,
$S_F^*(\classtuple,\coordtuple',\divisoridealtuple;u_\classtuple B) =
\emptyset$ unless \eqref{eq:height_2} holds.

Let us first show that, under these conditions,
$S_F^*(\classtuple,\coordtuple',\divisoridealtuple;u_\classtuple B)$
is not much smaller than $S_F(\coordtuple';u_\classtuple B)$, whose
volume we have already computed in Lemma \ref{lem:vol_SFB}.

\begin{lemma}\label{lem:B_vs_SFB}
  Let $\classtuple \in \classrepsyst^6$ and $\epsilon > 0$. Then
  \begin{equation*}
    \sum_{\substack{\coordtuple' \in
        \FDei^5\cap\coordfracidealproduct\\\eqref{eq:additional_height_condition},\eqref{eq:height_2}}}\sum_{\substack{\divisoridealtuple\\\eqref{eq:dij_conditions},\eqref{eq:di_conditions}}}|\mu_K(\divisoridealtuple)|\cdot \frac{\vol(S_F(\coordtuple'; u_\classtuple B)\smallsetminus S_F^*(\classtuple,\coordtuple',\divisoridealtuple;u_\classtuple B))}{\N(\latticefracideal_6\latticefracideal_7\latticefracideal_8)}\ll_\epsilon B(\log B)^{4+\epsilon}.
  \end{equation*}
\end{lemma}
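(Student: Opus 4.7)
The plan is to mimic the arguments of Lemmas \ref{lem:a6_conj_lower_bound} and \ref{lem:a7_conj_lower_bound}, with Lebesgue measure in place of lattice point counts. Indeed, $S_F(\coordtuple';u_\classtuple B)\smallsetminus S_F^*(\classtuple,\coordtuple',\divisoridealtuple;u_\classtuple B)$ is the union, over $w\in\archplaces$, of the two subsets
\begin{equation*}
T_{6}^{(w)} := \{(x_{jv})_{j,v}\in S_F(\coordtuple';u_\classtuple B) : \absv[w]{x_{6w}}<\N\latticefracideal_6^{d_w/d}\}
\end{equation*}
and the analogous set $T_{7}^{(w)}$ for the $7$-coordinate. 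Since $|\archplaces|$ is bounded in terms of $K$, it suffices to bound each such volume separately.

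For fixed $w$, I will bound $\vol(T_6^{(w)})/\N(\latticefracideal_6\latticefracideal_7\latticefracideal_8)$ by essentially repeating the proof of Lemma \ref{lem:a6_conj_lower_bound}. Namely, by a change of variables analogous to the one in Lemma \ref{lem:vol_SFB} and using the homogeneity of $\tN_v$, one shows that the total measure of $T_6^{(w)}$ in the $x_8$-direction (for fixed $x_6,x_7$), after the rescaling by $\N\latticefracideal_8^{-1/d_v}$ per place, matches the lattice bound \eqref{eq:sum_a8}. Integrating then over $x_7$ with $|x_{7v}|\gg \N\latticefracideal_7^{d_v/d}$ and the archimedean height constraint \eqref{eq:cusps_a7_bound} gives the same bound as in the lattice setting, since the relevant integrals are precisely the ones whose sums were estimated by Lemma \ref{lem:norm_sum_no_lower_bound}. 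Finally, integrating over $x_6$ with the restriction $|x_{6w}|\le\N\latticefracideal_6^{d_w/d}$ and \eqref{eq:cusps_a6_bound} at the other places produces the same power $\N\latticefracideal_6^{-1+3d_w/(4d)}$ that appeared in the proof of Lemma \ref{lem:a6_conj_lower_bound}. The same procedure with roles of $6$ and $7$ swapped handles $T_7^{(w)}$, splitting according to \eqref{eq:additional_height_condition_2} exactly as in Lemma \ref{lem:a7_conj_lower_bound}.

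With these pointwise volume estimates in hand, summing over $\coordtuple'$ (restricted by \eqref{eq:additional_height_condition}, \eqref{eq:height_2}, \eqref{eq:cusps_aj_bound}, \eqref{eq:cusps_a1_bound}) and over $\divisoridealtuple$ satisfying \eqref{eq:dij_conditions}, \eqref{eq:di_conditions} reduces to exactly the sums appearing at the end of the proofs of Lemmas \ref{lem:a6_conj_lower_bound} and \ref{lem:a7_conj_lower_bound}, which are handled by Lemma \ref{lem:error_sum} and give the claimed bound $O_\epsilon(B(\log B)^{4+\epsilon})$.

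The main (small) obstacle is bookkeeping: one must verify that the $w$-th factor of the measure (where the ``cusp'' condition bites) contributes the same power of $\N\latticefracideal_j^{d_w/d}$ as in the lattice proofs, so that the exponents $\alpha_6,\alpha_7,\alpha_8$ still satisfy the hypothesis \eqref{eq:dij_exp_greater_1} of Lemma \ref{lem:error_sum} in each case. Since the lattice arguments only used general bounds on $|x_{jv}|$ (via Lemma \ref{lem:box_counting} and Lemma \ref{lem:norm_sum_no_lower_bound}) and never exploited integrality, these bounds carry over verbatim to the volume computation; the exponents are therefore identical and the summability conclusion is the same.
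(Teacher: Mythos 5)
Your proposal is correct and follows essentially the same route as the paper: the paper's proof also decomposes the difference set into the pieces where $\absv[w]{x_{6w}}$ or $\absv[w]{x_{7w}}$ is small and bounds their volumes by the continuous analogues of the integrals underlying Lemmas \ref{lem:a6_conj_lower_bound} and \ref{lem:a7_conj_lower_bound}, then sums with Lemma \ref{lem:error_sum}. The only (harmless) inefficiency is your case split on \eqref{eq:additional_height_condition_2}: since the outer sum is already restricted to \eqref{eq:additional_height_condition}, the paper extracts the saving factor for the $x_7$-piece directly from that condition and needs no second case.
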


\begin{proof}
  For $w \in \archplaces$ and fixed $\coordtuple',
  \divisoridealtuple, u_\classtuple B$, let $V_w^{(6)}$
  (resp.~$V_w^{(7)}$) be the volume of the subset of
  $S_F(\coordtuple'; u_\classtuple B)$ where $\abs{x_{6w}}_w <
  \N\latticefracideal_6^{d_w/d}$ (resp.~$\abs{x_{7w}}_w <
  \N\latticefracideal_7^{d_w/d}$). Let
  \begin{equation*}
    R_w := \prod_{\substack{v \in
        \archplaces\\v\neq w}}\int_{\tN_v(\coordtuple'; x_{6v}, x_{7v},
      x_{8v})\ll B^{d_v/d}}\dd x_{6v} \dd x_{7v} \dd
    x_{8v}.
  \end{equation*}
  Then \eqref{eq:Nv_same_size} implies that, for $j \in \{6,7\}$,
  \begin{align*}
    V_w^{(j)} &\ll R_w\cdot \int_{\substack{\tN_w(\coordtuple';
        x_{6w}, x_{7w}, x_{8w})\ll
        B^{d_w/d}\\\abs{\coord_{jw}}_w<\N\latticefracideal_j^{d_w/d}}}\dd
    x_{6w}\dd x_{7w}\dd x_{8w}.
  \end{align*}
  Let us bound $R_w$. For each $v\neq w$, we use the last term in the
  maximum in $\tN_v(\coordtuple'; x_{6v},x_{7v},x_{8v})$ to bound the
  integral over $x_{7v}, x_{8v}$ (cf.~\cite[Lemma 5.1, (5)]{MR2520770}
  for real $v$ and \cite[Lemma 3.4, (4)]{arXiv:1302.6151} for complex
  $v$) and the second term in the maximum to bound the integral over
  $x_{6v}$. With \eqref{eq:conj_norm}, this leads to
  \begin{equation*}
    R_w \ll \left(\frac{B}{N(\coord_2\coord_3\coord_4\coord_5)}\right)^{1-d_w/d}.
  \end{equation*}
  Recall that $\tN_w(\coordtuple'; x_{6w}, x_{7w}, x_{8w})\ll
  B^{d_w/d}$ implies
  \begin{align*}
    \abs{x_{7w}}_w\ll
    \left(\frac{B}{N(\coord_3^2\coord_4^3\coord_5^4)}\right)^{d_w/(2d)}\cdot\frac{1}{\abs{x_{6w}}_w^{1/2}}.
  \end{align*}
  Using the first term in the minimum in \cite[Lemma 5.1,
  (4)]{MR2520770} resp. \cite[Lemma 3.4, (2)]{arXiv:1302.6151} to
  bound the integral over $x_{8w}$ and the above inequality to bound
  the integral over $x_{7w}$, we obtain
  \begin{equation*}
    \int\limits_{\substack{\tN_w(\coordtuple'; x_{6w}, x_{7w},
        x_{8w})\ll
        B^{d_w/d}\\\abs{\coord_{6w}}_w<\N\latticefracideal_6^{d_w/d}}}\hspace{-1cm}\dd
    x_{6w}\dd x_{7w}\dd x_{8w} \ll \N\latticefracideal_6^{\frac{3d_w}{4d}}\left(\frac{B}{N(\coord_3^2\coord_4^3\coord_5^4)}\right)^{\frac{d_w}{4d}}\left(\frac{N(\coord_1)B}{N(\coord_2)}\right)^{\frac{d_w}{2d}}.
  \end{equation*}
  Therefore,
  \begin{equation*}
    \frac{V_w^{(6)}}{\N(\latticefracideal_6\latticefracideal_7\latticefracideal_8)} \ll \frac{1}{\N\latticefracideal_6^{1-3d_w/(4d)}\N(\latticefracideal_7\latticefracideal_8)}\cdot\frac{B}{N(\coord_2\coord_3\coord_4\coord_5)}\cdot\left(\frac{B}{N(\coord_1^2\coord_2^2\coord_3^2\coord_4)}\right)^{-d_w/(4d)}.
  \end{equation*}
  We sum this over $\coordtuple',\divisoridealtuple$ with Lemma \ref{lem:error_sum}. Similarly,
  \begin{align*}
    \int\limits_{\substack{\tN_w(\coordtuple';x_{6w}, x_{7w},
        x_{8w})\ll
        B^{d_w/d}\\\abs{x_{7w}}_w<\N\latticefracideal_7^{d_w/d}}}\hspace{-1cm}\dd
    x_{6w} \dd x_{7w} \dd x_{8w}\ll
    \left(\frac{B}{N(\coord_1^2\coord_2^2\coord_3^2\coord_4)}\right)^{\frac{d_w}{3d}}
    \N\latticefracideal_7^{\frac{d_w}{2d}} \left(\frac{B
        N(\coord_1)}{N(\coord_2)}\right)^{\frac{d_w}{2d}},
  \end{align*}
  and thus
  \begin{align*}
    \frac{V_w^{(7)}}{\N(\latticefracideal_6\latticefracideal_7\latticefracideal_8)}
    \ll
    \frac{1}{\N\latticefracideal_7^{1-d_w/(2d)}\N(\latticefracideal_6\latticefracideal_8)}\cdot\frac{B}{N(\coord_2\coord_3\coord_4\coord_5)}\cdot\left(\frac{N(\coord_1\coord_2)B}{N(\coord_3^2\coord_4^4\coord_5^6)}\right)^{-d_w/(6d)}.
  \end{align*}
  Again, we sum this up using Lemma \ref{lem:error_sum}.
\end{proof}

In the rest of this article, a product over $\ppp$ runs over non-zero
prime ideals of $\Ok$.  Let $\theta_1(\coordidealtuple')$ be the
arithmetic function defined by $\theta_1(\coordidealtuple') :=
\prod_{\ppp}\theta_{1,\ppp}(J_\ppp(\coordidealtuple'))$, where
$J_\ppp(\coordidealtuple') := \{j\in\{1,\ldots, 5\} : \ppp \mid
\coordideal_j\}$ and
\begin{equation*}
  \theta_{1,\ppp}(J) :=
  \begin{cases}
    \left(1-\frac{1}{\N\ppp}\right)^2\left(1+\frac{2}{\N\ppp}\right) &\text{ if }J = \emptyset,\\
    \left(1-\frac{1}{\N\ppp}\right)^2\left(1+\frac{1}{\N\ppp}\right) &\text{ if }J = \{1\}, \{2\},\\
    \left(1-\frac{1}{\N\ppp}\right)^2 &\text{ if }J = \{3\}, \{5\},\\
    \left(1-\frac{1}{\N\ppp}\right)^3 &\text{ if }J = \{4\}, \{3,4\}, \{4,5\},\\
    0 &\text{ otherwise.}
  \end{cases}
\end{equation*}

\begin{lemma}\label{lem:arith_function}
  For any $\coordidealtuple' \in \idealsk^5$, we have
  \begin{equation*}
    \theta_0(\coordidealtuple')\sum_{\substack{\divisoridealtuple\\\eqref{eq:dij_conditions},\eqref{eq:di_conditions}}}\frac{\mu_K(\divisoridealtuple)}{\N(\divisorideal_{6}\divisorideal_7\divisorideal_8\divisorideal_{67}\divisorideal_{68}\divisorideal_{69}(\divisorideal_{67}\cap\divisorideal_{68}\divisorideal_{69}))} = \theta_1(\coordidealtuple').
  \end{equation*}
\end{lemma}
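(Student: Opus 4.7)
The plan is to exploit multiplicativity of the left-hand side over prime ideals of $\Ok$, reducing the identity to a finite (very small) case check.

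First I would observe that the factor $\mu_K(\divisoridealtuple)$ restricts the sum to squarefree tuples $\divisoridealtuple$. For squarefree $\divisorideal_{67}, \divisorideal_{68}, \divisorideal_{69}$, one has $v_\ppp(\divisorideal_{67}\cap\divisorideal_{68}\divisorideal_{69}) = \max(v_\ppp(\divisorideal_{67}), v_\ppp(\divisorideal_{68})+v_\ppp(\divisorideal_{69}))$ at every prime $\ppp$. Moreover, the divisibility conditions \eqref{eq:dij_conditions} and \eqref{eq:di_conditions}, the coprimality $\divisorideal_{68}+\divisorideal_{69}=\Ok$, and the denominator all factor through the individual prime-by-prime valuations. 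Hence the sum decomposes as an Euler product
\begin{equation*}
\sum_{\substack{\divisoridealtuple\\\eqref{eq:dij_conditions},\eqref{eq:di_conditions}}}\frac{\mu_K(\divisoridealtuple)}{\N(\divisorideal_{6}\divisorideal_7\divisorideal_8\divisorideal_{67}\divisorideal_{68}\divisorideal_{69}(\divisorideal_{67}\cap\divisorideal_{68}\divisorideal_{69}))} = \prod_\ppp L_\ppp(J_\ppp(\coordidealtuple')),
\end{equation*}
where the local factor $L_\ppp(J)$ depends only on the set $J=J_\ppp(\coordidealtuple')\subseteq\{1,\dots,5\}$.

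Next I would observe that if $\theta_0(\coordidealtuple')=0$, then some $\theta_{0,\ppp}(J_\ppp)=0$, i.e. $J_\ppp\notin\{\emptyset,\{1\},\{2\},\{3\},\{4\},\{5\},\{3,4\},\{4,5\}\}$; since $\theta_{1,\ppp}$ vanishes on exactly the same complement, the right-hand side also vanishes. So we may assume $\theta_0(\coordidealtuple')=1$ and verify $L_\ppp(J)=\theta_{1,\ppp}(J)$ for each of the eight admissible values of $J$. Writing $q=\N\ppp$ and the local valuations $a,b,c,d,e,f\in\{0,1\}$ of $\divisorideal_{67},\divisorideal_{68},\divisorideal_{69},\divisorideal_6,\divisorideal_7,\divisorideal_8$ at $\ppp$, conditions \eqref{eq:dij_conditions} and \eqref{eq:di_conditions} cut out an explicit subset $T_J\subseteq\{0,1\}^6$, and
\begin{equation*}
L_\ppp(J)=\sum_{(a,b,c,d,e,f)\in T_J}\frac{(-1)^{a+b+c+d+e+f}}{q^{a+b+c+d+e+f+\max(a,\,b+c)}}.
\end{equation*}

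Finally I would carry out the eight mechanical case computations. For example, for $J=\emptyset$ one has $d=e=f=0$ (divisibility conditions fail) and $(a,b,c)$ ranges over $\{0,1\}^3\smallsetminus\{(0,1,1),(1,1,1)\}$, yielding $1-3/q^2+2/q^3=(1-1/q)^2(1+2/q)$; for $J=\{1\}$ one gets $a=b=d=f=0$ and $(c,e)\in\{0,1\}^2$, yielding $(1-1/q)(1-1/q^2)=(1-1/q)^2(1+1/q)$; the case $\{2\}$ is symmetric via swapping $\divisorideal_{68}\leftrightarrow\divisorideal_{69}$; the cases $\{3\},\{5\}$ reduce to summing over two binary variables giving $(1-1/q)^2$; and the cases $\{4\},\{3,4\},\{4,5\}$ reduce to three independent binary variables giving $(1-1/q)^3$. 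These match the table defining $\theta_{1,\ppp}$ exactly, completing the proof. The only potential pitfall is bookkeeping in the enumeration — particularly tracking the $\max(a,b+c)$ contribution correctly in the $J=\emptyset$ case — but nothing deeper than that is required.
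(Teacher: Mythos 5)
Your proposal is correct and follows essentially the same route as the paper: both reduce the identity to a comparison of Euler factors, the paper by first splitting the sum into the $(\divisorideal_6,\divisorideal_7,\divisorideal_8)$ part and the $(\divisorideal_{67},\divisorideal_{68},\divisorideal_{69})$ part and writing each as an explicit Euler product, you by working fully locally at each prime from the start. Your eight local case computations (including the $\max(a,b+c)$ bookkeeping for $v_\ppp(\divisorideal_{67}\cap\divisorideal_{68}\divisorideal_{69})$) all check out against the table defining $\theta_{1,\ppp}$.
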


\begin{proof}
  For any ideal $\coordideal\in\idealsk$, let
  $\phi_K^*(\coordideal):=\prod_{\ppp\mid\coordideal}(1+1/\N\ppp)$.
  For fixed $\coordidealtuple'$, we have
  \begin{equation*}
    \sum_{\substack{\divisorideal_{6},\divisorideal_7,\divisorideal_8\\\eqref{eq:di_conditions}}}\frac{\mu_K(\divisorideal_{6})\mu_K(\divisorideal_7)\mu_K(\divisorideal_8)}{\N(\divisorideal_{6}\divisorideal_7\divisorideal_8)} = \phi_K^*(\coordideal_4\coordideal_5)\phi_K^*(\coordideal_1\coordideal_2\coordideal_3\coordideal_4)\phi_K^*(\coordideal_3\coordideal_4\coordideal_5),
  \end{equation*}
  and
  \begin{align*}
    &\sum_{\substack{\divisorideal_{67},\divisorideal_{68},\divisorideal_{69}\\\eqref{eq:dij_conditions}}}
    \frac{\mu_K(\divisorideal_{67})\mu_K(\divisorideal_{68})\mu_K(\divisorideal_{69})}{\N(\divisorideal_{67}\divisorideal_{68}\divisorideal_{69}(\divisorideal_{67}\cap\divisorideal_{68}\divisorideal_{69}))}\\
    &=
    \prod_{\substack{\ppp\mid\coordideal_1\\\ppp\nmid\coordideal_2\coordideal_3\coordideal_4\coordideal_5}}\left(1-\frac{1}{\N\ppp^2}\right)\prod_{\substack{\ppp
        \mid \coordideal_2\\\ppp\nmid
        \coordideal_1\coordideal_3\coordideal_4\coordideal_5}}\left(1-\frac{1}{\N\ppp^2}\right)\prod_{\ppp
      \nmid
      \coordideal_1\coordideal_2\coordideal_3\coordideal_4\coordideal_5}\left(1-\frac{3}{\N\ppp^2}+\frac{2}{\N\ppp^3}\right).
  \end{align*}
  A simple comparison of Euler factors proves the lemma.
\end{proof}

\begin{lemma}\label{lem:first_summation}
  Let $\epsilon > 0$. Then, for $B \geq 3$,
  \begin{align*}
    N_{U,H}(B) &= \frac{2^{r_1}(2\pi)^{r_2} h_K
      R_K}{3\cdot|\mu_K|\cdot|\Delta_K|^{3/2}}\left(\prod_{v\in\archplaces}\omega_v(\tS)\right)\sum_{\substack{\coordidealtuple'\in\idealsk^5\\\eqref{eq:additional_height_conditions_ideals}}}\frac{\theta_1(\coordidealtuple')B}{\N(\coordideal_1\cdots\coordideal_5)}
    \\ &+ O_\epsilon(B(\log B)^{5-1/d+\epsilon}),
  \end{align*}
  where the sum runs over all $5$-tuples of ideals $\coordidealtuple'
  =
  (\coordideal_1,\ldots,\coordideal_5)
  \in \idealsk^5$ satisfying
  \begin{equation}\label{eq:additional_height_conditions_ideals}
    \N(\coordideal_3^2\coordideal_4^4\coordideal_5^6) \leq \N(\coordideal_1\coordideal_2)B \quad\text{ and }\quad \N(\coordideal_1^2\coordideal_2^2\coordideal_3^2\coordideal_4)\leq B.
  \end{equation}
\end{lemma}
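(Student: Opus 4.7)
The plan is to chain together the lemmas of Sections \ref{sec:lattice_point_problem}--\ref{sec:first_summation} into a single expression, and then to verify by direct bookkeeping that the class-dependent factors collapse into the claimed form. First I combine Lemmas \ref{lem:A3+A1_passage_to_torsor} and \ref{lem:additional_height_condition}: these express $N_{U,H}(B)$ as $\frac{1}{|\mu_K|}$ times a triple sum over $\classtuple$, over $\coordtuple'$ satisfying \eqref{eq:additional_height_condition}, and over $\divisoridealtuple$ satisfying \eqref{eq:dij_conditions}, \eqref{eq:di_conditions}, of $\mu_K(\divisoridealtuple)\theta_0(\coordidealtuple')|\countinggroup\cap \FDxi^*|$, modulo an admissible error $O_\epsilon(B(\log B)^{4+\epsilon})$. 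I then substitute Lemma \ref{lem:barroero_widmer_estimate} for $|\countinggroup\cap \FDxi^*|$; the volume-of-projection error terms contribute $O_\epsilon(B(\log B)^{5-1/d+\epsilon})$ after the triple summation by Lemma \ref{lem:sum_vol_projections}, which governs the final error. In the resulting main term I replace $\vol S_F^*$ by $\vol S_F(\coordtuple'; u_\classtuple B)$ using Lemma \ref{lem:B_vs_SFB}, at the cost of another $O_\epsilon(B(\log B)^{4+\epsilon})$, and make the volume explicit via Lemma \ref{lem:vol_SFB}.

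Next I collapse the inner Möbius sum. Factoring
\begin{equation*}
\N(\latticefracideal_6\latticefracideal_7\latticefracideal_8) = \N(\coordideal_1)\cdot\N(\coordfracideal_6\coordfracideal_7\coordfracideal_8)\cdot\N\bigl(\divisorideal_6\divisorideal_7\divisorideal_8\divisorideal_{67}\divisorideal_{68}\divisorideal_{69}(\divisorideal_{67}\cap\divisorideal_{68}\divisorideal_{69})\bigr),
\end{equation*}
I pull the class-dependent part out of the $\divisoridealtuple$-sum and apply Lemma \ref{lem:arith_function} to rewrite the remaining inner sum as $\theta_1(\coordidealtuple')$.

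Finally I re-index over ideal tuples. Using $|N(\coord_j)|=\N(\coordideal_j)\N(\coordfracideal_j)$, a direct computation with the exponents $m^{(j)}$ (in particular $m^{(6)}+m^{(7)}+m^{(8)}=(3,-2,-1,-1,-2,-1)$ and $m^{(2)}+m^{(3)}+m^{(4)}+m^{(5)}=(0,1,0,0,1,0)$) shows that
\begin{equation*}
\frac{u_\classtuple}{\N(\coordideal_1)\,\N(\coordfracideal_6\coordfracideal_7\coordfracideal_8)\,|N(\coord_2\coord_3\coord_4\coord_5)|} = \frac{1}{\N(\coordideal_1\cdots\coordideal_5)},
\end{equation*}
since the class-dependent norms exactly cancel $u_\classtuple=\N(\classrep_0^3\classrep_1^{-1}\cdots\classrep_5^{-1})$. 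As $\classtuple=(\classrep_0,\dots,\classrep_5)$ runs over $\classrepsyst^6$, the tuple $([\coordfracideal_1^{-1}],\dots,[\coordfracideal_5^{-1}])$ depends only on $\classrep_1,\dots,\classrep_5$ (because $m^{(j)}_0=0$ for $j\leq 5$) and defines a bijection to $\cl^5$, while $\classrep_0$ varies freely over $h_K$ representatives; thus summing over $\classtuple$ and $\coordtuple'\in\FDei^5\cap\coordfracidealproduct$ becomes $h_K$ times a sum over $\coordidealtuple'\in\idealsk^5$, with the constraints \eqref{eq:additional_height_condition} and \eqref{eq:height_2} translating verbatim to \eqref{eq:additional_height_conditions_ideals} via the same cancellation of norms of $\coordfracideal_j$. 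Collecting constants and noting $2^{r_1}\cdot 2^{3r_2}\cdot(\pi/4)^{r_2}=2^{r_1}(2\pi)^{r_2}$ yields precisely the claimed main term.

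The main obstacle I anticipate is simply this careful bookkeeping of class-dependent factors and verification of the constant: the cancellation between $u_\classtuple$, the norms $\N(\coordfracideal_j)$, and the Jacobian factor $|N(\coord_2\coord_3\coord_4\coord_5)|$ from Lemma \ref{lem:vol_SFB}, together with the class-counting that produces $h_K$. All the analytic content has already been absorbed into the preceding lemmas.
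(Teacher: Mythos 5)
Your proposal is correct and follows essentially the same route as the paper: chaining Lemmas \ref{lem:A3+A1_passage_to_torsor}, \ref{lem:additional_height_condition}, \ref{lem:barroero_widmer_estimate}, \ref{lem:sum_vol_projections}, \ref{lem:B_vs_SFB} and \ref{lem:vol_SFB}, collapsing the M\"obius sum via Lemma \ref{lem:arith_function}, and re-indexing over ideal classes so that $u_\classtuple$ cancels against $\N(\coordfracideal_2\cdots\coordfracideal_8)$ and the free $\classrep_0$-sum produces $h_K$. Your explicit exponent bookkeeping (the sums of the $m^{(j)}$ and the unimodularity of the class map) is a correct, slightly more detailed version of what the paper leaves implicit.
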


\begin{proof}
  By Lemma \ref{lem:A3+A1_passage_to_torsor}, Lemma
  \ref{lem:additional_height_condition}, Lemma
  \ref{lem:barroero_widmer_estimate}, Lemma
  \ref{lem:sum_vol_projections}, Lemma \ref{lem:B_vs_SFB}, and Lemma
  \ref{lem:vol_SFB}, the quantity $N_{U,H}(B)$ is
  \begin{equation}\label{eq:NUH_after_first_summation}
    \begin{aligned}
      &\frac{2^{r_1}(2\pi)^{r_2}
        R_K}{3\cdot|\mu_K|\cdot|\Delta_K|^{3/2}}\left(\prod_{v\in\archplaces}\omega_v(\tS)\right)\sum_{\classtuple\in\classrepsyst^6}\sum_{\substack{\coordtuple'
          \in
          \FDei^5\cap\coordfracidealproduct\\\eqref{eq:additional_height_condition},\eqref{eq:height_2}}}\hspace{-0.3cm}\theta_0(\coordidealtuple')\hspace{-0.2cm}\sum_{\substack{\divisoridealtuple\\\eqref{eq:dij_conditions},\eqref{eq:di_conditions}}}\hspace{-0.2cm}\frac{\mu_K(\divisoridealtuple)
        u_\classtuple B}{\N(\latticefracideal_6\latticefracideal_7\latticefracideal_8)N(\coord_2\coord_3\coord_4\coord_5)}\\
      &+O_\epsilon(B(\log B)^{5-1/d+\epsilon}).
    \end{aligned}
  \end{equation}
  From the definitions of the $\latticefracideal_j,
  \coordfracideal_j$, we see that
  \begin{equation*}
    \frac{\mu_K(\divisoridealtuple)u_\classtuple B}{\N(\latticefracideal_6\latticefracideal_7\latticefracideal_8)N(\coord_2\coord_3\coord_4\coord_5)} =\frac{B}{\N(\coordideal_1\cdots\coordideal_5)}\cdot \frac{\mu_K(\divisoridealtuple)}{\N(\divisorideal_{6}\divisorideal_7\divisorideal_8\divisorideal_{67}\divisorideal_{68}\divisorideal_{69}(\divisorideal_{67}\cap\divisorideal_{68}\divisorideal_{69}))}
  \end{equation*}
  We evaluate the sum over $\divisoridealtuple$ by Lemma
  \ref{lem:arith_function}. Moreover, we observe that the
  $\coordfracideal_j$, $j \in \{1,\ldots,5\}$, are independent from
  $\classrep_0$. Hence, the main term in
  \eqref{eq:NUH_after_first_summation} is
  \begin{equation*}
    \frac{2^{r_1}(2\pi)^{r_2} h_K R_K}{3\cdot|\mu_K|\cdot|\Delta_K|^{3/2}}\left(\prod_{v\in\archplaces}\omega_v(\tS)\right)\sum_{\substack{(\classrep_1, \ldots, \classrep_5)\\ \in\classrepsyst^5}} \sum_{\substack{\coordtuple'\in \FDei^5\cap\coordfracidealproduct\\\eqref{eq:additional_height_condition},\eqref{eq:height_2}}}\frac{\theta_1(\coordidealtuple')B}{\N(\coordideal_1\cdots\coordideal_5)}
    .
  \end{equation*}
  When $(\classrep_1,\ldots, \classrep_5)$ runs through all of
  $\classrepsyst^5$ then
  $([\coordfracideal_1],\ldots,[\coordfracideal_5])$
  runs through all $5$-tuples of ideal classes. When $\coord_j$ runs
  through $\FDei \cap \coordfracideal_{j*}$ then
  $\coordideal_j=\coord_j\coordfracideal_{j}^{-1}$ runs through all
  integral ideals in the class $[\coordfracideal_j]$. Furthermore, the
  $\coordideal_j$ satisfy
  \eqref{eq:additional_height_conditions_ideals} if and only if the
  $\coord_j$ satisfy \eqref{eq:additional_height_condition} and
  \eqref{eq:height_2}.
\end{proof}

\section{The remaining summations}\label{sec:completion}
All that remains to be done is the evaluation of the sum over
$\coordidealtuple'$ in Lemma \ref{lem:first_summation}. We proceed as
in the proof of \cite[Proposition 7.3]{arXiv:1302.6151}, except that
we start at $r=5$ instead of $r+1$. Using \cite[Proposition
7.2]{arXiv:1302.6151} inductively, we see that
\begin{equation*}
  \sum_{\substack{\coordidealtuple'\in\idealsk^5\\\eqref{eq:additional_height_conditions_ideals}}}\frac{\theta_1(\coordidealtuple')B}{\N(\coordideal_1\cdots\coordideal_5)} = \left(\frac{2^{r_1}(2\pi)^{r_2}R_Kh_K}{|\mu_K|\sqrt{|\Delta_K|}}\right)^5 \theta_0 V_0(B)+O(B(\log B)^4\log\log B),
\end{equation*}
where
\begin{equation*}
  \theta_0 := \mathcal{A}(\theta_1(\coordidealtuple'), \coordideal_5,\ldots,\coordideal_1)
\end{equation*}
is the iterated ``mean value'' of $\theta_1(\coordidealtuple)$ as defined
in \cite[Section 2]{arXiv:1302.6151}, and
\begin{equation*}
  V_0(B) := \int_{\substack{t_1, \ldots,t_5\geq 1\\t_1^2t_2^2t_3^2t_4\leq B\\t_3^2t_4^4t_5^6\leq t_1t_2 B}}\frac{B}{t_1\cdots t_5}\dd t_1\cdots\dd t_5.
\end{equation*}
By \cite[Lemma 2.8]{arXiv:1302.6151}, we compute
\begin{equation*}
  \theta_0 = \prod_{\ppp}\left(1-\frac{1}{\N\ppp}\right)^6\left(1+\frac{6}{\N\ppp}+\frac{1}{\N\ppp^2}\right).
\end{equation*}
Let $\alpha(\tS)$ be the constant defined, for example, in \cite[D\'efinition 4.8]{MR2019019}. We evaluate $V_0(B)$ in terms of $\alpha(\tS)$. The negative curves $[E_1], \ldots, [E_7]$ generate the effective cone of $\tS$, and $[-K_\tS]=[2E_1+2E_2+2E_3+E_4+3E_6]$, $[E_7] = [E_1+E_2-E_4-2E_5+E_6]$. As in the proof of \cite[Lemma
8.1]{arXiv:1302.6151}, we see that
\begin{equation*}
   \alpha(\tS) = \frac{1}{3}\int_{\substack{x_1,\ldots,x_5 \geq 0\\2x_1+2x_2+2x_3+x_4\leq 1\\-x_1-x_2+2x_3+4x_4+6x_5\leq 1}}\dd x_1\cdots\dd x_5.
\end{equation*}
We substitute $t_i = B^{x_i}$ to obtain
\begin{equation*}
  3\alpha(\tS)\cdot B(\log B)^5 = V_0(B).
\end{equation*}
Let us compute the numerical value of $\alpha(\tS)$. By \cite[Theorem~1.3]{MR2377367}, we have
\begin{equation*}
  \alpha(\tS) = \frac{\alpha(S_0)}{|W(R)|} =  \frac{1}{8640},
\end{equation*}
where $S_0$ is a split smooth del Pezzo surface of degree $4$ (with $\alpha(S_0) = 1/180$ by \cite[Theorem~4]{MR2318651}) and $|W(R)|=2\cdot(3+1)!$ is the order of the Weyl group of the root system $\Athree+\Aone$ associated to
the singularities of $S$.

Together with Lemma \ref{lem:first_summation}, this shows the
asymptotic formula in Theorem \ref{maintheorem}, with the constant $c_{S,H}$ described in Subsection \ref{subsec:constant}.

\section{The leading constant}\label{sec:constant}

Let $\gamma : \tS \to S$ be the minimal desingularization defined in
Section \ref{sec:passage}. Neither $S$ nor $\tS$ are Fano, so the
original conjectures of Manin \cite{MR89m:11060, MR1032922} and Peyre
\cite{MR1340296} do not apply. However, \cite[Remarque 2.3.2]{MR1340296} already suggested generalizations of
Manin's conjecture that cover $\tS$, and such a generalization was formulated, for example, by Batyrev and Tschinkel \cite{MR1679843}.
For our purpose, Peyre's variant \cite{MR2019019} is the most
convenient. Indeed, $\tS$ satisfies \cite[Hypoth\`eses
3.3]{MR2019019}, so we can compare our result to the formula
\cite[Formule empirique 5.1]{MR2019019}.

Let $\Ss$ be the smooth locus of $S$, that is, the complement of the
rational points $(0:0:0:0:1)$ and $(0:1:0:0:0)$, and let $U$ be as in
Theorem \ref{maintheorem}. As we have already observed in the proof of
Lemma \ref{lem:A3+A1_passage_to_torsor},
\begin{equation*} N_{U, H}(B) = |\{x \in \gamma^{-1}(U)(K) \mid
  H(\gamma(x)) \leq B\}|.
\end{equation*}

We construct an adelic metric $(\normv{\cdot})_{v\in\places}$ on the
anticanonical line bundle $\omega^{-1}_\tS$ in the sense of
\cite{MR2019019} such that the Arakelov height $(\omega^{-1}_\tS,
(\normv{\cdot})_{v\in\places} )$ induces the height function $H\circ
\gamma$ on $\tS(K)$. We start by relating the canonical sheaves of $S$
and $\tS$ to each other. The surface $S$ is a complete intersection
defined in $\PP_K^4$ by the forms $h_1 := x_0x_3-x_2x_4$, $h_2 :=
x_0x_1+x_1x_3+x_2^2$. Hence, we may define the canonical sheaf of $S$
as
\begin{equation*}
  \omega_{S} := \det(\conorm_{S|\PP_K^4})^\dual \otimes_{\OO_S}\Omega_{\PP_K^4}^4|_S,
\end{equation*}
where $\conorm_{S|\PP_K^4}$ is the conormal sheaf of the immersion
$S\to \PP_K^4$ and $\Omega_{\PP_K^4}^4$ is the sheaf of differentials
of degree $4$ of $\PP_K^4$ over $K$ (see \cite[Definition
6.4.7]{MR1917232}). Then $\omega_{S}$ is invertible and is the
dualizing sheaf of $S$. Moreover, $\omega_S$ is isomorphic to
$\OO_S(-1)$. Let us specify such an isomorphism.

We write $S_{(i)}$ for the affine open subset where $x_i \neq 0$, with
coordinates $x_j^{(i)} := x_j/x_i$ for $j \neq i$. Then $S_{(i)}$ is
defined by the polynomials $h_j^{(i)}(x_0^{(i)}, \ldots,
\widehat{x_i^{(i)}}, \ldots, x_n^{(i)}) := h_j/x_i^2\in K[x_0^{(i)},
\ldots, \widehat{x_i^{(i)}}, \ldots, x_n^{(i)}]$, $j\in\{1,2\}$. Observe that $U = S_{(2)}$.

Since $S$ is a complete intersection, we have an isomorphism
$\OO_S(-2)\oplus \OO_S(-2)\to \conorm_{S|\PP_K^4}$ defined on
$S_{(i)}$ by $(a_1,a_2)\mapsto a_1 h_1 + a_2 h_2$, and hence an
isomorphism $\OO_S(4) \to \det(\conorm_{S|\PP_K^4})^\dual$ defined on
$S_{(i)}$ by $x_i^{4}\mapsto (h_1^{(i)} \wedge
h_2^{(i)})^\dual$. Moreover, we have an isomorphism $\OO_S(-5)\to
\Omega_{\PP_K^4}^4|_S$ given on $S_{(i)}$ by
\begin{equation*}
  x_i^{-5} \mapsto (-1)^{i}\dd x_0^{(i)} \wedge \cdots \wedge \widehat{\dd x_i^{(i)}}\wedge \cdots \wedge \dd x_4^{(i)}.
\end{equation*}
This gives an isomorphism $\OO_S(-1) \to \omega_S$ defined on $S_{(i)}$ by
\begin{equation*}
  \frac{1}{x_i} \mapsto s_i := (-1)^{i}(h_1^{(i)} \wedge h_2^{(i)})^\dual \otimes \dd x_0^{(i)} \wedge \cdots \wedge\widehat{\dd x_i^{(i)}}\wedge \cdots \wedge \dd x_4^{(i)}.
\end{equation*}
On the smooth locus $\Ss$, we can canonically identify $\omega_\Ss$ with
$\Omega_\Ss^2$ by taking the determinant of the conormal sequence
\begin{equation*}
  0 \to \conorm_{\Ss|\PP_K^4} \to \Omega_{\PP_K^4}^1|_{\Ss} \to \Omega_{\Ss}^1 \to 0.
\end{equation*}
For any $k < l \in \{0, \ldots, 4\}\smallsetminus\{i\}$, this leads to the identification 
\begin{equation}\label{eq:si_explicit}
  s_i = (-1)^{i+t}\Delta_{i,k,l}^{-1} \dd x_k^{(i)}\wedge \dd x_l^{(i)}
\end{equation}
in $\omega_{S,\xi} = \omega_{\Ss,\xi} = \Omega^2_{K(S)}$, where $\xi$
is the generic point of $S$, $\Delta_{i,k,l}$ is the determinant of
the Jacobian matrix $(\partial h_j^{(i)}/\partial x^{(i)}_n)_{j,n}$
with the $k$-th and $l$-th columns removed, and $t := k+l$ if $k<i<l$,
and $t:=k+l-1$ otherwise.

Since $\gamma$ is an isomorphism on $\gamma^{-1}(\Ss)$, the pullback
of differential forms gives an isomorphism
$\gamma^{*}\omega_S|_{\gamma^{-1}(\Ss)} \cong
\omega_{\tS}|_{\gamma^{-1}(\Ss)}$. This induces an isomorphism
$\omega_{\tS} \cong \gamma^*\omega_{S}\otimes\OO_\tS(P)$, where $P$
is a divisor supported on the complement $\tS \smallsetminus
\gamma^{-1}(\Ss)$. Since $\tS$ is split and both singularities of $S$
are rational double points, $[P] = 0$ (see \cite[Proposition
8.1.10]{MR2964027}). We conclude that
\begin{equation*}
 \omega_\tS\cong \gamma^*\omega_S,
\end{equation*}
with an isomorphism whose restriction to $\gamma^{-1}(\Ss)$ is given
by the pullback of differential forms. In the following, we use this
isomorphism to identify $\omega_{\tS}$ with $\gamma^*(\omega_S)$ and
its dual to identify $\omega_{\tS}^{-1}$ with
$\gamma^{*}\omega_S^{-1}$.

For every $i \in \{0, \ldots, 4\}$, let $\tau_i$ be the global section
of $\omega_S^{-1}\cong \OO_S(1)$ dual to $s_i$. Then $\tau_0, \ldots,
\tau_4$ define the embedding $S \hookrightarrow \PP_K^4$. The morphism
$\tS \to S \hookrightarrow \PP_K^4$ is given by the sections
$\gamma^*\tau_0, \ldots, \gamma^*\tau_4 \in
H^0(\tS,\omega_\tS^{-1})$. Consider the Arakelov height
$(\omega_\tS^{-1},(\normv{\cdot})_{v\in\places})$ defined by these
global sections: for all $v\in \places$, $x \in \tS(K_v)$, and $\tau
\in \omega_\tS^{-1}(x)$, we use the $v$-adic metric
\begin{equation*}
  \normv{\tau} := \min_{\substack{0\leq i\leq 4\\\gamma^*\tau_i(x)\neq 0}}\left\{\absv{\frac{\tau}{\gamma^*\tau_i(x)}}\right\}.
\end{equation*}
The corresponding height function on $\tS(K)$ (see \cite[D\'efinition
2.3]{MR2019019}) is $H\circ\gamma$, as desired.

According to \cite[Formule empirique 5.1]{MR2019019}, the leading constant in
Theorem \ref{maintheorem} should hence have the form
\begin{equation*}
  c_{S,H} = \alpha(\tS)\beta(\tS)\tau_H(\tS),
\end{equation*}
with $\alpha(\tS),\beta(\tS), \tau_H(\tS)$ as in \cite[D\'efinition
4.8]{MR2019019}.

We have already seen at the end of the the last section how the factor $\alpha(\tS)$ appears in our leading constant.  Moreover,
\begin{equation*}
  \beta(\tS) := |H^1(\Gal(\Qbar/K), \Pic(\tS_\Qbar))|  = 1,
\end{equation*}
since $\tS$ is split.

By \cite[D\'efinition 4.6, D\'efinition 4.8]{MR2019019}, and using the
properties of the model $\tSZ$ proved in Proposition
\ref{model_properties}, the Tamagawa number $\tau_H(\tS)$ is given as
\begin{equation*}
  \tau_H(\tS) = \lim_{s \to 1} (s-1)^6 L(s,\Pic(\tS_\Qbar))\frac{1}{|\Delta_K|}\prod_{v\in\places}\lambda_v^{-1}\bomega_{H,v}(\tS(K_v)),
\end{equation*}
where
\begin{equation*}
 \lambda_v :=
 \begin{cases}
   L_v(1, \Pic(\tS_\Qbar)) &\text{ if } v \in \nonarchplaces,\\
   1 &\text{ if } v \in \archplaces.
 \end{cases}
\end{equation*}
Note that the closure of $\tS(K)$ in the set $\tS(\AAA_K)$ of adelic
points coincides with $\tS(\AAA_K)$, since the rational variety $\tS$
satisfies weak approximation.

By Proposition \ref{model_properties}, \ref{splitness}, the Frobenius morphism associated to
any non-archimedean place $v$ corresponding to a prime ideal $\ppp$
acts trivially on the vector space $\Pic(\tSZ_{\Fbar_\ppp})\otimes
\QQ$ of dimension $6$. Therefore, $L_v(s, \Pic(\tS_\Qbar)) =
(1-\N\ppp^{-s})^{-6}$ and
\begin{equation*}
  L(s, \Pic(\tS_\Qbar)) = \prod_{v \in \nonarchplaces} L_v(s,
  \Pic(\tS_\Qbar)) = \zeta_K(s)^6.
\end{equation*} 
By the analytic class number formula,
\begin{equation*}
  \lim_{s \to 1} (s-1)^6 L(s,\Pic(\tS_\Qbar))\frac{1}{|\Delta_K|} = \left(\frac{2^{r_1}(2\pi)^{r_2}R_K h_K}{|\mu_K|}\right)^6\cdot\frac{1}{|\Delta_K|^4}.
\end{equation*}

Let us compute the $v$-adic measures $\bomega_{H, v}(\tS(K_v))$
defined in \cite[Notations 4.3]{MR2019019}. By Proposition \ref{model_properties}, \ref{smooth_geom_integral}, the base change $\tS_{\OO_v}$ of our model satisfies the hypotheses of \cite[Corollary 2.15]{MR1679841} for all $v\in\nonarchplaces$. If $v$ corresponds to a prime ideal $\ppp$ of $\Ok$, we may thus conclude that
\begin{equation*}
  \bomega_{H,v}(\tS(K_v)) = \frac{|\tS(k(\ppp))|}{\N\ppp^2}.
\end{equation*}
Moreover, Proposition \ref{model_properties}, \ref{splitness} shows that $|\tS(k(\ppp))| = \N\ppp^2 + 6\N\ppp + 1$. Thus, we see that $\lambda_v^{-1}\bomega_{H,v}(\tS(K_v)) = \omega_v(\tS)$, with $\omega_v(\tS)$ as in Subsection \ref{subsec:constant}. It remains to compute $\bomega_{H,v}(\tS(K_v))$ for $v\in\archplaces$.

Since $U=S_{(2)}$ is
smooth, its set of rational points $U(K_v)$ has the structure of a
$K_v$-analytic manifold, and $\gamma : \tS \to S$ induces an analytic
isomorphism $\gamma^{-1}(U)(K_v)\to U(K_v)$, which we again call
$\gamma$.

The preimage $\gamma^{-1}(S\smallsetminus U)$ is the union of the
negative curves on $\tS$. As a union of finitely many
submanifolds of strictly smaller dimension,
$\gamma^{-1}(S\smallsetminus U)(K_v)$ has measure $0$, and
$\bomega_{H,v}(\tS(K_v))=\bomega_{H,v}(\gamma^{-1}(U)(K_v))$

The local coordinates $x_0^{(2)}-1$, $x_3^{(2)}$ at the rational point
$p=(1,-1,0,0)$ of $U$ define an analytic isomorphism $\psi : U(K_v)
\to W := \{(z_0,z_3)\in K_v^2 \mid z_0+z_3+1\neq 0\}$. Since the
restriction of $\gamma$ to $\gamma^{-1}(U)$ is an isomorphism, the
functions $y_0:=(x_0^{(2)}-1)\circ\gamma$,
$y_3:=(x_3^{(2)})\circ\gamma$ form a system of local coordinates at
$\gamma^{-1}(p)$, defining an analytic isomorphism $\phi := \psi \circ
\gamma : \gamma^{-1}(U)(K_v) \to W$.  By definition,
\begin{align*}
  &\bomega_{H,v}(\gamma^{-1}(U)(K_v)) =
  \int_{W}\normv{\left(\frac{\partial}{\partial y_0} \wedge
      \frac{\partial}{\partial y_3}\right)(\phi^{-1}(z_0,z_3))}\dd z_0\dd z_3\\
  &= \int_{W}\normv{\gamma^*\left(\frac{\partial}{\partial x_0^{(2)}}
      \wedge
      \frac{\partial}{\partial x_3^{(2)}}\right)(\phi^{-1}(z_0,z_3))}\dd z_0\dd z_3\\
  &= \int_{W}\min_{\tau_i(\psi^{-1}(z_0,z_3))\neq
    0}\left\{\absv{\frac{\tau_2(\psi^{-1}(z_0,z_3))}{((x^{(2)}_0+x^{(2)}_3)\tau_i)(\psi^{-1}(z_0,z_3))}}\right\}\dd
  z_0\dd z_3,
\end{align*}
since $\partial/(\partial x_0^{(2)})\wedge \partial/(\partial
x_3^{(2)}) = -\Delta_{2,0,3}^{-1}\tau_2 =
-(x^{(2)}_0+x^{(2)}_3)^{-1}\tau_2$ due to
\eqref{eq:si_explicit}. Together with the relation $\tau_i =
x_i^{(2)}\cdot \tau_2$, this shows that $\bomega_{H,v}(\tS(K_v))$ has
the explicit form
\begin{equation*}
 \int_{K_v^2}\frac{\dd z_0 \dd
    z_3}{\max\{1,\absv{z_0+z_3},\absv{z_0(z_0+z_3)}, \absv{z_3(z_0+z_3)}, \absv{z_0z_3(z_0+z_3)}\}}.
\end{equation*}

We assume now that $v$ is a complex place. We transform to
variables $t_0 = -z_0$, $t_1 = z_0+z_1$ and use the identity
\begin{equation*}
  \frac{1}{s}=\int_{t\geq s}\frac{1}{t^2}\dd t,
\end{equation*}
for $s \in \RR\smallsetminus\{0\}$ to obtain
\begin{equation*}
  \bomega_{H,v}(\tS(K_v)) = 4 \int_{\substack{(t_0, t_3)\in
      \CC^2\\t_2\geq\max\{1, |t_1|^2, |t_0t_1|^2, |t_1(t_0+t_1)|^2,
      |t_0t_1(t_0+t_1)|^2\}}}\frac{\dd t_0 \dd t_1 \dd t_2}{t_2^2}.
\end{equation*}
Recall that Peyre normalizes the Haar measure on $K_v$ to be twice the usual
Lebesgue measure on $\CC \cong \RR^2$, which leads to the factor $4$ in front
of the integral.

We apply the transformation $t_2 = 1/y_2^3$, $t_0 = y_0/\sqrt{y_2}$,
$t_1 = y_1/\sqrt{y_2}$ of Jacobian determinant $-3/y_2^6$ and replace
$y_2$ by a complex variable via polar coordinates to see that
$\bomega_{H,v}(\tS(K_v)) = \omega_v(\tS)$. By a similar argument, the
same equality holds for real places $v$.

This shows that the constant $c_{S,H}$ in Theorem \ref{maintheorem} is as
expected.

\begin{ack}
  The first-named author was supported by a Humboldt Research
  Fellowship for Postdoctoral Researchers of the Alexander von
  Humboldt Foundation. The second-named author was supported by grant
  DE 1646/3-1 of the Deutsche Forschungsgemeinschaft. This work was
  partly completed while the first-named author enjoyed the
  hospitality of the Erwin Schr\"odinger Institute in Vienna.

  We thank the anonymous referee for pointing out references to us and for helpful comments that simplified arguments in Sections \ref{sec:passage} and \ref{sec:constant}.
\end{ack}

\bibliographystyle{alpha}

\bibliography{manin_dp4_a3a1}

\end{document}